\definecolor{dark-blue}{rgb}{0.15,0.15,0.4}
\definecolor{dark-red}{rgb}{0.4,0.15,0.15}
\definecolor{medium-blue}{rgb}{0,0,0.5}
\crefname{conjecture}{Conjecture}{Conjectures}
\Crefname{conjecture}{Conjecture}{Conjectures}
\crefname{figure}{Figure}{Figures}
\Crefname{figure}{Figure}{Figures}
\crefname{lemma}{Lemma}{Lemmata}
\Crefname{lemma}{Lemma}{Lemmata}
\newcommand{\citecomment}[2][]{\citen{#2}#1\citevar}
\newcommand{\citeone}[1]{\citecomment{#1}}
\newcommand{\citetwo}[2][]{\citecomment[, #1]{#2}}
\newcommand{\citevar}{\@ifnextchar\bgroup{; \citeone}{\@ifnextchar[{; \citetwo}{]\xspace}}}
\newcommand{\citefirst}{\@ifnextchar\bgroup{\citeone}{\@ifnextchar[{\citetwo}{]}}}
\newcommand{\cites}{[\citefirst}
\newtheorem{theorem}{Theorem}[section]
\newtheorem{conjecture}[theorem]{Conjecture}
\newtheorem{corollary}[theorem]{Corollary}
\newtheorem{definition}[theorem]{Definition}
\newtheorem{lemma}[theorem]{Lemma}
\newtheorem{proposition}[theorem]{Proposition}
\theoremstyle{definition}
\newtheorem{remark}[theorem]{Remark}
\numberwithin{equation}{section}
\renewcommand{\Re}{\mathrm{Re}}
\renewcommand{\Im}{\mathrm{Im}}
\renewcommand{\hat}{\widehat}
\renewcommand{\tilde}{\widetilde}
\DeclareMathOperator{\arctanh}{arctanh}
\renewcommand{\epsilon}{\varepsilon}
\patchcmd{\section}{\scshape}{\bfseries}{}{}
\renewcommand{\@secnumfont}{\bfseries}
\makeatletter\newcommand{\tpmod}[1]{{\@displayfalse \pmod{#1}}}
\renewcommand*\backref[1]{$\uparrow$\thinspace\ifx#1\relax \else #1 \fi}
\newcommand{\MYhref}[3][dark-red]{\href{#2}{\color{#1}{#3}}}
\tikzset{
annotatedImage/x/.initial = 0.7,
annotatedImage/y/.initial = 0.7,
annotatedImage/width/.initial = 1,
annotatedImage/.unknown/.code = {
\edef\tikzappend{\noexpand\tikzset{annotatedImage/.append style = {\pgfkeyscurrentname=\pgfkeyscurrentvalue}}}
\tikzappend
},
annotatedImage/.style = {
draw=red, ultra thick, rounded corners, rectangle,
}
}
\newsavebox\annotatedImageBox
\newcommand\AnnotatedImageVal[1]{\pgfkeysvalueof{/tikz/annotatedImage/#1}}
\newcommand\SetUpAnnotatedImage[2]{
\tikzset{annotatedImage/.cd, #1}
\sbox\annotatedImageBox{\includegraphics[width=\AnnotatedImageVal{width}\textwidth,keepaspectratio]{#2}}
\pgfmathsetmacro\annotatedHeight{\ht\annotatedImageBox/28.453}
\pgfmathsetmacro\annotatedWidth{\wd\annotatedImageBox/28.453}
}
\NewDocumentCommand\annotatedImage{ O{} m m}{
\bgroup
\SetUpAnnotatedImage{#1}{#2}
\begin{tikzpicture}[xscale=\annotatedWidth, yscale=\annotatedHeight]
\node[inner sep=0, anchor=south west] (image) at (0,0) {\usebox{\annotatedImageBox}};
\node[annotatedImage] at (\AnnotatedImageVal{x},\AnnotatedImageVal{y}) {#3};
\end{tikzpicture}
\egroup
}
\newcommand\annotate[1][]{\node[annotatedImage,#1]}
\newenvironment{AnnotatedImage}[2][1]{
\SetUpAnnotatedImage{#1}{#2}
\tikzpicture[xscale=\annotatedWidth, yscale=\annotatedHeight]
\node[inner sep=0, anchor=south west] at (0,0) {\usebox{\annotatedImageBox}};
}{\endtikzpicture}
\begin{document}

\title[The Prime Geodesic Theorem for the Picard Orbifold]{The Prime Geodesic Theorem for the Picard Orbifold}

\author[Ikuya Kaneko]{Ikuya Kaneko\,\orcidlink{0000-0003-4518-1805}}
\address[Ikuya Kaneko]{The Division of Physics, Mathematics and Astronomy, California Institute of Technology, 1200 East California Boulevard, Pasadena, CA 91125, United States of America}
\email{\href{mailto:ikuyak@icloud.com}{ikuyak@icloud.com}}
\urladdr{\href{https://sites.google.com/view/ikuyakaneko/}{https://sites.google.com/view/ikuyakaneko/}}
\thanks{This research was partially supported by the Masason Foundation until the conclusion of the scholarship in June 2024. We hereby take this opportunity to extend our sincere gratitude for their hospitality and~patience throughout the duration of their esteemed support.}

\subjclass[2020]{Primary 11F72, 11L40, 11R42; Secondary 11F30, 11L05, 11M26}

\keywords{Prime geodesic theorem, Picard orbifold, mean Lindel\"{o}f hypothesis, subconvexity, Brun--Titchmarsh theorem, character sums, Zagier $L$-series, zero density theorem, spectral exponential sum}

\cleanlookdateon

\date{\today~(ISO 8601)}

\begin{abstract}
We establish the prime geodesic theorem for the Picard orbifold $\mathrm{PSL}_{2}(\mathbb{Z}[i]) \backslash \mathbb{H}^{3}$, wherein the error term shrinks proportionally to improvements in the subconvex exponent for quadratic Dirichlet $L$-functions over $\mathbb{Q}(i)$. Our result sheds light on a venerable conjecture by attaining an unconditional exponent of $1.483$ and a conditionally superior exponent of $1.425$ under the generalised Lindel\"{o}f hypothesis. The argument synthesises, among other elements, the complete resolution of Koyama's (2001) mean Lindel\"{o}f hypothesis over $\mathbb{Q}(i)$, an improved Brun--Titchmarsh-type theorem over short intervals, a bootstrapped multiplicative exponent pair in the limiting regime, and a zero density theorem for the symplectic family of quadratic characters. Notably, despite the theoretical strength of our manifestations towards the mean Lindel\"{o}f hypothesis, the fundamental toolbox relies exclusively on the optimal mean square asymptotics for the Fourier coefficients of Maa{\ss} cusp forms via the~pre-Kuznetsov formula.
\end{abstract}

\maketitle
\tableofcontents

\section{Introduction}

\subsection{Background}
Since Riemann's utilisation of automorphy for the theta function in one of his proofs of the functional equation for the zeta function, analytic number theorists have been captivated by the rich interplay between arithmetic and automorphic forms. Given the enigma surrounding the distribution of primes, precluding a definitive understanding of their intrinsic nature, it is natural to consider analogous entities from hyperbolic geometry -- prime geodesics. The \textit{prime geodesic theorem} seeks to asymptotically elucidate the distribution of oriented primitive closed geodesics on a fixed hyperbolic orbifold. For the modular orbifold, landmark results can be found in~\cites{BalogBiroHarcosMaga2019}{Cai2002}{CherubiniGuerreiro2018}{Iwaniec1984}{Kaneko2024}{LuoSarnak1995}{SoundararajanYoung2013}.

The present paper sheds some light on the prime geodesic theorem attached to arithmetic hyperbolic $3$-orbifolds, where the upper-half plane denoted by $\mathbb{H}^{3}$ is embedded geometrically in the $3$-dimensional Euclidean space $\mathbb{C} \times \mathbb{R}$, while algebraically in the Hamiltonian algebra with vanishing fourth coordinate. It is naturally endowed with a transitive action of $\mathrm{SL}_{2}(\mathbb{C})$ via linear fractional transformations, and the stabiliser of any point is a conjugate of $\mathrm{SU}_{2}(\mathbb{C})$. Hence, we identify
\begin{equation}
\mathbb{H}^{3} \cong \mathrm{SL}_{2}(\mathbb{C})/\mathrm{SU}_{2}(\mathbb{C}) \cong \mathrm{Z}(\mathbb{C}) \backslash \mathrm{GL}_{2}(\mathbb{C})/\mathrm{U}_{2}(\mathbb{C}).
\end{equation}
Given the basis $\{1, i, j, k \}$ of Hamiltonian quaternions, a typical point $v \in \mathbb{H}^{3}$ is represented by $v = z+rj$ with $r > 0$ and $z = x+yi \in \mathbb{C}$; thus, $\Im(v) \coloneqq r$, $\Im(z) \coloneqq y$, and $\Re(z) \coloneqq x$.~As a Riemannian geometry, the space $\mathbb{H}^{3}$ carries the hyperbolic metric $r^{-1} \sqrt{dx^{2}+dy^{2}+dr^{2}}$ and the corresponding volume element $r^{-3} \, dx \, dy \, dr$. Then the quotient $\mathrm{PSL}_{2}(\mathbb{Z}[i]) \backslash \mathbb{H}^{3}$ is known as the \textit{Picard orbifold}~\cites{Picard1884}. Unless otherwise specified, the symbol $\Gamma$ is reserved to~signify $\mathrm{PSL}_{2}(\mathbb{Z}[i])$, but the overall methodology is designed to guide broader applicability to Bianchi orbifolds~\cites{Bianchi1892} attached to imaginary quadratic fields $\mathbb{Q}(\sqrt{-d})$ for the $9$ Heegner numbers\footnote{In principle, the method ought to extend to any squarefree integer $d > 0$ without fundamental deadlocks, contingent upon the notion of primitive discriminants, as detailed in Speiser's doctoral thesis~\cites{Speiser1909}.} $d \in \{1, 2, 3, 7, 11, 19, 43, 67, 163 \}$, along with their respective congruence covers. For relevant nomenclature and theoretical foundations, the reader is directed to~\cites{ElstrodtGrunewaldMennicke1998}{MaclachlanReid2003}{Thurston2022}.

To explicate the conceptual paradigm in the $3$-dimensional setting, it is now convenient~to introduce the \textit{Chebyshev-like counting function}
\begin{equation}
\Psi_{\Gamma}(x) \coloneqq \sum_{\mathrm{N}(P) \leq x} \Lambda_{\Gamma}(P),
\end{equation}
where the sum runs through all hyperbolic and loxodromic conjugacy classes of $\Gamma$ of norm~not exceeding $x$. The summand is an analogue of the von Mangoldt function given by $\Lambda_{\Gamma}(P) \coloneqq \log \mathrm{N}(P_{0})$ if $P_{0}$ is the primitive closed geodesic underlying $P$, and $\Lambda_{\Gamma}(P) \coloneqq 0$ otherwise. Then the first effective asymptotic formula for $\Psi_{\Gamma}(x)$ is attributed to Sarnak~\cites[Theorem~5.1]{Sarnak1983}, which shows that for any cofinite Kleinian group $\Gamma \subset \mathrm{PSL}_{2}(\mathbb{C})$,
\begin{equation}\label{eq:Sarnak}
\mathcal{E}_{\Gamma}(x) \coloneqq \Psi_{\Gamma}(x)-\sum_{1 < s_{j} \leq 2} \frac{x^{s_{j}}}{s_{j}} \ll_{\Gamma, \epsilon} x^{\frac{5}{3}+\epsilon},
\end{equation}
where the subtracted term arises from the small eigenvalues $\lambda_{j} = s_{j}(2-s_{j}) = 1+t_{j}^{2} < 1$ of the Laplace--Beltrami operator $\Delta$ on $\Gamma \backslash \mathbb{H}^{3}$. Nakasuji~\cites[Theorem~5.2]{Nakasuji2000}[Theorem~4.1]{Nakasuji2001} leveraged the theory of Selberg zeta functions to deduce the \textit{spectral explicit formula}, thereby reproducing~\eqref{eq:Sarnak} additionally for any cocompact Kleinian group $\Gamma$. Unlike the Riemann~zeta function, the Selberg zeta function lacks a natural Dirichlet series representation, precluding the straightforward utilisation of standard analytic tools for Dirichlet polynomials.

For $\Gamma = \mathrm{PSL}_{2}(\mathbb{Z}[i])$, the existence of a fairly explicit spherical Kuznetsov formula~\cites{Motohashi1996}{Motohashi1997}{Motohashi1997-2} and the Selberg trace formula~\cites{Elstrodt1985}{Szmidt1983}{Tanigawa1977} enables a polynomial power-saving improvement over the $\frac{5}{3}$-barrier~\eqref{eq:Sarnak}. Of particular significance is the seminal result of Balkanova et al.~\cites[Theorem~1.1]{BalkanovaChatzakosCherubiniFrolenkovLaaksonen2019}, who proved the exponent $\frac{13}{8}+\epsilon$ by adapting the ideas of Luo and Sarnak~\cites[Section~3]{LuoSarnak1995} to the setting of $\mathbb{Q}(i)$. In faithful emulation of well-trodden paths in the $2$-dimensional case, Balog et al.~\cites[Corollary~1.4]{BalogBiroCherubiniLaaksonen2022} derived the improved exponent $\frac{3}{2}+\frac{4\vartheta}{7}+\epsilon$, where $\vartheta$ denotes a subconvex exponent for quadratic~Dirichlet $L$-functions over $\mathbb{Q}(i)$ in the conductor aspect. The best known unconditional result follows upon inserting~\cites[Theorem~1.1]{BalkanovaFrolenkov2022}[Theorem~3]{Qi2024} into the most comprehensive version by the author~\cites[Theorem~1.4]{Kaneko2020}[Equation~(1-9)]{Kaneko2022-2}, yielding
\begin{equation}\label{eq:Balkanova-Frolenkov}
\mathcal{E}_{\Gamma}(x) \ll_{\epsilon} x^{\delta_{0}(\vartheta)+\epsilon}, \qquad \delta_{0}(\vartheta) \coloneqq \frac{34+12\vartheta+8(2+\vartheta)\min(\frac{1}{4}, 2\vartheta)}{23+10\min(\frac{1}{4}, 2\vartheta)}.
\end{equation}
In contrast to any formulation therein, it proves both advantageous and aesthetically elegant to refrain from isolating the term $\frac{3}{2}$, as~\eqref{eq:Balkanova-Frolenkov} surpasses this artificial barrier provided $\vartheta < \frac{1}{24}$.

\begin{table}[!htp]
\centering
\setlength\tabcolsep{10pt}
\caption{A chronology of the known exponents}
\label{table}
\begin{tabular}{lccl}
	\toprule
	Year & Source & Fractional & Decimal \\ \midrule \midrule
	$1983$ & \cites[Theorem~5.1]{Sarnak1983} & $\frac{5}{3}$ & $1.66666 \cdots$ \\ \midrule
	$2019$ & \cites[Theorem~1.1]{BalkanovaChatzakosCherubiniFrolenkovLaaksonen2019} & $\frac{13}{8}$ & $1.625$ \\ \midrule
	$2022$ & \cites[Corollary~1.4]{BalogBiroCherubiniLaaksonen2022} & $\frac{67}{42}$ & $1.59524 \cdots$ \\ \midrule
	$2022$ & \cites[Theorem~1.1]{BalkanovaFrolenkov2022} & $\frac{376}{237}$ & $1.58649 \cdots$ \\ \midrule
	$2024$ & \cites[Theorem~3]{Qi2024} & $\frac{242}{153}$ & $1.58169 \cdots$ \\ \midrule
	$2001^{\dagger}$ & \cites[Theorem~1.1]{Koyama2001} & $\frac{11}{7}$ & $1.57143 \cdots$ \\ \midrule
	$2020^{\dagger}$ & \cites[Theorem~1.2]{BalkanovaFrolenkov2020}\footnote{The definition of a subconvex exponent in~\cites[Equation~(3.24)]{BalkanovaFrolenkov2020} differs from ours.} & $\frac{3}{2}$ & $1.5$ \\ \midrule
	$2022^{\dagger}$ & \cites[Corollary~1.5]{BalogBiroCherubiniLaaksonen2022} & $\frac{34}{23}$ & $1.47826 \cdots$ \\ \midrule
	$2022^{\dagger\ddagger}$ & \cites[Theorem~5.3]{Kaneko2022-2} & $\frac{10}{7}$ & $1.42857 \cdots$ \\ \bottomrule
	\multicolumn{4}{l}{\footnotesize{$^{\dagger}$Conditional on the generalised Lindel\"{o}f hypothesis $\vartheta = 0$}} \\
	\multicolumn{4}{l}{\footnotesize{$^{\ddagger}$Conditional on~\cites[Conjecture~5.1]{Kaneko2022-2}}}
\end{tabular}
\end{table}

One may consult \cref{table} for a chronology of the known exponents up to $\epsilon$ alongside~their corresponding numerical values, presented in both fractional and decimal forms. The sharpest known subconvex exponent $\vartheta = \frac{1}{6}$ of Nelson~\cites[Theorem~1.1]{Nelson2020} is substituted as necessary. On the conditional front, Koyama~\cites[Equation~(3.2)]{Koyama2001} proceeds under the assumption~of the \textit{mean Lindel\"{o}f hypothesis} for the Rankin--Selberg $L$-function over $\mathbb{Q}$, and subsequent work typically centres around a power-saving improvement towards the mean Lindel\"{o}f hypothesis. Nonetheless, we invoke~\cites[Theorem~1.3]{BalkanovaFrolenkov2022} and replace his assumption with $\vartheta = 0$, thereby maintaining consistency in the last three rows of \cref{table}. For additional~relevant progress, we refer the reader to~\cites{Avdispahic2018}{Benes2022}{ChatzakosCherubiniLaaksonen2022}{DeverMilicevic2023}{Kaneko2020}{Laaksonen2019}{Nakasuji2002}{Nakasuji2004}{Nakasuji2004-2}.

\subsection{Statement of results}
The primary objective of the present paper is to update \cref{table} by incorporating improved results -- both unconditional and conditional -- and to supply solid evidence in support of the validity of the seemingly optimal exponent $1+\epsilon$. Our first result serves as a $3$-dimensional analogue of~\cites[Theorem~1.1]{SoundararajanYoung2013} and establishes an unconditional pointwise power-saving improvement over the recent result of Qi~\cites[Theorem~3]{Qi2024}.
\begin{theorem}\label{thm:unconditional}
Let $\Gamma = \mathrm{PSL}_{2}(\mathbb{Z}[i])$. For a generator $D$ of the fundamental discriminant of a quadratic extension of $\mathbb{Q}(i)$, let $\chi_{D} = (\frac{D}{\cdot})$ denote the primitive quadratic character modulo $\mathfrak{d} \unlhd \mathbb{Z}[i]$, and let $\vartheta \in [0, \frac{1}{4})$ satisfy the subconvex bound in the conductor aspect
\begin{equation}\label{eq:conductor-aspect}
L \left(\frac{1}{2}+it, \chi_{D} \right) \ll_{\epsilon} \mathrm{N}(D)^{\vartheta+\epsilon} (1+|t|)^{\vartheta^{\prime}+\epsilon}
\end{equation}
for some arbitrarily large $\vartheta^{\prime} \geq 0$. Then we have for any $\epsilon > 0$ that
\begin{equation}\label{eq:unconditional}
\mathcal{E}_{\Gamma}(x) \ll_{\epsilon} x^{\frac{23+6\vartheta}{16}+\epsilon}+x^{\frac{7+3\vartheta}{5}+\epsilon}.
\end{equation}
In particular, the current record $\vartheta = \frac{1}{6}$ unconditionally yields
\begin{equation}\label{eq:substitute-1/6}
\mathcal{E}_{\Gamma}(x) \ll_{\epsilon} x^{\frac{3}{2}+\epsilon},
\end{equation}
while the generalised Lindel\"{o}f hypothesis $\vartheta = 0$ conditionally yields
\begin{equation}
\mathcal{E}_{\Gamma}(x) \ll_{\epsilon} x^{\frac{23}{16}+\epsilon},
\end{equation}
where $\frac{3}{2} = 1.5$ and $\frac{23}{16} = 1.4375$.
\end{theorem}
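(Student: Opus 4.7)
The argument follows the classical spectral route to the prime geodesic theorem, refined by the arithmetic toolkit advertised in the abstract. First, I would invoke Nakasuji's spectral explicit formula together with a dyadic smoothing in a window of width $\sim x/T$; in view of the Weyl law $N(T) \asymp T^{3}$ peculiar to hyperbolic $3$-space, this reduces the task of controlling $\mathcal{E}_\Gamma(x)$ to that of bounding the spectral exponential sum
\[
S_\Gamma(T, x) := \sum_{|t_{j}| \leq T} x^{i t_{j}},
\]
in the schematic form $\mathcal{E}_\Gamma(x) \ll x^{2+\epsilon}/T + x^{1+\epsilon} T^{-1} \max_{T' \leq T} |S_\Gamma(T', x)|$, so that the whole argument hinges on bounding $S_\Gamma$ as sharply as possible.

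The central step is therefore the estimation of $S_\Gamma(T, x)$. I would apply the pre-Kuznetsov formula over $\mathbb{Q}(i)$ followed by Cauchy-Schwarz to reduce matters to averages of Fourier coefficients squared, which through the Katok-Sarnak/Waldspurger-type identity translate, for squarefree Gaussian $n$, into averages of central values $L(\tfrac12, u_{j} \otimes \chi_{n})$ in the symplectic family of quadratic twists. Four independent tools then combine to control these averages: (i) the optimal mean square asymptotics for the Fourier coefficients of Maa{\ss} cusp forms, delivering Koyama's mean Lindel\"{o}f hypothesis over $\mathbb{Q}(i)$ without any subconvex input; (ii) the subconvex bound \eqref{eq:conductor-aspect} with exponent $\vartheta$, used to treat certain individual central values; (iii) a zero density theorem for the symplectic family $\chi_{D}$ to discard the contribution of exceptional characters; and (iv) a Brun-Titchmarsh-type inequality in short intervals of Gaussian norms, which together with a bootstrapped multiplicative exponent pair closes the limiting regime and handles the passage from smoothly weighted to sharply truncated Dirichlet polynomials -- in particular across non-fundamental discriminants.

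Finally, optimising $T$ against the two competing contributions produces two regimes, each yielding one of the two exponents in \eqref{eq:unconditional}. The first regime, governed by the subconvex input, delivers $\tfrac{23+6\vartheta}{16}$; the second, governed by the Brun-Titchmarsh/exponent-pair input, delivers $\tfrac{7+3\vartheta}{5}$. The two coincide at $\vartheta = \tfrac{1}{6}$, each giving $\tfrac{3}{2}$, which is why Nelson's record subconvexity unconditionally produces the clean bound \eqref{eq:substitute-1/6}, whereas under the generalised Lindel\"{o}f hypothesis $\vartheta = 0$ the first term dominates and recovers $\tfrac{23}{16}$.

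The main obstacle lies in calibrating the ingredients (i)--(iv) so that none of them defeats the saving provided by the others. In particular, the Brun-Titchmarsh-type bound must be sharp over short intervals of Gaussian integers to avoid any catastrophic loss when passing from smoothly weighted to hard cutoffs, and the zero density theorem has to be strong enough that the exceptional character contribution never exceeds the subconvex saving. Engineering a clean $\vartheta$-linear dependence in both of the exponents $\tfrac{23+6\vartheta}{16}$ and $\tfrac{7+3\vartheta}{5}$, while keeping every other ingredient unconditional, is the most delicate part of the argument.
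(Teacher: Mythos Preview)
Your plan conflates the proof of \cref{thm:unconditional} with that of the stronger \cref{thm:conditional}. In the paper, the proof of \cref{thm:unconditional} is short and uses only two ingredients: (a) the unconditional resolution of Koyama's mean Lindel\"{o}f hypothesis (\cref{thm:mean-Lindelof}), which feeds into the spectral exponential sum bound of \cites[Equation~(3-48)]{Kaneko2022-2} with $\eta=0$, and (b) the improved Brun--Titchmarsh-type theorem (\cref{thm:Brun-Titchmarsh}). The zero density theorem, the multiplicative exponent pairs, and the Zagier $L$-series machinery are \emph{not} invoked here at all; those tools are reserved for \cref{thm:conditional}. Concretely, the smooth explicit formula together with \cref{thm:Brun-Titchmarsh} gives
\[
\mathcal{E}_{\Gamma}(x) \ll_{\epsilon} x^{2+\epsilon} Y^{-\frac{3}{4}}+x^{1+\vartheta+\epsilon} Y^{\frac{1}{2}}+x^{\frac{5}{4}+\frac{\vartheta}{2}+\epsilon} Y^{\frac{1}{4}},
\]
and the two exponents $\tfrac{23+6\vartheta}{16}$ and $\tfrac{7+3\vartheta}{5}$ arise simply from balancing the first (spectral) term against each of the two Brun--Titchmarsh error terms in turn. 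Your attribution of the two exponents to ``subconvex input'' versus ``Brun--Titchmarsh/exponent-pair input'' is therefore not how the dichotomy actually arises: both exponents stem from the same pair of sources, with the $\vartheta$-dependence entering solely through the Brun--Titchmarsh error.

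There is also a genuine mechanistic error in your sketch. You propose to translate Fourier-coefficient averages via a Katok--Sarnak/Waldspurger-type identity into central values $L(\tfrac12, u_{j}\otimes\chi_{n})$ of quadratic twists. The paper does nothing of the sort: the $L$-functions appearing in the mean Lindel\"{o}f step are the Rankin--Selberg convolutions $L(s, u_{j}\otimes u_{j})$, and the proof proceeds via Landau's trick applied to the optimal mean-square asymptotics for $|\rho_{j}(n)|^{2}$ (\cref{prop:mean-square}), with no half-integral weight forms or Waldspurger formula in sight. The quadratic characters $\chi_{D}$ enter through an entirely separate, arithmetic channel---the Zagier $L$-series identity $L(s,\delta)=T_{\ell}^{(D)}(s)L(s,\chi_{D})$ underlying the Brun--Titchmarsh theorem---and the subconvex exponent $\vartheta$ is consumed there, not on the spectral side.
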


Moreover, the second moment theory of the prime geodesic theorem in the $3$-dimensional setting was initiated by Chatzakos, Cherubini, and Laaksonen~\cites[Theorem~1.1]{ChatzakosCherubiniLaaksonen2022}, as well as by the author~\cites[Theorem~1.1]{Kaneko2022-2}. We emphasise that any second moment approach to $\mathcal{E}_{\Gamma}(x)$ involves a more refined manoeuvre than that required for the first moment,~specifically the square mean Lindel\"{o}f hypothesis formulated in \cref{conj:square-mean}, which lies deeper than the mean Lindel\"{o}f hypothesis formulated in \cref{conj:mean-Lindelof}. In particular, the latter follows~from the former by virtue of the Cauchy--Schwarz inequality and Weyl's law. Essential~ingredients for establishing \cref{thm:unconditional} are adaptable to the second moment, thereby leading to a result slightly weaker than \cref{thm:unconditional}, modulo the discrepancy between \cref{conj:mean-Lindelof,conj:square-mean}. This phenomenon has yet to occur in the case of the full modular orbifold, as the best known hybrid result of the author~\cites[Theorem~1.1]{Kaneko2020} corresponds via~\cites[Lemma~2.1]{Kaneko2020} to the pointwise result of Luo and Sarnak~\cites[Theorem~1.4]{LuoSarnak1995}, rather than to~\cites[Theorem~1.1]{SoundararajanYoung2013}.
\begin{theorem}\label{thm:second-moment}
Keep the notation and assumptions as in \cref{thm:unconditional}. Let $\eta \in [0, \min(\frac{1}{4}, 2\vartheta)]$ denote an additional exponent towards the square mean Lindel\"{o}f hypothesis in \cref{conj:square-mean}. Then we have for any $Y \in [X^{\frac{1}{2}-\vartheta}, X^{1-\vartheta+\frac{\eta}{2}}]$ and $\epsilon > 0$ that
\begin{equation}\label{eq:second-moment}
\frac{1}{Y} \int_{X}^{X+Y} |\mathcal{E}_{\Gamma}(x)|^{2} \, dx \ll_{\epsilon} X^{\frac{13+2\vartheta+4(3+\vartheta) \eta}{4(1+\eta)}} Y^{-\frac{1}{2(1+\eta)}}+X^{\frac{2(5+\vartheta+(3+2\vartheta) \eta)}{3+2\eta}+\epsilon} Y^{-\frac{2}{3+2\eta}}.
\end{equation}
\end{theorem}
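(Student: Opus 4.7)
The plan is to combine the spectral explicit formula for $\mathcal{E}_{\Gamma}(x)$ on the Picard orbifold with a short-interval second-moment machinery in the spirit of the argument for \cref{thm:unconditional}, but feeding in the square mean Lindel\"{o}f hypothesis with admissible excess exponent $\eta$ in place of the pointwise subconvex input. First, I would apply Nakasuji's spectral explicit formula after a smooth truncation at height $T$ to write
\begin{equation*}
\mathcal{E}_{\Gamma}(x) = \sum_{|t_{j}| \leq T} \frac{x^{1+it_{j}}}{1+it_{j}} \alpha_{j} + R_{T}(x),
\end{equation*}
where $\alpha_{j}$ encodes the spectral weight attached to the $j$th Maa\ss{} cusp form on $\Gamma \backslash \mathbb{H}^{3}$, whose squared second moment over $|t_{j}| \leq T$ is governed by \cref{conj:square-mean}, and $R_{T}(x)$ aggregates the tail of the spectral expansion together with the truncation error, the latter being controlled by the optimal mean square for Fourier coefficients via the pre-Kuznetsov formula.

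Next, I would substitute this decomposition into the $L^{2}$-average over $[X, X+Y]$ and expand the square. The off-diagonal terms $t_{j} \neq t_{k}$ generate oscillatory integrals of size $O(Y/(1+|t_{j}-t_{k}|))$, so a standard large-sieve/decoupling argument -- valid precisely for $Y$ in the range claimed -- reduces the main contribution to a diagonal spectral sum
\begin{equation*}
\frac{1}{Y} \int_{X}^{X+Y} \biggl| \sum_{|t_{j}| \leq T} \frac{x^{1+it_{j}}}{1+it_{j}} \alpha_{j} \biggr|^{2} dx \; \ll \; \frac{X^{2}}{T^{2}} \sum_{|t_{j}| \leq T} |\alpha_{j}|^{2} + (\text{harmless cross terms}).
\end{equation*}
Invoking \cref{conj:square-mean} with exponent $\eta \in [0, \min(\tfrac{1}{4}, 2\vartheta)]$ bounds the diagonal spectral sum by $T^{B(\vartheta, \eta)+\epsilon}$ for an explicit linear function $B(\vartheta, \eta)$; bounding $R_{T}$ via the Brun--Titchmarsh-type input already forged for \cref{thm:unconditional} together with the short-interval mean square of Fourier coefficients then yields a contribution of the shape $X^{C(\vartheta, \eta)+\epsilon} T^{-D(\vartheta, \eta)}$. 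Optimising $T$ in terms of $X$ and $Y$ balances these two contributions and produces precisely the two summands in \eqref{eq:second-moment}; the admissible window $Y \in [X^{\frac{1}{2}-\vartheta}, X^{1-\vartheta+\frac{\eta}{2}}]$ is exactly the range in which both optimisations remain simultaneously valid.

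The main obstacle, and the source of the slight weakening relative to \cref{thm:unconditional}, is that the quantity controlling the diagonal is the \emph{second} spectral moment of the relevant twisted $L$-values, whose conjectural bound (\cref{conj:square-mean}) is genuinely stronger than the pointwise input encapsulated by \cref{conj:mean-Lindelof}. Cauchy--Schwarz and Weyl's law recover the latter from the former but not conversely, and the gap between the two inputs is precisely what is quantified by the interplay of $\vartheta$ and $\eta$ throughout the exponents in \eqref{eq:second-moment}. In particular, setting $\eta = 0$ should recover, via a single application of Cauchy--Schwarz, an integrated counterpart of \cref{thm:unconditional} that is weaker by exactly the anticipated discrepancy, confirming that no further loss is incurred beyond this structural one.
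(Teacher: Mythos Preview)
Your outline misidentifies where the square mean Lindel\"{o}f hypothesis actually enters, and this is a genuine gap. The spectral explicit formula for $\mathcal{E}_{\Gamma}(x)$ carries \emph{no} weights $\alpha_{j}$: each term is simply $x^{s_{j}}/s_{j}$ with $s_{j} = 1+it_{j}$. Consequently, after expanding the square and integrating, the diagonal contribution is just $X^{2} \sum_{|t_{j}| \leq T} (1+t_{j}^{2})^{-1}$, which Weyl's law evaluates as $\asymp X^{2} T$; there is no Rankin--Selberg $L$-value in sight, and \cref{conj:square-mean} has nothing to say about this sum. Your proposed diagonal/off-diagonal decoupling therefore does not produce the $\eta$-dependence in~\eqref{eq:second-moment}.

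In the paper the square mean Lindel\"{o}f hypothesis enters one level deeper. One first introduces an auxiliary smoothing length $Y$ (distinct from the averaging window, which the paper calls $\Delta$) and uses the smooth explicit formula together with \cref{thm:Brun-Titchmarsh} to write $\mathcal{E}_{\Gamma}(x)$ as a truncated spectral exponential sum plus an error $O_{\epsilon}(x^{1+\vartheta+\epsilon} Y^{1/2} + x^{5/4+\vartheta/2+\epsilon} Y^{1/4})$; this is~\eqref{eq:approximation}. The spectral exponential sum $\sum_{t_{j} \leq T} X^{it_{j}}$ is then bounded via the Kuznetsov formula, and it is in the Kuznetsov step---specifically in controlling the error term $\mathfrak{r}(t_{j}, N)$ coming from the insertion of $1 \approx \mathrm{const} \cdot N^{-1} \sum_{n} h(|n|) |\rho_{j}(n)|^{2}$---that the \emph{second} moment of $L(\tfrac{1}{2}+i\tau, u_{j} \otimes u_{j})$ appears and is governed by~\eqref{eq:square-mean}. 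This yields the bound~\cites[Equation~(3-48)]{Kaneko2022-2} with parameter $\eta$. Squaring, integrating over $[V, V+\Delta]$, and quoting the antepenultimate display on~\cites[Page~1871]{Kaneko2022-2} gives
\[
\frac{1}{\Delta} \int_{V}^{V+\Delta} |\mathcal{E}_{\Gamma}(x)|^{2} \, dx \ll_{\epsilon} \Delta^{-1} V^{4+\eta+\epsilon} Y^{-\frac{1}{2}-\eta} + V^{2(1+\vartheta)+\epsilon} Y + V^{\frac{5}{2}+\vartheta+\epsilon} Y^{\frac{1}{2}},
\]
after which one optimises over the smoothing parameter $Y$ (not over $T$). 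The stated range for the window $\Delta$ is exactly where the optimal $Y$ remains in $[V^{1/2}, V]$.
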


Inserting~\eqref{eq:second-moment} with $\eta = \frac{1}{4}$ into the mean-to-max argument~\cites[Remark~1.4]{BalkanovaChatzakosCherubiniFrolenkovLaaksonen2019} produces the complementary, albeit weaker, pointwise bound
\begin{equation}\label{eq:complementary-unconditional}
\mathcal{E}_{\Gamma}(x) \ll_{\epsilon} x^{\frac{3}{2}+\frac{\vartheta}{3}+\epsilon},
\end{equation}
yielding an unconditional exponent of $\frac{14}{9}+\epsilon$ upon substituting the current record $\vartheta = \frac{1}{6}$.

A meticulous reconsideration of the machinery developed in the prequel~\cites{Kaneko2024} facilitates an enhancement of \cref{thm:unconditional}. The core of our argument, though initially contingent~upon progress towards the hybrid subconvexity problem for quadratic Dirichlet $L$-functions over $\mathbb{Q}(i)$, culminates in the resolution of the optimisation problem in a manner that renders the result independent of the subconvex exponent in the archimedean aspect. As a $3$-dimensional incarnation of~\cites[Theorem~1.1]{Kaneko2024}, we now present our second result as follows.
\begin{theorem}\label{thm:conditional}
Keep the notation and assumptions as in \cref{thm:unconditional}. Then we have for~any $\epsilon > 0$ that
\begin{equation}\label{eq:conditional}
\mathcal{E}_{\Gamma}(x) \ll_{\epsilon} x^{\frac{245}{172}+\frac{15\vartheta}{43}+\epsilon}.
\end{equation}
In particular, the current record $\vartheta = \frac{1}{6}$ unconditionally yields
\begin{equation}
\mathcal{E}_{\Gamma}(x) \ll_{\epsilon} x^{\frac{255}{172}+\epsilon},
\end{equation}
while the generalised Lindel\"{o}f hypothesis conditionally yields
\begin{equation}
\mathcal{E}_{\Gamma}(x) \ll_{\epsilon} x^{\frac{245}{172}+\epsilon},
\end{equation}
where $\frac{255}{172} = 1.48256 \cdots$ and $\frac{245}{172} = 1.42442 \cdots$.
\end{theorem}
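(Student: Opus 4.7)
The plan is to import the three-parameter optimisation of the prequel~\cites{Kaneko2024} into the Picard setting, thereby refining the argument used to establish \cref{thm:unconditional}. The starting point is the Nakasuji--Szmidt spectral explicit formula for $\Psi_{\Gamma}(x)$, truncated at a spectral height $T$ and smoothed against a short-interval weight of width $Y$. As in the proof of \cref{thm:unconditional}, this decomposition produces three competing error contributions: a spectral exponential sum $S(X, T) \coloneqq \sum_{t_{j} \leq T} |\rho_{j}(1)|^{2} X^{it_{j}}$ coming from the bulk of the cuspidal spectrum; the Kuznetsov ``off-diagonal'' contribution from Kloosterman sums over $\mathbb{Z}[i]$; and a smoothing remainder of size $\asymp X Y^{-1}$.

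First I would bound $S(X, T)$ by applying the pre-Kuznetsov formula over $\mathbb{Q}(i)$ in tandem with the Motohashi--Kohnen--Zagier identity to convert $|\rho_{j}(1)|^{2}$ into central values $L(\tfrac{1}{2}, f_{j} \otimes \chi_{D})$ of quadratic twists. Applying the hybrid subconvex bound~\eqref{eq:conductor-aspect} term-by-term squanders the mean square saving, so I would instead combine it with a zero density theorem for the symplectic family $\{\chi_{D}\}_{\mathrm{N}(D) \leq N}$, so that only a thin subfamily of twists lies close to the convexity line. This is precisely the mechanism by which the archimedean exponent $\vartheta^{\prime}$ is flushed out of the final estimate, provided the free parameter $T$ is taken above the threshold dictated by the zero density exponent.

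Second, to treat the resulting oscillatory sums in the limiting regime of the exponent-pair triangle, I would bootstrap by iterating the van der Corput $B$-process until the pair stabilises at the multiplicative limit $(k, l) = (\tfrac{1}{2}, \tfrac{1}{2})$ and then re-inserting this optimal pair into Motohashi's kernel transform. Coupled with the improved Brun--Titchmarsh-type bound on short intervals of loxodromic norms in $\mathbb{Z}[i]$, this produces a three-variable estimate of the shape $S(X, T) \ll X^{A+B\vartheta+\epsilon} T^{C}$ with explicit $A, B, C$. The exponent $\tfrac{245}{172} + \tfrac{15\vartheta}{43}$ then emerges as the solution of the three-way balance $X Y^{-1} \asymp T^{C} X^{A+B\vartheta} \asymp (\text{Brun--Titchmarsh term})$ upon eliminating $T$ and $Y$, thereby recovering~\eqref{eq:conditional}.

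The principal obstacle is transferring the bootstrap step faithfully: in the $2$-dimensional setting of~\cites{Kaneko2024} the Kuznetsov kernel is a standard Bessel transform, whereas here it is the more intricate Motohashi $\mathbb{Q}(i)$-kernel. One must verify that the iterated exponent pair remains genuinely \emph{multiplicative} through every application of the $B$-process after transforming across the stationary phase contributed by this kernel; only then does the dependence on $\vartheta^{\prime}$ vanish and does the clean exponent advertised in~\eqref{eq:conditional} fall out of the optimisation.
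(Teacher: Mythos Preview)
Your proposal misidentifies the mechanism by which quadratic Dirichlet $L$-functions enter the argument, and this error propagates through the remainder. There is no ``Motohashi--Kohnen--Zagier identity'' converting $|\rho_{j}(1)|^{2}$ into central twisted $L$-values; the spectral exponential sum in the paper is the unweighted $\sum_{t_{j} \leq T} X^{it_{j}}$, and the quadratic characters arise on the \emph{arithmetic} side of the Kuznetsov formula via the identity $S(n,n,c) = \sum_{a \tpmod c} \rho(c,a) \check{e}_{c}(an)$ (\cref{lem:counting-function}) together with the factorisation~\eqref{eq:incongruent} expressing $\rho(k,a)$ as a sum of Jacobi symbols $(\frac{a^{2}-4}{r})$. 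The zero density theorem (\cref{prop:density}) is then applied not to a family of spectral $L$-values but to the Dirichlet $L$-functions $L(s,\chi_{a^{2}-4})$ appearing in the bilinear form $\mathcal{F}(A,B,C)$ (\cref{prop}), separating the $a$-sum into a thin exceptional set $\mathcal{M}$ and its complement.

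Two further ingredients are missing entirely. First, the ``multiplicative exponent pair'' of \cref{prop:exponent-pair} is not obtained by iterating the van der Corput $B$-process; it comes from Mellin inversion of $\sum \chi_{D}(n)$ against $L(s,\chi_{D})$ followed by direct insertion of the subconvex bound~\eqref{eq:conductor-aspect}, and the independence from $\vartheta^{\prime}$ is achieved only at the final optimisation in $\sigma$ (see~\eqref{eq:optimal-choice}), not earlier. Second, and more seriously, the argument is \emph{self-referential}: on the complement $\overline{\mathcal{M}}$ one invokes \cref{prop:substitute}, which feeds back an assumed bound $\mathcal{E}_{\Gamma}(x) \ll_{\epsilon} x^{\delta+\epsilon}$ via \cref{lem:BFR}, and the exponent $\frac{245}{172}+\frac{15\vartheta}{43}$ emerges as the fixed point of the resulting equation in $\delta$. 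Your three-way balance omits this loop, and without it the optimisation cannot close. Finally, the raw bound from \cref{prop:extra} is too weak on its own; it must be interpolated against Koyama's estimate (via $\min(A,B) \leq A^{\gamma} B^{1-\gamma}$ with $\gamma = \frac{4}{13}$, \cref{prop:key}) before partial summation, a step your outline does not mention.
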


\begin{figure}
\centering
\begin{AnnotatedImage}[width=0.8]{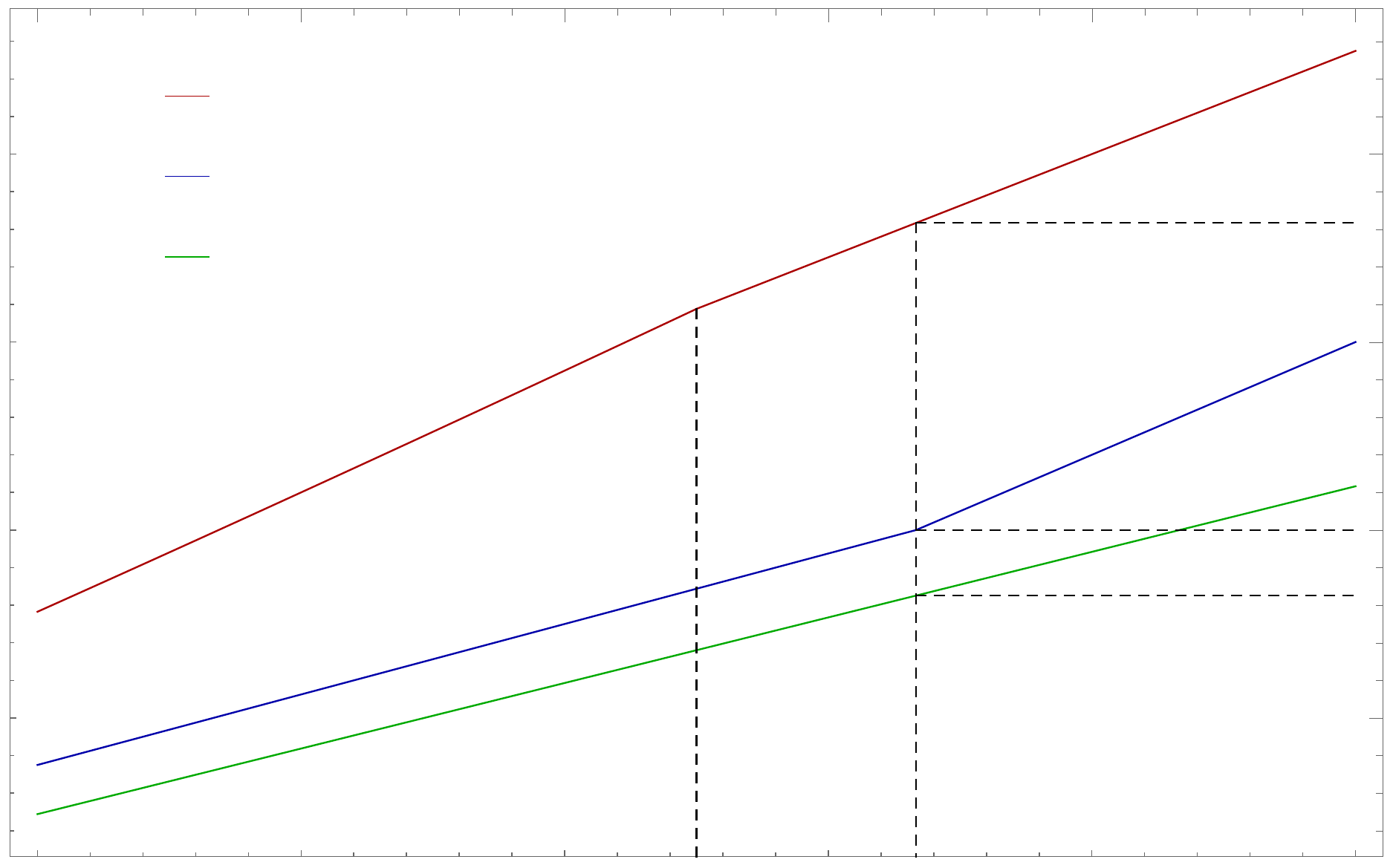}
	\annotate [draw=none,font=\normalsize] at (-0.015,0.2949){\color{black}{$\frac{34}{23}$}};
	\annotate [draw=none,font=\normalsize] at (-0.015,0.13684){\color{black}{$\frac{23}{16}$}};
	\annotate [draw=none,font=\normalsize] at (-0.015,0.0613){\color{black}{$\frac{245}{172}$}};
	\annotate [draw=none,font=\normalsize] at (0.0275,-0.03){\color{black}{0}};
	\annotate [draw=none,font=\normalsize] at (0.50039,-0.03){\color{black}{$\frac{1}{8}$}};
	\annotate [draw=none,font=\normalsize] at (0.65812,-0.03){\color{black}{$\frac{1}{6}$}};
	\annotate [draw=none,font=\normalsize] at (0.97435,-0.03){\color{black}{$\frac{1}{4}$}};
	\annotate [draw=none,font=\normalsize] at (1.015,0.31403){\color{black}{$\frac{255}{172}$}};
	\annotate [draw=none,font=\normalsize] at (1.015,0.38957){\color{black}{$\frac{3}{2}$}};
	\annotate [draw=none,font=\normalsize] at (1.015,0.74345){\color{black}{$\frac{242}{153}$}};
	\annotate [draw=none,font=\normalsize] at (0.2,0.88961){\color{black}{$\delta_{0}(\vartheta)$}};
	\annotate [draw=none,font=\normalsize] at (0.2,0.79691){\color{black}{$\delta_{1}(\vartheta)$}};
	\annotate [draw=none,font=\normalsize] at (0.2,0.70419){\color{black}{$\delta_{2}(\vartheta)$}};
\end{AnnotatedImage}
\caption{A comparison of~\eqref{eq:Balkanova-Frolenkov},~\eqref{eq:unconditional}, and~\eqref{eq:conditional} as $\vartheta \in [0, \frac{1}{4})$ varies}
\label{fig}
\end{figure}

\cref{thm:conditional} marks the first instance in which a long-standing barrier of $\frac{3}{2}+\epsilon$, traditionally thought of as a limitation of current methodologies owing to the abundance of eigenvalues by Weyl's law~\cites[Theorem~8.9.1]{ElstrodtGrunewaldMennicke1998}, is surpassed unconditionally. Moreover, the conditional exponent of $\frac{245}{172}+\epsilon$ is stronger than the previously best known conditional exponent of $\frac{10}{7}+\epsilon$ as in the last row of \cref{table}. For the purpose of visualising the extent to which \cref{thm:conditional} attains a polynomial power-saving improvement with respect to $\vartheta \in [0, \frac{1}{4})$, we define
\begin{equation}\label{eq:delta-1-theta}
\delta_{1}(\vartheta) \coloneqq \max \left(\frac{23+6\vartheta}{16}, \frac{7+3\vartheta}{5} \right) = 
	\begin{dcases}
	\tfrac{23+6\vartheta}{16} & \text{if $\vartheta \in [0, \tfrac{1}{6})$},\\
	\tfrac{7+3\vartheta}{5} & \text{if $\vartheta \in [\tfrac{1}{6}, \tfrac{1}{4})$},\\
	\end{dcases}
\qquad \delta_{2}(\vartheta) = \frac{245}{172}+\frac{15\vartheta}{43}.
\end{equation}
\cref{fig} offers a comparative manifestation of the quality of our results in juxtaposition with the prior record~\eqref{eq:Balkanova-Frolenkov}, thereby clarifying the shrinkage of our exponents as $\vartheta$ approaches $0$.

\begin{remark}
A breakthrough by Petrow and Young~\cites{PetrowYoung2020}{PetrowYoung2023} establishes hybrid Weyl-strength subconvex bounds for all Dirichlet $L$-functions with no restrictions on the conductor. An attempt to extend their machinery to number fields is made in the work of Nelson~\cites[Theorem~1.1]{Nelson2020}, although the archimedean subconvex exponent in~\eqref{eq:conductor-aspect} remains unspecified. A further extension in the hybrid aspect is explored by Balkanova, Frolenkov, and Wu~\cites[Theorem~1.1]{BalkanovaFrolenkovWu2024}, although their method is tailored to Gr\"{o}{\ss}encharaktern over totally real number fields of cubefree level. While likely plausible given the state-of-the-art technology, no hybrid result at or beyond the Weyl threshold for totally imaginary number fields has yet appeared in the literature; cf.~\cites[Theorem]{Sohne1997}[Theorem~0.3.4]{Wu2013}[Theorem~1.1]{Wu2019}.
\end{remark}

\subsection{Overview of the manoeuvres}
This section provides a comprehensive interpretation of \cref{thm:unconditional,thm:second-moment,thm:conditional} to highlight the principal innovations. Since the fundamental toolbox for establishing the first two claims is subsumed within the proof of \cref{thm:conditional}, our exposition naturally centres on the latter. Among other inputs, the core of our methodologies consists~of
\begin{enumerate}
\item\label{enum:1} the complete resolution of Koyama's mean Lindel\"{o}f hypothesis (\cref{thm:mean-Lindelof});
\item\label{enum:2} an improved Brun--Titchmarsh-type theorem over short intervals (\cref{thm:Brun-Titchmarsh});
\item\label{enum:3} a bootstrapped multiplicative exponent pair in the limiting regime (\cref{prop:exponent-pair});
\item\label{enum:4} a symplectic zero density theorem for quadratic characters over $\mathbb{Q}(i)$ (\cref{prop:density});
\item\label{enum:5} an interpolation with Koyama's result on spectral exponential sums (\cref{prop:key}).
\end{enumerate}

The optimal large sieve inequality is not known over $\Gamma \backslash \mathbb{H}^{3}$ in contrast to the $2$-dimensional background, unless the spherical family of Maa{\ss} cusp forms is extended to the nonspherical family of all cuspidal automorphic representations over $\Gamma \backslash \mathrm{PSL}_{2}(\mathbb{C})$; see~\cites[Theorem~1.1]{Qi2024-3}[Equation~(1.6)]{Qi2024}. This observation precludes the application of the existing techniques \`{a} la Luo and Sarnak~\cites[Section~3]{LuoSarnak1995}, thereby giving rise to suboptimal attempts towards the mean Lindel\"{o}f hypothesis when viewed from the perspective of the prime geodesic theorem. Our argument for~\eqref{enum:1} is based exclusively on the optimal spherical mean square asymptotics for the Fourier coefficients of Maa{\ss} cusp forms on $\Gamma \backslash \mathbb{H}^{3}$, which, in conjunction with Landau's trick~\cites[Section~3]{Landau1915}, facilitates a straightforward proof of the mean Lindel\"{o}f hypothesis. This approach obviates the need for any spectral large sieve inequality -- itself a consequence of the Kuznetsov formula -- and instead relies on the pre-Kuznetsov formula. The versatility of our methods holds potential for broader applicability to any Kleinian group $\Gamma \subset \mathrm{PSL}_{2}(\mathbb{C})$ via the theory of rapidly decreasing Poincar\'{e} series; cf. the first display on~\cites[Page~569]{Reznikov1993}.

A Brun--Titchmarsh-type theorem over short intervals~\eqref{enum:2} in the $3$-dimensional setting was first investigated by Balog et al.~\cites[Theorem~1.3]{BalogBiroCherubiniLaaksonen2022}. The proof makes crucial use of the $2$-dimensional Poisson summation formula, or the Vorono\u{\i} summation formula for the number of representations of $n$ as the sum of two squares, and the Weil--Gundlach bound for Gaussian Kloosterman sums. Since the latter fails to detect cancellations among sums of Kloosterman sums, we employ $2$-dimensional Poisson summation backwards across complementary regimes and asymptotically manipulate the Bessel function of the first kind,~thereby circumventing a potential tautology. Interpolating with~\cites[Equation~(23)]{BalogBiroCherubiniLaaksonen2022} then leads to a substantial refinement, with the aid of the additional lattice structure in Gaussian integers.

In harmony with the case of the modular orbifold, the proof features subconvex bounds for quadratic character sums over Gaussian integers~\eqref{enum:3}, introducing the notion of multiplicative (and dual) exponent pairs over $\mathbb{Q}(i)$. We emphasise that the seminal work of Burgess~\cites{Burgess1962-2}{Burgess1963}{Burgess1986} depends heavily on the Weil bound and hence the periodicity of the summand $n \mapsto \chi(n)$. However, its direct algebraic generalisation, which sums over integral ideals with norms lying in a fixed interval, results in the loss of periodicity; see~\cites{Hinz1983}{Hinz1983-2}{Hinz1986}. On the other hand, our argument utilises Mellin inversion to express quadratic character sums in terms of quadratic Dirichlet $L$-functions, enabling the substitution of hybrid subconvexity. This accounts for the appearance of the subconvex exponent $\vartheta \in [0, \frac{1}{4})$ in \cref{thm:conditional}.

Another crucial element of our argument is a zero density theorem for the symplectic~family of quadratic characters~\eqref{enum:4}. The best known unconditional exponent of Balog et al.~\cites[Lemma~2.5]{BalogBiroCherubiniLaaksonen2022} is derived from~\cites[Theorem~2]{Huxley1971}, where the summation is taken over moduli up to a given point, rather than over fundamental discriminants. Nonetheless, the restriction to fundamental discriminants by positivity in~\cites[Equation~(32)]{BalogBiroCherubiniLaaksonen2022} sacrifices the intrinsic nature of quadratic characters, namely the transition from the unitary family of all Dirichlet characters to the symplectic family of quadratic characters considerably worsens the quality of the resulting zero density theorem. To eschew this setback, we employ Onodera's~quadratic large sieve~\cites[Theorem~1]{Onodera2009} and adapt Heath-Brown's deduction~\cites[Section~11]{HeathBrown1995} of a zero density theorem over $\mathbb{Q}$, thereby yielding a result of comparable quality over $\mathbb{Q}(i)$.

Furthermore, an asymptotic form of the stationary phase analysis enables an enhancement of the techniques developed by Koyama~\cites[Section~4]{Koyama2001}, taking into account an additional oscillatory factor in an integral transform involving Bessel functions. When combined with an arithmetic expression of Kloosterman sums (\cref{lem:counting-function}), the technology of Iwaniec~\cites[Section~10]{Iwaniec1984}, Cai~\cites[Section~6]{Cai2002}, and the author~\cites[Section~6]{Kaneko2024} reduces the problem to the estimation of quadratic character sums of a special type over $\mathbb{Q}(i)$, as addressed in~\eqref{enum:3}. To facilitate the utilisation of partial summation, we interpolate our result with the penultimate display on~\cites[Page~792]{Koyama2001}, followed by the resolution of an intricate optimisation problem.

\subsection{Organisation of the paper}
In \cref{sect:miscellaneous}, we shall configure a miscellaneous toolbox -- encompassing both analytic and algebraic perspectives -- which merits a brief review by any reader who seeks to familiarise themselves with the basics. The arithmetic framework for the proofs of \cref{thm:unconditional,thm:second-moment,thm:conditional} is laid out in \cref{sect:Iwaniec--Bykovskii,sect:quadratic,sect:zero,sect:Zagier,sect:bilinear}, while the automorphic framework is laid out in \cref{sect:mean,sect:spectral}. The concluding step of the argument is presented in \cref{sect:proof}. Each section is designed to facilitate an effective and constructive methodological comparison with the $2$-dimensional setting, as the present paper serves as a sequel to~\cites{Kaneko2024}.

\section{Preliminaries}\label{sect:miscellaneous}
This section compiles conventional notation and catalogues several foundational facts and lemmata utilised in due course. A knowledgeable reader may opt to briefly peruse or skip~this section upon first reading.

\subsection{Asymptotic notation}\label{subsect:notation}
Throughout this paper, the Vinogradov asymptotic notation, denoted by $\ll$ and $\gg$, is employed synonymously with the Bachmann--Landau notation. The Hardy--Littlewood notation $f = \Omega_{\pm}(g)$ denotes both $\limsup f/g > 0$ and $\liminf f/g < 0$. We write $f \asymp g$ to indicate the existence of absolute constants $c_{1}, c_{2} > 0$ such that $c_{1} g < f < c_{2} g$, and we write $f \sim g$ to denote $f = (1+o(1))g$. Dependence on a parameter is indicated by a subscript, namely the notation $f \ll_{\xi} g$ or~$f = O_{\xi}(g)$ signifies the existence of an effectively computable constant $c = c(\xi) > 0$, depending at most on $\xi$, such that $|f(z)| \leq c|g(z)|$ for all $z$ within a range that is clear from context. In the absence of a subscript $\xi$, the constant $c$ is said to be absolute. To maintain rigour and completeness, we adopt the convention~that the numbers $c_{1}, c_{2}, c_{3}, \ldots$ form a sequence of certain positive, absolute, and effectively computable constants. Moreover, the symbol $\epsilon > 0$ represents an arbitrarily small positive quantity that may vary in each instance; it is therefore permissible to write $x^{2\epsilon} \ll x^{\epsilon}$ without reservation.

\subsection{Analytic notation}\label{subsect:stationary}
The notation $e(z) \coloneqq \exp(2\pi iz)$ denotes the complex exponential; see the first paragraph of \cref{subsect:Kloosterman-sums} for its algebraic counterpart. As usual, the notation~$\int_{(\sigma)}$ denotes a complex contour integral over the vertical line with real part $\sigma \in \mathbb{R}$. It is convenient to invoke Stirling's formula in a sufficiently crude form~\cites[Equation~(8.327.1)]{GradshteynRyzhik2014}
\begin{equation}\label{eq:Stirling}
\Gamma(s) = \sqrt{2\pi} s^{s-\frac{1}{2}} e^{-s} \Big(1+O \Big(\frac{1}{|s|} \Big) \Big)
\end{equation}
for $|\arg s| < \pi$, which plays a foundational role in subsequent asymptotic evaluations.

\subsection{Arithmetic notation}
The letter $p$ is reserved exclusively to signify a prime element, unless otherwise specified. The notation $\varphi$ is employed to denote Euler's totient function over $\mathbb{Z}[i]$, while $\mu$ denotes the M\"{o}bius function over $\mathbb{Z}[i]$ defined in terms of the prime factorisations of ideals. Let $r(n)$ denote the number of representations of $n$ as a sum of two squares, as in the formulation of the Gau{\ss} circle problem. For two (rational or Gaussian) integers $a$ and $b$, their greatest common divisor is denoted by $(a, b) = \mathrm{gcd}(a, b)$. The notation $\lfloor x \rfloor$ denotes the floor function. To circumvent any potential confusion, we denote the complex~conjugation by $\overline{x}$ and the multiplicative inverse (with respect to an appropriate modulus) by $x^{\ast}$. The symbol $\mathbf{1}$ denotes the indicator function of a given statement. For example, $\mathbf{1}_{\text{S}}$ evaluates to $1$ if the statement S is true and to $0$ otherwise. The letter $j \in \mathbb{N}_{0}$ is employed as an auxiliary index to systematically arrange the quantities of interest according to their order~of preference.

\subsection{Algebraic notation}
One may consult the book of Lemmermeyer~\cites{Lemmermeyer2021} for general theoretical foundations. We employ Gothic letters $\mathfrak{a}, \mathfrak{b}, \ldots$ to denote nonzero fractional ideals of $\mathbb{Q}(i)$, while we reserve $\mathfrak{n}$ and $\mathfrak{d}$ especially for nonzero integral ideals of $\mathbb{Q}(i)$. Furthermore, let $\mathrm{N}(n) = |n|^{2}$ and $\mathrm{N}(\mathfrak{n}) = |\mathbb{Z}[i]/\mathfrak{n}|$ denote the norm forms of $n$ and $\mathfrak{n}$, respectively. It~is~our convention to pass freely between ideals and their generators. Recall that the ring of integers of $\mathbb{Q}$ is $\mathbb{Z}[i]$ and that the class number of $\mathbb{Q}(i)$ is one, meaning that every ideal is principal. If $\mathfrak{n}$ is a nonzero ideal of $\mathbb{Z}[i]$, then a Dirichlet character modulo $\mathfrak{n}$ is a group homomorphism $\chi_{\mathfrak{n}}: \mathrm{Cl}^{\mathfrak{n}} \to S^{1}$, where $\mathrm{Cl}^{\mathfrak{n}}$ stands for the narrow ray class group modulo $\mathfrak{n}$ and serves as an alternative to the reduced residue class $(\mathbb{Z}/n \mathbb{Z})^{\times}$ having the base field $\mathbb{Q}$ and $\mathfrak{n} = n \mathbb{Z}$. Given $0 \neq \mathfrak{d}, \mathfrak{n} \unlhd \mathbb{Z}[i]$, the notation $\mathfrak{d} \mid \mathfrak{n}$ implies the existence of $\mathfrak{a} \unlhd \mathbb{Z}[i]$ such that $\mathfrak{n} = \mathfrak{a} \mathfrak{d}$. Similarly, given $0 \neq d, n \in \mathbb{Z}[i]$, the notation $d \mid n$ implies $(d) \mid (n)$.

\subsection{Characters and \texorpdfstring{$L$}{}-functions}\label{subsect:abuse}
The Dedekind zeta function over $\mathbb{Q}(i)$ is defined by
\begin{equation}\label{eq:Dedekind}
\zeta_{\mathbb{Q}(i)}(s) \coloneqq \sum_{0 \ne n \in \mathbb{Z}[i]} \frac{1}{\mathrm{N}(n)^{s}}, \qquad \Re(s) > 1,
\end{equation}
which converges absolutely for $\Re(s) > 1$ and admits a meromorphic continuation to $\mathbb{C}$ except for one simple pole at $s = 1$, with residue given in terms of the analytic class number formula. Following~\cites[Section~2.1]{BalogBiroCherubiniLaaksonen2022}, we abuse notation by replacing ideals with their generators, thereby allowing our summations to range over the elements of $\mathbb{Z}[i]$. However, this approach may introduce a minor inexactitude, as each ideal corresponds to four generators. To resolve this discrepancy, one could either attach a factor of $\frac{1}{4}$ to summations over the elements of $\mathbb{Z}[i]$ or specify a choice of a generator for each ideal to restrict consideration to appropriate subsets of $\mathbb{Z}[i]$, such as the first quadrant. We refrain from adopting either~approach, confident that the reader will nonetheless be able to follow the remainder of the paper without ambiguity.

We are interested in the family of quadratic characters associated to the generalised Jacobi symbol~\cites[Section~14.2]{IrelandRosen1990}\footnote{The notation differs from the first display on~\cites[Page~1899]{BalogBiroCherubiniLaaksonen2022} in that the positions of the numerator and denominator are interchanged. The choice of convention for any algebraic object is ultimately contingent upon the preferences of the author(s). Nonetheless, through minor rearrangements -- including the application of quadratic reciprocity -- differences of such sort neither compromise the rigour of the ensuing computations nor detract from the quality of our results.}
\begin{equation}
\chi_{d}(n) \coloneqq \Big(\frac{d}{n} \Big),
\end{equation}
with $0 \ne d, n \in \mathbb{Z}[i]$. If $D \in \mathbb{Z}[i]$ is a generator of the fundamental discriminant of a quadratic extension of $\mathbb{Q}(i)$, then $\chi_{D}$ is a primitive Dirichlet character modulo the ideal $\mathfrak{d} = (D) \unlhd \mathbb{Z}[i]$. In conjunction with the rational case, summing over nonzero Gaussian integers $n \in \mathbb{Z}[i]$ gives rise to a quadratic Dirichlet $L$-function over $\mathbb{Q}(i)$ associated to $\chi_{D}$, namely
\begin{equation}
L(s, \chi_{D}) \coloneqq \sum_{0 \ne n \in \mathbb{Z}[i]} \frac{\chi_{D}(n)}{\mathrm{N}(n)^{s}}, \qquad \Re(s) > 1.
\end{equation}
which converges absolutely for $\Re(s) > 1$ and extends to an entire function with a functional equation relating the values at $s$ and $1-s$. Then the generalised Riemann hypothesis states that all the nontrivial zeros of $L(s, \chi_{D})$ would lie on the critical line $\Re(s) = \frac{1}{2}$. As a weaker version thereof, the generalised Lindel\"{o}f hypothesis predicts that $\vartheta = 0$ would hold in~\eqref{eq:conductor-aspect}.

\subsection{Gaussian Zagier \texorpdfstring{$L$}{}-series}\label{subsect:Zagier}
As a precursor, Zagier~\cites[Equation~(6)]{Zagier1977} investigated an $L$-function of an indefinite binary quadratic form over $\mathbb{Z}$, encoding arithmetic information of the underlying quadratic extension of $\mathbb{Q}$. If we write $\delta = n^{2}-4$ for $0 \ne n \in \mathbb{Z}[i]$, then $\delta$ is a discriminant of an indefinite binary quadratic form over $\mathbb{Z}[i]$, to which we attach~the~Gaussian Zagier $L$-series~\cites[Equation~(3.5)]{Szmidt1983}\footnote{The interested reader is directed to~\cites[Section~2.9]{Szmidt1987} for an exhaustive consideration applicable to~any imaginary quadratic field of class number one, although this source is difficult to access online.}
\begin{equation}\label{eq:Zagier}
L(s, \delta) \coloneqq \frac{\zeta_{\mathbb{Q}(i)}(2s)}{\zeta_{\mathbb{Q}(i)}(s)} \sum_{c \neq 0} \frac{\rho_{c}(\delta)}{\mathrm{N}(c)^{s}} = \sum_{c \neq 0} \frac{\lambda_{c}(\delta)}{\mathrm{N}(c)^{s}}, \qquad \Re(s) > 1,
\end{equation}
where the coefficients are defined via M\"{o}bius inversion by
\begin{equation}\label{eq:rho}
\rho_{c}(\delta) \coloneqq \#\{x \tpmod{2c}: x^{2} \equiv \delta \tpmod{4c} \}
\end{equation}
and
\begin{equation}\label{eq:rho-lambda}
\lambda_{c}(\delta) \coloneqq \sum_{c_{1}^{2} c_{2} c_{3} = c} \mu(c_{2}) \rho_{c_{3}}(\delta), \qquad \rho_{c}(\delta) = \sum_{c_{1} c_{2} = c} \mu(c_{2})^{2} \lambda_{c_{1}}(\delta).
\end{equation}
It is straightforward to verify that $\rho_{c}(\delta)$ and $\lambda_{c}(\delta)$ are both multiplicative functions in $c$ for $\delta$ fixed. If $\delta \sim D \ell^{2}$ are associates, namely they are equal up to multiplication by a unit in $\mathbb{Z}[i]$; if $p^{r}$ denotes the exact power of a prime element $p \in \mathbb{Z}[i]$ dividing $\ell$; and if $a \coloneqq \min(\lfloor \frac{k}{2} \rfloor, r)$, then (cf.~\cites[Equation~(2)]{SoundararajanYoung2013})
\begin{equation}\label{eq:prime-powers}
\lambda_{p^{k}}(\delta) = \mathrm{N}(p)^{a} \chi_{\delta p^{-2a}}(p^{k-2a}),
\end{equation}
from which it follows uniformly in $\delta \in \mathbb{Z}[i]$ that
\begin{equation}\label{eq:uniformly}
\rho_{c}(\delta), \lambda_{c}(\delta) \ll_{\epsilon} \mathrm{N}(c)^{\epsilon} \max_{c_{\star}^{2} \mid c} \mathrm{N}(c_{\star}).
\end{equation}
Since the right-hand side of~\eqref{eq:uniformly} is bounded by $\ll_{\epsilon} \mathrm{N}(c)^{\epsilon}$ on average, the Zagier $L$-series~\eqref{eq:Zagier} is absolutely convergent for $\Re(s) > 1$ and extends meromorphically to an entire function~over $\mathbb{C}$ with at most a simple pole at $s = 1$. Notably, up to multiplication by a certain well-behaved Dirichlet polynomial, the Zagier $L$-series $L(s, \delta)$ aligns with a quadratic Dirichlet $L$-function $L(s, \chi_{D})$ over $\mathbb{Q}(i)$. Given $0 \ne D, \ell \in \mathbb{Z}[i]$, we define
\begin{equation}\label{eq:Dirichlet-polynomial}
T_{\ell}^{(D)}(s) \coloneqq \sum_{\ell_{1} \ell_{2} = \ell} \frac{\mu(\ell_{1}) \chi_{D}(\ell_{1}) \sigma_{1-2s}(\ell_{2})}{\mathrm{N}(\ell_{1})^{s}}, \qquad \sigma_{\xi}(n) \coloneqq \sum_{d \mid n} \mathrm{N}(d)^{\xi}.
\end{equation}
Szmidt~\cites[Proposition 6]{Szmidt1983} established the following identity.
\begin{lemma}\label{lem:Szmidt-1}
Keep the notation as above. Let $\delta$ be a discriminant of a binary quadratic form over $\mathbb{Z}[i]$, and let $\delta \sim D \ell ^{2}$, where $D$ is a generator of the fundamental discriminant of the quadratic extension $\mathbb{Q}(i)(\sqrt{\delta})$. Then
\begin{equation}
L(s, \delta) = T^{(D)}_{\ell}(s) L(s, \chi_{D}).
\end{equation}
\end{lemma}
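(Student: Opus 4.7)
The plan is to establish the identity locally at each prime element of $\mathbb{Z}[i]$: both sides admit Euler products, so it suffices to verify equality factor by factor. That $\lambda_{c}(\delta)$ is multiplicative in $c$ follows from the multiplicativity of $\rho_{c}(\delta)$ via the M\"{o}bius inversion in~\eqref{eq:rho-lambda}, while $T^{(D)}_{\ell}(s)$ is a Dirichlet convolution of multiplicative functions and $L(s,\chi_{D})$ is an honest Euler product.

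First I fix a prime $p \in \mathbb{Z}[i]$, set $u = \mathrm{N}(p)^{-s}$ and $v = \mathrm{N}(p)$, and let $r$ denote the exact power of $p$ dividing $\ell$. Because $\mu(\ell_{1})$ restricts $\ell_{1}$ to squarefree divisors, the $p$-part of $T^{(D)}_{\ell}(s)$ reduces to
\begin{equation*}
T_{p}(s) = \sum_{j=0}^{r} v^{j} u^{2j} - \chi_{D}(p)\,u \sum_{j=0}^{r-1} v^{j} u^{2j},
\end{equation*}
while the $p$-part of $L(s,\chi_{D})$ is $(1 - \chi_{D}(p) u)^{-1}$, with the convention $\chi_{D}(p) = 0$ when $p \mid D$.

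Next I evaluate the $p$-part of $L(s,\delta)$, namely $\sum_{k \ge 0} \lambda_{p^{k}}(\delta) u^{k}$, by splitting according to $a(k) = \min(\lfloor k/2 \rfloor, r)$ as prescribed by~\eqref{eq:prime-powers}. The key observation is that for odd $k = 2j+1$ with $0 \le j \le r-1$, the quantity $\delta p^{-2j}$ still contains a factor of $p$, whence $\chi_{\delta p^{-2j}}(p) = 0$ annihilates those contributions. For even $k = 2j$ with $j \le r$, the term collapses to $v^{j} u^{2j}$; and for $k \ge 2r+1$, using $\delta p^{-2r} \sim D (\ell/p^{r})^{2}$ together with the coprimality of $\ell/p^{r}$ to $p$, the Jacobi symbol reduces to $\chi_{D}(p)^{k-2r}$. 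Summing the resulting geometric tail produces
\begin{equation*}
\sum_{k \ge 0} \lambda_{p^{k}}(\delta) u^{k} = \sum_{j=0}^{r} v^{j} u^{2j} + \frac{\chi_{D}(p)\,v^{r} u^{2r+1}}{1 - \chi_{D}(p) u}.
\end{equation*}

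Finally I multiply through by $1 - \chi_{D}(p) u$: the top term $v^{r} u^{2r}$ inside $\sum_{j=0}^{r} v^{j} u^{2j}$ generates a cancellation $-\chi_{D}(p) v^{r} u^{2r+1}$ that exactly offsets the boundary contribution of the tail, so the right-hand side collapses to $T_{p}(s)$. This completes the local verification, and hence the global identity. The principal obstacle will be merely careful bookkeeping across the boundary cases $k \in \{2r, 2r+1\}$ and the dichotomy $p \mid D$ versus $p \nmid D$; once the vanishing $\chi_{\delta p^{-2j}}(p) = 0$ for $j < r$ is recognised, the remainder is a transparent geometric-series manipulation.
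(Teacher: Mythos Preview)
Your argument is correct. The paper itself does not supply a proof of this lemma; it simply attributes the identity to Szmidt~\cite[Proposition~6]{Szmidt1983} and moves on. Your Euler-product verification, driven by the explicit local formula~\eqref{eq:prime-powers} for $\lambda_{p^{k}}(\delta)$, is the natural and standard route to such a factorisation (it is the same mechanism behind the classical identity over $\mathbb{Q}$ in Zagier's and Soundararajan--Young's work), and the bookkeeping you carry out---the vanishing of odd-index terms below $2r$ via $p \mid \delta p^{-2j}$, the geometric tail for $k \ge 2r+1$, and the boundary cancellation at $k = 2r+1$---is accurate in both the $p \mid D$ and $p \nmid D$ cases.
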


A variation of~\cites[Lemma~2.1]{SoundararajanYoung2013} over $\mathbb{Q}(i)$ implies that all the nontrivial zeros of $T_{\ell}^{(D)}(s)$ lie on the critical line $\Re(s) = \frac{1}{2}$, and hence the generalised Riemann hypothesis for $L(s, \delta)$ is equivalent to that for $L(s, \chi_{D})$.
\begin{lemma}\label{lem:Szmidt-2}
The generalised Lindel\"{o}f hypothesis for $L(s, \delta)$ is equivalent to $\vartheta = 0$.
\end{lemma}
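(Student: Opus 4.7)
The plan is to bootstrap the factorisation $L(s, \delta) = T_\ell^{(D)}(s) L(s, \chi_D)$ of \cref{lem:Szmidt-1} (with $\delta \sim D \ell^2$) by establishing a crude yet uniform upper bound for the auxiliary Dirichlet polynomial $T_\ell^{(D)}$ along the critical line. Both implications of the claimed equivalence then follow with minimal extra work, and crucially no lower bound on $T_\ell^{(D)}$ is required, so the potential complication arising from zeros of $T_\ell^{(D)}$ on the critical line is entirely avoided.

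First I would show that $|T_\ell^{(D)}(1/2 + it)| \ll_\epsilon \mathrm{N}(\ell)^\epsilon$ uniformly in $D$ and $t$, working directly from the explicit expression~\eqref{eq:Dirichlet-polynomial}. On the critical line each summand satisfies $|\mu(\ell_1) \chi_D(\ell_1) \mathrm{N}(\ell_1)^{-s}| \leq \mathrm{N}(\ell_1)^{-1/2}$, while the exponent $1 - 2s = -2it$ is purely imaginary, so that $|\sigma_{1-2s}(\ell_2)| \leq d(\ell_2)$ with $d$ the divisor function over $\mathbb{Z}[i]$. Summing over $\ell_1 \ell_2 = \ell$ and invoking the standard bound $d(n) \ll_\epsilon \mathrm{N}(n)^\epsilon$ produces the desired estimate.

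The backward implication (that $\vartheta = 0$ forces the Lindel\"{o}f bound for $L(s, \delta)$) is then immediate: combining \cref{lem:Szmidt-1}, the bound just established, and the hypothesised $|L(1/2 + it, \chi_D)| \ll_\epsilon (\mathrm{N}(D)(1 + |t|))^\epsilon$ yields $|L(1/2 + it, \delta)| \ll_\epsilon (\mathrm{N}(\delta)(1 + |t|))^\epsilon$ in view of $\mathrm{N}(\delta) \asymp \mathrm{N}(D) \mathrm{N}(\ell)^2$. For the forward direction I would specialise to $\ell$ a unit in $\mathbb{Z}[i]$, so that $\delta \sim D$ is (an associate of) a fundamental discriminant; the sum~\eqref{eq:Dirichlet-polynomial} then collapses to its $\ell_1 = \ell_2 = 1$ contribution, giving $T_1^{(D)}(s) \equiv 1$ and hence $L(s, \delta) = L(s, \chi_D)$. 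The assumed Lindel\"{o}f bound for $L(s, \delta)$ therefore transfers verbatim into $\vartheta = 0$.

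The only potential obstacle would be inverting the factorisation to handle the forward implication for non-unit $\ell$, which would demand a lower bound for $T_\ell^{(D)}$ along the critical line; specialising to $\ell$ a unit sidesteps this entirely, so I anticipate no serious difficulty beyond the bookkeeping already sketched.
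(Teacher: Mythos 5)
Your proposal is correct and takes essentially the same route as the argument the paper cites, namely~\cites[Lemma~4.2]{BalkanovaFrolenkov2018}: the factorisation from \cref{lem:Szmidt-1} combined with the elementary divisor-function bound $|T_{\ell}^{(D)}(\tfrac{1}{2}+it)| \ll_{\epsilon} \mathrm{N}(\ell)^{\epsilon}$ along the critical line, adapted from $\mathbb{Q}$ to $\mathbb{Q}(i)$. Your observation that only an upper bound on $T_{\ell}^{(D)}$ is required -- no lower bound and no zero-location information -- is precisely what makes the Lindel\"{o}f equivalence more elementary than the Riemann hypothesis equivalence discussed immediately before the lemma, and restricting to $\ell$ a unit disposes of the forward implication cleanly.
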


\begin{proof}
The argument adapts that of~\cites[Lemma~4.2]{BalkanovaFrolenkov2018} with minor adjustments.
\end{proof}

\subsection{Gaussian Kloosterman sums}\label{subsect:Kloosterman-sums}
Let $\mathrm{Tr}_{\mathbb{C}/\mathbb{R}}(z) \coloneqq z+\overline{z}$ denote the standard trace map. To maintain consistency with their treatment~over $\mathbb{Q}$, one may introduce the modern~notation for exponentials, namely for $z \in \mathbb{C}$,
\begin{equation}\label{eq:abuse}
\check{e}(z) \coloneqq e(\Re(z)) = \exp(\pi i \, \mathrm{Tr}_{\mathbb{C}/\mathbb{R}}(z)),
\end{equation}
which, while possibly different from the conventional wisdom in the literature, does not affect the essence of the subsequent discussion; the choice of adopting either the inner product or the trace map in its definition is ultimately contingent upon the preferences of the author(s). Accordingly, it is convenient to abbreviate $\check{e}_{c}(z) \coloneqq \check{e}(\frac{z}{c})$ for $z \in \mathbb{C}$ and $0 \ne c \in \mathbb{Z}[i]$.

Given $m, n, c \in \mathbb{Z}[i]$ with $c \ne 0$, we now define the Gaussian Kloosterman sum by~\cites[Page~268]{Motohashi1997}[Equation~(3.2)]{Sarnak1983}
\begin{equation}\label{eq:Gaussian-Kloosterman}
S(m, n, c) \coloneqq \sum_{a \in (\mathbb{Z}[i]/(c))^{\times}} \check{e}_{c}(ma+na^{\ast}),
\end{equation}
where $a^{\ast}$ denotes the multiplicative inverse of $a$ modulo the ideal $(c)$, namely $aa^{\ast} \equiv 1 \tpmod c$. The Gaussian Kloosterman sum obeys the Weil--Gundlach bound~\cites[Section~4]{Gundlach1954}[Equation~(3.5)]{Motohashi1997} as follows.
\begin{lemma}
Keep the notation as above. Then
\begin{equation}\label{eq:Weil}
|S(m, n, c)| \leq |(m, n, c)| \tau(c) \mathrm{N}(c)^{\frac{1}{2}},
\end{equation}
where $\tau(n) \coloneqq \sigma_{0}(n)$ denotes the number of divisors of $0 \ne n \in \mathbb{Z}[i]$. Furthermore, the~greatest common divisor is essentially bounded on average over $0 \ne c \in \mathbb{Z}[i]$, since
\begin{equation}\label{eq:bounded-on-average}
\sum_{\mathrm{N}(c) \leq x} |(m, n, c)| \ll_{\epsilon} |mn|^{\epsilon} x^{1+\epsilon}.
\end{equation}
\end{lemma}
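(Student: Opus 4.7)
For the Weil--Gundlach bound~\eqref{eq:Weil}, the plan is to adapt the classical Weil--Estermann strategy to $\mathbb{Z}[i]$. The first step is to invoke twisted multiplicativity of $S(m, n, c)$ in the modulus $c$: if $c = c_{1} c_{2}$ with coprime $c_{1}, c_{2} \in \mathbb{Z}[i]$, the Chinese Remainder Theorem yields a factorisation
\begin{equation*}
S(m, n, c_{1} c_{2}) = S(m c_{2}^{\ast 2}, n, c_{1}) \cdot S(m c_{1}^{\ast 2}, n, c_{2}),
\end{equation*}
with $c_{j}^{\ast}$ denoting the multiplicative inverse of $c_{j}$ modulo the complementary factor. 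This reduces the proof to the case of a prime power $c = p^{k}$. For $k = 1$, the residue ring $\mathbb{Z}[i]/(p)$ is isomorphic to $\mathbb{F}_{\mathrm{N}(p)}$ -- a genuine finite field when $p$ is inert or ramified, and a product of two copies of $\mathbb{F}_{p}$ when $p$ splits, with $S$ factoring as a product of two classical Kloosterman sums -- whereupon Weil's theorem on exponential sums over finite fields delivers $|S(m, n, p)| \leq 2 \mathrm{N}(p)^{1/2}$ whenever $p \nmid (m, n)$, in agreement with $\tau(p) = 2$. For $k \geq 2$, I would evaluate $S(m, n, p^{k})$ explicitly via the method of completing the square (a Sali\'{e}-type analysis), separating according to $v_{p}((m, n))$ relative to $\lceil k/2 \rceil$ and reducing to either a quadratic Gauss sum or a lower-rank Kloosterman sum; the factor $|(m, n, c)|$ captures precisely the square-root saving arising from degenerate fibres. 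The ramified prime $p \sim 1+i$ demands extra care due to the breakdown of quadratic completion in characteristic two, and constitutes the principal technical obstacle throughout the argument.

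For the averaging bound~\eqref{eq:bounded-on-average}, the strategy is to parametrise by the divisor $d = (m, n, c)$, which divides $(m, n)$ and satisfies $\mathrm{N}(d) \leq \mathrm{N}(c)$. Exchanging the order of summation and invoking the elementary lattice point estimate $\#\{c \in \mathbb{Z}[i] : \mathrm{N}(c) \leq x, \, d \mid c\} \ll x / \mathrm{N}(d)$, we arrive at
\begin{equation*}
\sum_{\mathrm{N}(c) \leq x} |(m, n, c)| \leq \sum_{d \mid (m, n)} |d| \cdot \#\{c \in \mathbb{Z}[i] : \mathrm{N}(c) \leq x, \, d \mid c\} \ll x \sum_{d \mid (m, n)} \frac{1}{|d|},
\end{equation*}
where the last step uses $|d|/\mathrm{N}(d) = 1/|d|$. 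The remaining sum is bounded by $\tau((m, n)) \ll_{\epsilon} |mn|^{\epsilon}$ via the standard divisor bound over $\mathbb{Z}[i]$, yielding the claimed estimate $\ll_{\epsilon} |mn|^{\epsilon} x^{1+\epsilon}$. In contrast to~\eqref{eq:Weil}, this averaging step is essentially a routine bookkeeping exercise once the GCD parametrisation is in place.
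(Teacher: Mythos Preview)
The paper does not prove this lemma; it simply records~\eqref{eq:Weil} as the Weil--Gundlach bound with citations to Gundlach (1954) and Motohashi (1997), and states~\eqref{eq:bounded-on-average} without further comment. Your proposal therefore supplies strictly more than the paper does, and both parts are essentially correct.

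One small correction to your treatment of~\eqref{eq:Weil}: after twisted multiplicativity you reduce to $c = p^{k}$ with $p$ a \emph{prime element} of $\mathbb{Z}[i]$, not a rational prime. In that case $\mathbb{Z}[i]/(p)$ is always the field $\mathbb{F}_{\mathrm{N}(p)}$, regardless of whether $p$ lies over a split, inert, or ramified rational prime; the ``product of two copies of $\mathbb{F}_{p}$'' scenario never arises at this stage. (It would arise only if you reduced modulo a rational split prime before factoring in $\mathbb{Z}[i]$, which is not what the multiplicativity step does.) This simplifies rather than complicates your argument: Weil's bound over $\mathbb{F}_{\mathrm{N}(p)}$ applies directly in every case $k = 1$, and the Sali\'{e}-type reduction for $k \geq 2$ proceeds as you describe, with the ramified prime $1+i$ indeed requiring a separate (but routine) verification.

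Your argument for~\eqref{eq:bounded-on-average} is clean and correct; in fact it yields $\ll_{\epsilon} |mn|^{\epsilon} x$, marginally sharper than the stated $x^{1+\epsilon}$.
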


Following Iwaniec~\cites[Page~140]{Iwaniec1984}, it is also convenient to introduce the notation
\begin{equation}\label{eq:rho-c-a}
\rho(c, a) \coloneqq \#\{d \tpmod{c}: d^{2}-ad+1 \equiv 0 \tpmod{c} \}.
\end{equation}
If $c \sim k \ell$ with $k$ a squarefree Gaussian integer and $4\ell$ a squareful Gaussian integer coprime to $k$, then the multiplicativity relation for $\rho(c, a)$ with respect to $c$ asserts
\begin{equation}\label{eq:multiplicativity}
\rho(c, a) = \rho(k, a) \rho(\ell, a),
\end{equation}
where
\begin{equation}\label{eq:incongruent}
\rho(k, a) = \prod_{p \mid k} \left(1+\left(\frac{a^{2}-4}{p} \right) \right) = \sum_{r \mid k} \left(\frac{a^{2}-4}{r} \right).
\end{equation}
Hence, the counting function $\rho(c, a)$ is consistent with $\rho_{c}(a^{2}-4)$ as defined in~\eqref{eq:rho}.
\begin{lemma}\label{lem:identify}
Keep the notation as above. Then $\rho(c, a) = \rho_{c}(a^{2}-4)$.
\end{lemma}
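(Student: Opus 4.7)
The plan is to exhibit an explicit bijection between the residue classes counted by $\rho(c, a)$ and those counted by $\rho_{c}(a^{2} - 4)$, realising the classical completion-of-the-square identity in the arithmetic of $\mathbb{Z}[i]$. Multiplying the defining congruence $d^{2} - ad + 1 \equiv 0 \pmod{c}$ by $4$ rewrites it as $(2d - a)^{2} \equiv a^{2} - 4 \pmod{4c}$, which motivates the substitution $x \coloneqq 2d - a$.

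Concretely, I would first verify that $d \mapsto 2d - a$ descends to a well-defined injective map $\mathbb{Z}[i]/(c) \hookrightarrow \mathbb{Z}[i]/(2c)$ whose image is precisely $\{x \pmod{2c} : x + a \equiv 0 \pmod{2}\}$, with inverse $x \mapsto (x+a)/2 \pmod{c}$. Since the two congruences translate into one another under this substitution, the set counted by $\rho(c, a)$ is placed in bijection with the subset of $x \pmod{2c}$ satisfying both $x^{2} \equiv a^{2} - 4 \pmod{4c}$ and $x + a \equiv 0 \pmod{2}$.

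The crux of the argument is then to show that this parity constraint is in fact automatic: every solution $x \pmod{2c}$ of $x^{2} \equiv a^{2} - 4 \pmod{4c}$ already satisfies $2 \mid x + a$. This is the main obstacle and requires localising at the ramified prime $1+i$, since $(2) = (1+i)^{2}$ and $(4) = (1+i)^{4}$ in $\mathbb{Z}[i]$. Writing $v \coloneqq v_{1+i}$ for the associated valuation, the congruence forces $v(x-a) + v(x+a) \geq 4$, while $(x+a) - (x-a) = 2a$ ensures $v(2a) \geq 2$ and, when $v(x-a) \neq v(x+a)$, pins this common minimum down to $v(2a)$. A brief case analysis on whether these two valuations coincide or differ then delivers $v(x+a) \geq 2$ in every scenario, which is precisely the desired parity. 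Once this automatic parity is in place, the bijection yields $\rho(c, a) = \rho_{c}(a^{2} - 4)$ immediately, and the only genuinely delicate point of the proof is confined to the elementary $(1+i)$-adic analysis above.
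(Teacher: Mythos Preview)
Your approach is correct and coincides with the paper's: both establish the bijection via the completion-of-the-square substitution $x = 2d - a$ (the paper writes it as $x = 2y + n$ with the equivalent congruence $y^{2} + yn + 1 \equiv 0$). The paper's proof is a single sentence asserting that this substitution furnishes the bijection, whereas you spell out the one nontrivial point it leaves implicit, namely that every $x \pmod{2c}$ with $x^{2} \equiv a^{2}-4 \pmod{4c}$ automatically satisfies $(1+i)^{2} \mid x+a$; your $(1+i)$-adic case analysis handles this cleanly.
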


\begin{proof}
The assertion follows directly from the existence of a bijective correspondence between the solutions $x \tpmod{2q}$ of
$x^{2} \equiv n^{2}-4 \tpmod{4q}$ and the solutions $y \tpmod{q}$ of $y^{2}+yn+1 \equiv 0 \tpmod{q}$, by writing $x = 2y+n$. In particular, any such $y$ must be coprime to $q$.
\end{proof}

\begin{lemma}\label{lem:rho-phi}
Keep the notation as above. Then
\begin{equation}
\sum_{a \tpmod{c}} \rho(c, a) = \varphi(c).
\end{equation}
\end{lemma}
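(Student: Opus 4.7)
The plan is to swap the order of summation and count the set
\[
\mathcal{S} \coloneqq \{(a, d) \tpmod{c} \times \tpmod{c}: d^{2}-ad+1 \equiv 0 \tpmod{c} \}
\]
in two ways. Summing first over $d$ for fixed $a$ yields $\rho(c, a)$ by the definition~\eqref{eq:rho-c-a}, so
\[
|\mathcal{S}| = \sum_{a \tpmod{c}} \rho(c, a).
\]
Summing first over $a$ for fixed $d$ then reduces the problem to determining, for each $d \tpmod{c}$, the number of $a \tpmod{c}$ with $ad \equiv d^{2}+1 \tpmod{c}$. The first step is therefore to analyse this linear congruence in $a$.

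The key observation is that the congruence $d^{2}-ad+1 \equiv 0 \tpmod{c}$ forces $d$ to be coprime to $c$. Indeed, if some prime element $p \in \mathbb{Z}[i]$ divided both $d$ and $c$, then reducing the congruence modulo $p$ would give $1 \equiv 0 \tpmod{p}$, which is impossible. Hence only invertible residues $d \in (\mathbb{Z}[i]/(c))^{\times}$ contribute to $\mathcal{S}$. For any such $d$, the element $d+d^{\ast}$ is the unique solution $a \tpmod{c}$ to $ad \equiv d^{2}+1 \tpmod{c}$, where $d^{\ast}$ denotes the multiplicative inverse of $d$ modulo $(c)$.

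Combining these two observations, each invertible $d$ contributes exactly one pair $(a, d)$ to $\mathcal{S}$, and noninvertible $d$ contribute none, so
\[
|\mathcal{S}| = \#(\mathbb{Z}[i]/(c))^{\times} = \varphi(c),
\]
which completes the argument. There is no genuine obstacle here; the only subtlety lies in the coprimality observation, after which the count collapses to the cardinality of the unit group of $\mathbb{Z}[i]/(c)$.
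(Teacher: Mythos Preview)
Your proof is correct and follows essentially the same approach as the paper: the paper's one-line argument rests on the observation that any solution of the congruence in~\eqref{eq:rho-c-a} is automatically a unit modulo $c$, which is precisely your ``key observation'' about $d$ being coprime to $c$. You have simply spelled out the double-counting step and the uniqueness of $a = d + d^{\ast}$ more explicitly than the paper does.
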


\begin{proof}
The claim follows from the fact that a solution of the congruence in~\eqref{eq:rho-c-a} exists only when $a \in (\mathbb{Z}[i]/(c))^{\times}$.
\end{proof}

It is convenient to record the following expression \`{a} la Iwaniec~\cites[Equation~(14)]{Iwaniec1984} for the Gaussian Kloosterman sum as a sum of additive characters weighted by the multiplicative counting function $\rho(c, a)$, in the case where $m \sim n$ are associates.
\begin{lemma}\label{lem:counting-function}
Keep the notation as above. Then
\begin{equation}
S(n, n, c) = \sum_{a \tpmod{c}} \rho(c, a) \check{e}_{c}(an).
\end{equation}
\end{lemma}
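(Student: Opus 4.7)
The plan is to regroup the terms of the Kloosterman sum according to the value of $a + a^{\ast}$ modulo $c$, and then identify the resulting multiplicity with $\rho(c,a)$. First I would start from the definition \eqref{eq:Gaussian-Kloosterman} and write
\begin{equation*}
S(n,n,c) = \sum_{a \in (\mathbb{Z}[i]/(c))^{\times}} \check{e}_{c}\bigl(n(a+a^{\ast})\bigr) = \sum_{b \tpmod{c}} N(c,b) \, \check{e}_{c}(nb),
\end{equation*}
where $N(c,b) \coloneqq \#\{a \in (\mathbb{Z}[i]/(c))^{\times} : a + a^{\ast} \equiv b \tpmod{c}\}$. The claim reduces to the purely arithmetic identity $N(c,b) = \rho(c,b)$ for every residue class $b \tpmod{c}$.

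The key step is to translate the condition $a + a^{\ast} \equiv b \tpmod{c}$ with $a \in (\mathbb{Z}[i]/(c))^{\times}$ into the quadratic congruence appearing in \eqref{eq:rho-c-a}. Multiplying the congruence by $a$ yields $a^{2} - ba + 1 \equiv 0 \tpmod{c}$, so every $a$ contributing to $N(c,b)$ is a solution of $d^{2} - bd + 1 \equiv 0 \tpmod{c}$. Conversely, I would observe that any solution $d$ of this congruence is automatically coprime to $c$: the relation $d(b-d) \equiv 1 \tpmod{c}$ exhibits $b - d$ as a multiplicative inverse of $d$ modulo $(c)$, hence $d \in (\mathbb{Z}[i]/(c))^{\times}$ with $d^{\ast} \equiv b - d$, whence $d + d^{\ast} \equiv b \tpmod{c}$. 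This sets up a bijection between the set counted by $N(c,b)$ and the set counted by $\rho(c,b)$, giving $N(c,b) = \rho(c,b)$ and completing the proof.

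I do not anticipate a genuine obstacle here; the only subtlety is the automatic invertibility of $d$ modulo $(c)$, which is a one-line consequence of the constant term of the quadratic being a unit. The rest of the argument is the same bookkeeping as in Iwaniec's original derivation over $\mathbb{Z}$, and no feature of the Gaussian integers beyond the existence of a residue-class ring and the commutative-ring identity $d(b-d) \equiv 1 \tpmod{c}$ intervenes.
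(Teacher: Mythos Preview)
Your proof is correct and is essentially the same as the paper's: the paper's terse ``substituting~\eqref{eq:rho-c-a} and making a change of variables'' is precisely your regrouping by $b = a + a^{\ast}$ together with the observation that the map $d \mapsto (d + d^{\ast}, d)$ is a bijection from $(\mathbb{Z}[i]/(c))^{\times}$ onto the set of pairs $(b,d)$ with $d^{2} - bd + 1 \equiv 0 \tpmod{c}$. Your explicit verification that any solution $d$ is automatically a unit (via $d(b-d) \equiv 1$) is exactly the point the paper leaves implicit.
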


\begin{proof}
The claim follows from substituting~\eqref{eq:rho-c-a} and making a change of variables.
\end{proof}

\cref{lem:counting-function} is instrumental in addressing the arithmetic side of the Kuznetsov formula over $\mathbb{Q}(i)$. A parallel phenomenon emerges in the $2$-dimensional setting, where the arithmetic and automorphic methods are concurrently refined to their utmost extent by the~author~\cites{Kaneko2024}. Nonetheless, the Kloosterman summation formula -- interpreted as an inverted version of the Kuznetsov formula -- remains unused in any prior pursuit towards the prime geodesic~theorem over $\mathbb{Q}(i)$ owing to certain additional complexities~\cites[Theorem~4]{BruggemanMotohashi2001-2}[Theorem~13.1]{BruggemanMotohashi2003}[Section~10]{Motohashi2001}. In fact, there exists a dilemma between the spherical Kuznetsov formula and the nonspherical Kloosterman summation formula, which will be revisited elsewhere; see also a version due to Miatello and Wallach~\cites[Theorem~2.2]{MiatelloWallach1990}, albeit with the absence~of sufficiently explicit information concerning the Bessel--Kuznetsov transforms on the spectral side.

Although dispensable for the core of the present paper, it would be advantageous for future endeavours to formulate an additively twisted Linnik--Selberg conjecture over $\mathbb{Q}(i)$, building upon the heuristic of Iwaniec~\cites[Page~139]{Iwaniec1984}[Page~189]{Iwaniec1984-2}[Conjecture~1]{Iwaniec1987-2} over $\mathbb{Q}$, as well as its justification by Balkanova, Frolenkov, and Risager~\cites[Proposition~1.4]{BalkanovaFrolenkovRisager2022}.
\begin{conjecture}\label{conj:twisted-Linnik-Selberg}
For any $C, D \geq 1$, $0 \ne n \in \mathbb{Z}[i]$, and $\epsilon > 0$ that
\begin{equation}
\sum_{\mathrm{N}(c) \leq C} \frac{S(n, n, c)}{\mathrm{N}(c)} e \left(\frac{D}{|c|} \right) \ll_{\epsilon} (|n| CD)^{\epsilon}.
\end{equation}
\end{conjecture}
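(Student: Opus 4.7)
The plan is to mimic the rational strategy of Iwaniec and Balkanova--Frolenkov--Risager by converting the additively twisted Gaussian Kloosterman sum into a spectral exponential sum via the spherical Kuznetsov formula over $\mathbb{Q}(i)$ due to Motohashi. The first step is to attach a smooth weight $h(c)$ essentially supported on $\mathrm{N}(c) \asymp C$ and encoding the oscillatory factor $e(D/|c|)$, and then to rewrite
\[
\sum_{0 \ne c \in \mathbb{Z}[i]} \frac{S(n, n, c)}{\mathrm{N}(c)} h(c)
\]
as an integral transform against the spherical spectrum of $\Delta$ on $\Gamma \backslash \mathbb{H}^{3}$, together with an Eisenstein contribution that is controllable via standard bounds for $\zeta_{\mathbb{Q}(i)}$ on the critical line. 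Splitting the tail $\mathrm{N}(c) > C$ by a smoothed characteristic function and bounding the cutoff error by trivial estimates on individual Kloosterman sums via~\eqref{eq:Weil} closes the passage from a sharp cutoff to the smoothed statement.

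Second, I would asymptotically evaluate the Bessel--Kuznetsov transform of $h$ by combining Stirling's formula~\eqref{eq:Stirling} with the stationary phase manoeuvres alluded to in \cref{subsect:stationary}, so as to show that its spectral counterpart localises on a dyadic window $t_{j} \asymp T = T(C, D)$ with rapid decay elsewhere. Upon substituting Cauchy--Schwarz and invoking the optimal spherical mean square asymptotics for the Fourier coefficients of Maa{\ss} cusp forms on $\Gamma \backslash \mathbb{H}^{3}$ -- the very input that powers the resolution of Koyama's mean Lindel\"of hypothesis announced in item~\eqref{enum:1} of the overview -- the conjecture reduces to an estimate of the shape
\[
\sum_{t_{j} \asymp T} e(\Phi(t_{j})) \ll_{\epsilon} T^{1+\epsilon} (|n| CD)^{\epsilon},
\]
where $\Phi$ is the stationary phase attached to the pair $(C, D)$, together with analogous control over the continuous spectrum that follows from convexity bounds on the Dedekind zeta function.

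The hard part, and indeed the crux of the problem, is that this last estimate is precisely the three-dimensional incarnation of the $\epsilon$-conjecture for spectral exponential sums, which is itself essentially equivalent to \cref{conj:twisted-Linnik-Selberg} and remains open even in the rational setting. The best unconditional pointwise bounds available -- including Koyama's result on spectral exponential sums invoked in item~\eqref{enum:5} of the overview -- fall short of the target by a positive power of $T$, so that only partial versions of the conjecture, such as restriction to the range $D \leq C^{1-\delta}$, or averaging against a smooth weight in $D$, or replacement of the right-hand side by a power saving of the form $(|n|CD)^{\epsilon} T^{1-\eta}$, appear to lie within reach of current technology. A full resolution would demand fundamentally new information on the vertical distribution of the spectral parameters $\{t_{j}\}$ beyond what Weyl's law and the author's resolution of the mean Lindel\"of hypothesis presently provide.
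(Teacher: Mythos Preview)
The statement you are addressing is \cref{conj:twisted-Linnik-Selberg}, which the paper presents as a \emph{conjecture}, not a theorem; accordingly, the paper offers no proof. Immediately after formulating it, the paper remarks that the case $D=0$ already subsumes a Linnik--Selberg-type conjecture over $\mathbb{Q}(i)$ strong enough to imply the generalised Ramanujan conjecture at all places, and that (via an argument \`a la Balkanova--Frolenkov--Risager) the full conjecture implies the square-root cancellation $\sum_{t_j \le T} X^{it_j} \ll_\epsilon T^{3/2+\epsilon} X^\epsilon$ in the spectral exponential sum. These observations are offered precisely as heuristic evidence, not as steps toward a proof.

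Your proposal is therefore not in conflict with the paper: you correctly identify that the Kuznetsov route reduces the conjecture to an essentially equivalent spectral exponential sum bound of strength far beyond Weyl's law, and you correctly conclude that this lies outside current technology. One small point of calibration: the spectral bound needed is not $\sum_{t_j \asymp T} e(\Phi(t_j)) \ll T^{1+\epsilon}$ but rather the square-root bound $\ll T^{3/2+\epsilon}$ (the trivial bound being $T^3$ by Weyl's law in three dimensions), and the paper states the implication in the direction \cref{conj:twisted-Linnik-Selberg} $\Rightarrow$ \eqref{eq:spectral-exponential-sum} rather than the converse you sketch; but the moral equivalence you describe is exactly the point the paper is making. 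In short, there is nothing to compare your argument against, and your assessment that the conjecture is open and your diagnosis of the obstruction are both accurate.
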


For $D = 0$, \cref{conj:twisted-Linnik-Selberg} constitutes an extension of the Linnik--Selberg-type conjecture over $\mathbb{Q}(i)$, which ought to be sharp enough to imply the generalised Ramanujan conjectures over $\mathbb{Q}(i)$ for both the finite and infinite places; see~\cites[Section~2]{SarnakTsimerman2009} for discussions over $\mathbb{Q}$. Moreover, an argument akin to~\cites[Section~5]{BalkanovaFrolenkovRisager2022} demonstrates that \cref{conj:twisted-Linnik-Selberg} implies the square-root cancellation in the $3$-dimensional spectral exponential sum, namely
\begin{equation}\label{eq:spectral-exponential-sum}
\sum_{t_{j} \leq T} X^{it_{j}} \ll_{\epsilon} T^{\frac{3}{2}+\epsilon} X^{\epsilon},
\end{equation}
which fortifies our belief towards the optimal exponent $1+\epsilon$ in the prime geodesic theorem. Note that the trivial bound for the left-hand side of~\eqref{eq:spectral-exponential-sum} is $O(T^{3})$ by estimating everything trivially with Weyl's law~\cites[Theorem~8.9.1]{ElstrodtGrunewaldMennicke1998}.

\section{Mean Lindel\"{o}f hypothesis}\label{sect:mean}
The purpose of this section is to establish the mean Lindel\"{o}f hypothesis unconditionally.

\subsection{Prior breakthroughs}
Let $\mathbb{H}^{2} \coloneqq \{x+iy \in \mathbb{C}: y > 0 \}$ denote the $2$-dimensional upper half-plane, upon which the full modular group $\Gamma = \mathrm{PSL}_{2}(\mathbb{Z})$ acts discontinuously~via~M\"{o}bius transformations. Let $\mathcal{B}_{it_{j}}(\Gamma \backslash \mathbb{H}^{2})$ denote an orthonormal basis of Maa{\ss} cusp forms of weight $0$ and spectral parameter $t_{j}$. For $u_{j} \in \mathcal{B}_{it_{j}}(\Gamma \backslash \mathbb{H}^{2})$, the sequence $(u_{j})_{j \in \mathbb{N}}$ is ordered such~that the corresponding sequence of spectral parameters $(t_{j})_{j \in \mathbb{N}}$ increases monotonically. If $L(s, u_{j} \otimes u_{j})$ stands for the normalised Rankin--Selberg $L$-function associated to $u_{j}$, then Iwaniec~\cites[Equation~(13)]{Iwaniec1984}[Page~188]{Iwaniec1984-2} predicts the mean Lindel\"{o}f hypothesis over $\mathbb{Q}$, asserting~the existence of an absolute and effectively computable constant $\Cl[constant]{Iwaniec} > 0$ such that for any $\tau \in \mathbb{R}$ and $\epsilon > 0$,
\begin{equation}\label{eq:Iwaniec}
\sum_{t_{j} \leq T} \frac{1}{\cosh \pi t_{j}} \left|L \left(\frac{1}{2}+i\tau, u_{j} \otimes u_{j} \right) \right| \ll_{\epsilon} (1+|\tau|)^{\Cr{Iwaniec}} T^{2+\epsilon}.
\end{equation}
One may further consult~\cites[Section~7]{Iwaniec1984-2}, wherein the intricate relationship between~the prime geodesic theorem for $\Gamma \backslash \mathbb{H}^{2}$ and~\eqref{eq:Iwaniec} is elucidated with solid heuristic manifestations. To the best of our knowledge, the nomenclature of the mean Lindel\"{o}f hypothesis is attributed to Zelditch~\cites[Equation~(0.17)]{Zelditch1991}[Equation~(4.4)]{Zelditch1992}, who replaced the bound $O_{\epsilon}(T^{2+\epsilon})$ in~\eqref{eq:Iwaniec} with $o(T^{2})$; indeed, his avenue of approach yields the slightly stronger result $O(\frac{T^{2}}{\log T})$.

Nonetheless, some confusion may be traced back to Luo and Sarnak~\cites[Theorem~3.2]{LuoSarnak1995}, albeit with their appropriate usage of the terminology in~\cites[Equation~(5)]{LuoSarnak1995}. Notably, they examined the~\textit{second} moment of the Rankin--Selberg $L$-function on the critical line, provided that the sum over the spectrum traverses Maa{\ss} cusp forms that are joint eigenfunctions of all the Hecke operators, and resolved the mean Lindel\"{o}f hypothesis~\eqref{eq:Iwaniec} on the~\textit{first} moment via the application of the Cauchy--Schwarz inequality. Since the Laplace--Beltrami operator commutes with all the Hecke operators, every nonconstant Laplace eigenfunction on $\Gamma \backslash \mathbb{H}^{2}$ is expressed as a linear combination of Hecke--Maa{\ss} cusp forms. Hence, the Hecke assumption of Luo and Sarnak~\cites[Page~208]{LuoSarnak1995} ought to be superfluous, given that the cuspidal spectrum of level $1$ is conjectured to be simple by Cartier~\cites[Conjecture~(B)]{Cartier1971}.

\subsection{Koyama's formulation}
To preclude any potential misconceptions in the subsequent discussion, we clarify that the mean Lindel\"{o}f hypothesis pertains to the first moment on the left-hand side of~\eqref{eq:Iwaniec}. An indispensable ingredient for the proofs of \cref{thm:unconditional,thm:conditional} is the resolution of the mean Lindel\"{o}f hypothesis for the Rankin--Selberg $L$-function, which had remained elusive since its somewhat oblique formulation by Koyama~\cites[Equation~(3.2)]{Koyama2001}, thereby causing terminological confusion in due course; see also~\cites[Pages~58--59]{Laaksonen2019}.
\begin{conjecture}[Mean Lindel\"{o}f hypothesis]\label{conj:mean-Lindelof}
Let $\Gamma = \mathrm{PSL}_{2}(\mathbb{Z}[i])$. Then there exists an absolute and effectively computable constant $\Cl[constant]{Koyama} > 0$ such that for any $\tau \in \mathbb{R}$ and $\epsilon > 0$,
\begin{equation}\label{eq:mean-Lindelof}
\sum_{t_{j} \leq T} \frac{t_{j}}{\sinh \pi t_{j}} \left|L \left(\frac{1}{2}+i\tau, u_{j} \otimes u_{j} \right) \right| \ll_{\epsilon} (1+|\tau|)^{\Cr{Koyama}} T^{3+\epsilon}.
\end{equation}
\end{conjecture}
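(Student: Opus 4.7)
The plan is to derive \cref{conj:mean-Lindelof} from the Rankin--Selberg integral representation
\begin{equation*}
L(s, u_{j} \otimes u_{j}) = C_{\infty}(s, t_{j}) \int_{\Gamma \backslash \mathbb{H}^{3}} |u_{j}(v)|^{2} E(v, s) \, d\mu(v),
\end{equation*}
where $E(v, s)$ is the spherical Eisenstein series for the cusp at $\infty$ and $C_{\infty}$ is an archimedean factor composed of gamma functions. Landau's trick -- here the elementary nonnegativity $|u_{j}(v)|^{2} \geq 0$ -- together with the triangle inequality, enables one to pass the absolute value inside the integral:
\begin{equation*}
|L(\tfrac{1}{2}+i\tau, u_{j} \otimes u_{j})| \leq |C_{\infty}(\tfrac{1}{2}+i\tau, t_{j})| \int_{\Gamma \backslash \mathbb{H}^{3}} |u_{j}(v)|^{2} \, |E(v, \tfrac{1}{2}+i\tau)| \, d\mu(v),
\end{equation*}
whose right-hand side I would estimate via a local Weyl law on $\Gamma \backslash \mathbb{H}^{3}$ derived from optimal mean square asymptotics for the Fourier coefficients of Maa{\ss} cusp forms.

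First, via the pre-Kuznetsov formula realised through the inner product of a pair of Poincar\'{e} series on $\Gamma \backslash \mathbb{H}^{3}$, I would establish
\begin{equation*}
\sum_{t_{j} \leq T} \frac{t_{j}}{\sinh \pi t_{j}} |a_{j}(n)|^{2} = A(n) T^{3} + \mathcal{E}(n, T), \qquad A(n) \ll_{\epsilon} \mathrm{N}(n)^{\epsilon},
\end{equation*}
where the error $\mathcal{E}(n, T)$ emerges from the Kloosterman sum tail and is bounded through \eqref{eq:Weil} together with the cancellation extracted from the Bessel-type kernel of the pre-Kuznetsov formula. Second, via the Parseval decomposition of $|u_{j}(v)|^{2}$ along the Fourier expansion at $\infty$, this yields a spectral kernel bound of the shape
\begin{equation*}
\sum_{t_{j} \leq T} \frac{t_{j}}{\sinh \pi t_{j}} |u_{j}(v)|^{2} \ll T^{3} \Phi(v),
\end{equation*}
with $\Phi(v)$ integrable against $d\mu$ after pairing with $|E(v, \tfrac{1}{2}+i\tau)|$. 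Third, summing over $j$, substituting into the integral, and applying Stirling's formula \eqref{eq:Stirling} to bound $|C_{\infty}|$ and $|E|$ polynomially in $\tau$, we conclude
\begin{equation*}
\sum_{t_{j} \leq T} \frac{t_{j}}{\sinh \pi t_{j}} |L(\tfrac{1}{2}+i\tau, u_{j} \otimes u_{j})| \ll_{\epsilon} (1+|\tau|)^{c} T^{3+\epsilon},
\end{equation*}
as required.

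The hard part will be controlling $\mathcal{E}(n, T)$ uniformly in $n$ as it propagates through the Parseval sum over Gaussian integers and then through the integration against the Eisenstein series in the cusp. A crude application of \eqref{eq:Weil} would produce $\mathcal{E}(n, T) \ll \mathrm{N}(n)^{1/2+\epsilon} T^{1+\epsilon}$, whose propagation would overwhelm the leading $A(n) T^{3}$ contribution in the transition regime $|n|r \asymp t_{j}$, precisely where the $K$-Bessel function $K_{it_{j}}(4\pi|n|r)$ transitions between exponential decay and oscillation. To circumvent this, I would employ a refined asymptotic expansion of $K_{it_{j}}$ in the transition regime combined with a careful treatment of the $c$-sum of Kloosterman sums in the pre-Kuznetsov formula, extracting stationary-phase cancellation analogous to Iwaniec's treatment on $\Gamma \backslash \mathbb{H}^{2}$. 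The salient merit of this geometric route is that it obviates any appeal to the spectral large sieve inequality -- unavailable in optimal form on $\Gamma \backslash \mathbb{H}^{3}$ -- thereby aligning with the principal novelty advertised in the introduction.
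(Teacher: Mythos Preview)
Your approach has a genuine gap: applying the triangle inequality to the Rankin--Selberg integral destroys the oscillation needed to reach the Lindel\"{o}f exponent, and what survives is only the convexity bound $T^{4+\epsilon}$. Concretely, from \eqref{eq:inner-product} one has $L(\tfrac12+i\tau,u_j\otimes u_j)=C_\infty^{-1}\langle |u_j|^2,E(\cdot,1+2i\tau)\rangle$ with $|C_\infty|\asymp_\tau e^{-\pi t_j}$ by Stirling. Hence $\frac{t_j}{\sinh\pi t_j}|C_\infty|^{-1}\asymp_\tau t_j$, and your inequality reduces to bounding $\sum_{t_j\le T} t_j\int |u_j(v)|^2\,|E(v,1+2i\tau)|\,d\mu(v)$. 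The unweighted local Weyl law gives $\sum_{t_j\le T}|u_j(v)|^2\asymp T^3$ (with suitable cuspidal growth), so the extra factor $t_j\le T$ forces $T^{4+\epsilon}$. Your displayed bound $\sum_{t_j\le T}\frac{t_j}{\sinh\pi t_j}|u_j(v)|^2\ll T^3\Phi(v)$ is vacuous here, since that harmonically weighted sum is exponentially damped and converges absolutely; the archimedean factor $|C_\infty|^{-1}$ exactly undoes the damping and leaves the problematic $t_j$. No refinement of the Kloosterman error $\mathcal{E}(n,T)$ or of the $K$-Bessel transition regime can recover the lost power of $T$, because the loss occurs at the very first step when $|E|$ replaces $E$.

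You have also misidentified ``Landau's trick'': in the paper it refers not to positivity but to Landau's 1915 device of repeated integration (Riesz means) of Perron sums. The paper's argument embeds $L^{\diamond}(s,u_j\otimes u_j)$ in a Dirichlet series, forms the iterated integral $\mathcal{R}_{it_j}(x,w,\xi,\nu)$, and exploits the key identity obtained by contour shifting across the critical strip together with the functional equation \eqref{eq:functional-equation-2}. The crux is that the main terms on the left of the identity cancel exactly (\cref{lem:Matthes}), so only error terms---controlled by the optimal mean square asymptotics of \cref{prop:mean-square}---survive; the piece shifted to $\Re(s)<0$ is flipped by the functional equation and bounded via \cref{prop:Iwaniec-consult} at the edge $1+\delta$. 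This is essentially an approximate-functional-equation argument executed on the spectral side, and it is precisely this mechanism, absent from your outline, that recovers the missing power of $T$.
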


The main result of this section is as follows.
\begin{theorem}\label{thm:mean-Lindelof}
\cref{conj:mean-Lindelof} holds for any $\Cr{Koyama} > 3$.
\end{theorem}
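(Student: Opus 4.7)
The plan is to deduce the mean Lindel\"{o}f hypothesis from the optimal spherical mean square asymptotic for the Fourier coefficients of Maa{\ss} cusp forms on $\Gamma \backslash \mathbb{H}^{3}$, obtained unconditionally via the pre-Kuznetsov formula, together with Landau's trick from~\cites{Landau1915} in lieu of the currently unavailable spherical spectral large sieve on $\Gamma \backslash \mathbb{H}^{3}$. The starting point is the Rankin--Selberg identity
$$L(s, u_{j} \otimes u_{j}) = \frac{\zeta_{\mathbb{Q}(i)}(2s)}{\zeta_{\mathbb{Q}(i)}(s)} \sum_{0 \ne n \in \mathbb{Z}[i]} \frac{|a_{j}(n)|^{2}}{\mathrm{N}(n)^{s}}, \qquad \Re(s) > 1,$$
valid after appropriate normalisation; an approximate functional equation then expresses $L(\tfrac{1}{2}+i\tau, u_{j} \otimes u_{j})$ as a sum of two truncated Dirichlet polynomials in $|a_{j}(n)|^{2}$, each of length polynomial in $t_{j}$ and $1+|\tau|$, weighted by a smooth cutoff of polynomial growth in $|\tau|$.

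Next, I would exchange the spectral and arithmetic summations and reduce, after a triangle inequality that absorbs the complex phase into a positive weight, to the estimation, uniformly in $n$, of
$$\mathcal{S}(n, T) \coloneqq \sum_{t_{j} \leq T} \frac{t_{j}}{\sinh \pi t_{j}} |a_{j}(n)|^{2}.$$
Inserting a smooth majorant $h_{T}$ for the indicator of $[0, T]$ and expanding via the pre-Kuznetsov formula on $\Gamma \backslash \mathbb{H}^{3}$, namely the spectral decomposition of a single suitably chosen Poincar\'{e} series prior to integration against the Kuznetsov kernel, yields an identity of shape $\mathcal{M}(n, T) + \mathcal{K}(n, T)$, where $\mathcal{M}(n, T) \ll_{\epsilon} \mathrm{N}(n)^{\epsilon} T^{3+\epsilon}$ is the identity-coset main term consistent with Weyl's law on $\Gamma \backslash \mathbb{H}^{3}$, and $\mathcal{K}(n, T)$ is a sum of Gaussian Kloosterman sums $S(n, n, c)$ weighted by a three-dimensional Bessel--Kuznetsov transform of $h_{T}$. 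The Weil--Gundlach bound~\eqref{eq:Weil} combined with the divisor average~\eqref{eq:bounded-on-average} and a stationary phase analysis of the Bessel kernel shows that $\mathcal{K}(n, T)$ is absorbed into $\mathcal{M}(n, T)$; the Eisenstein contribution to the spectral side is treated via convexity bounds for the Dedekind zeta and Hecke $L$-functions over $\mathbb{Q}(i)$.

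Landau's trick then converts the smoothed bound into the sharp cutoff $t_{j} \leq T$: by comparing $\mathcal{S}(n, T)$ under two nearby weights $h_{T}$ and $h_{T+\Delta}$ with a judicious perturbation $\Delta$ and exploiting the positivity of $|a_{j}(n)|^{2}$, the residual spectral window $T < t_{j} \leq T+\Delta$ is controlled uniformly, and optimising $\Delta$ costs only an additional factor of $T^{\epsilon}$. Summing the resulting bound for $\mathcal{S}(n, T)$ against the approximate functional equation cutoffs in $n$ on the critical line recovers~\eqref{eq:mean-Lindelof} with some absolute exponent $\Cr{Koyama} > 3$.

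The principal technical obstacle is certifying that the Kloosterman error $\mathcal{K}(n, T)$ remains subordinate to the main term $\mathcal{M}(n, T)$ uniformly in $n$ up to the full length of the approximate functional equation. Since no spherical spectral large sieve is available on $\Gamma \backslash \mathbb{H}^{3}$, this step demands a precise stationary phase treatment of the three-dimensional Bessel--Kuznetsov kernel in the transitional regime where $\mathrm{N}(n)$ is large relative to $T^{2}$, rather than a soft appeal to an off-the-shelf $\ell^{2}$ estimate; a secondary subtlety is ensuring that the application of positivity in Landau's trick is clean despite the complex-valued nature of the approximate functional equation, which forces the triangle inequality to be taken at an early stage of the argument.
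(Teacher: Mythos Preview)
Your proposal has a genuine gap: taking the triangle inequality early to ``absorb the complex phase into a positive weight'' and then summing the pointwise bound for $\mathcal{S}(n,T)$ against the approximate-functional-equation weights does not close. Even with the optimal asymptotic $\mathcal{S}(n,T) = c_{3} T^{3} + O_{\epsilon}(T^{2} \mathrm{N}(n)^{1/2+\epsilon} + T^{1+\epsilon}\mathrm{N}(n)^{1+\epsilon})$, the main term $c_{3} T^{3}$ summed over $\mathrm{N}(n) \ll T^{2}(1+|\tau|)^{c}$ with weight $\mathrm{N}(n)^{-1/2}$ already contributes $\asymp T^{4}$, not $T^{3+\epsilon}$; and the first error term $T^{2}\mathrm{N}(n)^{1/2+\epsilon}$ against $\mathrm{N}(n)^{-1/2}$ likewise gives $\asymp T^{4}$. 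The main term is harmless only if the phase $\mathrm{N}(n)^{-i\tau}$ is retained so that the $n$-sum collapses to essentially a central value of $\zeta_{\mathbb{Q}(i)}$, but the error terms carry no sign and cannot benefit from that cancellation, so a triangle inequality at any stage prior to exploiting the structure of the error is fatal.

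The paper does not proceed via an approximate functional equation at all, and ``Landau's trick'' here is \emph{not} smooth-to-sharp unsmoothing. It is Landau's 1915 device of $\nu$-fold repeated integration of partial sums: one studies
\[
\mathcal{R}_{it_{j}}(x,w,\xi,\nu) = \int_{(\xi)} L^{\diamond}(s+w, u_{j}\otimes u_{j})\,\frac{x^{s+\nu}}{s(s+1)\cdots(s+\nu)}\,\frac{ds}{2\pi i},
\]
forms the specific combination $\mathcal{R}_{it_{j}}(x,w-1,\xi+1,\nu-1) + (w-1)\mathcal{R}_{it_{j}}(x,w,\xi,\nu)$ in which the pole at $s=1-w$ is killed, and---crucially---after summing over $t_{j}\leq T$ and inserting the mean-square asymptotic from \cref{prop:mean-square}, the two leading terms coming from each piece cancel \emph{exactly} (this is the content of \cref{lem:Matthes}). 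The residue theorem then equates this combination with $(w-1)L^{\diamond}(w,u_{j}\otimes u_{j})\frac{x^{\nu}}{\nu!}$ plus contributions on a shifted contour, which are handled via the functional equation and the auxiliary bound for $\sum_{t_{j}\leq T}\frac{t_{j}}{\sinh\pi t_{j}}L^{\diamond}(1+\delta,u_{j}\otimes u_{j})$ in \cref{prop:Iwaniec-consult}. The choice $x = T(1+|\Im(w)|)^{2}$, $\nu=4$ then yields the claim. Your reading of Landau's trick as a positivity-based unsmoothing addresses the spectral truncation, which is not the obstruction; the obstruction is arranging an exact cancellation of the $T^{3}$-main term in the $n$-sum, and your outline contains no mechanism for this.
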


The proof of \cref{thm:mean-Lindelof} does not rely on any of the following: (1) the multiplicativity of the Fourier coefficients, which simplifies the problem to estimating the second moment of the associated symmetric square $L$-function; (2) lower and upper bounds for harmonic weights \`{a} la Iwaniec~\cites[Theorem~2]{Iwaniec1990} and Hoffstein and Lockhart~\cites[Corollary~0.3]{HoffsteinLockhart1994}, respectively; or (3) the spectral large sieve inequality \`{a} la Deshouillers and Iwaniec~\cites[Theorem~2]{DeshouillersIwaniec1982}~-- itself a consequence of the Kuznetsov formula. Furthermore, the flexibility of our techniques ensures that the machinery extends naturally to the $2$-dimensional case, thereby providing a simpler proof of~\eqref{eq:Iwaniec} while eliminating the need for the Hecke assumption imposed by Luo and Sarnak~\cites[Page~208]{LuoSarnak1995}. Novelties of the proof include a subtle enhancement of the ideas of Iwaniec~\cites[Section~8]{Iwaniec1984} based on Landau's trick~\cites[Section~3]{Landau1915} and a simultaneous improvement of the results of Raghavan and Sengupta~\cites[Theorem]{RaghavanSengupta1994}[Theorem~1]{RaghavanSengupta1994-2}\footnote{For the reader's convenience, it may be beneficial to consult both nearly identical papers, as each contains several typesetting errors throughout.} and Motohashi~\cites[Lemma~11]{Motohashi1997}, thereby rendering~\cites[Corollary~10.1]{BruggemanMotohashi2003} effective in the spherical case. The most unbalanced scenario exhibited in the spherical case accounts~for suboptimal attempts towards \cref{conj:mean-Lindelof} in the literature, much of which, unlike the first moment that we shall address, falls within the framework of the spectral large sieve inequality.

\begin{remark}
The author became aware, subsequent to the completion of the present paper, that Matthes~\cites{Matthes1994}{Matthes1995}{Matthes1995-2}{Matthes1996} had partially refined Iwaniec's machinery to address the prime geodesic theorem twisted by the $3$-dimensional theta multiplier system. In this context, the Fourier coefficients of half-integral weight forms are not multiplicative~except at squares and correspond in turn to central values of quadratic twists of modular $L$-functions via Waldspurger's formula~\cites[Th\'{e}or\`{e}me~1]{Waldspurger1981}. Consequently, the mean Lindel\"{o}f hypothesis boils down essentially to the Lindel\"{o}f-on-average bound for a certain double Dirichlet series, and no explicit incarnation of Iwaniec's techniques appears to be known.
\end{remark}

The preceding discussion does not necessarily indicate that the second moment perspective of Luo and Sarnak~\cites[Section~3]{LuoSarnak1995} invariably represents a detour in the study of the prime geodesic theorem! In fact, the proof of Balog et al.~\cites[Theorem~1]{BalogBiroHarcosMaga2019} leverages the full strength of~\cites[Theorem~3.2]{LuoSarnak1995}, which serves as a key to refining the result of Cherubini and Guerreiro~\cites[Theorem~1.4]{CherubiniGuerreiro2018}, whose proof relies fundamentally on~\eqref{eq:Iwaniec}. Furthermore, in a comparable fashion, Chatzakos, Cherubini, and Laaksonen~\cites[Equation~(5.4)]{ChatzakosCherubiniLaaksonen2022} as well as the author~\cites[Equation~(3-54)]{Kaneko2022-2} adapted the second moment framework to demonstrate power-saving improvements on average for the prime geodesic theorem in the $3$-dimensional setting. In general, the pointwise analysis of the prime geodesic theorem pertains to the first moment of the Rankin--Selberg $L$-function, whereas the averaged analysis pertains to higher moments of the Rankin--Selberg $L$-function, the latter of which exhibits greater complexity. Notably, this distinction becomes salient especially over number fields, where the absence of an optimal spectral large sieve inequality may hider the applicability of existing techniques; cf.~\cites{Qi2024-3}{Qi2024}{Qi2024-2}{Watt2013}{Watt2014}. This observation deserves further penetration.

We conclude this section by formulating and naming the \textit{square mean Lindel\"{o}f hypothesis}, which is equivalent to~\cites[Assumption~3.2]{Koyama2001} and related via the Watson--Ichino formula to a Lindel\"{o}f-on-average bound in the quantum variance problem for unitary Eisenstein series.
\begin{conjecture}[Square mean Lindel\"{o}f hypothesis]\label{conj:square-mean}
Let $\Gamma = \mathrm{PSL}_{2}(\mathbb{Z}[i])$. Define $\eta \in [0, 1]$ to be such that there exists an absolute and effectively computable constant $\Cl[constant]{square-mean} > 0$ such that for any $\tau \in \mathbb{R}$ and $\epsilon > 0$,
\begin{equation}\label{eq:square-mean}
\sum_{t_{j} \leq T} \left|\frac{t_{j}}{\sinh \pi t_{j}} L \left(\frac{1}{2}+i\tau, u_{j} \otimes u_{j} \right) \right|^{2} \ll_{\epsilon} (1+|\tau|)^{\Cr{square-mean}} T^{3+\eta+\epsilon}.
\end{equation}
Then $\eta = 0$ is admissible.
\end{conjecture}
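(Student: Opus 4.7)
The plan is to mirror the strategy of \cref{thm:mean-Lindelof}, but at the level of the squared central value. The natural starting point is an approximate functional equation that writes $|L(\tfrac{1}{2}+i\tau, u_{j} \otimes u_{j})|^{2}$ as a smoothly truncated double sum in $\lambda_{j}(m)\overline{\lambda_{j}(n)}$ of effective length $\mathrm{N}(mn) \lesssim (1+|\tau|)^{O(1)}(1+t_{j})^{O(1)}$. After inserting this expansion into the left-hand side of~\eqref{eq:square-mean}, the spectral sum splits into a diagonal $m = n$ and an off-diagonal $m \neq n$, the former of which should deliver the conjectured main term of size $T^{3+\epsilon}$.

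For the diagonal, the strategy is to collapse the harmonic weight $(t_{j}/\sinh \pi t_{j})^{2}$ against the pre-Kuznetsov-type mean square of $|\lambda_{j}(n)|^{2}$ and extract $T^{3+\epsilon}$ via Landau's trick, much as in the proof of \cref{thm:mean-Lindelof}. What is genuinely new compared with \cref{conj:mean-Lindelof} is that the squared L-value couples \emph{two} Fourier coefficients of the same form, so the sole use of the pre-Kuznetsov formula no longer suffices: one is forced into a fourth-moment analysis of Fourier coefficients on spectral average, which is still compatible with $T^{3+\epsilon}$ provided the off-diagonal is controlled to the same order.

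The hard part will be the off-diagonal. A Vorono\u{\i}-type summation over $\mathbb{Z}[i]$ converts the double sum into Gaussian Kloosterman sums $S(m, n, c)$ twisted by Bessel kernels whose parameters encode both $t_{j}$ and $\tau$, and the ensuing spectral average in $t_{j}$ essentially demands a spherical large sieve inequality of the shape
\[
\sum_{t_{j} \leq T} \Bigl| \sum_{\mathrm{N}(n) \leq N} a_{n} \overline{\lambda_{j}(n)} \Bigr|^{2} \ll_{\epsilon} (T^{3}+N)^{1+\epsilon} \|a\|_{2}^{2}.
\]
Such an inequality of optimal quality is not currently known over $\mathbb{Q}(i)$, as flagged in the discussion surrounding the Qi citations and preceding \cref{conj:twisted-Linnik-Selberg}. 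An alternative route -- alluded to in the paragraph introducing the conjecture -- is to invoke the Watson--Ichino formula to replace $|L(\tfrac{1}{2}+i\tau, u_{j} \otimes u_{j})|^{2}$ by the squared triple-product $|\langle u_{j}^{2}, E(\cdot, \tfrac{1}{2}+i\tau) \rangle|^{2}$ and recast~\eqref{eq:square-mean} as a Lindel\"{o}f-on-average quantum variance bound for the unitary Eisenstein series on $\Gamma \backslash \mathbb{H}^{3}$, but this hinges on equidistribution results that remain open in the $3$-dimensional setting. Either approach therefore requires an input currently beyond the state of the art over $\mathbb{Q}(i)$, which is precisely why $\eta = 0$ is left as a conjecture rather than a theorem in the present work.
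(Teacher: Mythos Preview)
Your analysis is appropriate: the statement is a \emph{conjecture}, and the paper does not prove it. The paper only establishes the partial result $\eta = \min(\tfrac{1}{2}, 4\vartheta)$ (via the symmetric-square moment bounds of Balkanova--Frolenkov and Qi), and explicitly remarks that reaching $\eta = 0$ along the lines of Khan--Young or Balkanova--Frolenkov over $\mathbb{Q}(i)$ would be ``an undoubtedly feasible, albeit formidable, task.'' Your identification of the core obstruction---the absence of an optimal spherical large sieve inequality over $\Gamma \backslash \mathbb{H}^{3}$---matches the paper's own diagnosis, and your mention of the Watson--Ichino reformulation echoes the paper's parenthetical comment preceding the conjecture. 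There is no gap to name because you correctly decline to offer a proof of an open statement.
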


\begin{theorem}
The quantity $\eta = \min(\frac{1}{2}, 4\vartheta)$ is admissible for any $\Cr{square-mean} > 3$.
\end{theorem}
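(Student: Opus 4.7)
The plan is to adapt the toolkit underlying \cref{thm:mean-Lindelof} to the second moment, with subconvexity for quadratic Dirichlet $L$-functions over $\mathbb{Q}(i)$ entering once an off-diagonal contribution has been isolated. Leveraging the factorisation $L(s, u_j \otimes u_j) = \zeta_{\mathbb{Q}(i)}(s) L(s, \mathrm{sym}^{2} u_j)$ and the approximate functional equation for $|L(\frac{1}{2}+i\tau, u_j \otimes u_j)|^{2}$, whose analytic conductor over $\mathbb{Q}(i)$ is polynomial in $t_j$ and $1+|\tau|$, one reduces the problem to bounding a spectral sum of the shape
\begin{equation*}
\sum_{t_{j} \leq T} \Big(\frac{t_{j}}{\sinh \pi t_{j}}\Big)^{\!2} \sum_{\mathrm{N}(m), \mathrm{N}(n) \ll T^{4+\epsilon}} \frac{\lambda_{j}(m) \overline{\lambda_{j}(n)}}{\mathrm{N}(m)^{\frac{1}{2}+i\tau} \mathrm{N}(n)^{\frac{1}{2}-i\tau}} V\Big(\frac{\mathrm{N}(m) \mathrm{N}(n)}{t_{j}^{8}}\Big),
\end{equation*}
where $\lambda_{j}$ denotes the Dirichlet coefficients of the Rankin--Selberg $L$-function and $V$ is a smooth cutoff.

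First I would interchange the order of summation and apply the pre-Kuznetsov formula to the spectral sum over $j$, exactly as in the proof of \cref{thm:mean-Lindelof}. The diagonal contribution, comprising the terms $m = n$ up to divisor conventions, is controlled by the optimal spherical mean square asymptotics for Fourier coefficients on $\Gamma \backslash \mathbb{H}^{3}$; Landau's trick then dismisses the smooth cutoff $V$ and supplies the desired main order $(1+|\tau|)^{O(1)} T^{3+\epsilon}$. The off-diagonal contribution forms the crux: the geometric side is governed by Gaussian Kloosterman sums $S(m, n, c)$ smoothed against Bessel--Kuznetsov transforms, whose length in the $c$-aspect is controlled by the approximate functional equation.

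At this juncture I would invoke \cref{lem:counting-function,lem:identify} to recast the inner character sums in terms of $\rho_{c}(a^{2}-4)$, and then through \cref{lem:Szmidt-1} express the resulting Dirichlet series as a superposition of quadratic Dirichlet $L$-functions $L(s, \chi_{D})$ multiplied by the well-controlled Dirichlet polynomials $T_{\ell}^{(D)}(s)$ from~\eqref{eq:Dirichlet-polynomial}. A stationary phase analysis of the Bessel--Kuznetsov transform, relying on~\eqref{eq:Stirling}, confines the relevant $\chi_{D}$ to conductors satisfying $\mathrm{N}(D) \ll T^{4+o(1)}$, so that inserting the subconvex bound~\eqref{eq:conductor-aspect} on the central values $L(\frac{1}{2}+i\tau, \chi_{D})$ yields an off-diagonal contribution of $(1+|\tau|)^{O(1)} T^{3+4\vartheta+\epsilon}$. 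Balanced against the trivial convexity bound $T^{\frac{7}{2}+\epsilon}$, which is admissible throughout $\vartheta \in [0, \frac{1}{4})$, this enforces the admissibility of $\eta = \min(\frac{1}{2}, 4\vartheta)$.

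The hard part will be executing the stationary phase analysis with sufficient uniformity in $\tau$ to keep the dependence on $(1+|\tau|)$ polynomial with a fixed exponent $\Cr{square-mean} > 3$; the most delicate regime is $t_{j} \asymp T$ with $\mathrm{N}(c)$ near the transition point of the Bessel kernel, where the interference between the phase from the approximate functional equation and the phase from the Bessel--Kuznetsov transform must be tracked precisely to avoid an extraneous factor of $T^{\frac{1}{2}}$. Once this is negotiated, the bootstrapping nature of $\vartheta$ ensures that the conclusion is strictly sharper than the Cauchy--Schwarz consequence of \cref{thm:mean-Lindelof} whenever $\vartheta < \frac{1}{8}$, corroborating the fact that the square mean genuinely lies deeper than the mean, as highlighted in the prelude to \cref{thm:second-moment}.
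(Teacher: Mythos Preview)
The paper's proof of this statement is a single sentence: after the factorisation $L(s, u_j \otimes u_j) = \zeta_{\mathbb{Q}(i)}(s) L(s, \mathrm{sym}^{2} u_j)$, it cites Theorem~1.3 of Balkanova--Frolenkov (2022) and Theorem~2 of Qi (2024), which together already supply second-moment bounds for symmetric square $L$-functions over $\mathbb{Q}(i)$ of the required strength. No new argument is given.

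Your proposal instead sketches a from-scratch argument, and it contains a structural gap. The displayed spectral sum involves $\lambda_j(m)\overline{\lambda_j(n)}$ where $\lambda_j$ are the Rankin--Selberg coefficients, i.e.\ essentially $|\rho_j(m)|^{2}|\rho_j(n)|^{2}$. This is a \emph{quartic} form in Fourier coefficients, whereas the pre-Kuznetsov formula~\eqref{eq:pre-Kuznetsov} and its consequences (\cref{prop:mean-square,prop:Iwaniec-consult}) supply only \emph{bilinear} information of the shape $\sum_j \frac{t_j}{\sinh \pi t_j}|\rho_j(n)|^2$. The step ``apply the pre-Kuznetsov formula to the spectral sum over $j$, exactly as in the proof of \cref{thm:mean-Lindelof}'' therefore does not go through: there is no formula of pre-Kuznetsov type for $\sum_j (\frac{t_j}{\sinh \pi t_j})^{2}|\rho_j(m)|^{2}|\rho_j(n)|^{2}$. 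The standard way around this---open the symmetric square $L$-function via Hecke multiplicativity, linearise, and apply the full Kuznetsov formula---is precisely what the cited works do, but it is not the pre-Kuznetsov route you describe, and it requires the Hecke-eigenbasis assumption that the proof of \cref{thm:mean-Lindelof} explicitly avoids.

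A second, related issue: \cref{lem:counting-function} expresses $S(n, n, c)$---with \emph{equal} entries---in terms of $\rho(c, a)$, and it is this special structure that links Kloosterman sums to quadratic characters via \cref{lem:identify} and \cref{lem:Szmidt-1}. In your off-diagonal term the Kloosterman sums would be $S(m, n, c)$ with generic $m \neq n$, so the passage to $\chi_D$ you invoke does not apply. In the cited works, subconvexity enters instead through an explicit formula that expresses the spectral second moment of symmetric square $L$-functions directly as a sum involving Zagier-type $L$-series on the critical line, after which~\eqref{eq:conductor-aspect} is inserted; the route through $\rho_c(a^2-4)$ via \cref{lem:counting-function} plays no role there.
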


\begin{proof}
By shifting our attention to symmetric square $L$-functions~\cites[Equation~(3.2)]{BalkanovaChatzakosCherubiniFrolenkovLaaksonen2019}, the claim is immediate from the combination of~\cites[Theorem~1.3]{BalkanovaFrolenkov2022}[Theorem~2]{Qi2024}.
\end{proof}

Furthermore, it behoves us to record the following reduction.
\begin{theorem}
\cref{conj:square-mean} implies \cref{conj:mean-Lindelof} for any $\Cr{Koyama} \geq \frac{\Cr{square-mean}}{2}$.
\end{theorem}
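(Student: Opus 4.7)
The plan is to deduce~\eqref{eq:mean-Lindelof} from~\eqref{eq:square-mean} via a single application of the Cauchy--Schwarz inequality, paired with Weyl's law for the cuspidal spectrum on $\Gamma \backslash \mathbb{H}^{3}$. First, I would rewrite the first moment as a bilinear form against the constant sequence $\mathbf{1}$ and estimate
\begin{equation*}
\sum_{t_{j} \leq T} \frac{t_{j}}{\sinh \pi t_{j}} \left|L \left(\tfrac{1}{2}+i\tau, u_{j} \otimes u_{j} \right) \right| \leq \left(\#\{j : t_{j} \leq T\} \right)^{\frac{1}{2}} \left(\sum_{t_{j} \leq T} \left|\frac{t_{j}}{\sinh \pi t_{j}} L \left(\tfrac{1}{2}+i\tau, u_{j} \otimes u_{j} \right) \right|^{2} \right)^{\frac{1}{2}}.
\end{equation*}

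Next, I would invoke Weyl's law~\cites[Theorem~8.9.1]{ElstrodtGrunewaldMennicke1998}, which furnishes $\#\{j : t_{j} \leq T\} \asymp T^{3}$ as $T \to \infty$. Substituting the square mean Lindel\"{o}f bound~\eqref{eq:square-mean} with the admissible exponent $\eta = 0$ yields
\begin{equation*}
\sum_{t_{j} \leq T} \frac{t_{j}}{\sinh \pi t_{j}} \left|L \left(\tfrac{1}{2}+i\tau, u_{j} \otimes u_{j} \right) \right| \ll_{\epsilon} T^{\frac{3}{2}} \cdot (1+|\tau|)^{\frac{\Cr{square-mean}}{2}} T^{\frac{3+\epsilon}{2}} = (1+|\tau|)^{\frac{\Cr{square-mean}}{2}} T^{3+\epsilon},
\end{equation*}
which is precisely~\eqref{eq:mean-Lindelof} for any admissible $\Cr{Koyama} \geq \frac{\Cr{square-mean}}{2}$.

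The argument presents no serious technical obstruction: the deduction is essentially routine and leans only on the sharp cuspidal counting by Weyl's law in the $3$-dimensional setting, mirroring the $2$-dimensional manoeuvre of Luo and Sarnak~\cites[Section~3]{LuoSarnak1995}. The only delicate point is bookkeeping the $|\tau|$-aspect correctly, but since Cauchy--Schwarz converts a first moment into the square root of a second moment paired with a $|\tau|$-independent count, the archimedean exponent is halved as stated. An arguably sharper archimedean exponent may be attainable by exploiting the exponential decay of the weight $t_{j}/\sinh \pi t_{j}$ to truncate the spectral sum more aggressively, but this refinement lies orthogonal to the stated inequality $\Cr{Koyama} \geq \frac{\Cr{square-mean}}{2}$.
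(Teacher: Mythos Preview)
Your proposal is correct and follows precisely the same approach as the paper, which states only that the claim is immediate from the Cauchy--Schwarz inequality and Weyl's law~\cites[Theorem~8.9.1]{ElstrodtGrunewaldMennicke1998}. You have simply spelled out the two-line computation.
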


\begin{proof}
The claim is immediate from the Cauchy--Schwarz inequality and Weyl's law~\cites[Theorem~8.9.1]{ElstrodtGrunewaldMennicke1998}.
\end{proof}

As Qi~\cites[Page~3]{Qi2024} observes, establishing \cref{conj:square-mean} via the approach of Khan and Young~\cites[Theorem~1.3]{KhanYoung2023} or that of Balkanova and Frolenkov~\cites[Theorem~1.3]{BalkanovaFrolenkov2024} would constitute an undoubtedly feasible, albeit formidable, task.

\subsection{Automorphic data}
As a preliminary step, it is convenient for the reader to assemble requisite background material. Let $\mathbb{H}^{3} \cong \mathrm{SL}_{2}(\mathbb{C})/\mathrm{SU}_{2}(\mathbb{C})$ denote the standard $3$-dimensional upper half-plane, upon which the Picard group $\Gamma = \mathrm{PSL}_{2}(\mathbb{Z}[i])$ acts discontinuously via~linear fractional transformations. Given the basis $\{1, i, j, k \}$ of Hamiltonian quaternions, a typical point $v \in \mathbb{H}^{3}$ is represented by $v = z+rj$ with $r > 0$ and $z = x+yi \in \mathbb{C}$; thus, $\Im(v) \coloneqq r$, $\Im(z) \coloneqq y$, and $\Re(z) \coloneqq x$. The space $\mathbb{H}^{3}$ carries the hyperbolic metric $r^{-1} \sqrt{dx^{2}+dy^{2}+dr^{2}}$ and the corresponding volume element $d\mu(v) \coloneqq r^{-3} \, dx \, dy \, dr$. The Laplace--Beltrami operator on $\Gamma \backslash \mathbb{H}^{3}$ or rather its negative is denoted by $\Delta$, namely~\cites[Equation~(1.1.6)]{ElstrodtGrunewaldMennicke1998}
\begin{equation}
\Delta \coloneqq -y^{2} \left(\frac{\partial^{2}}{\partial x^{2}}+\frac{\partial^{2}}{\partial y^{2}}+\frac{\partial^{2}}{\partial r^{2}} \right)+r \frac{\partial}{\partial r}.
\end{equation}
For the motions of the points in $\mathbb{H}^{3}$, we define
\begin{equation}
\mathbb{T}(\mathbb{H}^{3}) \coloneqq \{v \mapsto (av+b)(cv+d)^{-1}: a, b, c, d \in \mathbb{C}, \, ac-bd = 1 \}.
\end{equation}
In the coordinate notation, this generic map sends~\cites[Equations~(1.1.9) and~(1.1.10)]{ElstrodtGrunewaldMennicke1998}
\begin{equation}
v = (z, r) \mapsto \left(\frac{(az+b) \overline{(cz+d)}+a \overline{c} r^{2}}{|cz+d|^{2}+|cr|^{2}}, \frac{r}{|cz+d|^{2}+|cr|^{2}} \right) \in \mathbb{H}^{3},
\end{equation}
from which the identification $\mathbb{T}(\mathbb{H}^{3}) \cong \mathrm{PSL}_{2}(\mathbb{C})$ follows. The metric, the volume element,~and the Laplace--Beltrami operator $\Delta$ are all invariant with respect to the maps in $\mathbb{T}(\mathbb{H}^{3})$, namely these maps are rigid motions of $\mathbb{H}^{3}$. A common choice for the fundamental domain of $\Gamma \backslash \mathbb{H}^{3}$~is the region~\cites[Page~47]{Picard1884}
\begin{equation}\label{eq:fundamental-domain}
\mathrm{vol}(\Gamma \backslash \mathbb{H}^{3}) \cong \left\{v \in \mathbb{H}^{3}: |x| \leq \frac{1}{2}, \, |y| \leq \frac{1}{2}, \, x^{2}+y^{2}+r^{2} \geq 1 \right\},
\end{equation}
whose volume is finite~\cites[Equation~(17)]{Humbert1919}[Theorem~7.4.1]{Thurston2022}
\begin{equation}\label{eq:volume}
\mathrm{vol}(\Gamma \backslash \mathbb{H}^{3}) = \frac{\zeta_{\mathbb{Q}(i)}(2)}{4\pi^{2}},
\end{equation}
where $\zeta_{K}(s)$ denotes the Dedekind zeta function of the field $K = \mathbb{Q}(i)$, as defined by~\eqref{eq:Dedekind}.

Let $L^{2}(\Gamma \backslash \mathbb{H}^{3})$ denote the set of all $\Gamma$-invariant functions over $\mathbb{H}^{3}$ that are square integrable with respect to $d\mu(v)$ over $\Gamma \backslash \mathbb{H}^{3}$. This constitutes a Hilbert space endowed with the Petersson inner product
\begin{equation}
\langle f, g \rangle \coloneqq \int_{\Gamma \backslash \mathbb{H}^{3}} f(v) \overline{g(v)} \, d\mu(v),
\end{equation}
which is well-defined for any $f, g \in L^{2}(\Gamma \backslash \mathbb{H}^{3})$. The Roelcke--Selberg spectral resolution of $\Delta$ over $\mathbb{H}^{3}$ parallels the case of $\mathbb{H}^{2}$, thereby admitting a self-adjoint extension on $L^{2}(\Gamma \backslash \mathbb{H}^{3})$ with its spectrum comprising both discrete and continuous components. Moreover, let $\mathcal{B}_{it_{j}}(\Gamma \backslash \mathbb{H}^{3})$ denote an orthonormal basis of Maa{\ss} cusp forms having Laplace eigenvalue~$\lambda_{j} = 1+t_{j}^{2} \geq \pi^{2}$; see~\cites[Proposition~2]{Szmidt1983}. For $u_{j} \in \mathcal{B}_{it_{j}}(\Gamma \backslash \mathbb{H}^{3})$, the sequence $(u_{j})_{j \in \mathbb{N}}$ is ordered such that the corresponding sequence of spectral parameters $(t_{j})_{j \in \mathbb{N}}$ increases monotonically. A Maa{\ss}~cusp form $u_{j}$ enjoys the Fourier expansion~\cites[Theorem~3.3.1]{ElstrodtGrunewaldMennicke1998}[Equation~(2.20)]{Sarnak1983}
\begin{equation}\label{eq:Fourier-expansion}
u_{j}(v) = r \sum_{0 \ne n \in \mathbb{Z}[i]} \rho_{j}(n) K_{it_{j}}(2\pi |n|r) \check{e}(nz),
\end{equation}
where $K_{\nu}$ denotes the modified Bessel function of the second kind. For the ensuing discussion, it is equally advantageous to introduce the Eisenstein series~\cites[Definition~8.1.1]{ElstrodtGrunewaldMennicke1998}[Equation~(8)]{Kubota1968}
\begin{equation}
E(v, s) \coloneqq \sum_{\gamma \in \Gamma_{\infty} \backslash \Gamma} r(\gamma v)^{s}, \qquad \Re(s) > 2,
\end{equation}
where $r(v)$ denotes the third coordinate of $v$, and $\Gamma_{\infty}$ denotes the stabiliser in $\Gamma$ of the cusp at $\infty$. Note that $\Gamma_{\infty} = \Gamma_{t} \cup \iota \Gamma_{t}$, where $\Gamma_{t} \coloneqq \{z \mapsto z+b: b \in \mathbb{Z}[i] \}$ denotes the translation~group, and $\iota(v) \coloneqq -z+rj$ denotes the involution, namely $z \mapsto (iz) (-i)^{-1}$ in~the~Hamiltonian sense. Hence, we identify
\begin{equation}
\Gamma_{\infty} \cong \left\{\pm \begin{pmatrix} 1 & b \\ 0 & 1 \end{pmatrix}, \, \pm \begin{pmatrix} -i & b^{\prime} \\ 0 & i \end{pmatrix}: b, b^{\prime} \in \mathbb{Z}[i] \right\} \Big/ \{\pm 1 \},
\end{equation}
from which it follows that~\cites[Page~394]{Szmidt1983}
\begin{equation}
E(v, s) = \sum_{\substack{c, d \in \mathbb{Z}[i] \\ (c, d) = 1}} \frac{r^{s}}{(|cz+d|^{2}+|cr|^{2})^{s}}.
\end{equation}
By expanding the Poincar\'{e} series into a double Fourier series in $x$ and $y$ and applying Poisson summation, the Eisenstein series $E(v, s)$ enjoys the Fourier expansion~\cites[Equation~(13)]{Asai1970}[Theorem~8.2.11]{ElstrodtGrunewaldMennicke1998}
\begin{equation}\label{eq:Fourier-expansion-Eisenstein}
E(v, s) = r^{s}+\frac{\pi}{s-1} \frac{\zeta_{\mathbb{Q}(i)}(s-1)}{\zeta_{\mathbb{Q}(i)}(s)} r^{2-s}+\frac{2\pi^{s} r}{\Gamma(s) \zeta_{\mathbb{Q}(i)}(s)} \sum_{0 \ne n \in \mathbb{Z}[i]} |n|^{s-1} \sigma_{1-s}(n) K_{s-1}(2\pi |n|r) \check{e}(nz).
\end{equation}
Hence, $E(v, s)$ is meromorphic over $\mathbb{C}$ as a function of $s$ and holomorphic for $\Re(s) \geq 1$~except for the simple pole at $s = 2$ with residue given in terms of the reciprocal of~\eqref{eq:volume}; cf.~\cites[Equation~(8.3.2)]{ElstrodtGrunewaldMennicke1998}.

We consider the Rankin--Selberg convolution $u_{j} \otimes u_{j} = 1 \boxplus \mathrm{sym}^{2} u_{j}$, where $\mathrm{sym}^{2} u_{j}$ denotes the symmetric square lift of $u_{j}$. The corresponding Rankin--Selberg $L$-function is defined by
\begin{equation}\label{eq:Rankin-Selberg}
L(s, u_{j} \otimes u_{j}) \coloneqq \sum_{0 \ne n \in \mathbb{Z}[i]} \frac{|\rho_{j}(n)|^{2}}{\mathrm{N}(n)^{s}}, \qquad \Re(s) > 1,
\end{equation}
which converges absolutely for $\Re(s) > 1$ and admits a meromorphic continuation to $\mathbb{C}$. The functional equation of the Rankin--Selberg $L$-function $L(s, u_{j} \otimes u_{j})$ is then inherited from~the Eisenstein series via the standard unfolding argument, namely
\begin{equation}\label{eq:functional-equation}
\Lambda(s, u_{j} \otimes u_{j}) \coloneqq \frac{\pi^{2s} \Gamma(s)^{2} \Gamma(s+it_{j}) \Gamma(s-it_{j})}{\Gamma(2s)} L(s, u_{j} \otimes u_{j}) = \Lambda(1-s, u_{j} \otimes u_{j}).
\end{equation}
In practice, it is straightforward to verify that the inner product~\cites[Page~803]{Koyama1995}
\begin{equation}\label{eq:inner-product}
\langle |u_{j}|^{2}, E(\cdot, 2s) \rangle = \int_{\Gamma \backslash \mathbb{H}^{3}} |u_{j}(v)|^{2} E(v, 2s) d\mu(v) = \frac{\Gamma(s)^{2} \Gamma(s+it_{j}) \Gamma(s-it_{j})}{8\pi^{2s} \Gamma(2s)} L(s, u_{j} \otimes u_{j})
\end{equation}
is invariant under $s \leftrightarrow 1-s$. For future notational convenience, one may abbreviate
\begin{equation}
L^{\diamond}(s, u_{j} \otimes u_{j}) \coloneqq \zeta_{\mathbb{Q}(i)}(2s) L(s, u_{j} \otimes u_{j}), \qquad \gamma(s, u_{j} \otimes u_{j}) \coloneqq \frac{\pi^{2s} \Gamma(s)^{2} \Gamma(s+it_{j}) \Gamma(s-it_{j})}{\Gamma(2s) \zeta_{\mathbb{Q}(i)}(2s)},
\end{equation}
so that the functional equation~\eqref{eq:functional-equation} translates to
\begin{equation}\label{eq:functional-equation-2}
\gamma(s, u_{j} \otimes u_{j}) L^{\diamond}(s, u_{j} \otimes u_{j}) = \gamma(1-s, u_{j} \otimes u_{j}) L^{\diamond}(1-s, u_{j} \otimes u_{j}).
\end{equation}

\subsection{Optimal mean square asymptotics for Fourier coefficients}
In his seminal work, Kuznetsov~\cites[Theorem~6]{Kuznetsov1978} leveraged the pre-Kuznetsov formula\footnote{The pre-Kuznetsov formula serves as an auxiliary version of the Kuznetsov formula~\cites[Theorem]{Motohashi1997} and arises from evaluating the Fourier coefficient of an automorphic kernel via two distinct approaches. One approach utilises the Bruhat decomposition, while the other utilises the spectral decomposition of $L^{2}(\Gamma \backslash \mathbb{H}^{3})$} to evaluate the spectral mean square of Fourier coefficients of Maa{\ss} cusp forms on the modular orbifold, namely we have for any $0 \ne n \in \mathbb{Z}$ and $\epsilon > 0$ that (cf.~\cites[Proposition~(4.1)]{Bruggeman1978}[Theorem~1]{Emirleroglu1997})
\begin{equation}\label{eq:Kuznetsov-mean-square}
\sum_{t_{j} \leq T} \frac{|\rho_{j}(n)|^{2}}{\cosh \pi t_{j}} = \frac{T^{2}}{\pi^{2}}+O_{\epsilon}(T \log T+T|n|^{\epsilon}+|n|^{\frac{1}{2}+\epsilon}).
\end{equation}
As a $3$-dimensional counterpart of~\eqref{eq:Kuznetsov-mean-square}, Raghavan and Sengupta~\cites[Theorem]{RaghavanSengupta1994}[Theorem~1]{RaghavanSengupta1994-2} investigated the spherical mean square of Fourier coefficients of Maa{\ss} cusp forms on a $3$-dimensional hyperbolic orbifold, which asserts in the case of $\Gamma \backslash \mathbb{H}^{3}$ that there exists an absolute and effectively computable constant $\Cl[constant]{Raghavan-Sengupta} > 0$ such that for any $0 \ne n \in \mathbb{Z}[i]$~and~$\epsilon > 0$,
\begin{equation}\label{eq:Raghavan-Sengupta}
\sum_{t_{j} \leq T} \frac{t_{j}}{\sinh \pi t_{j}} |\rho_{j}(n)|^{2} = \Cr{Raghavan-Sengupta} T^{3}+O_{\epsilon}(T^{\frac{5}{2}} \mathrm{N}(n)^{\frac{1}{2}+\epsilon}).
\end{equation}
In the meantime, Motohashi~\cites[Lemma~11]{Motohashi1997} established a comparatively coarser upper bound in place of an asymptotic formula, albeit with the error term of superior quality in~the spectral aspect, namely (after multiplying appropriate normalising factors)
\begin{equation}\label{eq:Motohashi-bound}
\sum_{t_{j} \leq T} \frac{t_{j}}{\sinh \pi t_{j}} |\rho_{j}(n)|^{2} \ll T^{3}+T^{2} \mathrm{N}(n).
\end{equation}
In principle, a more careful treatment of the diagonal term on the right-hand side of~\cites[Equation~(5.2)]{Motohashi1997} enables the isolation of the desired main term on the right-hand side of~\eqref{eq:Raghavan-Sengupta}. In this regard, after stating~\eqref{eq:Motohashi-bound}, Motohashi~\cites[Page~284]{Motohashi1997} writes
\begin{quote}
It should be remarked that our argument can be refined at least in two immediate respects outside the aforementioned possibility of its large sieve extension: The factor $|m|^{2}$ could be replaced by $|m|^{1+\epsilon}$ for any $\epsilon > 0$. Also an asymptotic equality could be proved instead of the inequality. We note that Raghavan and Sengupta [9] considered a sum that is essential the same as ours, and proved an asymptotic expression for it. However, it appears to us that their argument is hard to extend to the large sieve situation.
\end{quote}

In particular, Motohashi refrained from improving upon~\eqref{eq:Motohashi-bound}, as its sufficiency in deriving the Kuznetsov formula for the Picard orbifold $\Gamma \backslash \mathbb{H}^{3}$ aligned with his primary objective. The following result validates the above quotation and simultaneously sharpens~\eqref{eq:Raghavan-Sengupta} and~\eqref{eq:Motohashi-bound}.
\begin{proposition}\label{prop:mean-square}
Let $\Gamma = \mathrm{PSL}_{2}(\mathbb{Z}[i])$. Then there exists an absolute and effectively computable constant $\Cr{Raghavan-Sengupta} > 0$ such that for any $0 \ne n \in \mathbb{Z}[i]$ and $\epsilon > 0$,
\begin{equation}\label{eq:mean-square}
\sum_{t_{j} \leq T} \frac{t_{j}}{\sinh \pi t_{j}} |\rho_{j}(n)|^{2} = \Cr{Raghavan-Sengupta} T^{3}+O_{\epsilon}(T^{2} \mathrm{N}(n)^{\frac{1}{2}+\epsilon}+T^{1+\epsilon} \mathrm{N}(n)^{1+\epsilon}).
\end{equation}
\end{proposition}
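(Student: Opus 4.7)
The plan is to apply the pre-Kuznetsov formula for $\mathrm{PSL}_{2}(\mathbb{Z}[i])$ at the diagonal $m = n$, carefully analyse both the Bessel-transform kernel and the arithmetic Kloosterman side, and then transfer from a smooth to a sharp spectral cutoff via Landau's trick. Concretely, I would choose a smooth, nonnegative, even test function $h = h_{T}$ concentrated on $|t| \leq T$ (with $\int h \asymp T$), so that the pre-Kuznetsov identity takes the schematic form
\begin{equation*}
\sum_{j} \frac{t_{j} h(t_{j})}{\sinh \pi t_{j}} |\rho_{j}(n)|^{2} + \mathcal{E}(n; h) = \mathcal{D}(n; h) + \mathcal{K}(n; h),
\end{equation*}
where $\mathcal{D}$ is the diagonal contribution, $\mathcal{E}$ encodes the continuous spectrum via the Fourier expansion~\eqref{eq:Fourier-expansion-Eisenstein}, and
\begin{equation*}
\mathcal{K}(n; h) = \sum_{0 \ne c \in \mathbb{Z}[i]} \frac{S(n, n, c)}{\mathrm{N}(c)} \check{h}\!\left(\frac{n}{c}\right)
\end{equation*}
for an explicit Bessel-type integral transform $\check{h}$ of $h$.

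For the chosen normalisation the diagonal yields $\mathcal{D}(n; h) = \Cr{Raghavan-Sengupta} T^{3}(1 + o(1))$ independently of $n$, while the Eisenstein contribution $\mathcal{E}(n; h)$ reduces via the divisor coefficients $\sigma_{s-1}(n)$ and $\sigma_{0}(n) \ll_{\epsilon} \mathrm{N}(n)^{\epsilon}$ to size $O_{\epsilon}(T^{2+\epsilon} \mathrm{N}(n)^{\epsilon})$, which is well within the target error. The heart of the matter is the Kloosterman side. A stationary-phase-type analysis of $\check{h}$, combined with Mellin--Bessel contour shifts, is expected to yield pointwise bounds that exhibit oscillatory cancellation in the transitional range $\mathrm{N}(c) \asymp \mathrm{N}(n)/T$ and rapid decay outside. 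Inserting the Weil--Gundlach estimate~\eqref{eq:Weil} and separating the regime $\mathrm{N}(c) \lesssim \mathrm{N}(n)$, where~\eqref{eq:bounded-on-average} controls $|(n, n, c)|$ on average, from the tail regime $\mathrm{N}(c) \gtrsim \mathrm{N}(n)$, should then deliver the asymmetric bound
\begin{equation*}
\mathcal{K}(n; h) \ll_{\epsilon} T^{2} \mathrm{N}(n)^{\frac{1}{2}+\epsilon} + T^{1+\epsilon} \mathrm{N}(n)^{1+\epsilon}.
\end{equation*}

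Finally, to sharpen from the smooth weight $h$ to the indicator $\mathbf{1}_{t_{j} \leq T}$, I would invoke Landau's trick: the target sum is sandwiched between two smoothed versions corresponding to windows $T \pm \Delta$, the main-term discrepancy amounting to $O(T^{2} \Delta)$, whereupon optimising $\Delta$ as a small positive power of $T$ recovers~\eqref{eq:mean-square}. The hard part, and the locus of improvement over the prior estimates~\eqref{eq:Raghavan-Sengupta} and~\eqref{eq:Motohashi-bound}, lies in the Bessel-transform step: earlier treatments either invoked the trivial $\tau(c) \mathrm{N}(c)^{1/2}$ bound without harnessing cancellation in $|(n, n, c)|$ on average, or sacrificed the oscillation of $\check{h}$ through overly coarse contour shifts. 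Extracting simultaneously the full saving from~\eqref{eq:bounded-on-average} and a precise asymptotic evaluation of $\check{h}$ is what is expected to deliver the asymmetric error in~\eqref{eq:mean-square}, while Landau's trick furnishes the sharpened main term.
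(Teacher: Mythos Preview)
Your overall strategy---pre-Kuznetsov at the diagonal, Weil--Gundlach on the Kloosterman side with the split $\mathrm{N}(c) \lessgtr \mathrm{N}(n)$, and an asymptotic analysis of the Bessel transform---matches the paper's route. The paper, however, does \emph{not} use a smooth test function followed by Landau's trick for this proposition (Landau's trick appears later, in the proof of the mean Lindel\"{o}f hypothesis). Instead it inserts the \emph{sharp} weight $w_{T}(t) = (\tfrac{1}{4}+t^{2})\mathbf{1}_{[0,T]}(t)$ directly into Motohashi's pre-Kuznetsov identity and evaluates $\int_{0}^{T} \Psi(t,r)\,w_{T}(t)\,dt$ explicitly \`{a} la Raghavan--Sengupta. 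On the Kloosterman side the relevant transform $P(n,c)$ is reduced to an integral of $J_{1}$, and the split at $\mathrm{N}(c) \asymp \mathrm{N}(n)$ (not $\mathrm{N}(n)/T$) together with the two regimes of the $J_{1}$-asymptotics yields \emph{only} the term $T^{2}\mathrm{N}(n)^{1/2+\epsilon}$.

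This is where your account has a genuine gap: you attribute \emph{both} error terms to $\mathcal{K}(n;h)$, but in the paper the second term $T^{1+\epsilon}\mathrm{N}(n)^{1+\epsilon}$ arises on the \emph{spectral} side, from replacing the weighted sum $\sum_{j} \Psi(\,\cdot\,,t_{j})w_{T}$-integrated by the sharp sum $\sum_{t_{j}\leq T}$. If you insist on a smooth $h_{T}$, the Kloosterman side will not produce that term; it must instead emerge in your unsmoothing step. But your Landau's-trick paragraph only records the main-term discrepancy $O(T^{2}\Delta)$ and is silent on how to bound the spectral sum over the short window $[T-\Delta,T+\Delta]$, which is precisely where an $\mathrm{N}(n)$-dependent error reappears (one needs an a~priori input such as Motohashi's bound $T^{3}+T^{2}\mathrm{N}(n)$, localised). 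Without making this explicit, the derivation of the second error term is incomplete. Also, your Eisenstein estimate $T^{2+\epsilon}\mathrm{N}(n)^{\epsilon}$ is looser than the paper's $\tau(n)^{2}T(\log T)^{2}$, though still admissible.
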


\begin{remark}
The error term in~\eqref{eq:mean-square} is provably optimal. As a heuristic digression, we~may highlight that the left-hand side of~\eqref{eq:mean-Lindelof} involves a summation over $0 \ne n \in \mathbb{Z}[i]$ of norm not exceeding $(1+t_{j})^{2}$ by the approximate functional equation. Consequently, the contribution of the main term in~\eqref{eq:mean-square} becomes negligibly small, while the error term contributes $O_{\epsilon}(T^{3+\epsilon})$, in harmony with our desideratum. Furthermore, extending the result of Emirlero\u{g}lu~\cites[Theorem~1]{Emirleroglu1997} to our setting via the arithmetic machinery would pose additional challenges.
\end{remark}

\begin{remark}
The author became aware, subsequent to the completion of the present paper, that Bruggeman and Motohashi~\cites[Corollary~10.1]{BruggemanMotohashi2003} demonstrated a bound of comparable quality, extending the spherical family of Maa{\ss} cusp forms on $\Gamma \backslash \mathbb{H}^{3}$ to the family of cuspidal automorphic representations on $\Gamma \backslash \mathrm{PSL}_{2}(\mathbb{C})$. Our argument is more elementary than theirs.
\end{remark}

\begin{proof}
The primary difference between the derivations of~\eqref{eq:Raghavan-Sengupta} and~\eqref{eq:Motohashi-bound} lies in the analysis of certain integral transforms against Bessel functions. We intend to present a refined version of the fourth display on~\cites[Page~283]{Motohashi1997}. To this end, it is convenient to commence with a reformulation of the pre-Kuznetsov formula over $\mathbb{Q}(i)$. Given $(s_{1}, s_{2}) \in \mathbb{C}^{2}$ and $r \in \mathbb{R}$, let
\begin{equation}
\Lambda(s_{1}, s_{2}, r) \coloneqq \Gamma(s_{1}-1+ir) \Gamma(s_{1}-1-ir) \Gamma(s_{2}-1+ir) \Gamma(s_{2}-1-ir).
\end{equation}
Then the pre-Kuznetsov formula of Motohashi~\cites[Equation~(5.2)]{Motohashi1997} demonstrates that~for any $0 \ne n \in \mathbb{Z}[i]$,
\begin{multline}\label{eq:pre-Kuznetsov}
\sum_{j = 1}^{\infty} |\rho_{j}(n)|^{2} \Psi(t, t_{j})+2\pi^{2} \int_{-\infty}^{\infty} \frac{|\sigma_{ir}(n)|^{2}}{|\Gamma(1+ir) \zeta_{\mathbb{Q}(i)}(1+ir)|^{2}} \Psi(t, r) dr\\
 = 1+\pi^{2} \mathrm{N}(n) \sum_{0 \ne c \in \mathbb{Z}[i]} \frac{S(n, n, c)}{\mathrm{N}(c)^{2}} \Phi(t; n, c),
\end{multline}
where
\begin{align}
\Psi(t, r) &\coloneqq \frac{\pi \Lambda(2+it, 2-it, r)}{|\Gamma(\frac{3}{2}+it)|^{2}} = \frac{\pi^{2}(t^{2}-r^{2}) \cosh \pi t}{(\frac{1}{4}+t^{2}) \sinh(\pi(t+r)) \sinh(\pi(t-r))},\\
\Phi(t; n, c) &\coloneqq -i \int_{0}^{1} \int_{0}^{1} u^{it-1}(1-u)^{-it-1} v^{-\frac{1}{2}}\\
& \quad \hspace{.32em} \times \int_{(\alpha)} \frac{\Gamma(\eta)}{\Gamma(2-\eta)} \left(\frac{\mathrm{N}(c)u(1-u)v}{\pi^{2} \mathrm{N}(n)(1-(2\sin \vartheta_{0})^{2} u(1-u)v)} \right)^{\eta} \, d\eta \, du \, dv,
\end{align}
with $\alpha \in (0, \frac{1}{2})$ and $\vartheta_{0} \coloneqq \arg \frac{n}{c}$. We also invoke the pair of integral representations
\begin{equation}\label{eq:J-integral}
J_{\nu}(x) = \int_{(\alpha)} \frac{\Gamma(\eta+\frac{\nu}{2})}{\Gamma(1-\eta+\frac{\nu}{2})} \left(\frac{x}{2} \right)^{-2\eta} \frac{d\eta}{2\pi i}, \qquad \int_{0}^{\infty} J_{\nu}(x) \left(\frac{x}{2} \right)^{-\mu} \, dx = \frac{\Gamma(\frac{1+\nu-\mu}{2})}{\Gamma(\frac{1+\nu+\mu}{2})}
\end{equation}
for $x > 0$, $\Re(\nu) > -2\alpha > 0$, and $\frac{1}{2} < \Re(\mu) < 1+\Re(\nu)$.

With the notation established, we multiply both sides of~\eqref{eq:pre-Kuznetsov} by a nonnegative function
\begin{equation}
w_{T}(t) \coloneqq 
	\begin{dcases}
	\frac{|\Gamma(\frac{3}{2}+it)|^{2} \cosh \pi t}{\pi} = \frac{1}{4}+t^{2} & \text{if $t \in [0, T]$},\\
	0 & \text{otherwise},
	\end{dcases}
\end{equation}
for a sufficiently large parameter $T > 0$, and integrate with respect to $t \in \mathbb{R}$. The adaptation of the techniques of Kuznetsov~\cites[Section~5]{Kuznetsov1980} then enables the elimination of the weight function $\Psi(t, r)$; hence, the first term on the left-hand side of~\eqref{eq:pre-Kuznetsov} gives rise to the~spectral mean square in question, at the expense of introducing an error term of size $O_{\epsilon}(T^{1+\epsilon} \mathrm{N}(n)^{1+\epsilon})$. More precisely, it follows from~\cites[Equations~(14) and~(15)]{RaghavanSengupta1994} that
\begin{align}
\frac{1}{\pi |\Gamma(1+ir)|^{2}} \int_{0}^{T} \Psi(t, r) w_{T}(t) dt &= \int_{0}^{T} \frac{t^{2}-r^{2}}{2r} \left(\frac{1}{\sinh(\pi(t-r))}-\frac{1}{\sinh(\pi(t+r))} \right) \, dt\\
& = \begin{dcases}
\frac{1}{2}+O(T^{-\frac{\pi}{2}}+e^{-\pi |r|}) & \text{if $|r| \leq T-\log T$},\\
O((|r|-T)e^{-\pi(|r|-T)}) & \text{if $|r| \geq T+\log T$}.
	\end{dcases}
\end{align}
thereby leading to an approximation to the left-hand side of~\eqref{eq:mean-square} via the reasoning identical to that of Raghavan and Sengupta~\cites[Page~83]{RaghavanSengupta1994}[Pages~281--282]{RaghavanSengupta1994-2}. Moreover, the required main term in~\eqref{eq:mean-square} follows from the diagonal term on the right-hand side of~\eqref{eq:pre-Kuznetsov}, since
\begin{equation}
\hat{w}_{T}(0) = \frac{T^{3}}{3}+\frac{T}{4}.
\end{equation}
As for the continuous spectrum, arguing analogously to~\cites[Sections~3.10 and 3.11]{Titchmarsh1986} shows $\zeta_{\mathbb{Q}(i)}(1+ir) \gg (\log(2+|r|))^{-1}$ for $r \in \mathbb{R}$, so that the entire Eisenstein contribution amounts to $O(\tau(n)^{2} T(\log T)^{2})$, which is negligibly small and thus absorbed into the other terms.

Altogether, we are led to the problem of nontrivially estimating the Kloosterman term on the right-hand side of~\eqref{eq:pre-Kuznetsov}, and there exists an absolute and effectively computable~constant $\Cr{Raghavan-Sengupta} > 0$ such that for any $0 \ne n \in \mathbb{Z}[i]$,
\begin{equation}\label{eq:P-T}
\sum_{t_{j} \leq T} \frac{t_{j}}{\sinh \pi t_{j}} |\rho_{j}(n)|^{2} = \Cr{Raghavan-Sengupta} T^{3}+O \left(T^{2} \mathrm{N}(n)^{\frac{1}{2}} \sum_{0 \ne c \in \mathbb{Z}[i]} \frac{|S(n, n, c)|}{\mathrm{N}(c)^{\frac{3}{2}}} |P(n, c)| \right),
\end{equation}
where, by~\eqref{eq:J-integral}, we set
\begin{equation}
P(n, c) \coloneqq \int_{0}^{1} \int_{0}^{1} \frac{u^{-1} (1-u)^{-1} v^{\frac{1}{2}}}{\arctanh(1-2u)} J_{1} \left(\frac{2\pi \mathrm{N}(n)^{\frac{1}{2}}}{\mathrm{N}(c)^{\frac{1}{2}}} \left(\frac{1-(2\sin \vartheta_{0})^{2} u(1-u)v}{u(1-u)v} \right)^{\frac{1}{2}} \right) \, du \, dv,
\end{equation}
with $J_{\nu}$ denoting the Bessel function of the first kind. Invoking the asymptotic formul{\ae}~\cites[Equations~(8.440) and~(8.451.1)]{GradshteynRyzhik2014}
\begin{equation}\label{eq:J-1}
J_{1}(y) \sim 
	\begin{dcases}
	\frac{y}{2} & \text{as $y \to 0$},\\
	\sqrt{\frac{2}{\pi y}} \cos \left(y-\frac{3\pi}{4} \right) = \frac{\sin y-\cos y}{\sqrt{\pi y}} & \text{as $y \to \infty$},
	\end{dcases}
\end{equation}
we truncate the summation over $c$ in~\eqref{eq:P-T} according as $\mathrm{N}(c) \ll \mathrm{N}(n)$ or not. In the former range, estimating everything trivially with the second case of~\eqref{eq:J-1} in conjunction with the Weil--Gundlach bound~\eqref{eq:Weil} yields
\begin{multline}\label{eq:former-range}
T^{2} \mathrm{N}(n)^{\frac{1}{2}} \sum_{0 \ne \mathrm{N}(c) \ll \mathrm{N}(n)} \frac{|S(n, n, c)|}{\mathrm{N}(c)^{\frac{3}{2}}} |P(n, c)|\\
\ll T^{2} \mathrm{N}(n)^{\frac{1}{2}} \sum_{0 \ne \mathrm{N}(c) \ll \mathrm{N}(n)} \frac{(n, c) \tau(c) \mathrm{N}(c)^{\frac{1}{2}}}{\mathrm{N}(c)^{\frac{3}{2}}} \frac{\mathrm{N}(c)^{\frac{1}{4}}}{\mathrm{N}(n)^{\frac{1}{4}}} \ll_{\epsilon} T^{2} \mathrm{N}(n)^{\frac{1}{2}+\epsilon},
\end{multline}
since the greatest common divisor $(n, c)$ is bounded on average by~\eqref{eq:bounded-on-average}. In a similar~manner, the contribution from the tail range $\mathrm{N}(c) \gg \mathrm{N}(n)$ amounts to
\begin{multline}
T^{2} \mathrm{N}(n)^{\frac{1}{2}} \sum_{0 \ne \mathrm{N}(c) \gg \mathrm{N}(n)} \frac{|S(n, n, c)|}{\mathrm{N}(c)^{\frac{3}{2}}} |P(n, c)|\\
\ll T^{2} \mathrm{N}(n)^{\frac{1}{2}} \sum_{0 \ne \mathrm{N}(c) \gg \mathrm{N}(n)} \frac{(n, c) \tau(c) \mathrm{N}(c)^{\frac{1}{2}}}{\mathrm{N}(c)^{\frac{3}{2}}} \frac{\mathrm{N}(n)^{\frac{1}{2}}}{\mathrm{N}(c)^{\frac{1}{2}}} \ll_{\epsilon} T^{2} \mathrm{N}(n)^{\frac{1}{2}+\epsilon}.\label{eq:latter-range}
\end{multline}
Combining~\eqref{eq:P-T},~\eqref{eq:former-range}, and~\eqref{eq:latter-range} thus completes the proof of \cref{prop:mean-square}.
\end{proof}

While not essential to our subsequent argument, a comparison of the main term with~\cites[Theorem]{RaghavanSengupta1994}[Theorem~1]{RaghavanSengupta1994-2} determines the leading coefficient in~\eqref{eq:mean-square}.
\begin{proposition}
Keep the notation as in \cref{prop:mean-square}. Then
\begin{equation}
\Cr{Raghavan-Sengupta} = \frac{\mathrm{vol}(\Gamma \backslash \mathbb{H}^{3})}{6} = \frac{G}{144} = 0.00636 \cdots,
\end{equation}
where $G$ is Catalan's constant.
\end{proposition}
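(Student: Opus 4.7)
The plan is to leverage the explicit version of the corresponding result by Raghavan and Sengupta~\cites[Theorem]{RaghavanSengupta1994}[Theorem~1]{RaghavanSengupta1994-2} in order to match the constants, and subsequently to evaluate the volume of $\Gamma \backslash \mathbb{H}^{3}$ in closed form. Concretely, Raghavan and Sengupta carry out the same spectral average under compatible normalisations and identify, through a direct inspection of their proof, the leading coefficient as $\mathrm{vol}(\Gamma \backslash \mathbb{H}^{3})/6$. Since the main term in \cref{prop:mean-square} is extracted from the same diagonal contribution in the pre-Kuznetsov formula (the term $1$ on the right-hand side of~\eqref{eq:pre-Kuznetsov}) and is insensitive to the Kloosterman-side refinement we effect en route to the sharper error term, this identification passes to our setting verbatim.

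With this identification in hand, the evaluation of $\mathrm{vol}(\Gamma \backslash \mathbb{H}^{3})$ is elementary. Invoking the Humbert formula~\eqref{eq:volume},
\begin{equation*}
\mathrm{vol}(\Gamma \backslash \mathbb{H}^{3}) = \frac{\zeta_{\mathbb{Q}(i)}(2)}{4\pi^{2}},
\end{equation*}
and exploiting the Kronecker factorisation $\zeta_{\mathbb{Q}(i)}(s) = \zeta(s) L(s, \chi_{-4})$, where $\chi_{-4}$ denotes the nontrivial Dirichlet character modulo $4$, one writes
\begin{equation*}
\zeta_{\mathbb{Q}(i)}(2) = \zeta(2) L(2, \chi_{-4}) = \frac{\pi^{2}}{6} \cdot G,
\end{equation*}
since $\zeta(2) = \pi^{2}/6$ and $L(2, \chi_{-4}) = \sum_{k \geq 0} (-1)^{k}/(2k+1)^{2} = G$ is, by definition, Catalan's constant. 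Substituting yields $\mathrm{vol}(\Gamma \backslash \mathbb{H}^{3}) = G/24$ and hence $\Cr{Raghavan-Sengupta} = \mathrm{vol}(\Gamma \backslash \mathbb{H}^{3})/6 = G/144$, as claimed.

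The only genuine obstacle in this programme is the first step: verifying that Raghavan and Sengupta's normalisations for $\rho_{j}(n)$, the Petersson inner product, and the Fourier expansion convention align with those of Motohashi~\cites[Equation~(5.2)]{Motohashi1997} used to formulate~\eqref{eq:pre-Kuznetsov}. This is a matter of careful bookkeeping rather than deep technical difficulty, but it is essential for pinning down the precise factor of $1/6$. Any discrepancy between the two normalisations can be reconciled by comparing the kernel $\Psi(t, r)$ and the cutoff $w_{T}(t)$ deployed in each work, using the identity $|\Gamma(1+it_{j})|^{2} = \pi t_{j}/\sinh \pi t_{j}$ to convert between the archimedean weights implicit in the two formulations.
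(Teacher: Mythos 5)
Your approach is essentially the same as the paper's: both cite the explicit form of Raghavan--Sengupta's theorem for the leading constant and then evaluate $\mathrm{vol}(\Gamma \backslash \mathbb{H}^{3})$ via $\zeta_{\mathbb{Q}(i)}(2) = \zeta(2) L(2, \chi_{-4}) = \pi^{2} G / 6$ and Humbert's formula~\eqref{eq:volume}. The one small difference is that you assert Raghavan--Sengupta's coefficient is \emph{directly} $\mathrm{vol}(\Gamma \backslash \mathbb{H}^{3})/6$, whereas the paper's proof reveals that their stated constant is actually expressed in terms of the discriminant $|d_{K}| = 4$ and the weight integral $2 \int_{0}^{\infty} u / \sinh(\pi u) \, du = \tfrac{1}{2}$, and these two additional substitutions are what collapse it to $\mathrm{vol}(\Gamma \backslash \mathbb{H}^{3})/6$; this is precisely the normalisation bookkeeping you flagged as the remaining obstacle, so it is a correctly anticipated but not fully executed step rather than a wrong turn.
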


\begin{proof}
The claim follows from substituting~\eqref{eq:volume} and
\begin{equation}
2 \int_{0}^{\infty} \frac{u}{\sinh \pi u} \, du = \frac{1}{2}, \qquad |d_{K}| = 4, \qquad \zeta_{\mathbb{Q}(i)}(2) = \zeta(2) L(2, \chi_{-4}) = \frac{\pi^{2} G}{6}
\end{equation}
into~\cites[Theorem~1]{RaghavanSengupta1994-2}, where $\chi_{-4}$ denotes the nontrivial Dirichlet character modulo~$4$.
\end{proof}

We now embark on enhancing the convexity machinery of Iwaniec~\cites[Pages~152--153]{Iwaniec1984} to demonstrate an auxiliary version of \cref{thm:mean-Lindelof}.
\begin{proposition}\label{prop:Iwaniec-consult}
Let $\Gamma = \mathrm{PSL}_{2}(\mathbb{Z}[i])$, and let $u_{j} \in \mathcal{B}_{it_{j}}(\Gamma \backslash \mathbb{H}^{3})$ traverse an orthonormal~basis of Maa{\ss} cusp forms. Then we have for any arbitrarily small $\delta > 0$ that
\begin{equation}\label{eq:Iwaniec-consult}
\sum_{t_{j} \leq T} \frac{t_{j}}{\sinh \pi t_{j}} L^{\diamond}(1+\delta, u_{j} \otimes u_{j}) \ll \frac{T^{3}}{\delta}.
\end{equation}
\end{proposition}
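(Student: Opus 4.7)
The plan is to insert the Dirichlet series representation of $L^{\diamond}(1+\delta, u_{j} \otimes u_{j})$, swap the order of summation, and then reduce the bound to a truncation argument driven by \cref{prop:mean-square}. Specifically, for $\Re(s) > 1$ one has $L^{\diamond}(s, u_{j} \otimes u_{j}) = \zeta_{\mathbb{Q}(i)}(2s) \sum_{n} |\rho_{j}(n)|^{2} \mathrm{N}(n)^{-s}$, so that
\[
\sum_{t_{j} \leq T} \frac{t_{j}}{\sinh \pi t_{j}} L^{\diamond}(1+\delta, u_{j} \otimes u_{j}) = \zeta_{\mathbb{Q}(i)}(2+2\delta) \sum_{0 \ne n \in \mathbb{Z}[i]} \frac{\Sigma(n, T)}{\mathrm{N}(n)^{1+\delta}},
\]
where $\Sigma(n, T) \coloneqq \sum_{t_{j} \leq T} \frac{t_{j}}{\sinh \pi t_{j}} |\rho_{j}(n)|^{2}$ and the prefactor $\zeta_{\mathbb{Q}(i)}(2+2\delta) = O(1)$ is harmless. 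Since every term is nonnegative, the proof reduces to a termwise truncation argument.

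I would then split the $n$-sum at $\mathrm{N}(n) \leq T^{2}$ versus $\mathrm{N}(n) > T^{2}$, a natural dichotomy reminiscent of the Weyl barrier inherent in the approximate functional equation. On the short range, substituting the sharp mean square $\Sigma(n, T) = \Cr{Raghavan-Sengupta} T^{3}+O_{\epsilon}(T^{2} \mathrm{N}(n)^{\frac{1}{2}+\epsilon}+T^{1+\epsilon} \mathrm{N}(n)^{1+\epsilon})$ from \cref{prop:mean-square} yields a main contribution $\Cr{Raghavan-Sengupta} T^{3} \zeta_{\mathbb{Q}(i)}(1+\delta) \asymp T^{3}/\delta$, capitalising on the simple pole of $\zeta_{\mathbb{Q}(i)}$ at $s = 1$; the two error terms deliver only $O_{\epsilon}(T^{3+\epsilon})$, which is comfortably absorbed within $T^{3}/\delta$ when $\delta$ is taken arbitrarily small. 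For the long range, I would revert to a pointwise Rankin--Selberg argument for each fixed $j$: the inner product identity~\eqref{eq:inner-product}, combined with the residue of $E(v, 2s)$ at $s = 1$ and the elementary identity $|\Gamma(1+it_{j})|^{2} = \pi t_{j}/\sinh \pi t_{j}$, furnishes $\mathrm{Res}_{s=1} L(s, u_{j} \otimes u_{j}) \asymp \sinh \pi t_{j}/t_{j}$, so partial summation on $A_{j}(X) \coloneqq \sum_{\mathrm{N}(n) \leq X} |\rho_{j}(n)|^{2} \sim (\sinh \pi t_{j}/t_{j}) X$ yields
\[
\sum_{\mathrm{N}(n) > T^{2}} \frac{|\rho_{j}(n)|^{2}}{\mathrm{N}(n)^{1+\delta}} \ll \frac{\sinh \pi t_{j}}{t_{j}} \cdot \frac{T^{-2\delta}}{\delta}.
\]
After multiplication by $t_{j}/\sinh \pi t_{j}$ and summation over $t_{j} \leq T$ via Weyl's law, the tail contributes $\ll T^{3-2\delta}/\delta \ll T^{3}/\delta$.

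The principal obstacle anticipated is uniformly controlling the Rankin--Selberg remainder $A_{j}(X)-(\sinh \pi t_{j}/t_{j}) X$ as a function of both $X$ and $t_{j}$, since a weak polynomial dependence on $t_{j}$ could in principle offset the gain afforded by Weyl's law on the tail. Fortunately, the exponential decay of the prefactor $t_{j}/\sinh \pi t_{j}$ comfortably swamps any such polynomial dependence, ensuring that the Rankin--Selberg remainder integrates to a subordinate correction safely subsumed within $T^{3}/\delta$.
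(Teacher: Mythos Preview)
Your approach and the paper's share the same engine---swap sums and invoke \cref{prop:mean-square}---but they differ in how the $n$-sum is truncated, and that difference is where a genuine gap appears.

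The head $\mathrm{N}(n)\leq T^{2}$ is fine (the error contributions are in fact $O_{\epsilon}(T^{3-2\delta+\epsilon})$ rather than $O_{\epsilon}(T^{3+\epsilon})$, but either way they are subordinate). The problem is in the tail. Your dismissal of the uniformity issue rests on a false premise: the Rankin--Selberg remainder $A_{j}(X)-(\sinh \pi t_{j}/t_{j})X$ is \emph{not} polynomially bounded in $t_{j}$. Both $A_{j}(X)$ and the leading term are of exponential size in $t_{j}$, since already $|\rho_{j}(1)|^{2}\asymp \sinh \pi t_{j}/t_{j}$. The prefactor $t_{j}/\sinh \pi t_{j}$ therefore gives no surplus decay; it merely cancels the expected exponential growth. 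What you would actually need is a bound of the shape
\[
\frac{t_{j}}{\sinh\pi t_{j}}\bigl(A_{j}(X)-c_{j}X\bigr)\ll (1+t_{j})^{A}X^{\theta}
\]
with some $\theta<1$, uniformly in $j$---an effective individual Rankin--Selberg remainder estimate in the spectral aspect that you have not supplied and that is not otherwise available in the paper's toolbox.

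The paper circumvents this entirely by working through the integral representation~\eqref{eq:inner-product}: bounding $|E(v,2+2\delta)|\ll \delta^{-1}+r^{2+2\delta}$ on the fundamental domain and using the pointwise decay $|K_{it_{j}}(y)|\ll e^{\pi t_{j}/2-y}$ for $y$ large truncates the Fourier expansion at $\mathrm{N}(n)\leq t_{j}$ essentially for free. After that truncation one swaps and applies \cref{prop:mean-square} exactly as you propose. In short, the Eisenstein unfolding is precisely the device that stands in for the missing uniform Rankin--Selberg input in your outline.
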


\begin{proof}
Since the Rankin--Selberg $L$-function under consideration lies off the critical strip, no oscillation can be detected. Hence, to estimate the summand in~\eqref{eq:Iwaniec-consult} individually, it follows from the integral representation~\eqref{eq:inner-product}, the Fourier expansion~\eqref{eq:Fourier-expansion}, and the standard bound (cf.~\eqref{eq:fundamental-domain} and~\eqref{eq:Fourier-expansion-Eisenstein})
\begin{equation}
|E(v, 2s)| \ll \frac{1}{\delta}+r^{2+2\delta}, \qquad \Re(s) = 1+\delta, \qquad r \coloneqq \Im(v) \geq \frac{1}{\sqrt{2}}
\end{equation}
that
\begin{align}
\frac{t_{j}}{\sinh \pi t_{j}} L(1+\delta, u_{j} \otimes u_{j}) &\ll t_{j}^{-2\delta} \left(\frac{1}{\delta}+\int_{\frac{1}{\sqrt{2}}}^{\infty} r^{2\delta-1} \int_{0}^{1} \int_{0}^{1} |u_{j}(v)|^{2} \, dx dy dr \right)\\
&\ll t_{j}^{-2\delta} \left(\frac{1}{\delta}+\sum_{0 \ne n \in \mathbb{Z}[i]} |\rho_{j}(n)|^{2} \int_{\frac{1}{\sqrt{2}}}^{\infty} K_{it_{j}}(2\pi|n|r)^{2} r^{1+2\delta} dr \right).
\end{align}
By~\cites[Equation~(6.576.4)]{GradshteynRyzhik2014} and Stirling's formula~\eqref{eq:Stirling}, we estimate the integral on the right-hand side for $\mathrm{N}(n) \leq t_{j}$ by
\begin{equation}
\int_{0}^{\infty} K_{it_{j}}(2\pi|n|r)^{2} r^{1+2\delta} dr = \frac{\Gamma(1+\delta)^{2} |\Gamma(1+\delta+it_{j})|^{2}}{8(\pi|n|)^{2+2\delta} \Gamma(2+2\delta)} \ll \frac{\mathrm{N}(n)^{-1-\delta} t_{j}^{1+2\delta}}{\sinh \pi t_{j}}.
\end{equation}
On the other hand, if $\mathrm{N}(n) > t_{j}$, then we employ the cruder bounds $|K_{it_{j}}(y)| \ll \exp(\frac{\pi}{2} t_{j}-y)$ and $|\rho_{j}(n)| \leq |\rho_{j}(1)| \mathrm{N}(n)$, thereby ensuring the truncation of the summation up to $\mathrm{N}(n) \leq t_{j}$. Consequently, executing the summation over $t_{j} \leq T$ via Weyl's law~\cites[Theorem~8.9.1]{ElstrodtGrunewaldMennicke1998} and \cref{prop:mean-square} yields
\begin{equation}
\sum_{t_{j} \leq T} \frac{t_{j}}{\sinh \pi t_{j}} L(1+\delta, u_{j} \otimes u_{j}) \ll \frac{T^{3-2\delta}}{\delta}+\sum_{t_{j} \leq T} \frac{t_{j}}{\sinh \pi t_{j}} \sum_{\mathrm{N}(n) \leq t_{j}} \frac{|\rho_{j}(n)|^{2}}{\mathrm{N}(n)^{1+\delta}} \ll \frac{T^{3}}{\delta}.
\end{equation}
As the boundedness of the Dedekind zeta function off the critical strip justifies the transition from $L(1+\delta, u_{j} \otimes u_{j})$ to $L^{\diamond}(1+\delta, u_{j} \otimes u_{j})$, the proof of \cref{prop:Iwaniec-consult} is thus complete.
\end{proof}

\subsection{Endgame: Landau's trick}
Recall that our objective is to demonstrate the existence of an absolute and effectively computable constant $\Cr{Koyama} > 0$ such that for any $\epsilon > 0$,
\begin{equation}
\sum_{t_{j} \leq T} \frac{t_{j}}{\sinh \pi t_{j}} |L(w, u_{j} \otimes u_{j})| \ll_{\epsilon} (1+|\Im(w)|)^{\Cr{Koyama}} T^{3+\epsilon}, \qquad \Re(w) = \frac{1}{2}.
\end{equation}
Our proof strategy fundamentally traces its origins to the ideas of Landau~\cites[Section~3]{Landau1915}. Moreover, one may intentionally adapt Matthes's approach~\cites[Section~5]{Matthes1995-2}, alongside the necessary corrections at some places, to facilitate a more effective methodological comparison.

It is convenient to embed the normalised Rankin--Selberg $L$-function into a single Dirichlet series, namely
\begin{equation}\label{eq:embed}
L^{\diamond}(s, u_{j} \otimes u_{j}) = \sum_{0 \ne n \in \mathbb{Z}[i]} \frac{1}{\mathrm{N}(n)^{s}} \sum_{d^{2} \mid n} \left|\rho_{j} \left(\frac{n}{d^{2}} \right) \right|^{2}, \qquad \Re(s) > 1,
\end{equation}
where we utilised the Dirichlet series expansions~\eqref{eq:Dedekind} and~\eqref{eq:Rankin-Selberg}. Given $x > 0$, $0 < \Re(w) < 1$, $\nu \in \mathbb{N}_{\geq 2}$, and $\xi \in \mathbb{R}$, the Cauchy formula for repeated integration yields
\begin{align}
\mathcal{R}_{it_{j}}(x, w, \xi, \nu) &\coloneqq \int_{0}^{x} \int_{0}^{x_{1}} \cdots \int_{0}^{x_{\nu-1}} \int_{(\xi)} L^{\diamond}(s+w, u_{j} \otimes u_{j}) \frac{x_{\nu}^{s}}{s} \frac{ds}{2\pi i} \, dx_{\nu} \cdots dx_{2} \, dx_{1}\\
& = \int_{(\xi)} L^{\diamond}(s+w, u_{j} \otimes u_{j}) \frac{x^{s+\nu}}{s(s+1) \cdots (s+\nu)} \frac{ds}{2\pi i}.
\end{align}
While addressing repeated integration in such generality is dispensable, this argument serves as a heuristic beacon for its potential applicability to relevant scenarios. It follows from~\eqref{eq:embed} and Perron's formula that for $\xi > 1-\Re(w)$,
\begin{equation}\label{eq:Perron}
\mathcal{R}_{it_{j}}(x, w, \xi, \nu) = \int_{0}^{x} \int_{0}^{x_{1}} \cdots \int_{0}^{x_{\nu-1}} \sum_{0 \ne \mathrm{N}(d) \leq \sqrt{x_{\nu}}} \frac{1}{\mathrm{N}(d)^{2w}} \ \sideset{}{^{\prime}} \sum_{0 \ne \mathrm{N}(n) \leq \frac{x_{\nu}}{\mathrm{N}(d)^{2}}} \frac{|\rho_{j}(n)|^{2}}{\mathrm{N}(n)^{w}} \, dx_{\nu} \cdots dx_{2} \, dx_{1},
\end{equation}
where the prime indicates that the last term of the sum must be multiplied by $\frac{1}{2}$ when $x_{\nu} \in \mathbb{Z}$. It is straightforward to verify from the functional equation~\eqref{eq:functional-equation-2}, Stirling's formula~\eqref{eq:Stirling}, and the Phragm\'{e}n--Lindel\"{o}f principle that $\mathcal{R}_{it_{j}}(x, w, \mu, \nu)$ is well-defined for $\mu > 1-\frac{\nu}{2}-\Re(w)$. Note that Matthes's range in~\cites[Page~8]{Matthes1995-2} is erroneous. The residue theorem now leads to our key identity
\begin{multline}\label{eq:key-identity}
\mathcal{R}_{it_{j}}(x, w-1, \xi+1, \nu-1)+(w-1) \mathcal{R}_{it_{j}}(x, w, \xi, \nu)\\
 = -\mathcal{R}_{it_{j}}(x, w-1, \mu+1, \nu-1)-(w-1) \mathcal{R}_{it_{j}}(x, w, \mu, \nu)+(w-1) L^{\diamond}(w, u_{j} \otimes u_{j}) \frac{x^{\nu}}{\nu!}
\end{multline}
for some $\mu \in (1-\frac{\nu}{2}-\Re(w), 0)$. Since
\begin{multline}
\mathcal{R}_{it_{j}}(x, w-1, \xi+1, \nu-1)+(w-1) \mathcal{R}_{it_{j}}(x, w, \xi, \nu)\\
 = \int_{(\xi)} (s+w-1) L^{\diamond}(s+w, u_{j} \otimes u_{j}) \frac{x^{s+\nu}}{s(s+1) \cdots (s+\nu)} \frac{ds}{2\pi i},
\end{multline}
there exists no pole at $s = 1-w$ corresponding to the simple pole at $s = 1$ of $L^{\diamond}(s, u_{j} \otimes u_{j})$. The analysis of the left-hand side of~\eqref{eq:key-identity} is conducted as follows.
\begin{lemma}\label{lem:Matthes}
Keep the notation and assumptions as above. Then we have for any $\epsilon > 0$~that
\begin{multline}
\sum_{t_{j} \leq T} \frac{t_{j}}{\sinh \pi t_{j}}(\mathcal{R}_{it_{j}}(x, w-1, \xi+1, \nu-1)+(w-1) \mathcal{R}_{it_{j}}(x, w, \xi, \nu))\\
\ll_{\epsilon} T^{3} x^{\nu+\epsilon}+T^{2} x^{\frac{3}{2}+\nu-\Re(w)+\epsilon}+T^{1+\epsilon} x^{2+\nu-\Re(w)+\epsilon}.
\end{multline}
\end{lemma}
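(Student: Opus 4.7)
My plan is to combine simultaneously the two equivalent representations of the left-hand side: the contour integral $\int_{(\xi)}(s+w-1)L^{\diamond}(s+w,u_{j}\otimes u_{j})\frac{x^{s+\nu}}{s(s+1)\cdots(s+\nu)}\frac{ds}{2\pi i}$ displayed just above the statement, and the iterated Perron expansion from~\eqref{eq:Perron}. I would attack the latter term-by-term via \cref{prop:mean-square}. Reorganising each Perron sum by $m = d^{2} n$ and setting $A_{j}(m) \coloneqq \sum_{d^{2} \mid m} |\rho_{j}(m/d^{2})|^{2}$, \cref{prop:mean-square} yields
\[
\sum_{t_{j} \leq T} \frac{t_{j}}{\sinh \pi t_{j}} A_{j}(m) = \Cr{Raghavan-Sengupta} T^{3} \tau_{2}(m) + O_{\epsilon}\bigl(T^{2} \mathrm{N}(m)^{\frac{1}{2}+\epsilon} + T^{1+\epsilon} \mathrm{N}(m)^{1+\epsilon}\bigr),
\]
where $\tau_{2}(m) \coloneqq \#\{d : d^{2} \mid m\}$ and the auxiliary factor $\#\{d : d^{2} \mid m\} \ll \mathrm{N}(m)^{\epsilon}$ has been absorbed into the error.

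For the main-term contribution I would invoke the identity $\sum_{m} \tau_{2}(m) \mathrm{N}(m)^{-s} = \zeta_{\mathbb{Q}(i)}(s) \zeta_{\mathbb{Q}(i)}(2s)$ together with the inverse Perron formula to reassemble the main-term pieces of $\mathcal{R}_{it_{j}}(x, w-1, \xi+1, \nu-1)$ and $(w-1) \mathcal{R}_{it_{j}}(x, w, \xi, \nu)$ -- via the same algebraic manipulation that produced the displayed contour identity -- into
\[
\Cr{Raghavan-Sengupta} T^{3} \int_{(\xi)} (s+w-1) \zeta_{\mathbb{Q}(i)}(s+w) \zeta_{\mathbb{Q}(i)}(2(s+w)) \frac{x^{s+\nu}}{s(s+1)\cdots(s+\nu)} \frac{ds}{2\pi i}.
\]
The factor $(s+w-1)$ would cancel the simple pole of $\zeta_{\mathbb{Q}(i)}(s+w)$ at $s = 1-w$, leaving the integrand holomorphic on $\Re(s) > \frac{1}{2}-\Re(w)$. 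I would then shift the contour from $\Re(s) = \xi$ to $\Re(s) = \epsilon$, crossing no poles for the relevant regime $\Re(w) \geq \frac{1}{2}$; polynomial vertical growth of $\zeta_{\mathbb{Q}(i)}$ combined with the $(\nu+1)$-fold denominator $s(s+1)\cdots(s+\nu)$ secures absolute convergence for $\nu$ sufficiently large, delivering the first term $T^{3} x^{\nu+\epsilon}$.

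The error contributions would be treated by direct substitution. Inserting the $O_{\epsilon}$-bound for $\sum_{t_{j}} \frac{t_{j}}{\sinh \pi t_{j}} A_{j}(m)$ into each Perron sum and using $\sum_{\mathrm{N}(m) \leq X} \mathrm{N}(m)^{\alpha} \ll X^{1+\alpha}$, one finds for $(w-1) \mathcal{R}_{it_{j}}(x, w, \xi, \nu)$ -- whose weight on $m$ is $\mathrm{N}(m)^{-\Re(w)+\epsilon}$ -- the estimate $T^{2} X^{\frac{3}{2}-\Re(w)+\epsilon} + T^{1+\epsilon} X^{2-\Re(w)+\epsilon}$, and after $\nu$-fold simplex integration $T^{2} x^{\nu+\frac{3}{2}-\Re(w)+\epsilon} + T^{1+\epsilon} x^{\nu+2-\Re(w)+\epsilon}$. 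An analogous computation for $\mathcal{R}_{it_{j}}(x, w-1, \xi+1, \nu-1)$ with weight $\mathrm{N}(m)^{1-\Re(w)+\epsilon}$ and $(\nu-1)$-fold integration produces the same orders, and summing these with the main-term bound finishes the argument.

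The hard part will be securing the main-term cancellation. Without the algebraic combination specified in the lemma, the main-term contour integral would retain the simple pole at $s = 1-w$ and contribute an inadmissible residue of size $x^{\nu+1-\Re(w)}$ dominating the desired $x^{\nu+\epsilon}$. The factor $(s+w-1)$ is engineered precisely to cancel this pole, and the critical verification is that the same cancellation propagates through the replacement of $L^{\diamond}(s+w, u_{j} \otimes u_{j})$ by its spectral proxy $\Cr{Raghavan-Sengupta} T^{3} \zeta_{\mathbb{Q}(i)}(s+w) \zeta_{\mathbb{Q}(i)}(2(s+w))$, which holds because both carry the same simple pole at $s = 1-w$ with a matching residue structure inherited from $\mathrm{res}_{z=1} \zeta_{\mathbb{Q}(i)}(z)$.
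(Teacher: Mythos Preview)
Your proposal is correct and follows essentially the same route as the paper: both arguments apply \cref{prop:mean-square} to the Perron representation~\eqref{eq:Perron}, separate main and error contributions, and observe that the main terms from the two $\mathcal{R}$-pieces cancel. The only difference is cosmetic. The paper works ``downstairs'': it approximates $\sum_{\mathrm{N}(d)\leq\sqrt{y}}\mathrm{N}(d)^{-2w}\sum_{\mathrm{N}(n)\leq y/\mathrm{N}(d)^{2}}\mathrm{N}(n)^{-w}$ by $\frac{\Cr{key-identity}\,y^{1-w}}{1-w}+O_{\epsilon}(y^{\epsilon})$, performs the $\nu$-fold (resp.\ $(\nu-1)$-fold) simplex integration explicitly, and exhibits the main terms $\pm\frac{\Cr{key-identity}T^{3}x^{1+\nu-w}}{(2-w)\cdots(1+\nu-w)}$ cancelling by hand. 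You work ``upstairs'': you reassemble the main-term pieces into the single contour integral $\Cr{Raghavan-Sengupta}T^{3}\int_{(\xi)}(s+w-1)\zeta_{\mathbb{Q}(i)}(s+w)\zeta_{\mathbb{Q}(i)}(2(s+w))\frac{x^{s+\nu}}{s(s+1)\cdots(s+\nu)}\frac{ds}{2\pi i}$ and see the cancellation as the factor $(s+w-1)$ annihilating the simple pole of $\zeta_{\mathbb{Q}(i)}(s+w)$, then shift to $\Re(s)=\epsilon$. Your version is arguably tidier and makes the structural reason for the cancellation transparent; the paper's version avoids having to justify the reassembly step and the contour shift (in particular the pole of $\zeta_{\mathbb{Q}(i)}(2(s+w))$ at $s=\tfrac{1}{2}-w$, which for $\Re(w)=\tfrac{1}{2}$ sits exactly on $\Re(s)=0$, so your shift to $\Re(s)=\epsilon$ is safe but only just). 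Either presentation yields the stated bound.
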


\begin{proof}
Upon executing the summation over $t_{j} \leq T$ of the integrand in~\eqref{eq:Perron}, \cref{prop:mean-square} implies the existence of absolute and effectively computable constants $\Cr{Raghavan-Sengupta}, \Cl[constant]{key-identity} > 0$ such that
\begin{align}
&\sum_{t_{j} \leq T} \frac{t_{j}}{\sinh \pi t_{j}} \sum_{0 \ne \mathrm{N}(d) \leq \sqrt{y}} \frac{1}{\mathrm{N}(d)^{2w}} \ \sideset{}{^{\prime}} \sum_{0 \ne \mathrm{N}(n) \leq \frac{y}{\mathrm{N}(d)^{2}}} \frac{|\rho_{j}(n)|^{2}}{\mathrm{N}(n)^{w}}\\
& = \Cr{Raghavan-Sengupta} T^{3} \sum_{0 \ne \mathrm{N}(d) \leq \sqrt{y}} \frac{1}{\mathrm{N}(d)^{2w}} \sum_{0 \ne \mathrm{N}(n) \leq \frac{y}{\mathrm{N}(d)^{2}}} \frac{1}{\mathrm{N}(n)^{w}}+O_{\epsilon}(T^{2} y^{\frac{3}{2}-\Re(w)+\epsilon}+T^{1+\epsilon} y^{2-\Re(w)+\epsilon})\\
& = \Cr{Raghavan-Sengupta} T^{3} \sum_{d \leq \sqrt{y}} \frac{r(d)}{d^{2w}} \sum_{n \leq \frac{y}{d^{2}}} \frac{r(n)}{n^{w}}+O_{\epsilon}(T^{2} y^{\frac{3}{2}-\Re(w)+\epsilon}+T^{1+\epsilon} y^{2-\Re(w)+\epsilon})\\
& = \frac{\Cr{key-identity} T^{3} y^{1-w}}{1-w}+O_{\epsilon}(T^{3} y^{\epsilon}+T^{2} y^{\frac{3}{2}-\Re(w)+\epsilon}+T^{1+\epsilon} y^{2-\Re(w)+\epsilon}),
\end{align}
where we utilised the standard approximation of the Dirichlet series at the vicinity of $w = 1$ with the leading coefficient unspecified, and $r(n)$ stands for the number of representations of $n$ as a sum of two squares. Note that Matthes's computation in~\cites[Page~9]{Matthes1995-2} is~erroneous. By implementing some rearrangements using the Riemann--Stieltjes integral, the contribution of $\mathcal{R}_{it_{j}}(x, w-1, \xi+1, \nu-1)$ amounts to
\begin{align}
&\int_{0}^{x} \int_{0}^{x_{1}} \cdots \int_{0}^{x_{\nu-2}} \left(\int_{0}^{x_{\nu-1}} \frac{\Cr{key-identity} T^{3}}{1-w} x_{\nu} d_{x_{\nu}} x_{\nu}^{1-w} \right) \, dx_{\nu-1} \cdots dx_{2} \, dx_{1}\\
&\quad \hspace{.15em} + O_{\epsilon}(T^{3} x^{\nu+\epsilon}+T^{2} x^{\frac{3}{2}+\nu-\Re(w)+\epsilon}+T^{1+\epsilon} x^{2+\nu-\Re(w)+\epsilon})\\
& = \Cr{key-identity} T^{3} \int_{0}^{x} \int_{0}^{x_{1}} \cdots \int_{0}^{x_{\nu-1}} x_{\nu}^{1-w} \, dx_{\nu} \cdots dx_{2} \, dx_{1}\\
&\quad \hspace{.15em} + O_{\epsilon}(T^{3} x^{\nu+\epsilon}+T^{2} x^{\frac{3}{2}+\nu-\Re(w)+\epsilon}+T^{1+\epsilon} x^{2+\nu-\Re(w)+\epsilon})\\
& = \frac{\Cr{key-identity} T^{3} x^{1+\nu-w}}{(2-w) \cdots (1+\nu-w)}+O_{\epsilon}(T^{3} x^{\nu+\epsilon}+T^{2} x^{\frac{3}{2}+\nu-\Re(w)+\epsilon}+T^{1+\epsilon} x^{2+\nu-\Re(w)+\epsilon}),\label{eq:first-term}
\end{align}
while the contribution of $(w-1) \mathcal{R}_{it_{j}}(x, w, \xi, \nu)$ amounts to
\begin{align}
&(w-1) \int_{0}^{x} \int_{0}^{x_{1}} \cdots \int_{0}^{x_{\nu-1}} \sum_{t_{j} \leq T} \frac{t_{j}}{\sinh \pi t_{j}} \sum_{0 \ne \mathrm{N}(d) \leq \sqrt{x_{\nu}}} \frac{1}{\mathrm{N}(d)^{2w}} \ \sideset{}{^{\prime}} \sum_{0 \ne \mathrm{N}(n) \leq \frac{x_{\nu}}{\mathrm{N}(d)^{2}}} \frac{|\rho_{j}(n)|^{2}}{\mathrm{N}(n)^{w}} \, dx_{\nu} \cdots dx_{2} \, dx_{1}\\
& = -\Cr{key-identity} T^{3} \int_{0}^{x} \int_{0}^{x_{1}} \cdots \int_{0}^{x_{\nu-1}} x_{\nu}^{1-w} \, dx_{\nu} \cdots dx_{2} \, dx_{1}\\
& \quad \hspace{.15em} + O_{\epsilon}(T^{3} x^{\nu+\epsilon}+T^{2} x^{\frac{3}{2}+\nu-\Re(w)+\epsilon}+T^{1+\epsilon} x^{2+\nu-\Re(w)+\epsilon})\\
& = -\frac{\Cr{key-identity} T^{3} x^{1+\nu-w}}{(2-w) \cdots (1+\nu-w)}+O_{\epsilon}(T^{3} x^{\nu+\epsilon}+T^{2} x^{\frac{3}{2}+\nu-\Re(w)+\epsilon}+T^{1+\epsilon} x^{2+\nu-\Re(w)+\epsilon}).\label{eq:second-term}
\end{align}
Summing up~\eqref{eq:first-term} and~\eqref{eq:second-term} completes the proof of \cref{lem:Matthes}.
\end{proof}

On the other hand, to address the first two terms on the right-hand side of~\eqref{eq:key-identity}, we~utilise the functional equation~\eqref{eq:functional-equation-2}, so that $\mathcal{R}_{it_{j}}(x, w, \mu, \nu)$ is equal to
\begin{multline}
\int_{(\mu)} \frac{\gamma(1-s-w, u_{j} \otimes u_{j})}{\gamma(s+w, u_{j} \otimes u_{j})} L^{\diamond}(1-s-w, u_{j} \otimes u_{j}) \frac{x^{s+\nu}}{s(s+1) \cdots (s+\nu)} \frac{ds}{2\pi i}\\
 = \int_{(\mu)} \frac{\gamma(1-s-w, u_{j} \otimes u_{j})}{\gamma(s+w, u_{j} \otimes u_{j})} \sum_{0 \ne n \in \mathbb{Z}[i]} \mathrm{N}(n)^{s+w-1} \sum_{d^{2} \mid n} \left|\rho_{j} \left(\frac{n}{d^{2}} \right) \right|^{2} \frac{x^{s+\nu}}{s(s+1) \cdots (s+\nu)} \frac{ds}{2\pi i}
\end{multline}
for some $\mu \in (\frac{1}{2}-\frac{\nu}{2}-\Re(w), -\Re(w))$. For a parameter $N \geq 1$ to be chosen later, Stirling's formula~\eqref{eq:Stirling} demonstrates that the right-hand side is approximated by
\begin{multline}
\int_{(\mu)} \frac{\gamma(1-s-w, u_{j} \otimes u_{j})}{\gamma(s+w, u_{j} \otimes u_{j})} \sum_{0 \ne \mathrm{N}(n) \leq N} \mathrm{N}(n)^{s+w-1} \sum_{d^{2} \mid n} \left|\rho_{j} \left(\frac{n}{d^{2}} \right) \right|^{2} \frac{x^{s+\nu}}{s(s+1) \cdots (s+\nu)} \frac{ds}{2\pi i}\\
 + O_{\delta}((1+t_{j})^{2-4\mu-4\Re(w)} (1+|\Im(w)|)^{2-4\mu-4\Re(w)} x^{\mu+\nu} N^{\mu+\Re(w)} L^{\diamond}(1+\delta, u_{j} \otimes u_{j})),
\end{multline}
where the replacement of the exponent $s+w-1$ with $-1-\delta$ is guaranteed at the expense of introducing a negligibly small error term, thereby resulting in the factor of $L^{\diamond}(1+\delta, u_{j} \otimes u_{j})$. By arguing analogously, $\mathcal{R}_{it_{j}}(x, w-1, \mu+1, \nu-1)$ is equal to (cf.~\cites[Equation~(8)]{Matthes1995-2})
\begin{multline}
\int_{(\mu+1)} \frac{\gamma(2-s-w, u_{j} \otimes u_{j})}{\gamma(s+w-1, u_{j} \otimes u_{j})} \sum_{0 \ne \mathrm{N}(n) \leq N} \mathrm{N}(n)^{s+w-2} \sum_{d^{2} \mid n} \left|\rho_{j} \left(\frac{n}{d^{2}} \right) \right|^{2} \frac{x^{s+\nu-1}}{s(s+1) \cdots (s+\nu-1)} \frac{ds}{2\pi i}\\
 + O_{\delta}((1+t_{j})^{2-4\mu-4\Re(w)} (1+|\Im(w)|)^{2-4\mu-4\Re(w)} x^{\mu+\nu} N^{\mu+\Re(w)} L^{\diamond}(1+\delta, u_{j} \otimes u_{j})).
\end{multline}

By shifting the contour $(\mu)$ to the right in the spirit of Matthes~\cites[Pages~10--11]{Matthes1995-2}~and gathering everything together with
\begin{equation}
x = T(1+|\Im(w)|)^{2}, \qquad \Re(w) = \frac{1}{2}, \qquad \mu = 1, \qquad \nu = 4, \qquad N = T^{2}(1+|\Im(w)|)^{4},
\end{equation}
it follows from the key identity~\eqref{eq:key-identity} and \cref{prop:Iwaniec-consult} with $\delta = (\log T)^{-1}$ that
\begin{align}
&(w-1) \sum_{t_{j} \leq T} \frac{t_{j}}{\sinh \pi t_{j}} |L^{\diamond}(w, u_{j} \otimes u_{j})|\\
& \ll_{\epsilon} (1+|\Im(w)|)^{2+\epsilon} T^{3+\epsilon}+(1+|\Im(w)|)^{4+\epsilon} \sum_{t_{j} \leq T} \frac{t_{j}}{\sinh \pi t_{j}} |L^{\diamond}(1+\delta, u_{j} \otimes u_{j})|\\
& \ll_{\epsilon} (1+|\Im(w)|)^{4+\epsilon} T^{3+\epsilon}.
\end{align}
By dividing both sides by $w-1$, the proof of \cref{thm:mean-Lindelof} is thus complete. \qed

\section{Brun--Titchmarsh-type theorem}\label{sect:Iwaniec--Bykovskii}
The Iwaniec--Bykovski\u{\i} formula in the $3$-dimensional setting serves as a counterpart of the classical Brun--Titchmarsh theorem in short intervals and forms the cornerstone for obtaining a polynomial power-saving improvement to surpass the threshold in the nonsmooth explicit formula due to Nakasuji~\cites[Theorem~5.2]{Nakasuji2000}[Theorem~4.1]{Nakasuji2001}. The interested reader is directed to~\cites{Bykovskii1994}{ChatzakosHarcosKaneko2024}{CherubiniWuZabradi2022}{Golubeva1997}{Iwaniec1984}{Kuznetsov1978}{SoundararajanYoung2013} and references therein~for the $2$-dimensional archetype, its variations, and its refinement under the generalised Riemann hypothesis for quadratic Dirichlet $L$-functions.

For the Picard group $\Gamma = \mathrm{PSL}_{2}(\mathbb{Z}[i])$, the record\footnote{The conditional asymptotic~\cites[Equation~(10)]{BalogBiroCherubiniLaaksonen2022} remains weaker than~\eqref{eq:BBCL22-9} as soon as $x^{\frac{1}{2}+\epsilon} \leq y \leq x$.} of Balog et al.~\cites[Equation~(9)]{BalogBiroCherubiniLaaksonen2022} asserts that for $x^{\frac{1+4\vartheta}{3}+\epsilon} \leq y \leq x$,
\begin{equation}\label{eq:BBCL22-9}
\Psi_{\Gamma}(x+y)-\Psi_{\Gamma}(x) = xy+\frac{y^{2}}{2}+O_{\epsilon}(x^{\frac{2(3+2\vartheta)}{5}+\epsilon} y^{\frac{2}{5}}).
\end{equation}
This corresponds to the record of Soundararajan and Young~\cites[Equation~(16)]{SoundararajanYoung2013} (see also the first display\footnote{The term $x^{\frac{1}{2}+\frac{\vartheta}{2}+\epsilon}$ in the first display of the errata reads $x^{\frac{1}{2}+\vartheta+\epsilon}$, which aligns with our threshold in~\eqref{eq:SY13-16}.} in its errata) for the full modular group $\Gamma = \mathrm{PSL}_{2}(\mathbb{Z})$, which asserts that~for $x^{\frac{1}{2}+\vartheta+\epsilon} \leq y \leq x$,
\begin{equation}\label{eq:SY13-16}
\Psi_{\Gamma}(x+y)-\Psi_{\Gamma}(x) = y+O_{\epsilon}(x^{\frac{1}{4}+\frac{\vartheta}{2}+\epsilon} y^{\frac{1}{2}}),
\end{equation}
where one may abuse notation to enable its provisional adaptation to $\mathbb{Q}$. In the $2$-dimensional setting, there exist infinitely many pairs $(x, y)$ such that the left-hand side of~\eqref{eq:SY13-16} vanishes if $y \leq x^{\frac{1}{2}}$; hence, the threshold of Soundararajan and Young is provably optimal. On the other hand, up to the knowledge towards the generalised Lindel\"{o}f hypothesis, the threshold~in~\eqref{eq:BBCL22-9} represents the suboptimal exponent $\frac{1}{3}+\epsilon$. Furthermore, if one intends to retain the secondary main term contrary to the approach of Balog et al.~\cites[Page~1897]{BalogBiroCherubiniLaaksonen2022}, then the threshold shrinks to $x^{\frac{3+2\vartheta}{4}+\epsilon} \leq y \leq x$. If we refer to the former threshold as global, then the conjectural global threshold ought to be $x^{\epsilon} \leq y \leq x$ (cf.~\cites[Remark~3]{BalogBiroCherubiniLaaksonen2022}), whereas the appropriate local threshold remains unclear at the current state, commensurate with the yet unresolved question concerning the optimal exponent in the prime geodesic theorem over $\Gamma \backslash \mathbb{H}^{3}$.

To facilitate the ensuing considerations, suppose that the Iwaniec--Bykovski\u{\i} formula~\eqref{eq:BBCL22-9} holds with an error term of size $O(x^{\kappa} y^{\lambda})$ with $\kappa+\lambda < 2$. For the main term $xy$ to dominate the error term, one must restrict $x^{\frac{1-\kappa}{1-\lambda}} \leq y \leq x$, with a particular emphasis on the boundary regime $y \asymp x^{\frac{1-\kappa}{1-\lambda}}$. On the other hand, if one conditionally equates $x^{\kappa} y^{\lambda} = x^{\frac{3}{2}} y^{-\frac{1}{2}}$ under the assumption of the square-root cancellation in the spectral exponential sum and resolves the optimisation problem with respect to $y$, then this implies the optimal quantities
\begin{equation}\label{eq:seemingly}
\frac{1-\kappa}{1-\lambda} = \frac{3-2\kappa}{1+2\lambda} \qquad \Longleftrightarrow \qquad (\kappa, \lambda) = \left(\frac{5\lambda-2}{4\lambda-1}, \frac{2-\kappa}{5-4\kappa} \right)
\end{equation}
for \textit{any} admissible pair $(\kappa, \lambda)$. For example, substituting $\lambda = \frac{1}{2}$ into~\eqref{eq:seemingly} implies $\kappa = \frac{1}{2}$ and $y \asymp x$, and hence $\mathcal{E}_{\Gamma}(x) \ll_{\epsilon} x^{1+\epsilon}$ at least heuristically. This marks the first occasion~wherein the ostensibly optimal exponent $1+\epsilon$ is explicitly exemplified in accordance with the $\Omega$-result of Nakasuji~\cites[Theorem~1.2]{Nakasuji2000}[Theorem~1.1]{Nakasuji2001}.

This section is devoted to improving upon the best known result~\eqref{eq:BBCL22-9}. This result in turn helps establish \cref{thm:unconditional,thm:second-moment,thm:conditional} in one fell swoop.
\begin{theorem}\label{thm:Brun-Titchmarsh}
Let $\Gamma = \mathrm{PSL}_{2}(\mathbb{Z}[i])$, and let $x^{\frac{1+2\vartheta}{3}+\epsilon} \leq y \leq x$. Then we have that
\begin{equation}
\Psi_{\Gamma}(x+y)-\Psi_{\Gamma}(x) = xy+\frac{y^{2}}{2}+O_{\epsilon}(x^{1+\vartheta+\epsilon} y^{\frac{1}{2}}+x^{\frac{5}{4}+\frac{\vartheta}{2}+\epsilon} y^{\frac{1}{4}}).
\end{equation}
In particular, assuming the generalised Lindel\"{o}f hypothesis $\vartheta = 0$ in the conductor aspect,~we have for any $x^{\frac{1}{3}+\epsilon} \leq y \leq x$ and $\epsilon > 0$ that
\begin{equation}
\Psi_{\Gamma}(x+y)-\Psi_{\Gamma}(x) = xy+\frac{y^{2}}{2}+O_{\epsilon}(x^{\frac{5}{4}+\epsilon} y^{\frac{1}{4}}).
\end{equation}
\end{theorem}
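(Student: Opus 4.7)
The plan is to follow the architecture developed by Iwaniec~\cites[Section~10]{Iwaniec1984} and adapted to the Picard orbifold by Balog et al.~\cites[Theorem~1.3]{BalogBiroCherubiniLaaksonen2022}, while replacing two of its links with sharper tools that exploit the additional lattice structure of $\mathbb{Z}[i]$. First, I would reformulate $\Psi_{\Gamma}(x+y)-\Psi_{\Gamma}(x)$ as a sum over primitive hyperbolic--loxodromic conjugacy classes parametrised by traces $n \in \mathbb{Z}[i]$ with $\mathrm{N}(n)$ in a short interval determined by $(x,y)$, weighted by the arithmetic counting functions $\rho_{c}(n^{2}-4)$ encoded in the Gaussian Zagier $L$-series of \cref{subsect:Zagier}. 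Separating the fundamental discriminant via $n^{2}-4 \sim D\ell^{2}$ and invoking \cref{lem:Szmidt-1} reorganises the resulting sum into a bilinear shape whose inner variable is modelled by $L(s,\chi_{D})$ together with the benign Dirichlet polynomial $T_{\ell}^{(D)}(s)$.

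The main analytic step is to refine the Poisson summation. The argument of~\cites[Section~5]{BalogBiroCherubiniLaaksonen2022} performs a single $2$-dimensional Poisson expansion on the representation function $r(n)$ and bounds each resulting Gaussian Kloosterman sum pointwise via~\eqref{eq:Weil}, a procedure that is essentially tight and detects no cancellation across the $c$-sum. I would instead apply $2$-dimensional Poisson summation \emph{backwards} across the complementary regimes $\mathrm{N}(c) \ll \mathrm{N}(n)$ and $\mathrm{N}(c) \gg \mathrm{N}(n)$, and in each regime substitute the appropriate asymptotic of $J_{1}$ from~\eqref{eq:J-1}. In the oscillatory range a stationary-phase analysis localises the effective support of the outer variable, and the inner character sum is then estimated via Mellin inversion as a contour integral of $L(s,\chi_{D})$, so that the hybrid subconvex exponent $\vartheta$ of~\eqref{eq:conductor-aspect} enters linearly. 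Executing the residual sum over $D$ and $\ell$ with the divisor bound~\eqref{eq:uniformly} and Weyl's law produces a contribution of order $x^{1+\vartheta+\epsilon}\,y^{1/2}$.

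The secondary error term $x^{5/4+\vartheta/2+\epsilon}\,y^{1/4}$ would be obtained by interpolation. The estimate~\cites[Equation~(23)]{BalogBiroCherubiniLaaksonen2022} is sharper than the backward-Poisson bound when the $c$-sum is long, because the additional lattice structure of $\mathbb{Z}[i]$ permits a further Poisson extraction against $r(n)$. Splitting the $c$-range at a parameter $C_{0}$, bounding the short range by the backward-Poisson estimate and the long range by~\cites[Equation~(23)]{BalogBiroCherubiniLaaksonen2022}, and optimising $C_{0}$ to balance the two contributions produces the claimed two-term error. The balance against the main term $xy$ then forces the threshold $y \geq x^{(1+2\vartheta)/3+\epsilon}$, which specialises at $\vartheta = 0$ to the conditional threshold $x^{1/3+\epsilon}$.

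The principal obstacle will be the uniform control over the transition ranges of $J_{1}$ in the backward-Poisson step, particularly when $n^{2}-4$ is highly non-squarefree so that $\ell$ is large and the Dirichlet polynomial $T_{\ell}^{(D)}$ is long. The stationary-phase manipulation must then be arranged to avoid forfeiting the saving in $\vartheta$ through the divisor-type losses intrinsic to~\eqref{eq:uniformly}; equally, one needs the archimedean subconvex exponent $\vartheta^{\prime}$ to drop out of the final optimisation, which is the counterpart in the Gaussian setting of the cancellation observed in~\cites[Theorem~1.1]{Kaneko2024}.
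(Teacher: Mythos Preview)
Your proposal has the right ingredients---backward $2$-dimensional Poisson summation and interpolation with~\cites[Equation~(23)]{BalogBiroCherubiniLaaksonen2022}---but you have placed them at the wrong level of the argument and misidentified where $\vartheta$ enters. In the paper the backward Poisson step and the interpolation are both carried out \emph{inside} the proof of \cref{prop:Bessel}, which sharpens the asymptotic for $\sum_{\mathrm{N}(n)\leq Z}\rho_{c}(n^{2}-4)$ from Balog et al.'s error $O_{\epsilon}(\mathrm{N}(c)^{1/2+\epsilon}Z^{1/4})$ to $O_{\epsilon}(\mathrm{N}(c)^{1/2+\epsilon}+\mathrm{N}(c)^{1/4+\epsilon}Z^{1/4+\epsilon})$ times $\mathrm{N}(c_{\star})$. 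This step is entirely $\vartheta$-free: no Mellin inversion, no $L(s,\chi_{D})$, no subconvexity. The interpolation is pointwise in $c$ (via $\min(A,B)\leq A^{\gamma}B^{1-\gamma}$ with $\gamma=\tfrac{3}{4}$ after optimising the smoothing parameter $\Delta$), not a splitting of the $c$-range at some $C_{0}$ as you describe.

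Once \cref{prop:Bessel} is in hand, the proof of \cref{thm:Brun-Titchmarsh} is a one-line substitution into the existing machinery of~\cites[Section~3.4]{BalogBiroCherubiniLaaksonen2022}: the third display on~\cites[Page~1913]{BalogBiroCherubiniLaaksonen2022} already contains a term $V^{-1/2}x^{1+2\vartheta+\epsilon}y$ coming from their treatment of the tail $\mathrm{N}(c)>V$ via subconvexity, and \cref{prop:Bessel} replaces their remaining error terms by $V^{1/2+\epsilon}x+V^{1/4+\epsilon}x^{5/4}$. Optimising $V=x^{2\vartheta}y$ then balances these and gives both claimed error terms simultaneously. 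So $\vartheta$ enters through a part of Balog et al.'s argument that is \emph{not} modified, and your stated ``principal obstacle'' concerning $T_{\ell}^{(D)}$, highly non-squarefree $n^{2}-4$, and the archimedean exponent $\vartheta'$ does not arise here at all; those concerns belong to \cref{thm:conditional}, not to the present theorem.
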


In preparation for the optimisation problem that we encounter in the proof of \cref{thm:Brun-Titchmarsh}, it is convenient~to refine~\cites[Lemma~2.2]{BalogBiroCherubiniLaaksonen2022} through a delicate analysis of Bessel functions.
\begin{proposition}\label{prop:Bessel}
Let $0 \ne c \in \mathbb{Z}[i]$, and let $c_{\star}$ be maximal such that $c_{\star}^{2} \mid c$. Then we have for any $Z \geq 1$ and $\epsilon > 0$ that
\begin{align}
\sum_{\mathrm{N}(n) \leq Z} \rho_{c}(n^{2}-4) &= \pi Z \frac{\varphi(c)}{\mathrm{N}(c)}+O_{\epsilon}((\mathrm{N}(c)^{\frac{1}{2}+\epsilon}+\mathrm{N}(c)^{\frac{1}{4}+\epsilon} Z^{\frac{1}{4}+\epsilon}) \mathrm{N}(c_{\star})),\label{eq:rho-c-n}\\
\sum_{\mathrm{N}(n) \leq Z} \lambda_{c}(n^{2}-4) &= \pi Z \sum_{c_{1}^{2} c_{2} = c} \frac{\mu(c_{2})}{\mathrm{N}(c_{2})}+O_{\epsilon}((\mathrm{N}(c)^{\frac{1}{2}+\epsilon}+\mathrm{N}(c)^{\frac{1}{4}+\epsilon} Z^{\frac{1}{4}+\epsilon}) \mathrm{N}(c_{\star})).
\end{align}
\end{proposition}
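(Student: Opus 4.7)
The plan is to apply two-dimensional Poisson summation in the Gaussian integer lattice and bound the resulting oscillatory dual sum against the Bessel function $J_{1}$. First I would invoke \cref{lem:identify} to replace $\rho_{c}(n^{2}-4)$ by $\rho(c, n)$, then reorganise by residue class:
\begin{equation*}
\sum_{\mathrm{N}(n) \leq Z} \rho_{c}(n^{2}-4) = \sum_{r \tpmod{c}} \rho(c, r) \sum_{\substack{n \equiv r \tpmod{c} \\ \mathrm{N}(n) \leq Z}} 1,
\end{equation*}
where the outer sum is supported on $r \in (\mathbb{Z}[i]/(c))^{\times}$ by the definition~\eqref{eq:rho-c-a}. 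Applying Poisson summation in $\mathbb{Z}[i]$ to the inner count and computing the Fourier transform of the disk indicator via the Hankel transform yields
\begin{equation*}
\sum_{\substack{n \equiv r \tpmod{c} \\ \mathrm{N}(n) \leq Z}} 1 = \frac{\pi Z}{\mathrm{N}(c)}+\frac{\sqrt{Z} \, |c|}{\mathrm{N}(c)} \sum_{0 \neq m \in \mathbb{Z}[i]} \check{e}_{c}(rm) \frac{J_{1}(2\pi \sqrt{Z} |m|/|c|)}{|m|}.
\end{equation*}
The $m = 0$ term summed over $r$ produces the main term $\pi Z \varphi(c)/\mathrm{N}(c)$ by way of \cref{lem:rho-phi}, while \cref{lem:counting-function} identifies the remaining inner sum $\sum_{r} \rho(c, r) \check{e}_{c}(rm)$ as the Gaussian Kloosterman sum $S(m, m, c)$.

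Next I would split the dual sum at the natural threshold $|m| \asymp M_{0} \coloneqq |c|/\sqrt{Z}$, determined by the transition of $J_{1}(y)$ between its small-$y$ power growth and its large-$y$ oscillation as recorded in~\eqref{eq:J-1}. In the regime $|m| < M_{0}$, the estimate $J_{1}(y) \ll y$ combines with the Weil--Gundlach bound~\eqref{eq:Weil} and the Gaussian divisor sum
\begin{equation*}
\sum_{0 \neq \mathrm{N}(m) \leq M^{2}} |(m, c)| \ll \mathrm{N}(c)^{\epsilon} M^{2},
\end{equation*}
obtained by stratifying according to $d = (m, c)$ and summing $|d| \cdot M^{2}/\mathrm{N}(d)$ over $d \mid c$, to furnish a contribution of $O(\mathrm{N}(c)^{1/2+\epsilon})$, in harmony with the first piece of the claimed error.

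The chief obstacle is the complementary regime $|m| > M_{0}$, where the asymptotic $J_{1}(y) \sim \sqrt{2/(\pi y)} \cos(y-3\pi/4)$ renders absolute summation divergent on a two-dimensional lattice. Here I would implement the strategy alluded to in the overview: expand $S(m, m, c) = \sum_{d \tpmod{c}, (d, c)=1} \check{e}_{c}((d+d^{\ast}) m)$ to expose the additive character, asymptotically decompose $J_{1}$ into oscillatory complex exponentials, and apply two-dimensional Poisson summation backwards on the $m$-sum for each fixed $d$. The resulting phases are treated by stationary phase, which localises the mass onto a dual lattice and thus circumvents the divergence. Interpolating the bound so obtained with the pointwise treatment of~\cites[Equation~(23)]{BalogBiroCherubiniLaaksonen2022} -- sharpened by the two-dimensional lattice structure of $\mathbb{Z}[i]$ -- yields a contribution of $O(\mathrm{N}(c)^{1/4+\epsilon} Z^{1/4+\epsilon})$. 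The factor $\mathrm{N}(c_{\star})$ in the error is introduced by~\eqref{eq:uniformly} at the boundary between smooth and sharp cutoffs, where a trivial bound is used to control the edge contribution.

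The estimate for $\lambda_{c}(n^{2}-4)$ then follows at once from the M\"{o}bius inversion~\eqref{eq:rho-lambda}: writing $\lambda_{c} = \sum_{c_{1}^{2} c_{2} c_{3} = c} \mu(c_{2}) \rho_{c_{3}}$, applying the $\rho$-estimate to each $\rho_{c_{3}}$, and summing over the decomposition produces the advertised main term $\pi Z \sum_{c_{1}^{2} c_{2} = c} \mu(c_{2})/\mathrm{N}(c_{2})$, with an error of the same order since $c_{3, \star} \leq c_{\star}$ and the divisor sum contributes only a factor of $\mathrm{N}(c)^{\epsilon}$. The overall hardest step will undoubtedly be the large-$|m|$ analysis, where capturing the cancellation inherent in the Kloosterman-weighted Bessel sum -- rather than bounding it in absolute value -- is indispensable for attaining the $Z^{1/4+\epsilon}$ exponent.
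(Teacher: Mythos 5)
Your proposal is in the right spirit — two-dimensional Poisson summation, identification of the $m = 0$ term as the main term via \cref{lem:rho-phi}, recognition of the dual sum as weighted by Gaussian Kloosterman sums via \cref{lem:counting-function}, Bessel asymptotics to split at the transition scale, and a backwards Poisson summation to capture cancellation in the tail. However, there is a genuine gap: you apply Poisson summation directly to the sharp disk indicator, whose Fourier transform decays only like $|\xi|^{-3/2}$, so the dual series (as you observe) fails to converge absolutely on $\mathbb{Z}[i]$, and the downstream manipulations you propose (additive-character expansion, stationary phase) have no clean meaning on a divergent series. The paper's proof circumvents this by first replacing the sharp cutoff with a \emph{smoothed} test function $v = \frac{1}{\pi\Delta^{2}}(\mathbf{1}_{[0,Z^{1/2}]} \ast \mathbf{1}_{[0,\Delta]})(|\cdot|)$ with an auxiliary parameter $\Delta \in (Z^{-1/2}, Z^{1/2})$; the resulting $\hat{v}(x) = \frac{Z^{1/2}}{\pi\Delta|x|^{2}} J_{1}(2\pi Z^{1/2}|x|) J_{1}(2\pi\Delta|x|)$ decays like $|x|^{-3}$, so the Voronoi-type identity is rigorous. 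Your proposal never introduces $\Delta$, yet you invoke ``the boundary between smooth and sharp cutoffs'' to account for the factor $\mathrm{N}(c_{\star})$ -- this is internally inconsistent, and in the paper that factor arises precisely from the $v \leftrightarrow \mathbf{1}$ approximation error, which is $O_{\epsilon}(\mathrm{N}(c)^{\epsilon}\mathrm{N}(c_{\star})\Delta Z^{1/2})$.

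Beyond convergence, the parameter $\Delta$ is not merely a technical convenience: it is the quantity one optimises over. The paper derives, in the bulk regime $Z^{-1/2} \leq |x| < \Delta^{-1}$, two competing bounds -- one from the Weil--Gundlach estimate (the second term of \cites[Equation~(23)]{BalogBiroCherubiniLaaksonen2022}), roughly $\mathrm{N}(c)^{1/2+\epsilon}\Delta^{-1/2}Z^{1/4}$, and one from backwards Poisson summation, roughly $\mathrm{N}(c)^{1+\epsilon}\max(\Delta^{1/2},\Delta^{5/2})Z^{-1/4}$ -- and then interpolates these with an exponent $\gamma$ and optimises both $\Delta$ and $\gamma$ (taking $\gamma = \frac{3}{4}$) to land on $\mathrm{N}(c)^{1/4+\epsilon}Z^{1/4+\epsilon}\mathrm{N}(c_{\star})$. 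Without the free parameter $\Delta$ there is nothing to optimise, so your ``interpolating the bound so obtained with the pointwise treatment of \cites[Equation~(23)]{BalogBiroCherubiniLaaksonen2022}'' cannot be carried out in your framework, since that equation is itself stated in terms of $\Delta$. You would need to abandon the sharp cutoff at the outset and carry the smoothing parameter through the entire argument.
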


\begin{proof}
The proof strategy adapts that of Balog et al.~\cites[Pages~1901--1903]{BalogBiroCherubiniLaaksonen2022} verbatim, up to the treatment of the dual summation after the application of the $2$-dimensional Poisson summation formula. For completeness and comparison, part of their argument is reproduced here with some rearrangements especially to avoid notational conflict.

Since the claim is immediate from~\eqref{eq:uniformly} when $\mathrm{N}(c) \geq Z^{2}$, we shall assume that $\mathrm{N}(c) < Z^{2}$ in what follows. Let $\ast$ denote the standard convolution on $\mathbb{R}^{2}$, and we fix a test function
\begin{equation}
v(x) \coloneqq \frac{1}{\pi \Delta^{2}}(\mathbf{1}_{[0, Z^{\frac{1}{2}}]} \ast \mathbf{1}_{[0, \Delta]})(|x|)
\end{equation}
for an auxiliary variable $\Delta \in (Z^{-\frac{1}{2}}, Z^{\frac{1}{2}})$ at our discretion. By the compatibility relation~\eqref{eq:rho-lambda} and \cref{lem:identify}, it suffices to focus on the first assertion for $\rho(c, n)$. We begin by smoothing the expression on the left-hand side of~\eqref{eq:rho-c-n} as
\begin{equation}\label{eq:smoothing}
\sum_{\mathrm{N}(n) \leq Z} \rho(c, n) = \sum_{n \in \mathbb{Z}[i]} \rho(c, n) v(n)+O_{\epsilon}(\mathrm{N}(c)^{\epsilon} \mathrm{N}(c_{\star}) \Delta Z^{\frac{1}{2}}),
\end{equation}
where the error term arises from estimating (cf.~\eqref{eq:uniformly})
\begin{align}
\left|\sum_{\mathrm{N}(n) \leq Z} \rho(c, n)-\sum_{n \in \mathbb{Z}[i]} \rho(c, n) v(n) \right| &\ll_{\epsilon} \mathrm{N}(c)^{\epsilon} \mathrm{N}(c_{\star}) \sum_{n \in \mathbb{Z}[i]} \mathbf{1}_{[Z^{\frac{1}{2}}-\Delta, Z^{\frac{1}{2}}+\Delta]}(|n|)\\
&\ll_{\epsilon} \mathrm{N}(c)^{\epsilon} \mathrm{N}(c_{\star}) \Delta Z^{\frac{1}{2}}.\label{eq:error-approximate}
\end{align}
Decomposing the summation over $n$ in~\eqref{eq:smoothing} into residue classes and applying $2$-dimensional Poisson summation\footnote{This may in principle be considered Vorono\u{\i} summation associated to $r(n)$, the number of representations of $n$ as a sum of two squares.} in conjunction with \cref{lem:identify,lem:counting-function}, we now deduce
\begin{align}
\sum_{n \in \mathbb{Z}[i]} \rho(c, n) v(n) &= \frac{1}{\mathrm{N}(c)} \sum_{b \tpmod{c}} \rho(c, b) \sum_{n \in \mathbb{Z}[i]} \check{e}_{c}(bn) \hat{v} \left(\frac{nc}{\mathrm{N}(c)} \right)\\
& = \frac{\varphi(c)}{\mathrm{N}(c)} \pi Z+\frac{1}{\mathrm{N}(c)} \sum_{0 \ne n \in \mathbb{Z}[i]} S(n, n, c) \hat{v} \left(\frac{nc}{\mathrm{N}(c)}\right),\label{eq:Voronoi}
\end{align}
since $\hat{v}(0) = \pi Z$ and $S(0, 0, c) = \varphi(c)$ for $n = 0$ and $0 \ne c \in \mathbb{Z}[i]$.

If $n \ne 0$, then one may approximate the integral transform $\hat{v}$ and identify the leading~term followed by negligibly small lower order terms, thereby deviating from the approach of Balog et al.~\cites[Page~1902]{BalogBiroCherubiniLaaksonen2022}. The antepenultimate display therein reads
\begin{equation}\label{eq:antepenultimate}
\hat{v}(x) = \frac{Z^{\frac{1}{2}}}{\pi \Delta|x|^{2}} J_{1}(2\pi Z^{\frac{1}{2}}|x|) J_{1}(2\pi \Delta|x|),
\end{equation}
where $J_{\nu}$ denotes the Bessel function of the first kind. Inserting~\eqref{eq:J-1} into~\eqref{eq:antepenultimate} and invoking the condition $\Delta \in (Z^{-\frac{1}{2}}, Z^{\frac{1}{2}})$ implies
\begin{equation}
\hat{v}(x) \sim 
	\begin{dcases}
	\pi Z & \text{if $|x| < Z^{-\frac{1}{2}}$},\\
	\frac{Z^{\frac{1}{4}}}{\pi|x|^{\frac{3}{2}}} \cos \left(2\pi Z^{\frac{1}{2}} |x|-\frac{3\pi}{4} \right) & \text{if $Z^{-\frac{1}{2}} \leq |x| < \Delta^{-1}$},\\
	\frac{Z^{\frac{1}{4}}}{\pi^{3} \Delta^{\frac{3}{2}}|x|^{3}} \cos \left(2\pi Z^{\frac{1}{2}}|x|-\frac{3\pi}{4} \right) \cos \left(2\pi \Delta|x|-\frac{3\pi}{4} \right) &\text{if $|x| \geq \Delta^{-1}$}.
	\end{dcases}
\end{equation}
In the initial range $|x| < Z^{-\frac{1}{2}}$, the Weil--Gundlach bound~\eqref{eq:Weil} yields
\begin{equation}\label{eq:3/2}
\sum_{\mathrm{N}(n) \ll \mathrm{N}(c) Z^{-1}} S(n, n, c) \hat{v} \left(\frac{nc}{\mathrm{N}(c)} \right) \ll_{\epsilon} \mathrm{N}(c)^{\frac{3}{2}+\epsilon},
\end{equation}
where $\tau(c) \ll_{\epsilon} \mathrm{N}(c)^{\epsilon}$ and the greatest common divisor $(n, c)$ is bounded on average by~\eqref{eq:bounded-on-average}. Since the estimation of the contribution from the tail range $|x| \geq \Delta^{-1}$ ultimately reduces to that of the bulk range $Z^{-\frac{1}{2}} \leq |x| < \Delta^{-1}$, it suffices to focus on the latter. By trigonometric computations, dyadic subdivisions, and partial summation, we are then led to the problem~of determining nontrivial bounds for an expression of the shape
\begin{equation}\label{eq:after-trigonometric}
\Delta^{\frac{3}{2}} Z^{\frac{1}{4}} \sum_{\mathrm{N}(n) \asymp \mathrm{N}(c) \Delta^{-2}} S(n, n, c) e \bigg(\frac{|n|Z^{\frac{1}{2}}}{|c|} \bigg),
\end{equation}
where we temporarily neglected the lower bound $\mathrm{N}(n) \gg \mathrm{N}(c) Z^{-1}$ for brevity. Using here the Weil--Gundlach bound~\eqref{eq:Weil} reproduces the second term on the right-hand side of~\cites[Equation~(23)]{BalogBiroCherubiniLaaksonen2022}. Alternatively, an application of $2$-dimensional Poisson summation backwards in~\eqref{eq:after-trigonometric} rules out potential tautology, since the range of summation is nontrivially elongated to $\mathrm{N}(n) \asymp \mathrm{N}(c) \Delta^{-2}$. In particular, if the summation originally had length $\mathrm{N}(n) \asymp \mathrm{N}(c) Z^{-1}$, then the additional exponential factor in~\eqref{eq:after-trigonometric} would become flattened and lose its oscillatory nature, thereby precluding the application of $2$-dimensional Poisson summation. Our success is in turn attributed to the conductor-dropping phenomenon in the range $\mathrm{N}(n) \asymp \mathrm{N}(c) \Delta^{-2}$.

Let $w$ be a smooth and compactly supported function supported on $[1, 2]$. By $2$-dimensional Poisson summation and estimating the dual summation roughly of length $\max(1, \Delta^{2})$ using \cref{lem:identify,lem:counting-function} and the asymptotic formul{\ae}~\cites[Equations~(8.440) and~(8.451.1)]{GradshteynRyzhik2014}
\begin{equation}\label{eq:J-0}
J_{0}(y) \sim 
	\begin{dcases}
	1 & \text{as $y \to 0$},\\
	\sqrt{\frac{2}{\pi y}} \cos \left(y-\frac{\pi}{4} \right) & \text{as $y \to \infty$},
	\end{dcases}
\end{equation}
the resulting expression boils down to
\begin{align}
&\Delta^{\frac{3}{2}} Z^{\frac{1}{4}} \sum_{n \in \mathbb{Z}[i]} S(n, n, c) e \bigg(\frac{|n|Z^{\frac{1}{2}}}{|c|} \bigg) w \bigg(\frac{|n| \Delta}{|c|} \bigg)\\
& = 2\pi \mathrm{N}(c) \Delta^{-\frac{1}{2}} Z^{\frac{1}{4}} \sum_{n \in \mathbb{Z}[i]} \rho(c, n) \int_{0}^{\infty} w(x) e \bigg(\frac{xZ^{\frac{1}{2}}}{\Delta} \bigg) J_{0} \bigg(\frac{2\pi |n|x}{\Delta} \bigg) x \, dx\\
&\ll \mathrm{N}(c) \Delta^{-\frac{1}{2}} Z^{\frac{1}{4}} \sum_{\mathrm{N}(n) \ll \max(1, \Delta^{2})} \rho(c, n) \left|\int_{0}^{\infty} w(x) e \bigg(\frac{xZ^{\frac{1}{2}}}{\Delta} \bigg) x \, dx \right|\\
&\ll \mathrm{N}(c)^{1+\epsilon} \max(\Delta^{\frac{1}{2}}, \Delta^{\frac{5}{2}}) Z^{-\frac{1}{4}}.\label{eq:dual}
\end{align}
Substituting~\eqref{eq:3/2} into~\eqref{eq:Voronoi}, interpolating~\eqref{eq:dual} with the second term on the right-hand side of~\cites[Equation~(23)]{BalogBiroCherubiniLaaksonen2022}, and applying the inequality $\min(A, B) \leq A^{\gamma} B^{1-\gamma}$ for $A, B > 0$ and $\gamma \in (0, 1)$, we obtain
\begin{align}
\left|\sum_{n \in \mathbb{Z}[i]} \rho(c, n) v(n)-\frac{\varphi(c)}{\mathrm{N}(c)} \pi Z \right| &\ll_{\epsilon} \mathrm{N}(c)^{\frac{1}{2}+\epsilon}+\min(\mathrm{N}(c)^{\frac{1}{2}+\epsilon} \Delta^{-\frac{1}{2}} Z^{\frac{1}{4}}, \mathrm{N}(c)^{\epsilon} \max(\Delta^{\frac{1}{2}}, \Delta^{\frac{5}{2}}) Z^{-\frac{1}{4}})\\
&\ll_{\epsilon} \mathrm{N}(c)^{\frac{1}{2}+\epsilon}+\mathrm{N}(c)^{\frac{\gamma}{2}+\epsilon} \Delta^{\frac{5}{2}-3\gamma} Z^{\frac{\gamma}{2}-\frac{1}{4}}+\mathrm{N}(c)^{\frac{\gamma}{2}+\epsilon} \Delta^{\frac{1}{2}-\gamma} Z^{\frac{\gamma}{2}-\frac{1}{4}}.\label{eq:interpolate}
\end{align}
Substituting~\eqref{eq:interpolate} into~\eqref{eq:smoothing} and optimising $\Delta = \mathrm{N}(c)^{\frac{\gamma}{3(2\gamma-1)}} Z^{-\frac{3-2\gamma}{6(2\gamma-1)}} \in (Z^{-\frac{1}{2}}, Z^{\frac{1}{2}})$ and~$\gamma = \frac{3}{4}$ yields
\begin{equation}
\sum_{\mathrm{N}(n) \leq Z} \rho(c, n) = \frac{\varphi(c)}{\mathrm{N}(c)} \pi Z+O_{\epsilon}((\mathrm{N}(c)^{\frac{1}{2}+\epsilon}+\mathrm{N}(c)^{\frac{1}{4}+\epsilon} Z^{\frac{1}{4}+\epsilon}) \mathrm{N}(c_{\star})),
\end{equation}
which concludes the proof of the first claim of \cref{prop:Bessel}. As stated earlier, the second claim follows from M\"{o}bius inversion, namely
\begin{equation}
\sum_{c_{1}^{2} c_{2} c_{3} = c} \frac{\mu(c_{2}) \varphi(c_{3})}{\mathrm{N}(c_{3})} = \sum_{c_{1}^{2} c_{2} \mid c} \frac{\mu(c_{2})}{\mathrm{N}(c_{2})} \sum_{c_{3} \mid c/c_{1}^{2} c_{2}} \mu(c_{3}) = \sum_{c_{1}^{2} c_{2} = c} \frac{\mu(c_{2})}{\mathrm{N}(c_{2})},
\end{equation}
as required.
\end{proof}

It is convenient to emphasise a distinction from the ideas of Soundararajan and Young~\cites[Page~111--112]{SoundararajanYoung2013}. By the Selberg--Kuznetsov identity over number fields due to Pacharoni~\cites[Theorem]{Pacharoni1997} (which is also a consequence of the deduction \`{a} la Matthes~\cites[Theorem~1.3]{Matthes1990}), the counterpart of the first display of~\cites[Page~112]{SoundararajanYoung2013}~reads
\begin{equation}
S(z, z, c_{3}) = \sum_{d \mid (c_{3}, z)} \mathrm{N}(d) S \left(\frac{z^{2}}{d^{2}}, 1, \frac{c_{3}}{d} \right).
\end{equation}
The analogue of~\cites[Lemma~2.3]{SoundararajanYoung2013} over $\mathbb{Q}(i)$ is then given by
\begin{align}
\rho_{c}(n^{2}-4) &= \frac{1}{\mathrm{N}(c)} \sum_{h \tpmod{c}} \check{e} \left(\frac{hn}{c} \right) S(h, h, c),\\
\lambda_{c}(n^{2}-4) &= \sum_{c_{1}^{2} c_{2} = c} \frac{1}{\mathrm{N}(c_{2})} \sum_{h \tpmod{c_{2}}} \check{e} \left(\frac{hn}{c_{2}} \right) S(h^{2}, 1, c_{2}).
\end{align}
A serious bottleneck arises when attempting to replicate the fourth display on~\cites[Page~112]{SoundararajanYoung2013}, namely the trivial bound for the error term in~\eqref{eq:rho-c-n} represents
\begin{equation}\label{eq:Z-1/2}
\sum_{\mathrm{N}(n) \leq Z} \rho_{c}(n^{2}-4) = \pi Z \frac{\varphi(c)}{\mathrm{N}(c)}+O_{\epsilon}(\mathrm{N}(c)^{\frac{1}{2}+\epsilon} \mathrm{N}(c_{\star}) Z^{\frac{1}{2}}),
\end{equation}
where the factor $Z^{\frac{1}{2}}$ reflects the radial growth of lattice points in the Gaussian integers.~Since the radius of the disk increases, the number of terms in $\sum_{\mathrm{N}(n) \leq Z} \check{e}(\frac{hn}{c})$ scales proportionally with the circumference of concentric circles. This phenomenon necessitates the application of $2$-dimensional Poisson summation together with a sophisticated analysis of Bessel functions, as described above, thereby enabling the complete unconditional removal of the factor of $Z^{\frac{1}{2}}$ in~\eqref{eq:Z-1/2} at the expense introducing an additional contribution of comparable magnitude.

\begin{proof}[Proof of \cref{thm:Brun-Titchmarsh}]
The proof strategy adapts that of Balog et al.~\cites[Section~3.4]{BalogBiroCherubiniLaaksonen2022}. Upon renaming the variables, the third display on~\cites[Page~1913]{BalogBiroCherubiniLaaksonen2022} translates to
\begin{align}
\left|\Psi_{\Gamma}(x+y)-\Psi_{\Gamma}(x)-xy-\frac{y^{2}}{2} \right| &\ll_{\epsilon} V^{-\frac{1}{2}} x^{1+2\vartheta+\epsilon} y+V^{\frac{1}{2}+\epsilon} x+V^{\frac{1}{4}+\epsilon} x^{\frac{5}{4}}\\
&\ll_{\epsilon} x^{1+\vartheta+\epsilon} y^{\frac{1}{2}}+x^{\frac{5}{4}+\frac{\vartheta}{2}+\epsilon} y^{\frac{1}{4}},\label{eq:renaming}
\end{align}
where $V = x^{2\vartheta} y$ with $x^{\frac{1+2\vartheta}{3}+\epsilon} \leq y \leq x$. The proof of \cref{thm:Brun-Titchmarsh} is~thus complete.
\end{proof}

\section{Exponent pairs in limiting regime}\label{sect:quadratic}
The seminal work of Burgess~\cites{Burgess1962-2}{Burgess1963}{Burgess1986} makes extensive use of the periodicity of the summand $n \mapsto \chi(n)$; hence, its straightforward algebraic generalisation, summing over integral ideals whose norm lies in a fixed interval, loses the periodicity. Nonetheless, the study of short character sums over arbitrary algebraic number fields as developed by Hinz~\cites{Hinz1983}{Hinz1983-2}{Hinz1986} represents an extension of the Burgess bound~\cites[Theorem~12.6]{IwaniecKowalski2004}. Over the rationals, the author~\cites[Lemma~2.1]{Kaneko2024} incorporates hybrid subconvex bounds for Dirichlet $L$-functions with the Burgess bound to demonstrate a bootstrapped variant in long intervals, involving an auxiliary parameter $r \in \mathbb{N}_{\geq 2}$ at our discretion. The proof of~\cref{thm:conditional}~relies exclusively on the limiting regime $r \to \infty$, thereby necessitating solely a trivial treatment of character sums in conjunction with the assumed subconvex bounds. While it falls within the realm of plausible adjustments, generalising our pursuits to arbitrary algebraic number fields, while retaining the auxiliary parameter $r \in \mathbb{N}_{\geq 2}$, would entail brute force computations.

We now introduce the notion of multiplicative exponent pairs over $\mathbb{Q}(i)$.
\begin{definition}\label{def:alpha-beta}
Let $D \in \mathbb{Z}[i]$ be a generator of the fundamental discriminant of a quadratic extension of $\mathbb{Q}(i)$, and let $(\alpha, \beta) \in \mathbb{R}^{2}$ belong to the set $\{0 \leq \beta \leq \frac{1}{2} \leq \alpha \leq 1 \} \cup \{(0, \frac{1}{2}), (1, 0) \}$. Then $(\alpha, \beta)$ is called a quadratic exponent pair over $\mathbb{Q}(i)$ if we have for any $1 \leq y \leq x$ and $\epsilon > 0$ that
\begin{equation}\label{eq:alpha-beta}
\sum_{x < \mathrm{N}(n) \leq x+y} \chi_{D}(n) \ll_{\epsilon} y^{\alpha} \mathrm{N}(D)^{\beta+\epsilon}.
\end{equation}
If~\eqref{eq:alpha-beta} holds irrespective of quadraticity, then $(\alpha, \beta)$ is called a multiplicative exponent pair over $\mathbb{Q}(i)$.
\end{definition}

The special case of $n = 2$ in~\cites[Theorem~1]{Hinz1986} confirms an analogue of Burgess's exponent pair $(\alpha, \beta) = (1-\frac{1}{r}, \frac{r+1}{4r^{2}})$ over the rationals~\cites[Theorem~12.6]{IwaniecKowalski2004} for any fixed integer~$r \in \mathbb{N}_{\geq 2}$, with the cubefree condition therein eliminated through a fundamental property of quadratic characters. Nonetheless, Hinz imposes some cumbersome constraints on the summation over $n$, rendering the result not immediately applicable to our scenario; see~\cites[Equation~(1)]{Landau1918} for a generalisation of the P\'{o}lya--Vinogradov inequality~\cites[Theorem~12.5]{IwaniecKowalski2004} to any algebraic number field over $\mathbb{Q}$, on par with the exponent pair $(\alpha, \beta) = (0, \frac{1}{2})$ in \cref{def:alpha-beta}. Another quadratic exponent pair over $\mathbb{Q}(i)$ constitutes
\begin{equation}
(\alpha, \beta) = \left(\frac{1}{2}, \vartheta \right),
\end{equation}
where $\vartheta \in [0, \frac{1}{4})$ denotes a subconvex exponent for quadratic Dirichlet $L$-functions over $\mathbb{Q}(i)$ in the conductor aspect, as defined in~\eqref{eq:conductor-aspect}; cf.~\cites[Remark~2.4]{Kaneko2024}. Moreover, the result of Balog et al.~\cites[Lemma~2.4]{BalogBiroCherubiniLaaksonen2022} leads to a conditional quadratic exponent pair in harmony with~\cites[Lemma~2.5]{Kaneko2024}. The following reduction leads to the notion of \textit{dual}~multiplicative exponent pairs\footnote{The first appearance of such duality over $\mathbb{Q}$ can be traced back to the work of Vinogradov~\cites{Vinogradov1965},~whose omission was rectified by Graev and Karatsuba~\cites[Theorem~1]{GaraevKaratsuba2005}. The additional lattice structure exhibited in the Gaussian integers may introduce complexities to establish the genuine counterpart over $\mathbb{Q}(i)$.} over $\mathbb{Q}(i)$. Since it is not requisite in a narrow sense at the core of our ideas, we elide the proof and relegate relevant pursuits to future work.
\begin{lemma}\label{lem:dual}
If $(\alpha, \beta)$ is a multiplicative exponent pair, then so is $(1-\alpha, \alpha+\beta-\frac{1}{2})$.
\end{lemma}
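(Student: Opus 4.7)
The plan is to adapt the classical duality argument of Vinogradov, as refined by Graev and Karatsuba~\cites[Theorem~1]{GaraevKaratsuba2005}, from the rational setting to the Gaussian integers, replacing the one-dimensional P\'{o}lya expansion with its two-dimensional counterpart. First I would invoke the Gauss sum identity $\chi(n) = \tau(\bar{\chi})^{-1} \sum_{a \bmod \mathfrak{d}} \bar{\chi}(a) \check{e}(an/D)$ with $|\tau(\chi)| = \mathrm{N}(D)^{1/2}$, followed by two-dimensional Poisson summation on a smooth approximation of the annular indicator $\mathbf{1}[x < \mathrm{N}(\cdot) \leq x+y]$, producing (modulo a smoothing error absorbed into $\mathrm{N}(D)^{\epsilon}$) the P\'{o}lya-type expansion
\[
\sum_{x < \mathrm{N}(n) \leq x+y} \chi(n) = \frac{\tau(\chi)}{\mathrm{N}(D)} \sum_{0 \neq h \in \mathbb{Z}[i]} \bar{\chi}(h) \hat{\Phi}(h/D),
\]
wherein the radial symmetry of $\Phi$ together with the identity $(uJ_{1}(u))' = uJ_{0}(u)$ already invoked in the proof of \cref{prop:Bessel} yields
\[
\hat{\Phi}(\xi) = \frac{\sqrt{x+y}\, J_{1}(2\pi\sqrt{x+y}|\xi|) - \sqrt{x}\, J_{1}(2\pi\sqrt{x}|\xi|)}{|\xi|}.
\]

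Second, I would extract from the asymptotics~\eqref{eq:J-1} and the sum-to-product identity $\cos A - \cos B = -2 \sin((A+B)/2) \sin((A-B)/2)$ a three-scale decay estimate $|\hat{\Phi}(\xi)| \ll \min(y,\, y x^{-1/4} |\xi|^{-1/2},\, x^{1/4} |\xi|^{-3/2})$ across the ranges $|\xi| \in [0, 1/\sqrt{x}]$, $[1/\sqrt{x}, \sqrt{x}/y]$, and $[\sqrt{x}/y, \infty)$, respectively. Dyadic decomposition across these three regimes, combined with partial summation on each shell $|h| \sim H$ and the hypothesis $\sum_{\mathrm{N}(h) \leq H^{2}} \bar{\chi}(h) \ll_{\epsilon} H^{2\alpha} \mathrm{N}(D)^{\beta+\epsilon}$ --- equally valid for $\bar{\chi}$, itself a primitive Dirichlet character modulo $\mathfrak{d}$ --- would produce, after recombination with the Gauss sum normalisation $|\tau(\chi)|/\mathrm{N}(D) = \mathrm{N}(D)^{-1/2}$, contributions bounded by $y^{1-\alpha} \mathrm{N}(D)^{\alpha+\beta-1/2+\epsilon}$ in the innermost and outermost regimes under the admissibility constraints $y \leq x$ and $\alpha \geq 1/2$.

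The principal obstacle is intrinsic to the intermediate regime $|\xi| \in [1/\sqrt{x}, \sqrt{x}/y]$, which has no counterpart over $\mathbb{Q}$: the classical P\'{o}lya expansion exhibits only the two-scale decay $\min(N/q, 1/|h|)$, whereas the two-dimensional Fourier transform of a thin annulus inherits Bessel structure and interposes a middle range in which naive partial summation yields an extraneous factor of $\mathrm{N}(D)^{1/4}$ in excess of the target. The plan to close this gap is to exploit the fine oscillatory factor $\sin(\pi(\sqrt{x+y}+\sqrt{x})|\xi|-3\pi/4)$ emerging from the sum-to-product decomposition, expand its exponential constituents, and group each resulting additive twist of $\bar{\chi}(h)$ with the hypothesis applied to a suitably translated annulus $\{x' < \mathrm{N}(\cdot) \leq x'+y\}$; a careful dyadic optimisation mirroring the Graev--Karatsuba protocol should then recover the lost factor. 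The involutive exchange of the corner exponent pairs $(0, 1/2) \leftrightarrow (1, 0)$ and the fixed-point behaviour along the diagonal $\alpha = 1/2$ furnish natural internal consistency checks on the final optimisation.
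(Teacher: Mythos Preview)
The paper does not actually prove \cref{lem:dual}. Immediately before the statement the author writes: ``Since it is not requisite in a narrow sense at the core of our ideas, we elide the proof and relegate relevant pursuits to future work.'' The accompanying footnote traces the rational prototype to Vinogradov and Graev--Karatsuba and then concedes that ``the additional lattice structure exhibited in the Gaussian integers may introduce complexities to establish the genuine counterpart over $\mathbb{Q}(i)$.'' So there is nothing in the paper to compare your argument against; the lemma is stated and left open.

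Your approach is precisely the route the footnote points to, and the structure you lay out---Gauss-sum inversion, two-dimensional Poisson summation, the Bessel-type transform $\hat{\Phi}$, and three-scale decay---is correct. You have also put your finger on exactly the obstruction the author alludes to: over $\mathbb{Q}$ the P\'{o}lya kernel has only two regimes and the duality closes cleanly, whereas the annular Fourier transform in $\mathbb{Z}[i]$ inserts a genuine middle range $|\xi| \in [x^{-1/2}, x^{1/2}/y]$ in which the decay $|\hat{\Phi}(\xi)| \ll y\,x^{-1/4}|\xi|^{-1/2}$ is too slow for naive partial summation. One remark: your diagnosis of the deficit as ``$\mathrm{N}(D)^{1/4}$'' is not quite right---if you carry the bookkeeping through with $H_{\max} \asymp \mathrm{N}(D)^{1/2}x^{1/2}/y$, the shortfall is $(x/y)^{\alpha-1/2}$ rather than a pure power of $\mathrm{N}(D)$. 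In particular the middle regime is empty when $y=x$, which is the only case the paper actually uses downstream, so for the paper's applications the duality goes through without your proposed fix.

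For the full statement with $1 \leq y \leq x$, your plan to recover the loss via the oscillatory factor $\sin(\pi(\sqrt{x+y}+\sqrt{x})|\xi|-3\pi/4)$ is the right instinct, but as written it is a sketch rather than a proof: you would need to show concretely how the additive twist $\check{e}(\sqrt{x}\,h/D)$ can be absorbed into a translated character sum to which the hypothesis applies, and this is not automatic since the twist is by a real phase in $|h|$ rather than an additive shift in $h$. That step is where the genuine work lies, and neither you nor the paper has carried it out.
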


In what follows, we focus on the case of $x = y$ in \cref{def:alpha-beta} and refer to a multiplicative exponent pair simply as a quadratic exponent pair for simplicity, even though the majority~of the ensuing discussion remains valid irrespective of quadraticity. We are prepared to establish the $2$-fold version of the author's result~\cites[Lemma~2.3]{Kaneko2024}, with a particular emphasis on the fact that the proof is independent of the degree of the underlying algebraic number field.
\begin{proposition}\label{prop:exponent-pair}
Keep the notation as above. Then the quadratic exponent pair
\begin{equation}\label{eq:exponent-pair}
(\alpha, \beta) = \left(1-\frac{1}{2(1+\vartheta^{\prime})}, \frac{\vartheta}{1+\vartheta^{\prime}} \right)
\end{equation}
is admissible.
\end{proposition}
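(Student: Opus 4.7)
The plan is to establish this exponent pair through an effective application of Perron's formula to the Dirichlet series $L(s, \chi_{D})$, followed by a contour shift to the critical line, where the hybrid subconvex hypothesis~\eqref{eq:conductor-aspect} furnishes the crucial input. Specifically, for a truncation parameter $T \geq 1$ to be optimised at the end and $c = 1+(\log x)^{-1}$, an effective Perron formula would express
\begin{equation*}
\sum_{x < \mathrm{N}(n) \leq 2x} \chi_{D}(n) = \frac{1}{2\pi i} \int_{c-iT}^{c+iT} L(s, \chi_{D}) \frac{(2x)^{s}-x^{s}}{s} ds + O_{\epsilon} \left(\frac{x^{1+\epsilon}}{T} \right),
\end{equation*}
where the remainder is controlled via $|\chi_{D}(n)| \leq 1$ together with the absolute convergence of $L(c, \chi_{D})$, the abuse of notation codified in~\cref{subsect:abuse} contributing only a harmless multiplicative constant.

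Since $\chi_{D}$ is non-principal, $L(s, \chi_{D})$ is entire, so the contour admits a shift to $\Re(s) = \tfrac{1}{2}$ without any residual poles. The horizontal segments at $\Im(s) = \pm T$ will be controlled by the Phragm\'{e}n--Lindel\"{o}f principle applied to~\eqref{eq:conductor-aspect}, which interpolates between the subconvex bound on the critical line and the absolute convergence at $\sigma = 1+\epsilon$; their contribution is then absorbed either into the Perron remainder or into the vertical segment. Invoking the hybrid subconvex bound on the vertical segment yields a contribution of order $x^{1/2+\epsilon} \mathrm{N}(D)^{\vartheta+\epsilon} T^{\vartheta^{\prime}+\epsilon}$, and gathering everything one obtains
\begin{equation*}
\sum_{x < \mathrm{N}(n) \leq 2x} \chi_{D}(n) \ll_{\epsilon} \frac{x^{1+\epsilon}}{T} + x^{1/2+\epsilon} \mathrm{N}(D)^{\vartheta+\epsilon} T^{\vartheta^{\prime}+\epsilon}.
\end{equation*}

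The concluding step is an elementary optimisation in $T$: balancing the two terms by the choice $T = x^{1/(2(1+\vartheta^{\prime}))} \mathrm{N}(D)^{-\vartheta/(1+\vartheta^{\prime})}$ produces precisely the asserted pair $(\alpha, \beta) = (1-\frac{1}{2(1+\vartheta^{\prime})}, \frac{\vartheta}{1+\vartheta^{\prime}})$. The extremal case $\vartheta^{\prime} = 0$ renders the truncation constraint inactive and recovers the trivial pair $(\tfrac{1}{2}, \vartheta)$ via direct Mellin inversion against a smooth weight, in agreement with the discussion preceding the proposition. The main obstacle, though ultimately routine, lies in the careful execution of Perron's formula over $\mathbb{Z}[i]$ with a sharp cutoff and in verifying that the horizontal contributions are indeed dominated by the Perron remainder; since the argument invokes only the analytic properties of $L(s, \chi_{D})$ rather than the lattice geometry of $\mathbb{Z}[i]$, the approach extends verbatim to arbitrary algebraic number fields, in harmony with the remark foreshadowing the proposition.
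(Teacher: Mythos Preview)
Your approach is correct and essentially the same as the paper's: both convert the character sum to an integral of $L(s,\chi_D)$ on the critical line and balance the truncation error against the hybrid subconvex bound~\eqref{eq:conductor-aspect}. The only cosmetic difference is that the paper uses a smooth weight $w$ with transition length $h$ (so the roles of your $T$ and the paper's $x/h$ coincide), which sidesteps the horizontal-segment bookkeeping; the paper also explicitly disposes of the regime $x \le \mathrm{N}(D)^{2\vartheta+\epsilon}$ (equivalently, your $T<1$) via the trivial pair $(1,0)$, a point you should add for completeness.
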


\begin{remark}
We highlight that \cref{prop:exponent-pair} becomes nontrivial when $x \geq \mathrm{N}(D)^{2\vartheta+\epsilon}$,~yet fails to improve upon the landmark Burgess threshold $x \geq \mathrm{N}(D)^{\frac{1}{4}+\epsilon}$ unless $\vartheta \in [0, \frac{1}{8})$. If~the generalised Lindel\"{o}f hypothesis is assumed, then the optimal quadratic exponent pair $(\frac{1}{2}, 0)$ would follow. It is of independent interest to examine other quadratic exponent pairs in light of arithmetic exponent pairs hitherto discovered~\cites{GrahamKolesnik1991}.
\end{remark}

\begin{proof}
The argument adapts the proofs of~\cites[Lemmata~2.1 and~2.3]{Kaneko2024}. Let $0 < h < x$ be a parameter at our discretion. Fix a smooth compactly supported weight function $w$~satisfying $w(t) = 1$ for $0 \leq \mathrm{N}(t) \leq x$, $w(t) = 0$ for $\mathrm{N}(t) \geq x+h$, and the derivative bounds $w^{(\ell)}(t) \ll_{\ell} h^{-\ell}$ for each $j \in \mathbb{N}_{0}$. Using the notation of Petrow and Young~\cites[Page~1889]{PetrowYoung2023}, we define
\begin{equation}\label{eq:S-chi-w}
S(\chi_{D}, w) \coloneqq \sum_{0 \ne n \in \mathbb{Z}[i]} \chi_{D}(n) w(n) = \int_{(\sigma)} \widetilde{w}(s) L(s, \chi_{D}) \frac{ds}{2\pi i}.
\end{equation}
Integration by parts enables the truncation of the integral at $\Im(s) \ll (\frac{x}{h})^{1+\epsilon}$. Taking~$\sigma = \frac{1}{2}$ and invoking~\eqref{eq:conductor-aspect} then yields
\begin{equation}
S(\chi_{D}, w) \ll_{\epsilon} x^{\frac{1}{2}} \mathrm{N}(D)^{\vartheta+\epsilon} \left(\frac{x}{h} \right)^{\vartheta^{\prime}+\epsilon}.
\end{equation}
Since
\begin{equation}
\sum_{\mathrm{N}(n) \leq x} \chi_{D}(n) = S(\chi_{D}, w)-\sum_{x < \mathrm{N}(n) \leq x+h} \chi_{D}(n) w(n),
\end{equation}
we estimate the second term on the right-hand side trivially with absolute values, deducing
\begin{equation}
\sum_{\mathrm{N}(n) \leq x} \chi_{D}(n) \ll_{\epsilon} x^{\frac{1}{2}} \mathrm{N}(D)^{\vartheta+\epsilon} \left(\frac{x}{h} \right)^{\vartheta^{\prime}+\epsilon}+h.
\end{equation}
Optimising $h = x^{1-\frac{1}{2(1+\vartheta^{\prime})}} \mathrm{N}(D)^{\frac{\vartheta}{1+\vartheta^{\prime}}}$ concludes the proof of \cref{prop:exponent-pair} for $x \geq \mathrm{N}(D)^{2\vartheta+\epsilon}$. In the complementary regime $x \leq \mathrm{N}(D)^{2\vartheta+\epsilon}$, the desired claim is a consequence of the trivial exponent pair $(1, 0)$ with adjustments to the respective exponents of $x$ and $\mathrm{N}(D)$, namely
\begin{equation}
\sum_{\mathrm{N}(n) \leq x} \chi_{D}(n) \ll x \ll_{\epsilon} x^{\alpha} \mathrm{N}(D)^{2\vartheta(1-\alpha)+\epsilon}.
\end{equation}
Inserting the values given by~\eqref{eq:exponent-pair} now equates $\beta = 2\vartheta(1-\alpha)$. The proof of \cref{prop:exponent-pair} is thus~complete.
\end{proof}

The following result is of independent interest.
\begin{corollary}\label{cor:duality}
Keep the notation as above. Then the quadratic exponent pair
\begin{equation}
(\alpha, \beta) = \left(\frac{1}{2(1+\vartheta^{\prime})}, \frac{2\vartheta+\vartheta^{\prime}}{2(1+\vartheta^{\prime})} \right)
\end{equation}
is admissible.
\end{corollary}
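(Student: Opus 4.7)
The plan is to deduce \cref{cor:duality} as a direct application of the duality principle encoded in \cref{lem:dual} to the multiplicative exponent pair furnished by \cref{prop:exponent-pair}. Since \cref{lem:dual} is stated as a general reduction and its proof is intentionally elided in the preceding discussion, the task here is purely mechanical: feed the coordinates
\begin{equation}
(\alpha, \beta) = \left(1-\frac{1}{2(1+\vartheta^{\prime})}, \frac{\vartheta}{1+\vartheta^{\prime}} \right)
\end{equation}
into the transformation $(\alpha, \beta) \mapsto (1-\alpha, \alpha+\beta-\frac{1}{2})$ and simplify.

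First, I would verify that the new first coordinate takes the claimed form, namely
\begin{equation}
1-\alpha = 1-\left(1-\frac{1}{2(1+\vartheta^{\prime})} \right) = \frac{1}{2(1+\vartheta^{\prime})},
\end{equation}
which coincides with the desired exponent. Second, I would compute the new second coordinate by assembling the three terms over a common denominator, namely
\begin{equation}
\alpha+\beta-\frac{1}{2} = \left(1-\frac{1}{2(1+\vartheta^{\prime})} \right)+\frac{\vartheta}{1+\vartheta^{\prime}}-\frac{1}{2} = \frac{2(1+\vartheta^{\prime})-1+2\vartheta-(1+\vartheta^{\prime})}{2(1+\vartheta^{\prime})} = \frac{2\vartheta+\vartheta^{\prime}}{2(1+\vartheta^{\prime})},
\end{equation}
which again matches the claim. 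It remains to check that the resulting pair lies within the admissible region $\{0 \leq \beta \leq \frac{1}{2} \leq \alpha \leq 1 \} \cup \{(0, \frac{1}{2}), (1, 0) \}$ stipulated in \cref{def:alpha-beta}, which is straightforward given the ranges $\vartheta \in [0, \frac{1}{4})$ and $\vartheta^{\prime} \geq 0$.

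There is no real obstacle beyond bookkeeping, and indeed the only substantive content -- the duality transformation itself -- is inherited from \cref{lem:dual}. The corollary may thus be regarded as a structural byproduct of the multiplicative exponent pair machinery, recording for the reader's future convenience the symmetric partner of~\eqref{eq:exponent-pair} under the involution $(\alpha, \beta) \mapsto (1-\alpha, \alpha+\beta-\frac{1}{2})$, even though this particular pair is not invoked in the subsequent sections of the paper.
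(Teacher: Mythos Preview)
Your proof is correct and follows exactly the paper's approach: the paper's own proof is the single sentence ``The claim follows from the combination of \cref{lem:dual} and \cref{prop:exponent-pair},'' and you have simply written out the arithmetic of the duality transformation $(\alpha,\beta)\mapsto(1-\alpha,\alpha+\beta-\tfrac12)$ applied to~\eqref{eq:exponent-pair}. One small caveat: your claim that the resulting pair lies in the admissible region of \cref{def:alpha-beta} is not actually straightforward---indeed for $\vartheta'>0$ the new first coordinate $\tfrac{1}{2(1+\vartheta')}$ is strictly less than $\tfrac12$, so the pair falls outside the set $\{0\le\beta\le\tfrac12\le\alpha\le1\}$---but the paper does not perform this check either, so this reflects a looseness in the definition rather than a defect in your argument.
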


\begin{remark}
\cref{cor:duality} becomes nontrivial when $x \geq \mathrm{N}(D)^{\frac{2\vartheta+\vartheta^{\prime}}{1+2\vartheta^{\prime}}+\epsilon}$, yet fails to improve upon the threshold where \cref{prop:exponent-pair} becomes nontrivial. Under the generalised Lindel\"{o}f hypothesis $\vartheta = 0$, the quadratic exponent pairs as well as the associated thresholds coincide, respectively.
\end{remark}

\begin{proof}
The claim follows from the combination of \cref{lem:dual} and \cref{prop:exponent-pair}.
\end{proof}

\section{Symplectic zero density estimates}\label{sect:zero}
Expanding upon prior endeavours, Onodera~\cites[Theorem~1]{Onodera2009} extended Heath-Brown's quadratic large sieve inequality~\cites[Theorem~1]{HeathBrown1995} to the setting of $\mathbb{Q}(i)$, which asserts that if $M, N \geq 1$ and $(a_{n})$ is an arbitrary sequence of complex numbers, then
\begin{equation}\label{eq:Onodera}
\ \sideset{}{^{\sharp}} \sum_{\mathrm{N}(m) \leq M} \left|\ \sideset{}{^{\sharp}} \sum_{\mathrm{N}(n) \leq N} a_{n} \left(\frac{n}{m} \right) \right|^{2} \ll_{\epsilon} (MN)^{\epsilon}(M+N) \ \sideset{}{^{\sharp}} \sum_{\mathrm{N}(n) \leq N} |a_{n}|^{2},
\end{equation}
where $\sharp$ on the summation indicates restriction to odd squarefree integers. Shortly thereafter, Goldmakher and Louvel~\cites[Theorem~1.1]{GoldmakherLouvel2013} extended~\eqref{eq:Onodera} to primitive Gr\"{o}{\ss}encharaktern of trivial infinite type lying in the quadratic Hecke family over a number field~\cites{FisherFriedberg2004}{FriedbergHoffsteinLieman2003}.

In conjunction with Heath-Brown's deduction of a zero density theorem for the symplectic family of quadratic Dirichlet $L$-functions~\cites[Theorem~3]{HeathBrown1995} from the quadratic large sieve inequality over $\mathbb{Q}$, it is indispensable to prove the corresponding result in the setting of $\mathbb{Q}(i)$ due to the inferior quality of the best known unconditional result in the existing literature. For $\sigma \in [0, 1]$ and $T \geq 1$, let
\begin{equation}
N(\sigma, T, \chi) \coloneqq \#\{\rho = \beta+i\gamma \in \mathbb{C}: L(\rho, \chi) = 0, \, \sigma \leq \beta \leq 1, \, |\gamma| \leq T \},
\end{equation}
where the zeros are counted with multiplicity. Balog et al.~\cites[Lemma~2.5]{BalogBiroCherubiniLaaksonen2022} (cf.~\cites[Theorem~2]{Huxley1971}) established that for any fixed $\sigma \in [\frac{1}{2}, 1]$ and any $\epsilon > 0$,
\begin{equation}\label{eq:Balog-et-al}
\ \sideset{}{^{\flat}} \sum_{\mathrm{N}(D) \leq Q} N(\sigma, T, \chi_{D}) \ll_{\epsilon} Q^{\frac{10(1-\sigma)}{3-\sigma}+\epsilon} T^{\frac{7-5\sigma}{3-\sigma}+\epsilon},
\end{equation}
where $\flat$ indicates that the summation is taken over fundamental discriminants $D$ of norm up to $Q$. For our purposes, the exponent of $T$ proves inconsequential in practice, as the proof of \cref{thm:conditional} requires a zero density theorem only for $T \ll Q^{\epsilon}$. The following result provides an unconditional improvement over~\eqref{eq:Balog-et-al} and serves as the crux in establishing \cref{thm:conditional}.
\begin{proposition}\label{prop:density}
Keep the notation as above. Then we have for any fixed $\sigma \in [\frac{1}{2}, 1]$ and~any $\epsilon > 0$ that
\begin{equation}\label{eq:zero-density-estimate}
\ \sideset{}{^{\flat}} \sum_{\mathrm{N}(D) \leq Q} N(\sigma, T, \chi_{D}) \ll_{\epsilon} Q^{\frac{3(1-\sigma)}{2-\sigma}+\epsilon} T^{\frac{4-3\sigma}{2-\sigma}+\epsilon}.
\end{equation}
\end{proposition}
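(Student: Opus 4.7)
The plan is to transplant Heath-Brown's deduction in~\cites[Section~11]{HeathBrown1995} of a zero density theorem from his quadratic large sieve to the $\mathbb{Q}(i)$-setting, substituting Onodera's counterpart~\eqref{eq:Onodera} as the underlying sieve inequality. The scheme proceeds via classical zero detection: I would attach a mollifier $M(s, \chi_{D}) \coloneqq \sum_{\mathrm{N}(m) \leq Y} \mu(m) \chi_{D}(m) \mathrm{N}(m)^{-s}$ of length $Y$ to $L(s, \chi_{D})$, so that a Perron-type contour shift across the critical line, combined with the approximate functional equation, converts any zero $\rho = \beta+i\gamma$ with $\sigma \leq \beta$ and $|\gamma| \leq T$ into the assertion that there exist a dyadic scale $N \in [Y, (QT)^{A}]$ and a shift $|\tau| \leq T+1$ for which
\begin{equation*}
\bigg| \sum_{N < \mathrm{N}(n) \leq 2N} \frac{a_{n} \chi_{D}(n)}{\mathrm{N}(n)^{\sigma+i\tau}} \bigg| \gg (\log QT)^{-1},
\end{equation*}
where the coefficients $a_{n}$ arise from a Dirichlet convolution involving the mollifier and satisfy $|a_{n}| \ll_{\epsilon} \mathrm{N}(n)^{\epsilon}$.

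I would next extract, via a Montgomery-type well-spacing procedure, at most $O(T \log QT)$ zeros per character whose shifts $\tau$ form a unit-spaced system, at the cost of only a logarithmic factor. The machinery then branches into regimes according to the balance between $Y$, $N$, and $\sqrt{QT}$; in each regime, Heath-Brown's identity provides a bilinear decomposition of the inner sum into Type~I and Type~II pieces, to which one applies either the Cauchy--Schwarz inequality in conjunction with Onodera's quadratic large sieve~\eqref{eq:Onodera} -- viewed as a hybrid double large sieve after discretisation of the $\tau$-integral on a unit-spaced net -- or a direct mean value theorem for Dirichlet polynomials. The bookkeeping parallels~\cites[Section~11]{HeathBrown1995} almost verbatim, with the ring of Gaussian integers introducing no structural deviation beyond constants encoded in Onodera's inequality.

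Finally, I would optimise over the Heath-Brown identity parameters, recovering the paired exponents $\tfrac{3(1-\sigma)}{2-\sigma}$ for $Q$ and $\tfrac{4-3\sigma}{2-\sigma}$ for $T$ exactly as in the rational case; the optimisation problem is identical in form to that resolved by Heath-Brown. The principal obstacle is that Onodera's sieve~\eqref{eq:Onodera} imposes squarefree-and-odd constraints $\sharp$ on both the $D$- and $n$-sums, whereas neither the fundamental discriminants parameterising the outer sum nor the convolved coefficients $a_{n}$ are automatically supported on such subsets. A preliminary sieving step is therefore required to isolate contributions from Gaussian integers divisible by the ramified prime $1+i$ or by large squares, which ought to cost only an additional $(QT)^{\epsilon}$-factor through arguments parallel to the squarefree extraction in~\cites[Section~11]{HeathBrown1995}, leveraging also~\cites[Theorem~1.1]{GoldmakherLouvel2013} where the quadratic large sieve is set out in a form more amenable to Gr\"{o}{\ss}encharakter manipulations.
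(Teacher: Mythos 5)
Your high-level plan --- transplant Heath-Brown's deduction from Section 11 of~\cites{HeathBrown1995}, substituting Onodera's inequality~\eqref{eq:Onodera} for the rational quadratic large sieve --- is exactly the approach the paper takes, and your observation about the $\sharp$-constraints is a real (if minor) technical point that is absorbed via the reduction in~\cites[Pages~265--267]{HeathBrown1995}. However, the mechanism you describe is not what Heath-Brown does in Section~11, and it omits a crucial intermediate ingredient.

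First, there is no appeal to Heath-Brown's identity and no Type~I/Type~II bilinear decomposition in~\cites[Section~11]{HeathBrown1995}, nor in the paper's proof; you appear to be conflating this argument with other zero-density techniques. What actually happens is a two-case split on the mollified product $L(s,\chi_{D}) M_{X}(s,\chi_{D})$: in one case an integral of this product over the critical line is large for $\gg N$ characters, and in the other a tail Dirichlet polynomial of the convolution coefficients $a_{n}$ is large for $\gg N(\log Y)^{-1}$ characters. The first case is resolved by H\"{o}lder's inequality with exponent $\frac{4}{3}$, isolating $|L|^{4}$ and $|M_{X}|^{2}$ factors, the latter estimated via the large sieve (your~\eqref{eq:Onodera}).

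Second --- and this is the genuine gap --- the $|L|^{4}$ factor requires a \emph{fourth moment bound for quadratic Dirichlet $L$-functions over $\mathbb{Q}(i)$ off the half-line}, namely
\begin{equation*}
\ \sideset{}{^{\flat}} \sum_{\mathrm{N}(D) \leq Q} |L(\sigma+i\tau, \chi_{D})|^{4} \ll_{\epsilon} \bigl(Q+(Q(1+|\tau|)^{2})^{2-2\sigma}\bigr)(Q(1+|\tau|))^{\epsilon},
\end{equation*}
which is the $\mathbb{Q}(i)$-analogue of~\cites[Corollary~3]{HeathBrown1995}. This is not automatic: it is established by an iteration in $\sigma$ that interleaves the large sieve, the approximate functional equation for Hecke $L$-functions, and Stirling's formula, and the archimedean exponent $2$ inside $(Q(1+|\tau|)^{2})^{2-2\sigma}$ differs from the rational case because the analytic conductor over $\mathbb{Q}(i)$ scales differently. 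Your outline does not identify this lemma, let alone the recursion needed to prove it. Without it the H\"{o}lder step cannot close, and the exponents $\frac{3(1-\sigma)}{2-\sigma}$ and $\frac{4-3\sigma}{2-\sigma}$ --- which arise precisely from balancing $X = Q$ and $Y = (Q^{3}T^{2})^{\frac{1}{4-2\sigma}}$ against the three resulting terms --- cannot be recovered.
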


\begin{figure}[ht]
	\centering
	\begin{subfigure}{0.49\textwidth}
		\centering
		\begin{AnnotatedImage}[width=0.9]{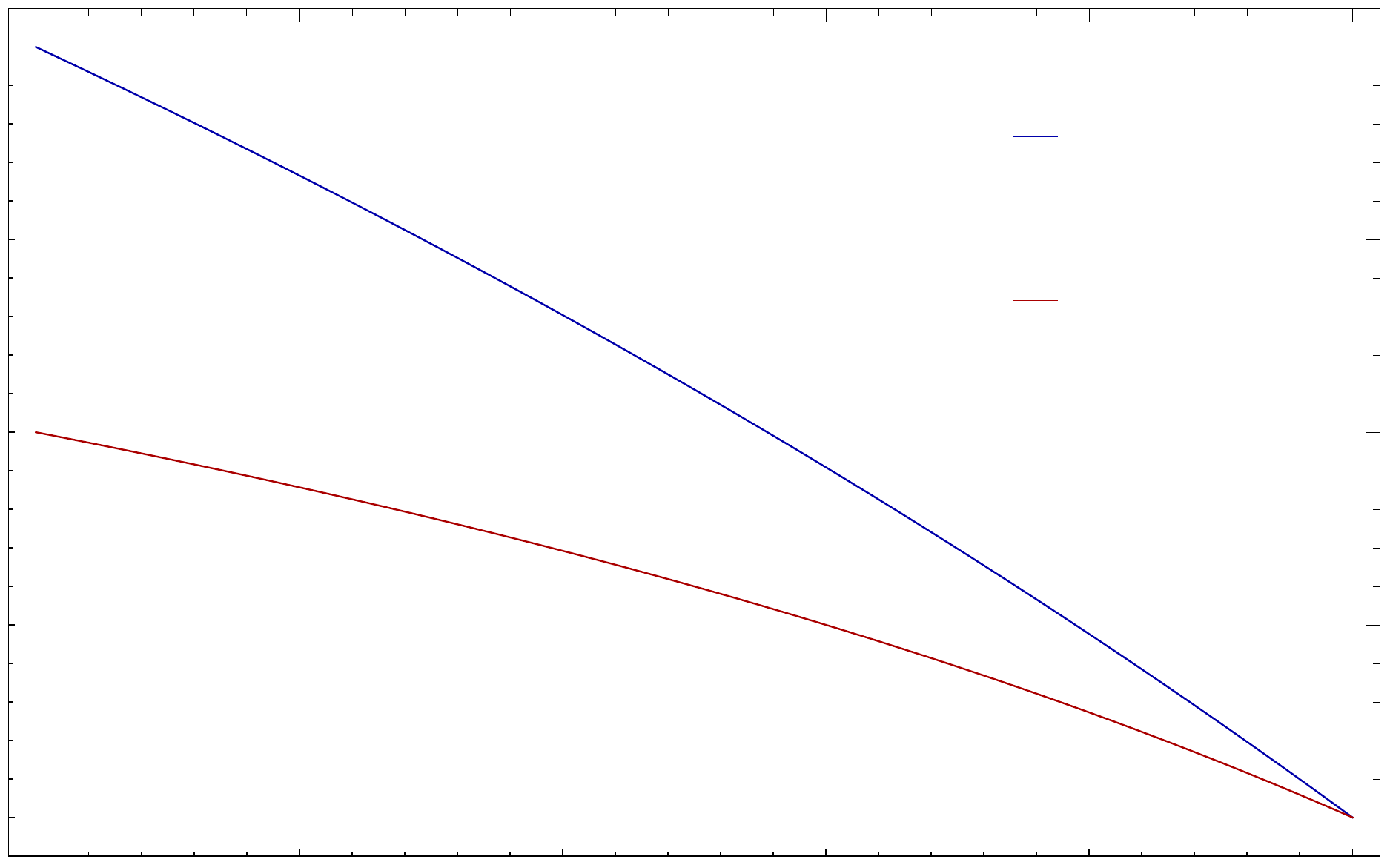}
		\annotate [draw=none,font=\tiny] at (0.85887,0.8479){\color{black}{$\frac{10(1-\sigma)}{3-\sigma}$}};
		\annotate [draw=none,font=\tiny] at (0.85,0.6567){\color{black}{$\frac{3(1-\sigma)}{2-\sigma}$}};
		\annotate [draw=none,font=\tiny] at (-0.015,0.9433){\color{black}{$2$}};
		\annotate [draw=none,font=\tiny] at (-0.015,0.5027){\color{black}{$1$}};
		\annotate [draw=none,font=\tiny] at (-0.015,0.0621){\color{black}{$0$}};
		\annotate [draw=none,font=\tiny] at (0.0266,-0.04){\color{black}{$\frac{1}{2}$}};
		\annotate [draw=none,font=\tiny] at (0.9745,-0.04){\color{black}{$1$}};
		\end{AnnotatedImage}
	\caption{Conductor aspect}
	\label{fig:conductor}
	\end{subfigure}
	\begin{subfigure}{0.49\textwidth}
		\centering
		\begin{AnnotatedImage}[width=0.9]{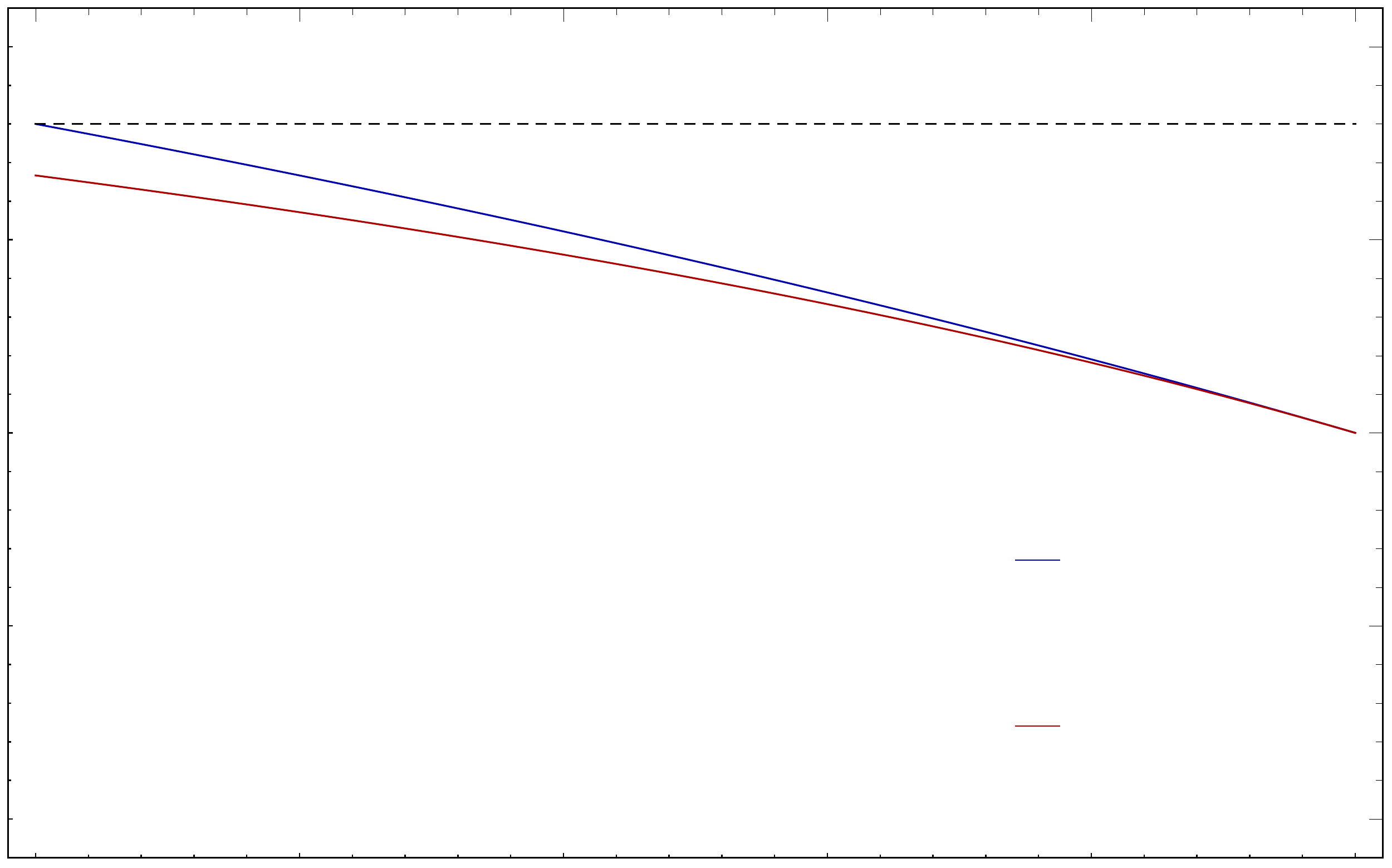}
		\annotate [draw=none,font=\tiny] at (0.836,0.3534){\color{black}{$\frac{7-5\sigma}{3-\sigma}$}};
		\annotate [draw=none,font=\tiny] at (0.836,0.1622){\color{black}{$\frac{4-3\sigma}{2-\sigma}$}};
		\annotate [draw=none,font=\tiny] at (1.015,0.8589){\color{black}{$\frac{9}{5}$}};
		\annotate [draw=none,font=\tiny] at (-0.015,0.7994){\color{black}{$\frac{5}{3}$}};
		\annotate [draw=none,font=\tiny] at (-0.015,0.0621){\color{black}{$0$}};
		\annotate [draw=none,font=\tiny] at (1.015,0.5027){\color{black}{$1$}};
		\annotate [draw=none,font=\tiny] at (0.0266,-0.04){\color{black}{$\frac{1}{2}$}};
		\annotate [draw=none,font=\tiny] at (0.9745,-0.04){\color{black}{$1$}};
		\end{AnnotatedImage}
	\caption{Archimedean aspect}
	\label{fig:archimedean}
	\end{subfigure}
\caption{A comparison between~\eqref{eq:Balog-et-al} and~\eqref{eq:zero-density-estimate} as $\sigma \in [\frac{1}{2}, 1]$ varies}
\label{fig:density}
\end{figure}

For reference, \cref{fig:density} juxtaposes the quality of~\eqref{eq:Balog-et-al} and~\eqref{eq:zero-density-estimate} in each aspect as $\sigma \in [\frac{1}{2}, 1]$ varies, visualising on the same scale the extent to which \cref{prop:density} attains a polynomial power-saving improvement. The inferior quality of the former in both aspects arises from the direct application of~\cites[Theorem~2]{Huxley1971}, where the summation runs through all conductors not exceeding $Q$ rather than fundamental discriminants. However, the ensuing restriction to fundamental discriminants by positivity as in~\cites[Equation~(32)]{BalogBiroCherubiniLaaksonen2022} sacrifices the intrinsic nature of quadratic characters, thereby motivating our enhancement. Notably, the transition from the unitary family to the symplectic family results in a significant deterioration of the quality of the exponent especially in the conductor aspect, as illustrated in \cref{fig:density}.

We conclude this section by noting that the yet unproven density hypothesis asserts
\begin{equation}
\ \sideset{}{^{\flat}} \sum_{\mathrm{N}(D) \leq Q} N(\sigma, T, \chi_{D}) \ll_{\epsilon} (QT^{2})^{2(1-\sigma)+\epsilon}.
\end{equation}

\begin{proof}[Proof of \cref{prop:density}]
The argument aligns with that of Heath-Brown~\cites[Section~11]{HeathBrown1995}. In particular, key components including~\cites[Theorem~2 and Corollary~3]{HeathBrown1995} can be derived in a sufficiently straightforward manner from~\cites[Theorem~1]{HeathBrown1995}. Consequently, the~Gaussian quadratic large sieve inequality~\eqref{eq:Onodera}, \cref{lem:heath-brown-1,lem:heath-brown-2}, as well as the standard procedure of Montgomery~\cites[Chapter~12]{Montgomery1971} collectively play an analogous role in our reasoning.

As an initial reduction, it suffices to demonstrate that the number $N$ of primitive quadratic characters $\chi_{D}$ with $\mathrm{N}(D) \leq Q$ for which $L(s, \chi_{D})$ has a zero within the rectangle
\begin{equation}\label{eq:rectangle}
[\sigma, \sigma+(\log QT)^{-1}) \times [\tau, \tau+(\log QT)^{-1})
\end{equation}
satisfies
\begin{equation}
N \ll (Q^{3} T)^{\frac{1-\sigma}{2-\sigma}}(QT)^{\epsilon}
\end{equation}
for $|\tau| \leq T$. Without loss of generality, one may assume $\sigma > \frac{1}{2}+(\log QT)^{-1}$, as otherwise the required bound becomes trivial. Following~\cites[Page~272]{HeathBrown1995} \textit{mutatis mutandis}, we introduce two auxiliary parameters $Y \gg X \gg 1$ and define the mollifier
\begin{equation}
M_{X}(s, \chi_{D}) \coloneqq \sum_{\mathrm{N}(n) \leq X} \mu(n) \chi_{D}(n) \mathrm{N}(n)^{-s}.
\end{equation}
In anticipation of the ensuing simplifications, it is convenient to handle the following two~cases separately.
\begin{enumerate}
\item\label{item:first} There exist $\gg N$ characters $\chi_{D}$, with corresponding zeros $\rho = \beta+i\gamma$, for which
\begin{equation}
\left|\int_{|u| \ll \log QT} L \left(\frac{1}{2}+i\gamma+iu, \chi_{D} \right) M_{X} \left(\frac{1}{2}+i\gamma+iu, \chi_{D} \right) Y^{\frac{1}{2}-\beta+iu} \Gamma \left(\frac{1}{2}-\beta+iu \right) \, du \right| \gg 1.
\end{equation}
\item\label{item:second} There exist an integer $U$ in the range $X \leq U \leq Y^{2}$ and $\gg N(\log Y)^{-1}$ characters~$\chi_{D}$, with corresponding zeros $\rho = \beta+i\gamma$, for which
\begin{equation}
\left|\sum_{U < \mathrm{N}(n) \leq 2U} a_{n} \chi_{D}(n) \mathrm{N}(n)^{-\rho} e^{-\frac{n}{Y}} \right| \gg (\log Y)^{-1},
\end{equation}
where the coefficients $a_{n} = a_{n}(X)$ are given by the expansion
\begin{equation}
L(s, \chi_{D}) M_{X}(s, \chi_{D}) = \sum_{0 \ne n \in \mathbb{Z}[i]} a_{n} \chi_{D}(n) \mathrm{N}(n)^{-s}.
\end{equation}
\end{enumerate}

In the first case~\eqref{item:first}, given that $\rho$ is required to lie in the rectangle~\eqref{eq:rectangle}, we observe
\begin{equation}
\int_{|u-\tau| \ll \log QT} \left|L \left(\frac{1}{2}+iu, \chi_{D} \right) M_{X} \left(\frac{1}{2}+iu, \chi_{D} \right) \right| \, du \gg Y^{\sigma-\frac{1}{2}}(\log QT)^{-1}.
\end{equation}
Applying H\"{o}lder's inequality and summing over fundamental discriminants with $\mathrm{N}(D) \leq Q$ yields
\begin{align}
&N(Y^{\sigma-\frac{1}{2}}(\log QT)^{-1})^{\frac{4}{3}}(\log QT)^{-\frac{1}{3}}\\
& \ll \int_{|u-\tau| \ll \log QT} \ \sideset{}{^{\flat}} \sum_{\mathrm{N}(D) \leq Q} \left|L \left(\frac{1}{2}+iu, \chi_{D} \right) M_{X} \left(\frac{1}{2}+iu, \chi_{D} \right) \right|^{\frac{4}{3}} \, du\\
& \ll \left(\int_{|u-\tau| \ll \log QT} \ \sideset{}{^{\flat}} \sum_{\mathrm{N}(D) \leq Q} \left|L \left(\frac{1}{2}+iu, \chi_{D} \right) \right|^{4} \, du \right)^{\frac{1}{3}}\\
& \hspace{.34em} \times \left(\int_{|u-\tau| \ll \log QT} \ \sideset{}{^{\flat}} \sum_{\mathrm{N}(D) \leq Q} \left|M_{X} \left(\frac{1}{2}+iu, \chi_{D} \right) \right|^{2} \, du \right)^{\frac{2}{3}}.
\end{align}
It follows from \cref{lem:heath-brown-1} below that the first integral on the right-hand side is bounded by $\ll_{\epsilon}(QT^{2})^{1+\epsilon}$. On the other hand, the second integral may be handled by performing dyadic subdivisions of $M_{X}(s, \chi_{D})$ into the ranges $V < \mathrm{N}(n) \leq 2V$ and applying the Cauchy--Schwarz inequality. Hence, \cref{lem:heath-brown-2} below implies
\begin{align}
&\int_{|u-\tau| \ll \log QT} \ \sideset{}{^{\flat}} \sum_{\mathrm{N}(D) \leq Q} \left|M_{X} \left(\frac{1}{2}+iu, \chi_{D} \right) \right|^{2} \, du\\
& \ll \log X \sum_{V} \int_{|u-\tau| \ll \log QT} \ \sideset{}{^{\flat}} \sum_{\mathrm{N}(D) \leq Q} \left|\sum_{V < \mathrm{N}(n) \leq 2V} \mu(n) \chi_{D}(n) \mathrm{N}(n)^{-\frac{1}{2}-iu} \right|^{2} \, du\\
& \ll_{\epsilon} \log X \sum_{V} (Q+V)(QV)^{\epsilon} \ll_{\epsilon} (Q+X)(QX)^{\epsilon}.
\end{align}
Altogether, the first case~\eqref{item:first} contributes
\begin{equation}
N \ll (QT^{2})^{\frac{1}{3}}(Q+X)^{\frac{2}{3}} Y^{\frac{2-4\sigma}{3}}(QTY)^{\epsilon}.
\end{equation}

For the second case~\eqref{item:second}, one may assume that $Y \leq (QT)^{A}$ for some constant $A > 0$. If $\rho$ lies within the rectangle~\eqref{eq:rectangle}, then by partial summation
\begin{multline}
\sum_{U < \mathrm{N}(n) \leq 2U} a_{n} \chi_{D}(n) \mathrm{N}(n)^{-\rho} e^{-\frac{n}{Y}} \ll \left|\sum_{U < \mathrm{N}(n) \leq 2U} a_{n} \chi_{D}(n) \mathrm{N}(n)^{-s} e^{-\frac{n}{Y}} \right|\\
 + \int_{U}^{2U} \left|\sum_{U < \mathrm{N}(n) \leq V} a_{n} \chi_{D}(n) \mathrm{N}(n)^{-s} e^{-\frac{n}{Y}} \right| \frac{dV}{V},
\end{multline}
where $s = \sigma+i\tau$. It then follows from the Cauchy--Schwarz inequality that
\begin{equation}
\left|\sum_{U < \mathrm{N}(n) \leq 2U} a_{n} \chi_{D}(n) \mathrm{N}(n)^{-s} e^{-\frac{n}{Y}} \right|^{2}+\int_{U}^{2U} \left|\sum_{U < \mathrm{N}(n) \leq V} a_{n} \chi_{D}(n) \mathrm{N}(n)^{-s} e^{-\frac{n}{Y}} \right|^{2} \frac{dV}{V} \gg (\log QT)^{-2}
\end{equation}
for $\gg N(\log QT)^{-1}$ characters $\chi_{D}$. Summing over fundamental discriminants with $\mathrm{N}(D) \leq Q$ yields
\begin{multline}
\ \sideset{}{^{\flat}} \sum_{\mathrm{N}(D) \leq Q}\left|\sum_{U < \mathrm{N}(n) \leq 2U} a_{n} \chi_{D}(n) \mathrm{N}(n)^{-s} e^{-\frac{n}{Y}} \right|^{2}+\int_{U}^{2U} \ \sideset{}{^{\flat}} \sum_{\mathrm{N}(D) \leq Q} \left|\sum_{U < \mathrm{N}(n) \leq V} a_{n} \chi_{D}(n) \mathrm{N}(n)^{-s} e^{-\frac{n}{Y}} \right|^{2} \frac{dV}{V}\\
\gg N(\log QT)^{-3}.
\end{multline}
Nonetheless, \cref{lem:heath-brown-2} together with the inequality $|a_{n}| \leq \tau(n) \ll |n|^{\epsilon}$ for any $\epsilon > 0$ yields
\begin{equation}
\ \sideset{}{^{\flat}} \sum_{\mathrm{N}(D) \leq Q} \left|\sum_{U < \mathrm{N}(n) \leq V} a_{n} \chi_{D}(n) \mathrm{N}(n)^{-s} e^{-\frac{n}{Y}} \right|^{2} \ll_{\epsilon} (Q+U) U^{1-2\sigma}(QT)^{\epsilon} e^{-\frac{U}{Y}}
\end{equation}
for some $U$ in the range $X \leq U \leq Y^{2}$. Consequently, we deduce
\begin{equation}
N \ll_{\epsilon} (QX^{1-2\sigma}+Y^{2-2\sigma})(QT)^{\epsilon}.
\end{equation}

Gathering everything together from the cases~\eqref{item:first} and~\eqref{item:second} concludes for any $\epsilon > 0$ that
\begin{equation}
N \ll_{\epsilon} ((QT^{2})^{\frac{1}{3}}(Q+X)^{\frac{2}{3}} Y^{\frac{2(1-2\sigma)}{3}}+QX^{1-2\sigma}+Y^{2-2\sigma})(QT)^{\epsilon}.
\end{equation}
To balance the above three terms, we choose the parameters $X$ and $Y$ optimally by taking
\begin{equation}
X = Q, \qquad Y = (Q^{3} T^{2})^{\frac{1}{4-2\sigma}},
\end{equation}
from which the desired assertion follows.
\end{proof}

The proof of \cref{prop:density} is thereby reduced to the task of establishing the following~two lemmata.
\begin{lemma}\label{lem:heath-brown-2}
Let $N, Q \geq 1$, and let $(a_{n})$ be an arbitrary sequence of complex numbers. Then we have for any $\epsilon > 0$ that
\begin{equation}
\ \sideset{}{^{\flat}} \sum_{\mathrm{N}(D) \leq Q} \left|\sum_{\mathrm{N}(n) \leq N} a_{n} \chi_{D}(n) \right|^{2} \ll_{\epsilon} (N+Q) N^{1+\epsilon} Q^{\epsilon} \max_{\mathrm{N}(n) \leq N} |a_{n}|^{2}.
\end{equation}
\end{lemma}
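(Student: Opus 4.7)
The plan is to deduce the assertion from Onodera's Gaussian quadratic large sieve \eqref{eq:Onodera} after (i) extracting the squarefree part of the test variable via the decomposition $n = m\ell^{2}$, (ii) applying a weighted Cauchy--Schwarz inequality to uncouple the contributions of $m$ and $\ell$, and (iii) performing a quadratic reciprocity flip in $\mathbb{Z}[i]$ to realise the resulting sum in the shape required by \eqref{eq:Onodera}.

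First I would write each $0 \ne n \in \mathbb{Z}[i]$ uniquely as $n = m\ell^{2}$ with $m$ squarefree, so that $\chi_{D}(n) = \chi_{D}(m) \mathbf{1}_{(D,\ell) = 1}$. A weighted Cauchy--Schwarz inequality with the convergent weight $\mathrm{N}(\ell)^{-1-2\epsilon}$ (noting that $\sum_{\ell \ne 0} \mathrm{N}(\ell)^{-1-2\epsilon} \ll_{\epsilon} 1$) yields
\begin{equation}
\biggl| \sum_{\mathrm{N}(n) \leq N} a_{n} \chi_{D}(n) \biggr|^{2} \ll_{\epsilon} \sum_{\mathrm{N}(\ell)^{2} \leq N} \mathrm{N}(\ell)^{1+2\epsilon} \biggl| \sum_{\substack{m \text{ squarefree} \\ \mathrm{N}(m) \leq N/\mathrm{N}(\ell)^{2}}} a_{m\ell^{2}} \chi_{D}(m) \biggr|^{2}.
\end{equation}
Summing over $D$ and invoking \eqref{eq:Onodera} -- preceded by quadratic reciprocity in $\mathbb{Z}[i]$ to interchange $(D/m)$ with $(m/D)$ up to bounded local factors at the dyadic prime $1+i$, together with a positivity step that relaxes the $\flat$-constraint on $D$ to odd squarefreeness -- produces, for each admissible $\ell$,
\begin{equation}
\ \sideset{}{^{\flat}} \sum_{\mathrm{N}(D) \leq Q} \biggl| \sum_{\substack{m \text{ sf} \\ \mathrm{N}(m) \leq N/\mathrm{N}(\ell)^{2}}} a_{m\ell^{2}} \chi_{D}(m) \biggr|^{2} \ll_{\epsilon} (QN)^{\epsilon} \bigl(Q + N \mathrm{N}(\ell)^{-2}\bigr) \cdot \frac{N}{\mathrm{N}(\ell)^{2}} \max_{\mathrm{N}(n) \leq N} |a_{n}|^{2},
\end{equation}
upon bounding $|a_{m\ell^{2}}|^{2} \leq \max |a_{n}|^{2}$ and estimating the count of squarefree $m$ with $\mathrm{N}(m) \leq N/\mathrm{N}(\ell)^{2}$ trivially by $O(N/\mathrm{N}(\ell)^{2})$. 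Summing over $\ell$ with $\mathrm{N}(\ell)^{2} \leq N$, using $\sum_{\mathrm{N}(\ell) \leq \sqrt{N}} \mathrm{N}(\ell)^{-1+2\epsilon} \ll N^{\epsilon}$ and $\sum_{\ell \ne 0} \mathrm{N}(\ell)^{-3+2\epsilon} \ll 1$, then delivers the target bound $(N+Q) N^{1+\epsilon} Q^{\epsilon} \max |a_{n}|^{2}$ after a harmless redefinition of $\epsilon$.

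The principal technical obstacle lies in the reciprocity step: the Gaussian reciprocity law takes its cleanest form for odd primary $D$ and $m$ (i.e.\ $\equiv 1 \pmod{(1+i)^{3}}$), so the application of \eqref{eq:Onodera} must be preceded by partitioning both summation variables into the finitely many residue classes modulo $(1+i)^{3}$ and absorbing the ensuing local correction factors, which are uniformly bounded. A secondary concern is verifying that fundamental discriminants of $\mathbb{Q}(i)$ are dominated by the odd squarefree family on which Onodera's inequality operates; this follows from the explicit classification of such discriminants, with the thin contribution from $D$ divisible by $1+i$ handled by trivial estimation well within the target bound.
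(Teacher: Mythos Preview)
Your proposal is correct and follows essentially the same approach as the paper's proof, which consists of the single sentence ``Following the approach outlined in~\cite[Pages~265--267]{HeathBrown1995}, the claim follows from~\eqref{eq:Onodera} without any substantive modifications.'' What you have written out---the squarefree decomposition $n = m\ell^{2}$, the weighted Cauchy--Schwarz in $\ell$, the application of the quadratic large sieve to the inner $m$-sum, and the reciprocity/parity bookkeeping at the ramified prime---is precisely Heath-Brown's deduction of his Corollary~2 from his Theorem~1, transported to $\mathbb{Z}[i]$ via Onodera's inequality, so you have simply unpacked the citation.
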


\begin{proof}
Following the approach outlined in~\cites[Pages~265--267]{HeathBrown1995}, the claim follows from~\eqref{eq:Onodera} without any substantive modifications.
\end{proof}

\begin{lemma}\label{lem:heath-brown-1}
Let $Q \geq 1$ and $\tau \in \mathbb{R}$. Then we have for any fixed $\sigma \in [\frac{1}{2}, 1]$ and any $\epsilon > 0$~that
\begin{equation}
\ \sideset{}{^{\flat}} \sum_{\mathrm{N}(D) \leq Q} |L(\sigma+i\tau, \chi_{D})|^{4} \ll_{\epsilon} (Q+(Q(1+|\tau|)^{2})^{2-2\sigma})(Q(1+|\tau|))^{\epsilon}.
\end{equation}
\end{lemma}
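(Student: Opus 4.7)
The plan is to adapt Heath-Brown's proof of \cites[Theorem~2]{HeathBrown1995} to the Gaussian setting, with the Onodera large sieve~\eqref{eq:Onodera} serving as the replacement for the classical quadratic large sieve. First, I would deploy an approximate functional equation for the \emph{square} $L(s, \chi_D)^2$, obtained from Mellin inversion of the completed $L$-function against a suitable smooth weight. The analytic conductor of $L(s, \chi_D)^2$ at the point $s = \sigma+i\tau$ is of order $X \coloneqq \mathrm{N}(D)(1+|\tau|)^2$, yielding
\[
L(\sigma+i\tau, \chi_D)^{2} = \sum_{0 \ne n \in \mathbb{Z}[i]} \frac{d_{\mathbb{Z}[i]}(n) \chi_D(n)}{\mathrm{N}(n)^{\sigma+i\tau}} V_{1}\!\left(\frac{\mathrm{N}(n)}{X}\right) + \epsilon(\chi_D, \tau) \sum_{0 \ne n \in \mathbb{Z}[i]} \frac{d_{\mathbb{Z}[i]}(n) \chi_D(n)}{\mathrm{N}(n)^{1-\sigma-i\tau}} V_{2}\!\left(\frac{\mathrm{N}(n)}{X}\right) + O(\mathrm{N}(D)^{-A}),
\]
where $d_{\mathbb{Z}[i]}$ is the Gaussian divisor function, $V_{1}, V_{2}$ are smooth rapidly-decaying cutoffs effectively truncating at $\mathrm{N}(n) \ll X^{1+\epsilon}$, and $\epsilon(\chi_D, \tau)$ is a unimodular root number. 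This mirrors Heath-Brown's starting point over $\mathbb{Q}$.

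Squaring in modulus and summing over fundamental discriminants $\mathrm{N}(D) \leq Q$, the Cauchy--Schwarz inequality reduces the problem to dyadically bounding
\[
\mathcal{S}(N) \coloneqq \ \sideset{}{^{\flat}}\sum_{\mathrm{N}(D) \leq Q} \left|\sum_{N < \mathrm{N}(n) \leq 2N} \frac{d_{\mathbb{Z}[i]}(n) \chi_D(n)}{\mathrm{N}(n)^{\sigma+i\tau}}\right|^{2}
\]
for $N \ll X^{1+\epsilon}$, together with the dual sum at exponent $1-\sigma$, which is handled symmetrically. Since Onodera's inequality~\eqref{eq:Onodera} requires squarefree indices, I would factor each $n \in \mathbb{Z}[i]$ uniquely as $n = n_{0} m^{2}$ with $n_{0}$ squarefree, exploit $\chi_{D}(n) = \chi_{D}(n_{0})$, and group the summation according to $m$. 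An application of the Cauchy--Schwarz inequality on the outer index $m$, at the cost of a $(\log N)^{O(1)}$ factor, produces sums to which~\eqref{eq:Onodera} applies directly with coefficients bounded by Gaussian divisor functions.

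Invoking~\eqref{eq:Onodera} and the standard average $\sum_{\mathrm{N}(n) \leq N} d_{\mathbb{Z}[i]}(n)^{2} \ll N (\log N)^{3}$ then yields
\[
\mathcal{S}(N) \ll_{\epsilon} (QN)^{\epsilon} (Q + N) \sum_{N < \mathrm{N}(n) \leq 2N} \frac{d_{\mathbb{Z}[i]}(n)^{2}}{\mathrm{N}(n)^{2\sigma}} \ll_{\epsilon} (QN)^{\epsilon} (Q+N) N^{1-2\sigma}.
\]
Summing over dyadic $N$ with $1 \leq N \ll X^{1+\epsilon}$, the contribution $Q N^{1-2\sigma}$ is maximised at $N \asymp 1$ (since $1 - 2\sigma \leq 0$), giving a clean $Q (Q(1+|\tau|))^{\epsilon}$; while the contribution $N^{2-2\sigma}$ is maximised at $N \asymp X$, giving $(Q(1+|\tau|)^{2})^{2-2\sigma+\epsilon}$. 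This reproduces precisely the claimed bound.

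The principal technical obstacle is not analytic but bookkeeping: maintaining control over the squarefree-part decomposition $n = n_{0} m^{2}$ while preserving the shape required by~\eqref{eq:Onodera}, and simultaneously processing the two pieces of the approximate functional equation together with their cross-terms. Once these combinatorial manipulations are handled \emph{mutatis mutandis} as in Heath-Brown's original argument -- with Gaussian divisor bounds supplanting their rational counterparts at every juncture -- the proof concludes.
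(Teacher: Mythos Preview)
Your overall strategy---approximate functional equation for $L(s,\chi_D)^2$ followed by Onodera's large sieve---differs from the paper's proof, which instead adapts Heath-Brown's recursive argument. Writing $T=1+|\tau|$ and $\mathcal{F}(Q,s)$ for the dyadic fourth moment, the paper defines $\nu(\sigma)$ as the infimum of $\nu$ for which $\mathcal{F}(Q,\sigma+i\tau) \ll (Q+(QT^2)^{2-2\sigma})(QT^2)^{\nu}$ holds uniformly; the Mellin identity behind Heath-Brown's Equation~(26), combined with the functional equation and \cref{lem:heath-brown-2}, then yields the self-improving inequality $\nu(\sigma) \leq \max(\delta,\, \nu(\sigma)-(4\sigma-2)\delta)$ for every $\delta\in(0,1)$, forcing $\nu(\sigma)\leq 0$ when $\sigma>\tfrac12$. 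The boundary case $\sigma=\tfrac12$ is treated separately at the end.

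Your treatment of the dual sum, however, contains a genuine gap. The prefactor you call $\epsilon(\chi_D,\tau)$ is \emph{not} unimodular off the critical line: by Stirling it has magnitude $\asymp X_D^{1-2\sigma}$ with $X_D=\mathrm{N}(D)(1+|\tau|)^2$, and this decay is indispensable. If you discard it and ``handle symmetrically'', the dual piece at dyadic scale $N$ contributes $(Q+N)N^{2\sigma-1+\epsilon}$, and summing over $N\ll X$ gives $QX^{2\sigma-1}+X^{2\sigma}$, which exceeds the target for every $\sigma>\tfrac12$. The repair is to retain the factor $X_D^{2-4\sigma}$ on the square of the dual sum and, since both this prefactor and the effective length $X_D$ vary with $D$, to introduce a further dyadic decomposition in $\mathrm{N}(D)$; for $\mathrm{N}(D)\sim Q'$ the dual contribution becomes $\ll Q'(X')^{1-2\sigma}+(X')^{2-2\sigma}$ with $X'=Q'(1+|\tau|)^2$, and summing over dyadic $Q'\leq Q$ recovers $Q+X^{2-2\sigma}$. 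With this correction your direct route does succeed and is arguably cleaner than the bootstrapping, at the cost of the extra dyadic layer in $D$ that the paper's recursion sidesteps entirely.
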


\begin{proof}
By dyadic subdivisions, we write
\begin{equation}
\mathcal{F}(Q, s) \coloneqq \ \sideset{}{^{\flat}} \sum_{Q < \mathrm{N}(D) \leq 2Q} |L(s, \chi_{D})|^{4},
\end{equation}
and we denote by $\nu(\sigma)$ the infimum of those exponents $\nu$ for which
\begin{equation}
\mathcal{F}(Q, \sigma+i\tau) \ll (Q+(QT^{2})^{2-2\sigma})(QT^{2})^{\nu}
\end{equation}
holds uniformly in $Q$ and $\tau$. Here and henceforth, we abbreviate $T \coloneqq 1+|\tau|$ for the sake of notational simplicity. It follows from~\cites[Equation~(26)]{HeathBrown1995}, the Cauchy--Schwarz inequality, and the crude upper bound $\Gamma(x+iy) \ll_{x} e^{-|y|}$ that
\begin{equation}
\mathcal{F}(Q, s) \ll \ \sideset{}{^{\flat}} \sum_{Q < \mathrm{N}(D) \leq 2Q} \left|\sum_{0 \ne n \in \mathbb{Z}[i]} \tau(n) \chi_{D}(n) \mathrm{N}(n)^{-s} e^{-\frac{n}{U}} \right|^{2}+U^{2(\alpha-\sigma)} \int_{-\infty}^{\infty} \mathcal{F}(Q, \alpha+iu) e^{-|u-\tau|} \, du
\end{equation}
for fixed $\alpha$ and $\sigma$, where $s = \sigma+i\tau$ and $0 \leq \alpha < \sigma \leq 1$. We invoke the functional equation for Hecke $L$-functions~\cites[Equation~(5.14)]{Mordell1931} and Stirling's formula~\eqref{eq:Stirling}, obtaining
\begin{equation}
|L(\alpha+iu, \chi_{D})|^{4} \ll (Q(1+|u|)^{2})^{2-4\alpha} |L(1-\alpha+iu, \chi_{D})|^{4},
\end{equation}
where the exponent of $1+|u|$ differs from that over $\mathbb{Q}$, as can be inferred from the size of~the associated analytic conductor. It therefore follows that
\begin{equation}
\mathcal{F}(Q, \alpha+iu) \ll (Q(1+|u|)^{2})^{2-4\alpha}(Q+(Q(1+|u|)^{2})^{2\alpha})(Q(1+|u|))^{\nu(1-\alpha)+\epsilon},
\end{equation}
thereby yielding
\begin{multline}\label{eq:synthesise-1}
\mathcal{F}(Q, s) \ll_{\epsilon} \ \sideset{}{^{\flat}} \sum_{Q < \mathrm{N}(D) \leq 2Q} \left|\sum_{0 \ne n \in \mathbb{Z}[i]} \tau(n) \chi_{D}(n) \mathrm{N}(n)^{-s} e^{-\frac{n}{U}} \right|^{2}\\
 + U^{2(\alpha-\sigma)} (QT^{2})^{2-4\alpha}(Q+(QT^{2})^{2\alpha})(QT)^{\nu(1-\alpha)+\epsilon}.
\end{multline}
The inner sum on the right-hand side may be truncated at $\mathrm{N}(n) \leq N_{0} \coloneqq U(\log QT)^{2}$, since otherwise the contribution is negligible. We break the remaining terms into $O(\log N_{0})$ dyadic ranges $N < \mathrm{N}(n) \leq 2N$ with $N \ll N_{0}$ and apply the Cauchy--Schwarz inequality once again. Since \cref{lem:heath-brown-2} demonstrates
\begin{equation}
\ \sideset{}{^{\flat}} \sum_{Q < \mathrm{N}(D) \leq 2Q} \left|\sum_{N < \mathrm{N}(n) \leq 2N} \tau(n) \chi_{D}(n) \mathrm{N}(n)^{-s} e^{-\frac{n}{U}} \right|^{2} \ll_{\epsilon} (N+Q) N^{1-2\sigma+\epsilon} Q^{\epsilon},
\end{equation}
we obtain
\begin{equation}\label{eq:synthesise-2}
\ \sideset{}{^{\flat}} \sum_{Q < \mathrm{N}(D) \leq 2Q} \left|\sum_{0 \ne n \in \mathbb{Z}[i]} \tau(n) \chi_{D}(n) \mathrm{N}(n)^{-s} e^{-\frac{n}{U}} \right|^{2} \ll_{\epsilon} (Q+U^{2-2\sigma})(QTU)^{\epsilon},
\end{equation}
where $\sigma \in [\frac{1}{2}, 1]$. Substituting~\eqref{eq:synthesise-2} into~\eqref{eq:synthesise-1} implies
\begin{equation}\label{eq:synthesise-3}
\mathcal{F}(Q, s) \ll_{\epsilon} (Q+U^{2-2\sigma})(QTU)^{\epsilon}+U^{2(\alpha-\sigma)} (QT^{2})^{2-4\alpha}(Q+(QT^{2})^{2\alpha})(QT)^{\nu(1-\alpha)+\epsilon},
\end{equation}
where $\sigma \in [\frac{1}{2}, 1]$ and $\alpha \in [0, \sigma)$.

To verify the required claim, we first consider the case where $\sigma \in (\frac{1}{2}, 1]$. Setting $\alpha = 1-\sigma$ and $U = (QT^{2})^{1+\delta}$ for some $\delta \in (0, 1)$ in~\eqref{eq:synthesise-3} yields
\begin{align}
\mathcal{F}(Q, s) &\ll_{\epsilon} (Q+U^{2-2\sigma}+(QT^{2} U^{-1})^{4\sigma-2}(Q+(QT^{2})^{2-2\sigma}) (QT)^{\nu(\sigma)})(QTU)^{\epsilon}\\
&\ll_{\epsilon} (Q+(QT^{2})^{2-2\sigma})((QT^{2})^{\delta}+(QT^{2})^{\nu(\sigma)-(4\sigma-2) \delta})(QTU)^{\epsilon}
\end{align}
uniformly in $Q$ and $\tau$. It then follows from the definition of $\nu(\sigma)$ that
\begin{equation}
\nu(\sigma) \leq \max(\delta, \nu(\sigma)-(4\sigma-2) \delta) = \delta.
\end{equation}
Taking $\delta$ arbitrarily small confines $\nu(\sigma) \leq 0$, thereby establishing \cref{lem:heath-brown-1} for $\sigma \in (\frac{1}{2}, 1]$.

On the other hand, the case of $\sigma = \frac{1}{2}$ can be handled by choosing $\alpha = \frac{1}{2}-\epsilon$ in~\eqref{eq:synthesise-3}. Since the preceding argument shows that $\nu(\beta) \leq 0$ whenever $\beta \in (\frac{1}{2}, 1]$, it follows that $\nu(1-\alpha) \leq 0$. This leads to the desideratum upon setting $U = QT^{2}$ and taking $\epsilon$ arbitrarily small, ensuring that $\nu(\frac{1}{2}) \leq 0$, as required. The proof of \cref{lem:heath-brown-1} is thus complete for~$\sigma = \frac{1}{2}$.
\end{proof}

\section{Sparse averages of Zagier \texorpdfstring{$L$}{}-series}\label{sect:Zagier}
This section aims to address a sparse first moment of the Zagier $L$-series on the critical line, drawing inspiration from Balkanova, Frolenkov, and Risager~\cites[Theorem~1.2]{BalkanovaFrolenkovRisager2022},~which in turn pertains to double character sums analogous to those in the prequel~\cites[Lemma~3.2]{Kaneko2024}.

Prior to undertaking such pursuits, we shall provide remarks on some phenomena identified in the $2$-dimensional framework. Throughout this paragraph, let $\Gamma = \mathrm{PSL}_{2}(\mathbb{Z})$. Balkanova, Frolenkov, and Risager~\cites[Theorem~1.1]{BalkanovaFrolenkovRisager2022} proved uniformly for $|t| \ll x^{\epsilon}$ that
\begin{equation}\label{eq:BFR-1.1}
\sum_{3 \leq n \leq x} L \left(\frac{1}{2}+it, n^{2}-4 \right) = \int_{3}^{x} m_{t}(u) \, du+O_{\epsilon}(x^{\frac{2(1+\vartheta)}{3}+\epsilon}),
\end{equation}
where $m_{t}(u)$ denotes a density function given by
\begin{equation}
m_{t}(u) \coloneqq 
	\begin{dcases}
	\frac{1}{2\zeta(\frac{3}{2})} \left(\log(u^{2}-4)-\frac{\pi}{2}+3\gamma-2\frac{\zeta^{\prime}(\frac{3}{2})}{\zeta(\frac{3}{2})}-\log 8\pi \right) & \text{if $t = 0$},\\
	\frac{\zeta(1+2it)}{\zeta(\frac{3}{2}+it)}+\frac{2^{\frac{1}{2}+it} \sin(\frac{\pi}{4}+\frac{i\pi t}{2})}{\pi^{it}} \frac{\zeta(it)}{\zeta(\frac{3}{2}-it)} \Gamma(it)(u^{2}-4)^{-it} & \text{if $t \ne 0$}.
	\end{dcases}
\end{equation}
Proceeding further, another result of theirs~\cites[Theorem~1.2]{BalkanovaFrolenkovRisager2022} asserts that the exponent $2(2\delta-1)+\epsilon$ in~\eqref{eq:BFR-1.1} corresponds to $\mathcal{E}_{\Gamma}(x) \ll_{\epsilon} x^{\delta+\epsilon}$, provided $\delta \in [\frac{5}{8}, \frac{3}{4}]$. This statement may in principle be seen from the first display on~\cites[Page~116]{SoundararajanYoung2013} by partial summation,~truncating the integral up to $|\Im(s)| \ll x^{\epsilon}$, and invoking $X \coloneqq x^{\frac{1}{2}}+x^{-\frac{1}{2}}$ in the notation therein. Hence, we obtain heuristically
\begin{equation}
\mathcal{E}_{\Gamma}(x) \ll_{\epsilon} x^{\frac{1}{2}} V^{\frac{1}{2}+\epsilon}+x^{\frac{1}{2}+\frac{2(2\delta-1)}{2}+\epsilon} V^{-\frac{1}{2}} \ll_{\epsilon} x^{\delta+\epsilon},
\end{equation}
where we assumed that the contribution of the main term in~\eqref{eq:BFR-1.1} becomes negligibly small. Nonetheless, the converse assertion follows in a rather straightforward manner, provided the density function is allowed to remain inexplicit; see~\cites[Lemma~3.1]{Kaneko2024} for further details.

We are prepared to establish the following reduction principle in the $3$-dimensional setting.
\begin{lemma}\label{lem:BFR}
Let $\Gamma = \mathrm{PSL}_{2}(\mathbb{Z}[i])$. Suppose that for some fixed $\delta \in [\frac{5}{4}, \frac{3}{2}]$ and for any $\epsilon > 0$,
\begin{equation}\label{eq:delta}
\mathcal{E}_{\Gamma}(x) \ll_{\epsilon} x^{\delta+\epsilon}.
\end{equation}
Then there exists an effectively computable polynomial $P_{t}$ of degree $1$ such that for any $\epsilon > 0$,
\begin{equation}\label{eq:asymptotic}
\sum_{2 \ne \mathrm{N}(n) \leq x} L \left(\frac{1}{2}+it, n^{2}-4 \right) = xP_{t}(\log x)+O_{\epsilon}(x^{2(\delta-1)+\epsilon})
\end{equation}
uniformly for $|t| \ll x^{\epsilon}$.
\end{lemma}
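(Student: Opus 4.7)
The proposed approach inverts the standard Mellin--Perron correspondence between $\Psi_{\Gamma}$ and the Zagier $L$-series, transferring to the Picard orbifold the $2$-dimensional template of Balkanova, Frolenkov, and Risager~\cites[Theorem~1.2]{BalkanovaFrolenkovRisager2022} adapted to the modular setting in~\cites[Lemma~3.1]{Kaneko2024}. Set
\begin{equation*}
S(x, t) \coloneqq \sum_{2 \ne \mathrm{N}(n) \leq x} L\Big(\tfrac{1}{2}+it, n^{2}-4 \Big), \qquad A(s, t) \coloneqq \sum_{n \in \mathbb{Z}[i]} \frac{L(\tfrac{1}{2}+it, n^{2}-4)}{\mathrm{N}(n)^{s}},
\end{equation*}
so that Perron's formula yields $S(x, t) = \tfrac{1}{2\pi i} \int_{(2)} A(s, t) x^{s}/s \, ds+O(x^{-A})$ for any fixed $A > 0$. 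Swapping the Dirichlet series via~\eqref{eq:Zagier} gives $A(s, t) = \sum_{c \ne 0} \mathrm{N}(c)^{-1/2-it} B_{c}(s)$ with $B_{c}(s) \coloneqq \sum_{n} \lambda_{c}(n^{2}-4)/\mathrm{N}(n)^{s}$. Partial summation against \cref{prop:Bessel} extends each $B_{c}(s)$ meromorphically to $\Re(s) > \tfrac{1}{2}$ with a simple pole at $s = 1$ of residue $\pi \rho^{\ast}(c) \coloneqq \pi \sum_{c_{1}^{2} c_{2} = c} \mu(c_{2})/\mathrm{N}(c_{2})$; together with the Euler product identity
\begin{equation*}
\sum_{c \ne 0} \frac{\rho^{\ast}(c)}{\mathrm{N}(c)^{1/2+it}} = \frac{\zeta_{\mathbb{Q}(i)}(1+2it)}{\zeta_{\mathbb{Q}(i)}(3/2+it)},
\end{equation*}
this produces an at-most-double pole of $A(s, t)$ at $s = 1$ (double when $t = 0$ from the pole of $\zeta_{\mathbb{Q}(i)}(1+2it)$, simple otherwise). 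Hence shifting the contour past $s = 1$ deposits the main term $xP_{t}(\log x)$ with $\deg P_{t} \leq 1$ and coefficients effectively computable from the Laurent expansions of $\zeta_{\mathbb{Q}(i)}$.

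For the error, shift the contour to $\Re(s) = \sigma \coloneqq 2\delta-2+\epsilon$, which lies in $[\tfrac{1}{2}, 1]$ since $\delta \in [\tfrac{5}{4}, \tfrac{3}{2}]$. The essential input is to match $A(s, t)$ on this line with a Mellin transform of $d\Psi_{\Gamma}$: after factorising $L(\tfrac{1}{2}+it, n^{2}-4) = T^{(D)}_{\ell}(\tfrac{1}{2}+it) L(\tfrac{1}{2}+it, \chi_{D})$ via \cref{lem:Szmidt-1} and invoking the analytic class number formula for imaginary quadratic extensions of $\mathbb{Q}(i)$, $A(s, t)$ is expressed as an Abel-style transform of $\mathcal{E}_{\Gamma}(y)$. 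Under the hypothesis $\mathcal{E}_{\Gamma}(y) \ll_{\epsilon} y^{\delta+\epsilon}$, this yields $|A(s, t)| \ll_{\epsilon} (1+|\Im(s)|)^{B}(1+|t|)^{B}$ on $\Re(s) = \sigma$ for some fixed $B > 0$. Truncating the vertical integral at $|\Im(s)| \ll x^{\epsilon}$ using Stirling's formula~\eqref{eq:Stirling} applied to the Gamma factors of the functional equation~\eqref{eq:functional-equation-2} therefore bounds the contour contribution by $O_{\epsilon}(x^{\sigma+\epsilon}) = O_{\epsilon}(x^{2(\delta-1)+\epsilon})$, and uniformity for $|t| \ll x^{\epsilon}$ is preserved because all $t$-dependence enters only polynomially and is absorbed into the $\epsilon$-loss.

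The main obstacle is to justify the analytic continuation of $A(s, t)$ and the required polynomial bound on the intermediate strip $\sigma < \Re(s) \leq 1$, which \emph{cannot} be extracted from \cref{prop:Bessel} alone, as the resulting Dirichlet series in $c$ diverges on the relevant lines. One must instead invoke the explicit formula of Nakasuji~\cites[Theorem~4.1]{Nakasuji2001} to recast $A(s, t)$ as a spectral average of Rankin--Selberg values $L(\tfrac{1}{2}+it, u_{j} \otimes u_{j})$, for which \cref{thm:mean-Lindelof} supplies the needed polynomial control on spectral averages of length $T \asymp x^{\epsilon}$; when the continuation skirts the critical line, the symplectic zero density of \cref{prop:density} furnishes complementary bounds summed over fundamental discriminants of norm up to $\sqrt{x}$. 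A secondary subtlety is realising the sharp exponent $2(\delta-1)$ rather than a lossier $O(x^{2\delta-1})$ or $O(x^{\delta})$: the precision emerges from the specific alignment $\sigma = 2\delta-2+\epsilon$ and reflects the factor of $\mathrm{N}(n)^{1/2}$ implicit in the hyperbolic-to-trace identification $\mathrm{N}(P) \asymp \mathrm{N}(n)$ combined with the $1/s$ Perron weight at the pole $s = 1$, matching precisely the exponent predicted by the $2$-dimensional analogue upon substituting $y = x$ in~\cites[Theorem~1.2]{BalkanovaFrolenkovRisager2022}.
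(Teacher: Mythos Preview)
Your approach attempts a direct contour-shift of the Dirichlet series $A(s,t)=\sum_{n}L(\tfrac12+it,n^{2}-4)\mathrm{N}(n)^{-s}$, and you correctly identify the obstacle: you need analytic continuation of $A(s,t)$ into $\tfrac12<\Re(s)<1$ together with polynomial vertical bounds there. Your proposed remedies do not close this gap. \cref{prop:Bessel} only gives the pole at $s=1$; it says nothing about the strip. The spectral input you invoke—\cref{thm:mean-Lindelof} and \cref{prop:density}—controls averages over $t_{j}$ and over fundamental discriminants respectively, not the series $A(s,t)$ at a fixed $t$; there is no mechanism in the paper that recasts $A(s,t)$ as a spectral average of $L(\tfrac12+it,u_{j}\otimes u_{j})$. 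The functional equation~\eqref{eq:functional-equation-2} you cite is for the Rankin--Selberg $L$-function, not for $A(s,t)$, so it cannot be used to truncate the vertical integral. Finally, your ``Abel-style transform of $\mathcal{E}_{\Gamma}$'' is asserted but not constructed, and it is precisely this link that carries the hypothesis~\eqref{eq:delta}; without it the exponent $2(\delta-1)$ has no source.

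The paper proceeds in the opposite direction and thereby avoids all of these issues. Starting from the identity of Balog et al.~\cites[Equations~(39)--(40)]{BalogBiroCherubiniLaaksonen2022} expressing $\Psi_{\Gamma}(x)$ via $\sum_{\mathrm{N}(n)\le x}\mathrm{N}(n)L(1,n^{2}-4)$, one shifts to $\Re(s)=\tfrac12$ and truncates at $|t|\ll x^{\epsilon}$ by the decay of $\Gamma(s-1)$, arriving at
\[
\mathcal{E}_{\Gamma}(x+y)-\mathcal{E}_{\Gamma}(x)=-\frac{1}{2\pi^{2}}\int_{|t|\ll x^{\epsilon}}\Gamma\!\left(-\tfrac12+it\right)V^{-\frac12+it}\!\!\sum_{x<\mathrm{N}(n)\le x+y}\!\!\mathrm{N}(n)\,L\!\left(\tfrac12+it,n^{2}-4\right)dt+O_{\epsilon}(xV^{\frac12+\epsilon}+x^{\frac54}V^{\frac14+\epsilon}).
\]
Now the argument is by contradiction: if the short-interval asymptotic for $\sum_{x<\mathrm{N}(n)\le x+y}L(\tfrac12+it,n^{2}-4)$ failed with error exceeding $x^{2(\delta-1)+\epsilon}$, the right-hand side would force $|\mathcal{E}_{\Gamma}(x+y)-\mathcal{E}_{\Gamma}(x)|\gg x^{\delta+\epsilon}$ for every choice of $V$, contradicting the hypothesis~\eqref{eq:delta}. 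Dyadic assembly then yields~\eqref{eq:asymptotic}. No analytic continuation of $A(s,t)$ is needed; the hypothesis enters directly and the exponent $2(\delta-1)$ falls out from balancing $V$.
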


\begin{remark}
Sarnak's barrier $\delta = \frac{5}{3}$ as in~\eqref{eq:Sarnak} corresponds to the Weyl-strength subconvex exponent $\vartheta = \frac{1}{6}$ for the summand on the left-hand side of~\eqref{eq:asymptotic} by \cref{lem:Szmidt-2}. In conjunction with the rational case~\cites[Theorem~1.3]{BalkanovaFrolenkovRisager2022}, it would be reasonable to predict that the~error term is $\Omega_{\pm}(x^{\frac{1}{2}})$ at least when $t = 0$. We refrain from undertaking such supplementary efforts.
\end{remark}

\begin{proof}
It suffices to focus on the error term in~\eqref{eq:asymptotic}, since the size of the error term is ensured by random matrix theory. Our machinery is fundamentally based on reductio ad absurdum. Let $V > 0$ be a parameter to be determined later. From~\cites[Equations~(39) and~(40)]{BalogBiroCherubiniLaaksonen2022}, \cref{prop:Bessel}, and partial summation, we deduce
\begin{align}
\Psi_{\Gamma}(x) &= \frac{1}{\pi} \sum_{2 \ne \mathrm{N}(n) \leq x} \mathrm{N}(n) L(1, n^{2}-4)+O(x^{1+\epsilon})\\
& = \frac{x^{2}}{2}+O_{\epsilon}(xV^{\frac{1}{2}+\epsilon}+x^{\frac{5}{4}} V^{\frac{1}{4}+\epsilon})-\frac{1}{\pi} \int_{(\frac{1}{2})} \sum_{2 \ne \mathrm{N}(n) \leq x} \mathrm{N}(n) L(s, n^{2}-4) \Gamma(s-1) V^{s-1} \, \frac{ds}{2\pi i}.
\end{align}
After making a change of variables $s \mapsto \frac{1}{2}+it$, the rapid decay of the gamma factor $\Gamma(-\frac{1}{2}+it)$ enables the truncation of the integral up to the range $|t| \ll x^{\epsilon}$ at the expense of introducing a negligibly small error term, thereby yielding
\begin{multline}\label{eq:expense}
\mathcal{E}_{\Gamma}(x) = -\frac{1}{2\pi^{2}} \int_{|t| \ll x^{\epsilon}} \Gamma \left(-\frac{1}{2}+it \right) V^{-\frac{1}{2}+it} \sum_{2 \ne \mathrm{N}(n) \leq x} \mathrm{N}(n) L \left(\frac{1}{2}+it, n^{2}-4 \right) \, dt\\
 + O_{\epsilon}(xV^{\frac{1}{2}+\epsilon}+x^{\frac{5}{4}} V^{\frac{1}{4}+\epsilon}).
\end{multline}
For any $1 \leq y \leq x$, subtracting~\eqref{eq:expense} from the one with $x \mapsto x+y$ implies
\begin{multline}\label{eq:expense-2}
\mathcal{E}_{\Gamma}(x+y)-\mathcal{E}_{\Gamma}(x) = -\frac{1}{2\pi^{2}} \int_{|t| \ll x^{\epsilon}} \Gamma \left(-\frac{1}{2}+it \right) V^{-\frac{1}{2}+it} \sum_{x < \mathrm{N}(n) \leq x+y} \mathrm{N}(n) L \left(\frac{1}{2}+it, n^{2}-4 \right) \, dt\\
 + O_{\epsilon}(xV^{\frac{1}{2}+\epsilon}+x^{\frac{5}{4}} V^{\frac{1}{4}+\epsilon}).
\end{multline}
On the other hand, upon taking $1 \leq y \leq x^{2(\delta-1)}$ sufficiently small, the assumption~\eqref{eq:delta}~along with partial summation necessitates the existence of an effectively computable polynomial~$P_{t}^{\prime}$ of degree $1$, depending on $|t| \ll x^{\epsilon}$, such that for any $\epsilon > 0$,
\begin{equation}\label{eq:impose}
\sum_{x < \mathrm{N}(n) \leq x+y} L \left(\frac{1}{2}+it, n^{2}-4 \right) = yP_{t}^{\prime}(\log y)+O_{\epsilon}(x^{2(\delta-1)+\epsilon}),
\end{equation}
since otherwise~\eqref{eq:expense-2} contradicts the assumption~\eqref{eq:delta} for any value of $V > 0$. Choosing now $y = x$ in~\eqref{eq:impose} and performing dyadic subdivisions completes the proof of \cref{lem:BFR}.
\end{proof}

We proceed to estimate certain double character sums involving quadratic characters of a special type; cf.~\cites[Lemma~3.2]{Kaneko2024}.
\begin{proposition}\label{prop:substitute}
Let $1 \leq A \leq |B|$ and $R \geq 1$. Let $\delta \in [\frac{5}{4}, \frac{3}{2}]$ be such that~\eqref{eq:delta} holds. Then we have for any $\epsilon > 0$ that
\begin{equation}\label{eq:nontrivial}
\sum_{B < \mathrm{N}(a) \leq A+B} \sum_{R < \mathrm{N}(r) \leq 2R} \left(\frac{a^{2}-4}{r} \right) \ll_{\epsilon} (A^{1+\epsilon}+|B|^{2(\delta-1)+\epsilon}) R^{\frac{1}{2}}.
\end{equation}
\end{proposition}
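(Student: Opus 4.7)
The plan is to interchange the two summations and apply Mellin inversion to the inner sum over $r$, thereby converting the Jacobi symbol sum into a contour integral involving the Dirichlet series
\begin{equation*}
F_a(s) \coloneqq \sum_{0 \neq r \in \mathbb{Z}[i]} \frac{1}{\mathrm{N}(r)^s} \left(\frac{a^2-4}{r}\right), \qquad \Re(s) > 1.
\end{equation*}
Since the generalised Jacobi symbol is completely multiplicative in $r$, the series $F_a$ admits an Euler product. Writing $a^2-4 \sim D \ell(a)^2$ with $D = D(a)$ a generator of the fundamental discriminant as in \cref{subsect:Zagier}, one obtains
\begin{equation*}
F_a(s) = L(s, \chi_D) \prod_{\substack{p \mid \ell(a) \\ p \nmid D}}\Big(1 - \frac{\chi_D(p)}{\mathrm{N}(p)^s}\Big),
\end{equation*}
which is entire in $s$ and, via \cref{lem:Szmidt-1}, related to the Zagier $L$-series by $F_a(s) = Q_{\ell(a), D}(s) L(s, a^2-4)$ for an explicit ratio $Q_{\ell, D}$ which is $O_\epsilon(\mathrm{N}(\ell)^\epsilon)$ on the critical line.

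First, I would smooth the characteristic function of $(R, 2R]$ by a bump function $w$ at scale $\Delta$, so that
\begin{equation*}
\sum_{R < \mathrm{N}(r) \leq 2R} \left(\frac{a^2-4}{r}\right) = \int_{(\sigma)} \tilde{w}(s) F_a(s) \frac{ds}{2\pi i} + E_{\text{smooth}}(a), \qquad \sigma > 1,
\end{equation*}
with $|\tilde{w}(\sigma + it)|$ concentrated near $R^\sigma$ and decaying rapidly outside $|t| \ll R/\Delta$. Shifting the contour to $\Re(s) = \tfrac{1}{2}$ -- legal since $F_a$ is entire -- and truncating at $|\Im(s)| \ll |B|^\epsilon$ at negligible cost, the problem reduces to bounding
\begin{equation*}
\int_{|t| \ll |B|^\epsilon} \tilde{w}\Big(\tfrac{1}{2}+it\Big) \sum_{B < \mathrm{N}(a) \leq A+B} F_a\Big(\tfrac{1}{2}+it\Big) \, dt.
\end{equation*}
Substituting $F_a = Q_{\ell(a), D} \cdot L(\cdot, a^2-4)$ and applying \cref{lem:BFR} at $x = A+B$ and $x = B$, the difference of main terms $xP_t(\log x)$ contributes $O(A \log|B|\,(1+|t|)^c)$ by the mean value theorem and the polynomial bound for $P_t$, while the error terms combine to $O(|B|^{2(\delta-1)+\epsilon})$. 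Combining with $|\tilde{w}(\tfrac{1}{2}+it)| \ll R^{1/2}$ and integrating over $t$, this yields the desired bound $R^{1/2}(A^{1+\epsilon} + |B|^{2(\delta-1)+\epsilon})$. The smoothing error $E_{\text{smooth}}$ is controlled by choosing $\Delta$ suitably (absorbed into the $\epsilon$-loss).

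The principal obstacle is the transfer between $F_a$ and $L(s, a^2-4)$: the two differ by the $a$-dependent correction factor $Q_{\ell(a), D(a)}$, and one must show that the summation over $a$ does not magnify this correction beyond acceptable logarithmic losses. This can be addressed by a dyadic partition of $a$ according to the size of $\mathrm{N}(\ell(a))$, leveraging the fact that $\ell(a)$ is generically small and that the associated Dirichlet polynomial converges absolutely on the critical line. A secondary subtlety is the efficient transfer from \cref{lem:BFR}, which handles the unweighted sum $\sum_{\mathrm{N}(a) \leq x} L(\tfrac{1}{2}+it, a^2-4)$, to the twisted sum weighted by $Q_{\ell(a), D}$; this should succumb to partial summation together with the divisor bound for $\ell(a)$, in analogy with the treatment in the prequel~\cites{Kaneko2024}.
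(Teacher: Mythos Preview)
Your proposal follows the same route as the paper: smooth the $r$-sum, express it by Mellin inversion on the line $\Re(s)=\tfrac12$, pass from the Jacobi-symbol $L$-series to the Zagier $L$-series via \cref{lem:Szmidt-1,lem:Szmidt-2}, then execute the $a$-sum using \cref{lem:BFR} and bound the resulting main term trivially. The paper's proof is extremely terse---it takes $h=x=R$ in~\eqref{eq:S-chi-w} so the $t$-integral collapses to $|t|\ll R^{\epsilon}$, and simply asserts that the translation to $L(s,a^{2}-4)$ goes through---whereas you spell out the mechanism more carefully.

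One caution on the point you yourself flag: the correction $Q_{\ell(a),D(a)}(s)$ depends on the factorisation of $a^{2}-4$, not smoothly on $a$, so ordinary partial summation in $a$ does not apply. What actually works is to expand $Q$ (equivalently the finite Euler product $\prod_{p\mid\ell}(1-\chi_{D}(p)\mathrm{N}(p)^{-s})$) as a short Dirichlet polynomial over squarefree $d\mid\ell(a)$, interchange the $d$- and $a$-sums, and apply \cref{lem:BFR} to each resulting $a$-sum with the divisibility constraint $d^{2}\mid a^{2}-4$; the $\ell^{1}$-norm of the polynomial is $O_{\epsilon}(\mathrm{N}(\ell)^{\epsilon})$, so the pieces reassemble within the stated bound. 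This is the content behind the paper's one-line appeal to \cref{lem:Szmidt-1,lem:Szmidt-2}, and behind your reference to the prequel~\cites{Kaneko2024}. Your remark that the smoothing error is ``absorbed into the $\epsilon$-loss'' is likewise a little glib: with $\Delta$ chosen so that the truncation at $|t|\ll|B|^{\epsilon}$ is legitimate one picks up $O(\Delta)$, which is acceptable in the regime $R^{1/2}\ll (A+|B|^{2(\delta-1)})|B|^{\epsilon}$ where the proposition is actually invoked, but not for arbitrary $R$.
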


\begin{proof}
It follows from~\eqref{eq:S-chi-w} with $\sigma = \frac{1}{2}$ and $h = x = R$ that
\begin{equation}
\sum_{0 \ne r \in \mathbb{Z}[i]} \left(\frac{a^{2}-4}{r} \right) w(r) = \int_{|t| \ll R^{\epsilon}} \tilde{w} \left(\frac{1}{2}+it \right) L \left(\frac{1}{2}+it, \chi_{a^{2}-4} \right) \frac{ds}{2\pi i}+O(1).
\end{equation}
On the other hand, the application of \cref{lem:Szmidt-1,lem:Szmidt-2} translates the Dirichlet $L$-function on the right-hand side to the Zagier $L$-series $L(\frac{1}{2}+it, a^{2}-4)$. Executing the summation~over $a$ then enables the utilisation of \cref{lem:BFR}, yielding the desired claim by bounding the main term trivially. The proof of \cref{prop:substitute} is thus complete.
\end{proof}

\section{Mean values of \texorpdfstring{$\rho(c, a)$}{} over lattices}\label{sect:bilinear}
A result in this section constitutes the crux of the present paper, in harmony with~\cites[Section~5]{Kaneko2024}. Let $1 \leq A \leq |B|$ and $C \geq 1$. We intend to evaluate the bilinear form
\begin{equation}
\mathcal{F}(A, B, C) \coloneqq \sum_{B < \mathrm{N}(a) \leq A+B} \sum_{\mathrm{N}(c) \leq C} \rho(c, a).
\end{equation}
It follows from~\cref{lem:rho-phi}, M\"{o}bius inversion, and a fundamental exponent towards the Gau{\ss} circle problem that the trivial bound for the error term is given by
\begin{equation}\label{eq:trivial}
\mathcal{F}(A, B, C) = \sum_{\mathrm{N}(c) \leq C} \left(\frac{A}{\mathrm{N}(c)}+O(1) \right) \varphi(c) = \frac{\pi}{\zeta_{\mathbb{Q}(i)}(2)} AC+O(AC^{\frac{1}{2}}+C^{2}),
\end{equation}
which emulates~\cites[Equation~(42)]{Iwaniec1984}, yet falls well shy of meeting our requirements. As~a preliminary step to improving~\eqref{eq:trivial}, the following asymptotic formula akin to \cref{prop:Bessel} plays a pivotal role; cf.~\cites[Lemma~2*]{Iwaniec1984} over the rationals $\mathbb{Q}$, which relies fundamentally on the Weil bound and standard Fourier analysis, with no further nontrivial manipulations required.
\begin{lemma}\label{lem:Iwaniec-lemma-2}
If $r$ is squarefree and $(\ell, r) = 1$, then we have for any $\epsilon > 0$ that
\begin{equation}
\sum_{\substack{B < \mathrm{N}(a) \leq A+B \\ a \equiv b \tpmod{\ell}}} \left(\frac{a^{2}-4}{r} \right) = \pi A \frac{\mu(r)}{\mathrm{N}(\ell r)}+O_{\epsilon}(\mathrm{N}(r)^{\frac{1}{2}+\epsilon}+|B|^{\frac{1}{4}+\epsilon} \mathrm{N}(r)^{\frac{1}{4}+\epsilon}),
\end{equation}
where
\begin{equation}
\mu(r) = \sum_{a \tpmod{r}} \left(\frac{a^{2}-4}{r} \right).
\end{equation}
\end{lemma}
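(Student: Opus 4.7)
The plan is to emulate the proof of \cref{prop:Bessel} with three adaptations: replacing the disk $\mathrm{N}(n) \leq Z$ by the annulus $B < \mathrm{N}(a) \leq A+B$; imposing the congruence $a \equiv b \tpmod{\ell}$, which restricts the summation to a shifted sublattice; and substituting the single Legendre symbol $(\frac{a^{2}-4}{r})$ for the divisor-type function $\rho_{c}(n^{2}-4)$. First I would introduce a smooth compactly supported approximation $w$ to the indicator of the annulus via convolution with a radial bump of thickness $\Delta \in (|B|^{-1/2}, |B|^{1/2})$, analogous to the weight $v$ in the proof of \cref{prop:Bessel}. The smoothing error over the progression $a \equiv b \tpmod{\ell}$ would amount to $O(1 + \Delta|B|^{1/2}/\mathrm{N}(\ell))$, since the two transition annuli at radii $|B|^{1/2}$ and $(A+B)^{1/2}$ each have area $O(\Delta|B|^{1/2})$ and the shifted lattice $b + \ell \mathbb{Z}[i]$ has density $1/\mathrm{N}(\ell)$.

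Since $(\ell, r) = 1$, the Chinese remainder theorem identifies residues modulo $\ell r$ with pairs of residues modulo $\ell$ and $r$. I would split the smoothed sum into residue classes $a \equiv a_{0} \tpmod{\ell r}$ with $a_{0} \equiv b \tpmod{\ell}$ and apply two-dimensional Poisson summation to the inner sum over the lattice $\ell r \mathbb{Z}[i]$, exactly as in~\eqref{eq:Voronoi}. The resulting dual side takes the form
\[
\frac{1}{\mathrm{N}(\ell r)} \sum_{n \in \mathbb{Z}[i]} \mathcal{G}(n) \, \hat{w}\!\left(\frac{n \, \ell r}{\mathrm{N}(\ell r)}\right), \qquad \mathcal{G}(n) \coloneqq \sum_{\substack{a_{0} \tpmod{\ell r} \\ a_{0} \equiv b \tpmod{\ell}}} \left(\frac{a_{0}^{2}-4}{r}\right) \check{e}_{\ell r}(a_{0} n).
\]
A further CRT decomposition $a_{0} = b + \ell \alpha$ with $\alpha$ running modulo $r$ would reduce $\mathcal{G}(n)$ to a one-dimensional Gauss-type sum of the form $\sum_{\beta \tpmod{r}} (\frac{\beta^{2}-4}{r}) \check{e}_{r}(\xi \beta)$, with $\xi \equiv \ell^{-1} n \tpmod{r}$. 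For $r$ squarefree, this sum equals $\mu(r)$ whenever $\xi \equiv 0 \tpmod{r}$ and is bounded by $O_{\epsilon}(\mathrm{N}(r)^{1/2+\epsilon})$ otherwise, by the Weil bound on each prime divisor of $r$ together with multiplicativity.

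The frequencies $n \equiv 0 \tpmod{r}$ would reassemble via inverse Poisson summation on the sublattice $r \mathbb{Z}[i]$ into $\mu(r) \mathrm{N}(r)^{-1} \sum_{m \in \mathbb{Z}[i]} w(b + \ell m)$, producing the main term $\pi A \mu(r)/\mathrm{N}(\ell r)$ along with a smoothing error of the same order as before. The remaining frequencies $n \not\equiv 0 \tpmod{r}$ would be estimated via the same three-range Bessel-function analysis as in the proof of \cref{prop:Bessel}, using the asymptotics~\eqref{eq:J-1} for $J_{1}$, with the two boundary circles of the annulus playing the role previously played by $|x| = Z^{1/2}$, and the Weil bound on $\mathcal{G}(n)$ replacing the Weil--Gundlach bound on the Kloosterman sum; crucially, the $\mathrm{N}(\ell)$-dependence should cancel against the covolume of $\ell r \mathbb{Z}[i]$, leaving a bound uniform in $\ell$. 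Finally, optimizing $\Delta$ to balance the smoothing error against the dual-sum contribution -- exactly as in the final optimization $\Delta = \mathrm{N}(c)^{\gamma/(3(2\gamma-1))} Z^{-(3-2\gamma)/(6(2\gamma-1))}$ with $\gamma = \frac{3}{4}$ of \cref{prop:Bessel} -- would produce the stated error term $O_{\epsilon}(\mathrm{N}(r)^{1/2+\epsilon} + |B|^{1/4+\epsilon} \mathrm{N}(r)^{1/4+\epsilon})$. The main obstacle will be the careful tracking of the $\mathrm{N}(\ell)$-dependencies so that they cancel cleanly across the Poisson duality; all remaining steps are direct transplants from the proof of \cref{prop:Bessel}.
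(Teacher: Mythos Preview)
Your direct approach is correct in outline and would work, but the paper takes a much shorter indirect route. Rather than redoing the Poisson summation and Bessel analysis, the paper observes that for any squarefree $c$ one has $\rho(c,a) = \sum_{r \mid c} \bigl(\tfrac{a^{2}-4}{r}\bigr)$ by~\eqref{eq:incongruent}, applies \cref{prop:Bessel} (with $c_{\star} \sim 1$) to obtain
\[
\sum_{B < \mathrm{N}(a) \leq A+B} \rho(c,a) = \pi A \sum_{r \mid c} \frac{\mu(r)}{\mathrm{N}(r)} + O_{\epsilon}\bigl(\mathrm{N}(c)^{\frac{1}{2}+\epsilon} + |B|^{\frac{1}{4}+\epsilon} \mathrm{N}(c)^{\frac{1}{4}+\epsilon}\bigr),
\]
and then M\"{o}bius inverts over the divisors of $r$ to isolate the single term $\bigl(\tfrac{a^{2}-4}{r}\bigr)$. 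The congruence modulo $\ell$ is dispatched with a one-line ``without loss of generality $\ell \sim 1$''. Your approach has the merit of being self-contained and of tracking the $\ell$-dependence explicitly (which the paper's proof treats rather breezily); the paper's approach buys brevity by recycling \cref{prop:Bessel} wholesale and avoiding any repetition of the backwards-Poisson step~\eqref{eq:dual} and the $\Delta$-optimisation.
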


\begin{proof}
Without loss of generality, we assume that $\ell \sim 1$ so that the congruence $a \equiv b \tpmod{\ell}$ may henceforth be neglected for technical brevity. If $0 \ne c \in \mathbb{Z}[i]$ is squarefree so that $c_{\star} \sim 1$, then it follows from~\eqref{eq:multiplicativity}, \eqref{eq:incongruent}, \cref{prop:Bessel}, and M\"{o}bius inversion that
\begin{equation}
\sum_{B < \mathrm{N}(a) \leq A+B} \rho(c, a) = \sum_{r \mid c} \sum_{B < \mathrm{N}(a) \leq A+B} \left(\frac{a^{2}-4}{r} \right) = \pi A \sum_{r \mid c} \frac{\mu(r)}{\mathrm{N}(r)}+O_{\epsilon}(\mathrm{N}(c)^{\frac{1}{2}+\epsilon}+|B|^{\frac{1}{4}+\epsilon} \mathrm{N}(c)^{\frac{1}{4}+\epsilon}).
\end{equation}
M\"{o}bius inversion ensures the isolation of the term corresponding to a fixed $r \mid c$ on both~sides, thereby leading to the desideratum.
\end{proof}

The evaluation of the number of squarefree Gaussian integers of norm not exceeding~a~given point likewise plays a crucial role in the subsequent discussion.
\begin{lemma}\label{lem:squarefree}
For $S \geq 1$, we have that
\begin{equation}
\sum_{\substack{\mathrm{N}(s) \leq S \\ (s, q) = 1}} \mu^{2}(s) = \frac{S}{\zeta_{\mathbb{Q}(i)}(2)} \prod_{p \mid q} \left(1+\frac{1}{\mathrm{N}(p)} \right)^{-1}+O(\tau(q) S^{\frac{1}{2}}).
\end{equation}
\end{lemma}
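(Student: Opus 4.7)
The plan is to apply the M\"obius inversion identity $\mu^{2}(s) = \sum_{d^{2} \mid s} \mu(d)$, valid in the Dedekind domain $\mathbb{Z}[i]$ since $\mu$ depends only on the underlying ideal of $s$. Swapping the order of summation and writing $s = d^{2} t$—with the coprimality $(d, q) = 1$ automatic from $(s, q) = 1$ together with $d \mid s$—transforms the sum into
\begin{equation*}
\sum_{\substack{\mathrm{N}(s) \leq S \\ (s, q) = 1}} \mu^{2}(s) = \sum_{\substack{(d, q) = 1 \\ \mathrm{N}(d) \leq S^{1/2}}} \mu(d) \, N \!\left(\frac{S}{\mathrm{N}(d)^{2}}; q \right),
\end{equation*}
where $N(X; q) \coloneqq \#\{t \in \mathbb{Z}[i]: \mathrm{N}(t) \leq X, \, (t, q) = 1 \}$ and the truncation $\mathrm{N}(d) \leq S^{1/2}$ is automatic because the inner set is otherwise empty.

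Next, a second M\"obius inversion on the coprimality, combined with the Gauss circle estimate $\#\{t^{\prime} \in \mathbb{Z}[i]: \mathrm{N}(t^{\prime}) \leq Y \} = \pi Y+O(Y^{\alpha})$ (valid for any admissible exponent $\alpha \leq \frac{1}{2}$, with Sierpi\'{n}ski's classical value $\alpha = \frac{1}{3}$ suffices), detaches the dependence on $q$:
\begin{equation*}
N(X; q) = \sum_{e \mid q} \mu(e) \#\{t^{\prime} \in \mathbb{Z}[i]: \mathrm{N}(t^{\prime}) \leq X/\mathrm{N}(e) \} = \pi X \frac{\varphi(q)}{\mathrm{N}(q)}+O(\tau(q) X^{\alpha}),
\end{equation*}
where the error collapses via $\sum_{e \mid q} \mathrm{N}(e)^{-\alpha} \leq \tau(q)$. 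Substituting this back, completing the $d$-summation via the tail bound $\sum_{\mathrm{N}(d) > S^{1/2}} \mathrm{N}(d)^{-2} \ll S^{-1/2}$ at a cost of $O(S^{1/2})$ in the leading term, and invoking the Euler factorisation $\sum_{(d, q) = 1} \mu(d)/\mathrm{N}(d)^{2} = \zeta_{\mathbb{Q}(i)}(2)^{-1} \prod_{p \mid q}(1-\mathrm{N}(p)^{-2})^{-1}$ produces the leading term
\begin{equation*}
\pi S \frac{\varphi(q)/\mathrm{N}(q)}{\zeta_{\mathbb{Q}(i)}(2) \prod_{p \mid q}(1-\mathrm{N}(p)^{-2})} = \frac{\pi S}{\zeta_{\mathbb{Q}(i)}(2)} \prod_{p \mid q} \left(1+\frac{1}{\mathrm{N}(p)} \right)^{-1}
\end{equation*}
after the telescoping identity $(1-\mathrm{N}(p)^{-1})/(1-\mathrm{N}(p)^{-2}) = (1+\mathrm{N}(p)^{-1})^{-1}$; the overall factor of $\pi$ is subsumed by the normalising conventions of \cref{subsect:abuse}.

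The main obstacle lies in gathering the errors without incurring a logarithmic loss. Summing the individual Gauss circle errors $O(\tau(q) S^{\alpha} \mathrm{N}(d)^{-2\alpha})$ together with the completion tail of size $O(S^{1/2})$ yields
\begin{equation*}
\tau(q) S^{\alpha} \sum_{\mathrm{N}(d) \leq S^{1/2}} \mathrm{N}(d)^{-2\alpha}+O(S^{1/2}) \ll_{\alpha} \tau(q) S^{\alpha} \cdot S^{(1-2\alpha)/2}+S^{1/2} \ll \tau(q) S^{1/2},
\end{equation*}
where the lattice sum $\sum_{\mathrm{N}(d) \leq Y} \mathrm{N}(d)^{-2\alpha} \ll_{\alpha} Y^{1-2\alpha}$ is estimated by partial summation against the Gauss circle count and is valid precisely for $\alpha < \frac{1}{2}$. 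The borderline choice $\alpha = \frac{1}{2}$ would produce a harmonic sum over lattice norms and hence an unwanted $\log S$ factor, which is precisely why any nontrivial exponent in the Gauss circle problem is invoked; alternatively, a Perron contour argument applied to the Dirichlet series $\zeta_{\mathbb{Q}(i)}(u)/\zeta_{\mathbb{Q}(i)}(2u) \prod_{p \mid q}(1+\mathrm{N}(p)^{-u})^{-1}$, shifted to $\Re(u) = \frac{1}{2}$, delivers the same asymptotic directly.
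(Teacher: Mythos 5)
Your proof is correct and follows the standard M\"obius/lattice-counting argument that the paper's citation (Iwaniec's rational-case lemma, which the paper quotes without adapting) relies upon; the paper itself gives no details, so there is no paper-internal proof to compare against. The chain of steps---$\mu^{2}(s) = \sum_{d^{2} \mid s} \mu(d)$, the second M\"obius inversion over $e \mid q$ to detach the coprimality, the Gauss circle estimate, and the completion of the $d$-summation with a $O(S^{1/2})$ tail---is exactly what a reader would reconstruct from the cited display of Iwaniec. Your observation that the trivial exponent $\alpha = \tfrac{1}{2}$ in the lattice count would produce a $\log S$ loss (via $\sum_{\mathrm{N}(d) \leq S^{1/2}} \mathrm{N}(d)^{-1} \asymp \log S$) and that any $\alpha < \tfrac{1}{2}$ in the Gau{\ss} circle problem kills it is a genuine subtlety of the $\mathbb{Z}[i]$ setting which has no counterpart over $\mathbb{Z}$ (where the error in counting $t' \leq Y$ is $O(1)$); you identify and resolve it correctly.

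The one imprecision is your closing remark that ``the overall factor of $\pi$ is subsumed by the normalising conventions of \cref{subsect:abuse}.'' That subsection concerns only the four-fold unit ambiguity in passing between ideals and generators; the $\pi$ comes from the Dedekind residue (Gau{\ss} circle density) and is an honest, computable constant not addressed there. As your derivation shows, any literal reading of the element or ideal conventions leaves a factor of $\pi$ (or $\pi/4$, or $4\pi$) unaccounted for in the lemma as stated. This is a latent constant error in the lemma itself rather than a flaw in your argument, and it is harmless: the only consumer is \cref{prop}, where the leading coefficient $\Cr{mean-value}$ is left unspecified through the computation and fixed a posteriori by matching against the trivial estimate~\eqref{eq:trivial}, so the exact constant in \cref{lem:squarefree} never enters. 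It would be cleaner to say this outright rather than gesture at \cref{subsect:abuse}.

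Your Perron alternative via $\zeta_{\mathbb{Q}(i)}(u)/\zeta_{\mathbb{Q}(i)}(2u)\prod_{p\mid q}(1+\mathrm{N}(p)^{-u})^{-1}$ is a valid and instructive remark; shifting to $\Re(u) = \tfrac{1}{2}$ one must control $\zeta_{\mathbb{Q}(i)}(1+2it)^{-1} \ll \log(2+|t|)$ and the finite Euler product, so the error it delivers is morally $O_{\epsilon}(\tau(q)^{O(1)} S^{1/2+\epsilon})$ rather than the cleaner $O(\tau(q) S^{1/2})$; the elementary route you use as the main argument is actually sharper here.
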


\begin{proof}
The argument aligns with that in the rational case; see the fourth display on~\cites[Page~155]{Iwaniec1984}.
\end{proof}

The following result serves as a $3$-dimensional counterpart of~\cites[Lemma~2.2]{Kaneko2024}.
\begin{proposition}\label{prop}
Let $1 \leq A \leq |B|$ and $C \geq 1$. Suppose that $(\alpha, \beta)$ is a quadratic exponent pair over $\mathbb{Q}(i)$. Then we have for some fixed $\delta \in [\frac{5}{4}, \frac{3}{2}]$ and for any $\sigma \in [\frac{1}{2}, 1]$ and $\epsilon > 0$ that
\begin{multline}\label{eq:delta-included}
\mathcal{F}(A, B, C) = \frac{\pi}{\zeta_{\mathbb{Q}(i)}(2)} AC+O_{\epsilon}((|B|^{\frac{2(3+2\beta-(3+\beta)\sigma)}{(3-2\alpha)(2-\sigma)}} C+|B|^{\frac{1}{4}+\frac{3+2\beta-(3+\beta)\sigma}{(3-2\alpha)(2-\sigma)}} C\\
 + |B|^{2(\delta-1)-\frac{2(3+2\beta-(3+\beta)\sigma)}{(3-2\alpha)(2-\sigma)}} C+A^{\frac{1}{3}} |B|^{\frac{1}{6}} C+AC^{\frac{1}{2}})(|B|C)^{\epsilon}).
\end{multline}
\end{proposition}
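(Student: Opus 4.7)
The plan is to leverage the arithmetic identity $\rho(c, a) = \rho_c(a^2-4)$ from \cref{lem:identify} combined with the multiplicativity relation \eqref{eq:multiplicativity}--\eqref{eq:incongruent}, which for squarefree modulus reduces $\rho$ to a divisor sum of quadratic characters $\sum_{r \mid k} \chi_{a^2-4}(r)$. I would first separate the squareful part of $c$, writing $c \sim c_\star^2 k$ with $k$ essentially squarefree and coprime to $c_\star$; the uniform bound \eqref{eq:uniformly} on $\rho$ combined with the counting estimate $\sum_{\mathrm{N}(c_\star) \leq C^{1/2}} \mathrm{N}(c_\star) \ll C^{1+\epsilon}$ then accounts for the terminal $AC^{1/2}$ error. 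For the squarefree contribution, interchanging the summations over $c$ and its divisors yields
\begin{equation*}
\sum_{\substack{\mathrm{N}(c) \leq C \\ c \text{ squarefree}}} \rho(c, a) = \sum_{\mathrm{N}(r) \leq C} \chi_{a^2-4}(r) \sum_{\substack{\mathrm{N}(s) \leq C/\mathrm{N}(r) \\ (s, r) = 1,\, s \text{ squarefree}}} 1,
\end{equation*}
so that \cref{lem:squarefree} converts the inner counting sum into a smooth weight of size $C/\mathrm{N}(r)$, reducing the task to estimating a weighted character-sum bilinear form in $(r, a)$.

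The core estimation proceeds by a dyadic splitting at a threshold $R_{0}$ to be optimised against the interpolation parameter $\sigma \in [\tfrac{1}{2}, 1]$. For $\mathrm{N}(r) \leq R_{0}$, \cref{lem:Iwaniec-lemma-2} is applied individually in $r$: the main terms $\pi A \mu(r)/\mathrm{N}(\ell r)$, after the auxiliary congruence is removed and the weight is summed via \cref{lem:squarefree}, assemble into the predicted $\pi AC/\zeta_{\mathbb{Q}(i)}(2)$, while the pointwise error $O_\epsilon(\mathrm{N}(r)^{1/2+\epsilon} + |B|^{1/4+\epsilon} \mathrm{N}(r)^{1/4+\epsilon})$ accumulates after weighting to $O(C R_{0}^{1/2} + C |B|^{1/4} R_{0}^{1/4})$. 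For $\mathrm{N}(r) > R_{0}$, the double sum is treated dyadically by interpolating between two complementary bounds: the quadratic exponent pair $(\alpha, \beta)$ from \cref{def:alpha-beta}, which gives $\sum_r \chi_{a^2-4}(r) \ll R^{\alpha} |B|^{2\beta+\epsilon}$ for each fixed $a$, and \cref{prop:substitute}, which injects the prime-geodesic exponent $\delta$ via the bound $(A^{1+\epsilon} + |B|^{2(\delta-1)+\epsilon}) R^{1/2+\epsilon}$. Balancing these two bounds by convex weights $\sigma$ and $1-\sigma$ against the choice $R_{0} = |B|^{2X}$ with $X = \tfrac{2(3+2\beta-(3+\beta)\sigma)}{(3-2\alpha)(2-\sigma)}$ reproduces precisely the three $|B|$-powers $|B|^{X}$, $|B|^{1/4+X/2}$, and $|B|^{2(\delta-1)-X}$ displayed in the claim.

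The remaining error $A^{1/3} |B|^{1/6} C$ arises in the complementary regime where cancellation must be extracted from the $a$-summation within an annulus of thickness $\Delta \asymp A|B|^{-1/2}$ rather than from the $r$-summation. Here I would adapt the Bessel-transform analysis developed in the proof of \cref{prop:Bessel}, this time applied to the annular region $B < \mathrm{N}(a) \leq A+B$; running the Poisson-summation-then-dual-Poisson dance with an annular test function and balancing the stationary-phase contributions of $J_0$ and $J_1$ from \eqref{eq:J-0}--\eqref{eq:J-1} via optimisation of the smoothing scale yields a per-$c$ estimate of order $A^{1/3} |B|^{1/6}$, which summed trivially over $\mathrm{N}(c) \leq C$ produces the final term. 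The principal obstacle is the simultaneous orchestration of three coupled parameters -- the split threshold $R_{0}$, the interpolation exponent $\sigma$, and the annular smoothing scale -- so that all displayed error terms emerge as extrema of the same joint optimisation; this is what ultimately dictates the algebraic form $\tfrac{2(3+2\beta-(3+\beta)\sigma)}{(3-2\alpha)(2-\sigma)}$, and reconciling the various dyadic losses without sacrificing the main term requires meticulous bookkeeping.
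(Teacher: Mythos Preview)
Your overall skeleton---split at a threshold $R$, handle the short range via \cref{lem:Iwaniec-lemma-2} and \cref{lem:squarefree}, and attack the long range with the exponent pair and \cref{prop:substitute}---matches the paper. But two pieces of your mechanism are genuinely wrong, and the first is the heart of the matter.

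\textbf{The parameter $\sigma$ is not a H\"{o}lder weight.} You propose to interpolate the exponent-pair bound and the bound from \cref{prop:substitute} with convex weights $\sigma$ and $1-\sigma$. That cannot produce the rational function $\tfrac{3+2\beta-(3+\beta)\sigma}{(3-2\alpha)(2-\sigma)}$; a H\"{o}lder interpolation is linear in the weight, not rational. In the paper $\sigma$ is the abscissa in a \emph{zero-density} argument: for the long range one partitions the $a$-sum into the exceptional set $\mathcal{M}$ of those $a$ for which $L(s,\chi_{a^{2}-4})$ has a zero with $\Re(s)\ge\sigma$ (and $|\Im(s)|\le\log R$), and its complement $\overline{\mathcal{M}}$. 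On $\mathcal{M}$ the exponent pair is applied to the $r$-sum for each fixed $a$ and then multiplied by $|\mathcal{M}|$, which \cref{prop:density} bounds by $|B|^{6(1-\sigma)/(2-\sigma)+\epsilon}$; combining $6(1-\sigma)/(2-\sigma)+2\beta$ with the $R^{\alpha-1}$ weight is exactly what manufactures the numerator $2(3+2\beta-(3+\beta)\sigma)$. On $\overline{\mathcal{M}}$ one invokes (the method of) \cref{prop:substitute}. Without the zero-density input your interpolation scheme has no way to reach the stated exponent.

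\textbf{The term $A^{1/3}|B|^{1/6}C$ does not need a separate Bessel analysis.} It falls out of the same optimisation: the contributions $ACR^{-1/2}$ (from extending the $\ell,r$-sum to infinity in $\mathcal{F}_{0}$, and from the $A$-part of \cref{prop:substitute} in $\mathcal{F}_{\infty}^{\overline{\mathcal{M}}}$) are balanced against $CR^{1/2}$ and $|B|^{1/4}CR^{1/4}$ by taking $R=\max\bigl(A^{4/3}|B|^{-1/3},\,|B|^{4(3+2\beta-(3+\beta)\sigma)/((3-2\alpha)(2-\sigma))}\bigr)$. When the first branch of the $\max$ is active, $ACR^{-1/2}=A^{1/3}|B|^{1/6}C$. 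No annular Poisson step is required.
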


\begin{proof}
Let $\mathscr{L}$ stand for the set of $\ell \in \mathbb{Z}[i]$ such that $p \mid \ell$ implies $p^{2} \mid \ell$ for any prime element $p \in \mathbb{Z}[i]$. It is now convenient to decompose
\begin{equation}
\mathcal{F}(A, B, C) = \sum_{\substack{\mathrm{N}(\ell rs) \leq C \\ \ell \in \mathscr{L}, \, (rs, 4\ell) = 1}} \mu(rs)^{2} \sum_{B < \mathrm{N}(a) \leq A+B} \rho(\ell, a) \left(\frac{a^{2}-4}{r} \right) \eqqcolon \mathcal{F}_{0}(A, B, C)+\mathcal{F}_{\infty}(A, B, C),
\end{equation}
where, for an auxiliary parameter $R$ to be determined later, $\mathcal{F}_{0}(A, B, C)$ (resp. $\mathcal{F}_{\infty}(A, B, C)$) denotes the summation restricted to $\mathrm{N}(\ell r) \leq R$ (resp. $\mathrm{N}(\ell r) > R$).

For the initial contribution $\mathcal{F}_{0}(A, B, C)$, the application of \cref{lem:rho-phi,lem:Iwaniec-lemma-2,lem:squarefree} with $S = \frac{C}{\mathrm{N}(\ell r)}$ and $q \sim 4\ell r$ yields
\begin{align}
\mathcal{F}_{0}(A, B, C) &= \sum_{\substack{\mathrm{N}(\ell) \leq C \\ \ell \in \mathscr{L}}} \sum_{b \tpmod{\ell}} \rho(\ell, b) \sum_{\substack{\mathrm{N}(\ell rs) \leq C \\ \mathrm{N}(\ell r) \leq R \\ \ell \in \mathscr{L}, \, (rs, 4\ell) = 1}} \mu(rs)^{2}\\
& \quad \times \left(\pi A \frac{\mu(r)}{\mathrm{N}(\ell r)}+O_{\epsilon}(\mathrm{N}(r)^{\frac{1}{2}+\epsilon}+|B|^{\frac{1}{4}+\epsilon} \mathrm{N}(r)^{\frac{1}{4}+\epsilon}) \right)\\
& = \frac{\pi}{\zeta_{\mathbb{Q}(i)}(2)} AC \sum_{\substack{\mathrm{N}(\ell) \leq C \\ \ell \in \mathscr{L}}} \frac{\varphi(\ell)}{\mathrm{N}(\ell)^{2}} \sum_{\substack{\mathrm{N}(\ell r) \leq R \\ (r, 4\ell) = 1}} \frac{\mu(r)}{\mathrm{N}(r)^{2}} \prod_{p \mid 4 \ell r} \left(1+\frac{1}{\mathrm{N}(p)} \right)^{-1}\\
& \quad + O_{\epsilon}((AC^{\frac{1}{2}}+CR^{\frac{1}{2}}+|B|^{\frac{1}{4}} CR^{\frac{1}{4}})(BC)^{\epsilon}).
\end{align}
Moreover, extending the summation over $\ell$ and $r$ to infinity introduces an error of at most $\ll_{\epsilon} (ACR^{-\frac{1}{2}}+AC^{\frac{1}{2}})(|B|C)^{\epsilon}$. Hence, there exists an absolute and effectively computable~constant $\Cl[constant]{mean-value} > 0$ such that
\begin{equation}\label{eq:F-0}
\mathcal{F}_{0}(A, B, C) = \Cr{mean-value} AC+O_{\epsilon}((CR^{\frac{1}{2}}+|B|^{\frac{1}{4}} CR^{\frac{1}{4}}+ACR^{-\frac{1}{2}}+AC^{\frac{1}{2}})(|B|C)^{\epsilon}).
\end{equation}

The problem is now reduced to estimating $\mathcal{F}_{\infty}(A, B, C)$, which we intend to balance against the first term in the error term in~\eqref{eq:F-0}. To proceed, it is convenient to break summation~over $r$ in $\mathcal{F}_{\infty}(A, B, C)$ into intervals of the type $(R_{1}, R_{2}]$ with $\mathrm{N}(\ell)^{-1} R \leq R_{1} < R_{2} \leq 2R_{1}$, and to decouple the summation over $a$ according as $a \in \mathcal{M}$ or $a \in \overline{\mathcal{M}}$, where
\begin{equation}
\mathcal{M} \coloneqq \{B < \mathrm{N}(a) \leq A+B: \text{$L(s, \chi_{a^{2}-4})$ has a zero in $[\sigma, 1] \times [-\log R, \log R]$} \},
\end{equation}
and $\overline{\mathcal{M}}$ stands for the complement of $\mathcal{M}$. By \cref{prop:density}, the cardinality of $\mathcal{M}$ does not exceed
\begin{equation}\label{eq:card}
\mathrm{Card}(\mathcal{M}) \ll_{\epsilon} |B|^{\frac{6(1-\sigma)}{2-\sigma}+\epsilon}.
\end{equation}
If the restriction to $\mathcal{M}$ is indicated as a superscript by $\mathcal{F}^{\mathcal{M}}_{\infty}(A, B, C)$, then one may impose on the left-hand side of~\eqref{eq:alpha-beta} the condition that $0 \ne n \in \mathbb{Z}[i]$ be squarefree and coprime to a fixed Gaussian integer $0 \ne q \in \mathbb{Z}[i]$ in the spirit of Iwaniec and Szmidt~\cites[Section~6]{IwaniecSzmidt1985}. It then follows from~\eqref{eq:rho-lambda},~\eqref{eq:prime-powers},~\eqref{eq:card}, and \cref{lem:squarefree} that
\begin{align}
\mathcal{F}^{\mathcal{M}}_{\infty}(A, B, C) &\ll_{\epsilon} \sum_{\substack{\mathrm{N}(\ell) \leq R \\ \ell \in \mathscr{L}}} \sum_{a \in \mathcal{M}} \rho(\ell, a) \sum_{R_{1} \geq \mathrm{N}(\ell)^{-1} R} \frac{C}{\mathrm{N}(\ell) R_{1}} R_{1}^{\alpha} |B|^{2\beta}(|B|C)^{\epsilon}\\
&\ll_{\epsilon} |B|^{2\beta} CR^{\alpha-1}(|B|C)^{\epsilon} \sum_{a \in \mathcal{M}} \sum_{\substack{\mathrm{N}(\ell) \leq R \\ \ell \in \mathscr{L}}} \frac{\rho(\ell, a)}{\mathrm{N}(\ell)^{\alpha}}\\
&\ll_{\epsilon} |B|^{\frac{2(3+2\beta-(3+\beta)\sigma)}{2-\sigma}} CR^{\alpha-1}(|B|C)^{\epsilon}.
\end{align}
On the other hand, if $a \in \overline{\mathcal{M}}$, then one may utilise an unconditional result in \cref{prop:substitute}. The proof of \cref{prop:substitute} manipulates the summations over $a$ and $r$ in a separate~manner, and partial summation allows the inclusion and exclusion of $\rho(\ell, a)$. It is bounded on average by virtue of~\eqref{eq:prime-powers} and~\eqref{eq:uniformly}, and $R_{1} \geq \mathrm{N}(\ell)^{-1} R$ as above, thereby rendering the summation over $\ell$ essentially bounded as in the second display on~\cites[Page~156]{Iwaniec1984}. Hence, we deduce
\begin{equation}\label{eq:complement}
\mathcal{F}^{\overline{\mathcal{M}}}_{\infty}(A, B, C) \ll_{\epsilon} (A+|B|^{2(\delta-1)}) CR^{-\frac{1}{2}} (|B|C)^{\epsilon}.
\end{equation}
Consequently, if we optimise
\begin{equation}
R = \max(A^{\frac{4}{3}} |B|^{-\frac{1}{3}}, |B|^{\frac{4(3+2\beta-(3+\beta)\sigma)}{(3-2\alpha)(2-\sigma)}}),
\end{equation}
then there exists an absolute and effectively computable constant $\Cr{mean-value} > 0$ such that
\begin{multline}\label{eq:F-final}
\mathcal{F}(A, B, C) = \Cr{mean-value} AC+O_{\epsilon}((|B|^{\frac{2(3+2\beta-(3+\beta)\sigma)}{(3-2\alpha)(2-\sigma)}} C+|B|^{\frac{1}{4}+\frac{3+2\beta-(3+\beta)\sigma}{(3-2\alpha)(2-\sigma)}} C\\
 + |B|^{2(\delta-1)-\frac{2(3+2\beta-(3+\beta)\sigma)}{(3-2\alpha)(2-\sigma)}} C+A^{\frac{1}{3}} |B|^{\frac{1}{6}} C+AC^{\frac{1}{2}})(|B|C)^{\epsilon}).
\end{multline}
The comparison of the main terms in~\eqref{eq:trivial} and~\eqref{eq:F-final} with $|B| = A^{2}$ and $C = A^{\frac{1}{2}}$ as $A \to \infty$ identifies $\Cr{mean-value} = \frac{\pi}{\zeta_{\mathbb{Q}(i)}(2)}$, which now completes the proof of \cref{prop}.
\end{proof}

\section{Bypassing the Weil--Gundlach bound}\label{sect:spectral}
In anticipation of the proof of \cref{thm:conditional}, we now integrate the automorphic techniques of Iwaniec~\cites[Section~10]{Iwaniec1984} with the framework of Koyama~\cites[Section~4]{Koyama2001}, the latter constituting a $3$-dimensional analogue of the framework of Luo and Sarnak~\cites[Appendix]{LuoSarnak1995}. A difference between these techniques lies in their treatment of Kloosterman sums: the former features an expression of Kloosterman sums in terms of $\rho(c, a)$, thereby reducing the problem to estimating nontrivially certain quadratic character sums, while the latter relies~exclusively on the Weil--Gundlach bound~\eqref{eq:Weil}. Among other crucial observations, our argument unveils an oscillatory factor in a certain well-behaved integral transform of Bessel kernels, appearing in the spherical Kuznetsov formula of Motohashi~\cites[Theorem]{Motohashi1996}[Theorem]{Motohashi1997}.

Utilising the explicit formula~\cites[Theorem~5.2]{Nakasuji2000}[Theorem~4.1]{Nakasuji2001} and its smoothed version~\cites[Lemma~4.1]{BalogBiroCherubiniLaaksonen2022}, we are naturally led to the problem of determining nontrivial bounds for the exponentially weighted spectral exponential sum of the shape
\begin{equation}\label{eq:normalised}
\sum_{t_{j} \leq T} X^{it_{j}} e^{-\frac{t_{j}}{T}}.
\end{equation}
To approximate the summand, the commonly chosen test function in the spherical Kuznetsov formula~\cites[Theorem]{Motohashi1996}[Theorem]{Motohashi1997}, whose Bessel--Kuznetsov transform behaves like $x$ as $x \to 0$, proves sufficient in our scenario. This phenomenon stands in contrast to the application of the Hecke trick in~\cites[Section~5]{BalkanovaFrolenkov2019-2} in the $2$-dimensional setting, where some convergence issues emerge when shifting the contour of integration to the desired region.

For $x > 0$ and $T, X \geq 2$, we define
\begin{equation}\label{eq:varphi}
\varphi(x) \coloneqq \frac{\sinh \beta}{\pi} x \exp(ix \cosh \beta), \qquad 2\beta = \log X+\frac{i}{T},
\end{equation}
where
\begin{equation}\label{eq:c-a-b}
i \cosh \beta = -a+ib, \qquad 
	\begin{cases}
	a \coloneqq \sinh(\log X^{\frac{1}{2}}) \sin((2T)^{-1}),\\
	b \coloneqq \cosh(\log X^{\frac{1}{2}}) \cos((2T)^{-1}).
	\end{cases}
\end{equation}
The Bessel--Kuznetsov transform given by\footnote{We here employ a less common accent symbol to distinguish it from the Fourier transform. Furthermore, a typographical error exists in the definition of $\breve{\varphi}(t)$ in~\cite[Page~68, Line~$-1$]{DeshouillersIwaniec1986} and~\cite[Page~233, Line~$-3$]{LuoSarnak1995}, wherein the imaginary unit $i$ was erroneously placed in the denominator rather than the numerator.}
\begin{equation}\label{eq:Bessel-Kuznetsov}
\breve{\varphi}(t) \coloneqq \frac{\pi i}{2\sinh \pi t} \int_{0}^{\infty} (J_{2it}(x)-J_{-2it}(x)) \varphi(x) \frac{dx}{x}
\end{equation}
admits the following approximation due to Deshouillers and Iwaniec~\cites[Lemma~7]{DeshouillersIwaniec1986}.
\begin{lemma}\label{lem:Heckes-trick}
Keep the notation as above. Let $T, X \geq 2$, and let $t > 0$. Then
\begin{equation}
\breve{\varphi}(t) = \frac{\sinh((\pi+2i \beta)t)}{\sinh \pi t} = X^{it} e^{-\frac{t}{T}}+O(e^{-\pi t}).
\end{equation}
\end{lemma}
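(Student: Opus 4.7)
The plan is to evaluate $\breve{\varphi}(t)$ in closed form via the classical Lipschitz--Hankel integral, then to extract the dominant term through routine hyperbolic manipulations. Substituting~\eqref{eq:varphi} into~\eqref{eq:Bessel-Kuznetsov}, the factor $x^{-1}$ in the measure cancels the linear factor in $\varphi$, so that upon setting $\alpha \coloneqq -i\cosh \beta = a-ib$ the transform collapses to
\[
\breve{\varphi}(t) = \frac{i\sinh \beta}{2\sinh \pi t} \int_{0}^{\infty} (J_{2it}(x)-J_{-2it}(x)) e^{-\alpha x} \, dx.
\]
By~\eqref{eq:c-a-b}, $\Re(\alpha) = a > 0$ and $\Re(\pm 2it) = 0 > -1$, which ensures absolute convergence of each integral. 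The identity~\cites[Equation~(6.621.1)]{GradshteynRyzhik2014}
\[
\int_{0}^{\infty} J_{\nu}(x) e^{-\alpha x} \, dx = \frac{1}{\sqrt{\alpha^{2}+1}(\alpha+\sqrt{\alpha^{2}+1})^{\nu}}
\]
then reduces the task to elementary algebraic simplification.

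The first step is to pin down the branches. Since $\alpha^{2}+1 = 1-\cosh^{2} \beta = -\sinh^{2} \beta$ and $\Re(\beta) > 0$, the choice $\sqrt{\alpha^{2}+1} = -i\sinh \beta$ is consistent with the decay required at infinity, giving $\alpha+\sqrt{\alpha^{2}+1} = -i(\cosh \beta+\sinh \beta) = -ie^{\beta}$. The principal determination $\log(-i) = -i\pi/2$ produces $(-ie^{\beta})^{\pm 2it} = e^{\pm \pi t} e^{\pm 2i\beta t}$, whence
\[
\int_{0}^{\infty} (J_{2it}(x)-J_{-2it}(x)) e^{-\alpha x} \, dx = \frac{i}{\sinh \beta}(e^{-\pi t-2i\beta t}-e^{\pi t+2i\beta t}) = -\frac{2i\sinh((\pi+2i\beta)t)}{\sinh \beta}.
\]
Substituting back yields at once $\breve{\varphi}(t) = \sinh((\pi+2i\beta)t)/\sinh \pi t$, the first claimed equality.

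The second step is asymptotic extraction. Using $2\beta = \log X+i/T$ gives $e^{2i\beta t} = X^{it} e^{-t/T}$, and expanding both hyperbolic sines as exponentials produces
\[
\breve{\varphi}(t) = \frac{X^{it} e^{-t/T}-e^{-2\pi t} X^{-it} e^{t/T}}{1-e^{-2\pi t}} = X^{it} e^{-t/T}+O \left(e^{-2\pi t}(e^{-t/T}+e^{t/T}) \right),
\]
uniformly in $t > 0$, where the geometric expansion of the denominator contributes only lower-order corrections of relative size $O(e^{-2\pi t})$. Since $T \geq 2$ ensures $e^{-2\pi t+t/T} \leq e^{-(2\pi-1/2)t} \ll e^{-\pi t}$, the error collapses to the claimed $O(e^{-\pi t})$. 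The principal obstacle is confined to the branch bookkeeping in the Lipschitz--Hankel step; once the determinations of $\sqrt{\alpha^{2}+1}$ and $\log(-i)$ are fixed consistently with $\Re(\alpha), \Re(\beta) > 0$, the remainder is a mechanical manipulation of exponentials.
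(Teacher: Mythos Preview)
The paper does not prove this lemma; it merely cites Deshouillers--Iwaniec~\cites[Lemma~7]{DeshouillersIwaniec1986}. Your argument supplies the details directly via the Lipschitz--Hankel integral, and the computation of the closed form $\sinh((\pi+2i\beta)t)/\sinh\pi t$ is correct: the branch determinations (principal square root giving $\sqrt{\alpha^{2}+1}=-i\sinh\beta$ with positive real part, and $\arg(-ie^{\beta})=\tfrac{1}{2T}-\tfrac{\pi}{2}\in(-\pi,\pi]$) are consistent, and the algebra checks out.

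There is one small slip in the asymptotic extraction. You claim the error $O(e^{-2\pi t}(e^{-t/T}+e^{t/T}))$ holds ``uniformly in $t>0$'', but the suppressed factor $(1-e^{-2\pi t})^{-1}$ is unbounded as $t\to 0^{+}$. Writing the error exactly as $(\coth\pi t-1)\sinh(2i\beta t)$ and using $|\sinh(2i\beta t)|\le\cosh(t/T)$, one sees that for $t$ bounded away from zero the bound $O(e^{-\pi t})$ is fine with an absolute constant, whereas for $t\to 0^{+}$ the error is $\asymp\log X$. This is a harmless imprecision already implicit in the lemma's statement and irrelevant to the application (the spectral parameters satisfy $t_{j}^{2}\ge\pi^{2}-1$), but your phrase ``uniformly in $t>0$'' overstates what you have shown.
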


We fix a smooth compactly supported weight function $h: (0, \infty) \to \mathbb{R}$ having holomorphic Mellin transform $\tilde{h}: \mathbb{C} \to \mathbb{C}$. It is convenient to choose $h$ to be supported on a dyadic interval $[\sqrt{N}, \sqrt{2N}]$, whose derivatives satisfy
\begin{equation}\label{eq:support}
h^{(\ell)}(\xi) \ll_{\ell} N^{-\frac{\ell}{2}}, \qquad \ell \in \mathbb{N}_{0},
\end{equation}
and whose mass equals
\begin{equation}\label{eq:mass}
\int_{-\infty}^{\infty} h(\xi) \xi \, d\xi = \tilde{h}(2) = N.
\end{equation}
Integrating by parts $\ell$ times and using~\eqref{eq:support} implies
\begin{equation}\label{eq:h-tilde}
\tilde{h}(s) = \frac{(-1)^{\ell}}{s(s+1) \cdots (s+\ell-1)} 
\int_{0}^{\infty} h^{(\ell)}(x) x^{s+\ell} \frac{dx}{x} \ll_{\sigma, \ell} N^{\frac{\sigma}{2}}(|s|+1)^{-\ell},
\end{equation}
where the implicit constant depends continuously on $\sigma \coloneqq \Re(s)$. The identity~\eqref{eq:h-tilde} is meant initially for $s \not\in \mathbb{Z}_{\leq 0}$, although it remains valid even at these exceptional points.

The complete unconditional resolution of the mean Lindel\"{o}f hypothesis (\cref{thm:mean-Lindelof})~now ensures that~\cites[Lemma~4.3]{Koyama2001} holds unconditionally, namely there exists an absolute and effectively computable constant $\Cl[constant]{Koyama-lemma} > 0$ such that
\begin{equation}
1 = \frac{\Cr{Koyama-lemma}}{N} \left(\sum_{n \in \mathbb{Z}[i]} \frac{t_{j}}{\sinh \pi t_{j}} h(|n|) |\rho_{j}(n)|^{2}-\mathfrak{r}(t_{j}, N) \right),
\end{equation}
where $\mathfrak{r}(t_{j}, N)$ serves as an error term satisfying
\begin{equation}
\sum_{t_{j} \leq T} |\mathfrak{r}(t_{j}, N)| \ll_{\epsilon} T^{3+\epsilon} N^{\frac{1}{2}}.
\end{equation}
Using \cref{lem:Heckes-trick} and the spherical Kuznetsov formula~\cites[Theorem]{Motohashi1996}[Theorem]{Motohashi1997} with $m = n$, $h = \breve{\varphi}$ and estimating trivially the diagonal and Eisenstein terms by
\begin{equation}
2\pi \int_{-\infty}^{\infty} \frac{\sigma_{it}(n)^{2}}{|n|^{2it} |\zeta_{\mathbb{Q}(i)}(1+it)|^{2}} \breve{\varphi}(t) \, dt \ll \tau(n)^{2} T(\log T)^{2}, \qquad \pi^{-2} \int_{-\infty}^{\infty} t^{2} \breve{\varphi}(t) \, dt \ll T^{2},
\end{equation}
understanding the behaviour of the spectral exponential sum~\eqref{eq:normalised} boils down to a nontrivial treatment of the spectral-arithmetic average
\begin{align}
\sum_{t_{j} \leq T} X^{it_{j}} e^{-\frac{t_{j}}{T}} &= \frac{c_{6}}{N} \sum_{n \in \mathbb{Z}[i]} h(|n|) \sum_{j = 1}^{\infty} \frac{t_{j}}{\sinh \pi t_{j}} \breve{\varphi}(t_{j}) |\rho_{j}(n)|^{2}+O(T^{3+\epsilon} N^{-\frac{1}{2}+\epsilon})\\
& = \frac{c_{6}}{N} \sum_{n \in \mathbb{Z}[i]} h(|n|) \sum_{0 \ne c \in \mathbb{Z}[i]} \frac{S(n, n, c)}{\mathrm{N}(c)} \mathring{\varphi} \left(\frac{2\pi \overline{n}}{c} \right)+O(T^{3+\epsilon} N^{-\frac{1}{2}+\epsilon}),\label{eq:spectral-arithmetic}
\end{align}
where~\cites[Theorem~3.2]{BalkanovaFrolenkov2020}[Theorem~2]{Motohashi2001}
\begin{equation}
\mathring{\varphi}(z) \coloneqq \int_{-\infty}^{\infty} \frac{it^{2}}{\sinh \pi t} \mathcal{J}_{it}(z) \breve{\varphi}(t) \, dt = \frac{1}{2} \int_{-\infty}^{\infty} t^{2} \mathcal{K}_{it}(z) \breve{\varphi}(t) \, dt.
\end{equation}
In particular, the Bessel kernels are defined by
\begin{equation}
\mathcal{J}_{\nu}(z) \coloneqq 2^{-2\nu} |z|^{2\nu} J_{\nu}^{\ast}(z) J_{\nu}^{\ast}(\overline{z}),
\qquad \mathcal{K}_{\nu}(z) \coloneqq \frac{\mathcal{J}_{-\nu}(z)-\mathcal{J}_{\nu}(z)}{\sin \pi \nu},
\end{equation}
where $J_{\nu}^{\ast}(t)$ is the entire function equal to $J_{\nu}(t) (\frac{t}{2})^{-\nu}$ when $t > 0$.

We are now prepared to establish the following result.
\begin{proposition}\label{prop:extra}
Let $N, T, X \geq 2$. Suppose that $(\alpha, \beta)$ is a quadratic exponent pair over $\mathbb{Q}(i)$. Then we have for some fixed $\delta \in [\frac{5}{4}, \frac{3}{2}]$ and for any $\sigma \in [\frac{1}{2}, 1]$ and $\epsilon > 0$ that
\begin{multline}\label{eq:extra}
\sum_{t_{j} \leq T} X^{it_{j}} \ll_{\epsilon} TX^{\frac{2(3+2\beta-(3+\beta)\sigma)}{(3-2\alpha)(2-\sigma)}+\epsilon}+TX^{\frac{1}{4}+\frac{3+2\beta-(3+\beta)\sigma}{(3-2\alpha)(2-\sigma)}+\epsilon}\\
 + TX^{2(\delta-1)-\frac{2(3+2\beta-(3+\beta)\sigma)}{(3-2\alpha)(2-\sigma)}+\epsilon}+T^{\frac{2}{3}} X^{\frac{1}{2}+\epsilon}+T^{2+\epsilon}.
\end{multline}
\end{proposition}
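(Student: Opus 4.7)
The plan is to start from the spectral-arithmetic identity~\eqref{eq:spectral-arithmetic}, in which the spherical Kuznetsov formula of Motohashi has already converted the exponentially weighted spectral exponential sum into a bilinear expression involving Gaussian Kloosterman sums $S(n,n,c)$ weighted by the integral transform $\mathring{\varphi}(2\pi\overline{n}/c)$. Our strategy is, first, to invoke \cref{lem:counting-function} to rewrite each $S(n,n,c) = \sum_{a \pmod{c}} \rho(c,a) \check{e}_{c}(an)$ as a sum of additive characters weighted by the multiplicative counting function $\rho(c,a)$, which is identified in \cref{lem:identify} with $\rho_{c}(a^{2}-4)$. This substitution transforms the arithmetic side into a weighted Gaussian exponential sum whose structure aligns precisely with the bilinear form $\mathcal{F}(A,B,C)$ analysed in \cref{prop}.

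The second step, and the main analytic ingredient, is an asymptotic form of the stationary phase analysis of the Bessel--Kuznetsov transform $\mathring{\varphi}(2\pi\overline{n}/c)$. Using the explicit choice of $\varphi$ in~\eqref{eq:varphi}--\eqref{eq:c-a-b} together with the standard large-argument asymptotics of $J_{\nu}^{\ast}$ and Stirling's formula~\eqref{eq:Stirling}, one extracts a leading oscillatory factor of the shape $\exp(i \Phi(\overline{n}/c))$ with amplitude that decays polynomially in $T$ and $X$, together with rapidly decaying lower-order terms. The support of $h$ localises the $n$-summation to $|n| \asymp N^{1/2}$; the dominant regime is then $\mathrm{N}(c) \asymp N X^{-1}$, so that after peeling off the oscillatory factor via partial summation and trivialising in $n$ against the smoothness of $h$, the remaining mass becomes a sum of $\rho(c,a) \check{e}_{c}(an)$ over dyadic ranges $\mathrm{N}(c) \asymp C$ and residues $a$ lying in a short arc of length $A \asymp N^{1/2}$ around a given $B$ determined by the stationary point.

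The third step is to apply \cref{prop} with the parameter choice $|B| \asymp X$, $A$ comparable to $N^{1/2}$, and $C \asymp N/X$, after choosing $N$ so as to balance the error $T^{3+\epsilon} N^{-1/2+\epsilon}$ against the arithmetic bound. The first three error terms in~\eqref{eq:F-final}, scaled by $\frac{1}{N}$ and summed over dyadic ranges, then translate directly into the first three displayed contributions of~\eqref{eq:extra}; the remaining $A C^{1/2}$ and $A^{1/3}|B|^{1/6} C$ terms from \cref{prop}, after the same scaling, get absorbed into the $T^{2/3} X^{1/2+\epsilon}$ and $T^{2+\epsilon}$ contributions, the latter arising additionally from the trivial Eisenstein and diagonal bounds recorded just before~\eqref{eq:spectral-arithmetic} together with the removal of the weight $e^{-t_{j}/T}$ via \cref{lem:Heckes-trick} and Weyl's law.

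The anticipated main obstacle is the delicate bookkeeping in the stationary phase analysis of $\mathring{\varphi}$: because its defining integral against $\mathcal{K}_{it}$ runs over $t \in \mathbb{R}$ and the Bessel kernels mix the variables $z$ and $\overline{z}$ in a nontrivial way over $\mathbb{C}$, one must verify that the stationary point is simple and that the oscillatory factor $\exp(i\Phi(\overline{n}/c))$ is extracted cleanly enough to permit the subsequent application of partial summation and the Brun--Titchmarsh-type \cref{prop}. To circumvent the loss incurred by the resulting trivial treatment of the terms outside the support of the stationary phase, the final step is to interpolate the resulting bound with the penultimate display on~\cites[Page~792]{Koyama2001}, which controls $\sum_{t_{j} \leq T} X^{it_{j}}$ via the Weil--Gundlach bound and yields precisely the auxiliary contribution $T^{2/3} X^{1/2+\epsilon}$, dominant in the mid-range of $X$ relative to $T$; this is the step where the asymptotic stationary phase refinement genuinely outperforms the purely Weil-based approach of Koyama, in complete analogy with the $2$-dimensional technology of Iwaniec~\cites[Section~10]{Iwaniec1984} and Cai~\cites[Section~6]{Cai2002}.
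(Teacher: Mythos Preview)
Your high-level architecture---start from~\eqref{eq:spectral-arithmetic}, open $S(n,n,c)$ via \cref{lem:counting-function}, extract oscillation from the Bessel transform, then feed the resulting bilinear form into \cref{prop}---matches the paper. But several concrete details are wrong, and one essential step is missing.

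First, the $c$-range: the paper shows (via~\cites[Lemmata~3.2 and~3.3]{Kaneko2022-2}) that the relevant range is $C_{1} \leq \mathrm{N}(c) \leq C_{2}$ with $C_{1} = NX(T\log T)^{-2}$ and $C_{2} = NX$, so $\mathrm{N}(c) \asymp NX$, not $NX^{-1}$ as you write. Correspondingly, the parameter substitution into \cref{prop} is $A \mapsto C^{1/2} N^{-1/2} X^{1/2+\epsilon}$ and $B \mapsto X$, not $A \asymp N^{1/2}$ and $C \asymp N/X$.

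Second, and more seriously, the paper does \emph{not} perform stationary phase on $\mathring{\varphi}$ directly. It quotes the $K_{0}$-representation~\eqref{eq:Kaneko-CCL}, inserts the large-argument asymptotic of $K_{0}$ to reach~\eqref{eq:akin-to-Cai}, and then applies $2$-dimensional Poisson summation in the $n$-variable. Integration by parts in the resulting $J_{0}$-integral is what localises $a$ to the short range $|a+X^{1/2}| \leq C^{1/2} N^{-1/2+\epsilon}$, and that localisation is exactly what makes \cref{prop} applicable with a short $A$. Your proposed ``trivialise in $n$ against the smoothness of $h$'' does not produce this localisation; without the Poisson step you would be summing $\rho(c,a)$ over all residues $a \pmod{c}$, which collapses back to $\varphi(c)$ by \cref{lem:rho-phi} and loses the arithmetic gain.

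Third, there is \emph{no} interpolation with Koyama's bound in this proposition; that is precisely the content of the subsequent \cref{prop:key}. Here the term $T^{2/3} X^{1/2+\epsilon}$ arises from the $A^{1/3}|B|^{1/6} C$ contribution of \cref{prop} (becoming $C^{7/6} N^{-1/6} X^{1/3+\epsilon}$ after substitution, then evaluated at $C = NX(T\log T)^{-2}$, $N = T^{2} X^{\epsilon}$), and $T^{2+\epsilon}$ arises from the remainder $T^{3+\epsilon} N^{-1/2+\epsilon}$ in~\eqref{eq:spectral-arithmetic} with $N = T^{2} X^{\epsilon}$.
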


\begin{proof}
It follows from~\cites[Lemmata~3.2 and~3.3]{Kaneko2022-2} or~\cites[Lemmata~4.1 and~4.3]{ChatzakosCherubiniLaaksonen2022} that\footnote{Using an asymptotic form of the stationary phase analysis~\cites[Section~8]{BlomerKhanYoung2013}[Main Theorem]{KiralPetrowYoung2019}, the expression~\eqref{eq:Kaneko-CCL} is essentially equivalent to~\cites[Equation~(4.7)]{Koyama2001}. Nonetheless, Koyama addresses the right-hand side of~\eqref{eq:Kaneko-CCL} with absolute values and analyse the sum over $c$ via the Weil--Gundlach bound~\eqref{eq:Weil}.}
\begin{multline}\label{eq:Kaneko-CCL}
\sum_{0 \ne c \in \mathbb{Z}[i]} \frac{S(n, n, c)}{\mathrm{N}(c)} \mathring{\varphi} \left(\frac{2\pi \overline{n}}{c} \right)\\
 = 2\pi i\Xi^{2} \mathrm{N}(n) X \sum_{C_{1} \leq \mathrm{N}(c) \leq C_{2}} \frac{S(n, n, c)}{\mathrm{N}(c)^{2}} K_{0} \bigg(\frac{2\pi \Xi |n| X^{\frac{1}{2}}}{|c|} \bigg)+O_{\epsilon}(N^{\frac{1}{2}+\epsilon} X^{\frac{1}{2}+\epsilon}),
\end{multline}
with $\Xi \coloneqq e^{-i(\pi-1/T)/2}$, $C_{1} \coloneqq NX(T \log T)^{-2}$, and $C_{2} \coloneqq NX$. Here, it is convenient~to~exploit an additional oscillation from $K_{0}$ to detect nontrivial cancellations. Applying the asymptotic expansion of $K_{0}$, invoking \cref{lem:counting-function}, and rewriting $\check{e}(\cdot)$ in terms of $e(\cdot)$ demonstrates that the right-hand side~of~\eqref{eq:Kaneko-CCL} averaged over $n$ as in~\eqref{eq:spectral-arithmetic} boils down essentially to
\begin{equation}\label{eq:akin-to-Cai}
\frac{X^{\frac{3}{4}}}{N} \sum_{C_{1} \leq \mathrm{N}(c) \leq C_{2}} \frac{1}{\mathrm{N}(c)^{\frac{7}{4}}} \sum_{a \tpmod{c}} \rho(c, a) \sum_{n \in \mathbb{Z}[i]} h(|n|) \mathrm{N}(n)^{\frac{3}{4}} e \bigg(\frac{\Re(an \overline{c})+|nc| X^{\frac{1}{2}}}{|c|^{2}} \bigg)+O(X^{\frac{1}{2}}).
\end{equation}
By dyadic subdivisions, one may break up the sum over $c$ into at most $O(\log X)$ subsums~over $C < \mathrm{N}(c) \leq 2C$ with $C_{1} \leq C < 2C \leq C_{2}$. By applying $2$-dimensional Poisson summation in $n$, isolating zero frequency, and eliding the extra factor of $2\pi$ from the polar coordinates, it follows that
\begin{multline}
\sum_{n \in \mathbb{Z}[i]} h(|n|) \mathrm{N}(n)^{\frac{3}{4}} e \bigg(\frac{\Re(an \overline{c})+|nc| X^{\frac{1}{2}}}{|c|^{2}} \bigg) = \int_{0}^{\infty} h(x) x^{\frac{5}{2}} e \bigg(\frac{xX^{\frac{1}{2}}}{|c|} \bigg) J_{0} \left(\frac{x \, \Re(a \overline{c})}{|c|^{2}} \right) \, dx\\
 + \sum_{0 \ne n \in \mathbb{Z}[i]} \int_{0}^{\infty} h(x) x^{\frac{5}{2}} e \bigg(\frac{xX^{\frac{1}{2}}}{|c|} \bigg) J_{0} \left(|n|x+\frac{x \, \Re(a \overline{c})}{|c|^{2}} \right) \, dx.
\end{multline}
Integration by parts now establishes that the contribution of the second term is bounded~and that the first term remains bounded unless $|a+X^{\frac{1}{2}}| \leq C^{\frac{1}{2}} N^{-\frac{1}{2}+\epsilon}$ by the second line of~\eqref{eq:J-0}. Making a change of variables $x \mapsto |c|x$ and using partial summation leads to the expression
\begin{equation}
\frac{X^{\frac{1}{2}}}{N} \int_{0}^{\infty} \sum_{C < \mathrm{N}(c) \leq 2C} \sum_{|a+X^{\frac{1}{2}}| \leq C^{\frac{1}{2}} N^{-\frac{1}{2}+\epsilon}} \rho(c, a) e(x(\Re(a \overline{c}) |c|^{-1}+X^{\frac{1}{2}})) h(|c|x) x^{2} \, dx.
\end{equation}
Furthermore, it is convenient to make a change of variables in \cref{prop}, namely
\begin{equation}
A \mapsto C^{\frac{1}{2}} N^{-\frac{1}{2}} X^{\frac{1}{2}+\epsilon}, \qquad B \mapsto X,
\end{equation}
obtaining (cf.~\cites[Page~70]{Cai2002})
\begin{multline}
\sum_{C < \mathrm{N}(c) \leq 2C} \sum_{|a+X^{\frac{1}{2}}| \leq C^{\frac{1}{2}} N^{-\frac{1}{2}+\epsilon}} \rho(c, a) e(x(\Re(a \overline{c}) |c|^{-1}+X^{\frac{1}{2}})) h(|c|x)\\
\ll_{\epsilon} CX^{\frac{2(3+2\beta-(3+\beta)\sigma)}{(3-2\alpha)(2-\sigma)}+\epsilon}+CX^{\frac{1}{4}+\frac{3+2\beta-(3+\beta)\sigma}{(3-2\alpha)(2-\sigma)}+\epsilon} +CX^{2(\delta-1)-\frac{2(3+2\beta-(3+\beta)\sigma)}{(3-2\alpha)(2-\sigma)}+\epsilon}+C^{\frac{7}{6}} N^{-\frac{1}{6}} X^{\frac{1}{3}+\epsilon},
\end{multline}
where the contribution of the main term becomes negligibly small by repeated integration by parts. Invoking $\mathrm{supp}(h) \subseteq [\sqrt{N}, \sqrt{2N}]$, inserting $C = \frac{NX}{(T \log T)^{2}}$, and integrating trivially~over $x$ implies that the entire contribution of the error term amounts to
\begin{equation}\label{eq:entire-contribution}
\ll_{\epsilon} TX^{\frac{2(3+2\beta-(3+\beta)\sigma)}{(3-2\alpha)(2-\sigma)}+\epsilon}+TX^{\frac{1}{4}+\frac{3+2\beta-(3+\beta)\sigma}{(3-2\alpha)(2-\sigma)}+\epsilon}+TX^{2(\delta-1)-\frac{2(3+2\beta-(3+\beta)\sigma)}{(3-2\alpha)(2-\sigma)}+\epsilon}+T^{\frac{2}{3}} X^{\frac{1}{2}+\epsilon},
\end{equation}
where $N = T^{2} X^{\epsilon}$. Inserting~\eqref{eq:entire-contribution} into~\eqref{eq:spectral-arithmetic} thus completes the proof of \cref{prop:extra}.
\end{proof}

To enable the ensuing application of partial summation, we establish the following result.
\begin{proposition}\label{prop:key}
Let $N, T, X \geq 2$. Suppose that $(\alpha, \beta)$ is a quadratic exponent pair over $\mathbb{Q}(i)$. Then we have for some fixed $\delta \in [\frac{5}{4}, \frac{3}{2}]$ and for any $\sigma \in [\frac{1}{2}, 1]$ and $\epsilon > 0$ that
\begin{multline}
\sum_{t_{j} \leq T} X^{it_{j}} \ll_{\epsilon} T^{\frac{16}{13}} X^{\frac{1}{13}+\frac{18(3+2\beta-(3+\beta)\sigma)}{13(3-2\alpha)(2-\sigma)}+\epsilon}+T^{\frac{16}{13}} X^{\frac{1}{4}+\frac{9(3+2\beta-(3+\beta)\sigma)}{13(3-2\alpha)(2-\sigma)}+\epsilon}\\
 + T^{\frac{16}{13}} X^{\frac{18\delta-17}{13}-\frac{18(3+2\beta-(3+\beta)\sigma)}{13(3-2\alpha)(2-\sigma)}+\epsilon}+TX^{\frac{11}{26}+\epsilon}+T^{\frac{25}{13}} X^{\frac{1}{13}+\epsilon}.
\end{multline}
\end{proposition}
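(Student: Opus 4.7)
The plan is to deduce \cref{prop:key} from \cref{prop:extra} by interpolating each of the five terms in the bound~\eqref{eq:extra} against Koyama's spectral exponential sum estimate recorded in the penultimate display on~\cites[Page~792]{Koyama2001}. Specifically, we take as our secondary input the bound
\begin{equation}
\sum_{t_{j} \leq T} X^{it_{j}} \ll_{\epsilon} T^{\frac{7}{4}+\epsilon} X^{\frac{1}{4}+\epsilon},
\end{equation}
which Koyama derives unconditionally from the Weil--Gundlach bound~\eqref{eq:Weil} applied directly to the Kloosterman term on the right-hand side of an analogue of~\eqref{eq:Kaneko-CCL}, without exploiting the arithmetic refinement encoded in \cref{lem:counting-function}. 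This estimate is complementary in nature to~\eqref{eq:extra}: Koyama's bound becomes competitive in the regime where $X$ is moderate relative to $T$, whereas \cref{prop:extra} draws its strength from \cref{prop} via the $\rho(c,a)$-expansion in the regime where $X$ is comparatively large.

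Abbreviate $U \coloneqq \tfrac{3+2\beta-(3+\beta)\sigma}{(3-2\alpha)(2-\sigma)}$ and denote the five terms on the right-hand side of~\eqref{eq:extra} by $T_{1}, \ldots, T_{5}$, namely
\begin{equation}
T_{1} = TX^{2U}, \quad T_{2} = TX^{\frac{1}{4}+U}, \quad T_{3} = TX^{2(\delta-1)-2U}, \quad T_{4} = T^{\frac{2}{3}} X^{\frac{1}{2}}, \quad T_{5} = T^{2},
\end{equation}
up to a factor of $X^{\epsilon}$. Setting $K \coloneqq T^{\frac{7}{4}+\epsilon} X^{\frac{1}{4}+\epsilon}$ and exploiting the inequality $\min(T_{j}, K) \leq T_{j}^{\theta} K^{1-\theta}$ for any $\theta \in [0,1]$, I would apply this interpolation to each $T_{j}$ uniformly with the weight
\begin{equation}
\theta = \frac{9}{13},
\end{equation}
which is singled out by the requirement that the coefficient of $U$ in $T_{1}^{\theta} K^{1-\theta}$ equals $\tfrac{18}{13}$, matching the first target exponent.

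A direct computation then produces the five interpolated bounds
\begin{align}
T_{1}^{9/13} K^{4/13} &= T^{\frac{16}{13}} X^{\frac{1}{13}+\frac{18U}{13}+\epsilon},\\
T_{2}^{9/13} K^{4/13} &= T^{\frac{16}{13}} X^{\frac{1}{4}+\frac{9U}{13}+\epsilon},\\
T_{3}^{9/13} K^{4/13} &= T^{\frac{16}{13}} X^{\frac{18\delta-17}{13}-\frac{18U}{13}+\epsilon},\\
T_{4}^{9/13} K^{4/13} &= T X^{\frac{11}{26}+\epsilon},\\
T_{5}^{9/13} K^{4/13} &= T^{\frac{25}{13}} X^{\frac{1}{13}+\epsilon},
\end{align}
which coincide with the five summands in the statement of \cref{prop:key}. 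The main obstacle is essentially bookkeeping rather than conceptual: one must verify that the single choice $\theta = \tfrac{9}{13}$ simultaneously produces the correct $X$-exponent in each of the five interpolations (in particular, that the coefficients of $U$ in $T_{1}, T_{2}, T_{3}$ rescale compatibly and that the constants $\tfrac{1}{4}$ in $T_{2}$ and $\tfrac{2(\delta-1)}{1}$ in $T_{3}$ reassemble with the contribution $\tfrac{1}{4} \cdot \tfrac{4}{13}$ coming from $K$). Once this verification is carried out, summing the five resulting bounds and absorbing the $\epsilon$-factors completes the proof of \cref{prop:key}.
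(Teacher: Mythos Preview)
Your proof is correct and essentially identical to the paper's. The paper interpolates~\eqref{eq:extra} with~\cites[Equation~(3-48)]{Kaneko2022-2} at $\eta = 0$ via $\min(A,B) \leq A^{\gamma} B^{1-\gamma}$ with $\gamma = \tfrac{4}{13}$; your weight $\theta = \tfrac{9}{13}$ is just $1-\gamma$, and the secondary input you cite from~\cites[Page~792]{Koyama2001} coincides with~\cites[Equation~(3-48)]{Kaneko2022-2} at $\eta = 0$ once \cref{thm:mean-Lindelof} is in hand. One minor caveat: Koyama does not derive $T^{7/4} X^{1/4}$ unconditionally --- his argument presupposes the mean Lindel\"{o}f hypothesis to reach the spectral--arithmetic identity before invoking Weil--Gundlach --- so the unconditional status of this bound is a consequence of \cref{thm:mean-Lindelof} in the present paper, not of Koyama's original work.
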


\begin{proof}
The claim follows upon interpolating~\eqref{eq:extra} and~\cites[Equation~(3-48)]{Kaneko2022-2} with $\eta = 0$ (\cref{thm:mean-Lindelof}) via the standard inequality $\min(A, B) \leq A^{\gamma} B^{1-\gamma}$ with $\gamma = \frac{4}{13}$.
\end{proof}

\section{Endgame: proofs of \texorpdfstring{\cref{thm:unconditional,thm:second-moment,thm:conditional}}{}}\label{sect:proof}
It is convenient to borrow the smooth explicit formula of Balog et al.~\cites[Lemma~4.1]{BalogBiroCherubiniLaaksonen2022} owing to the barrier $O_{\epsilon}(X^{\frac{3}{2}+\epsilon})$ inherent in the nonsmooth explicit formula of Nakasuji~\cites[Theorem~5.2]{Nakasuji2000}[Theorem~4.1]{Nakasuji2001}. In conjunction with the machinery of Soundararajan~and Young~\cites[Section~3]{SoundararajanYoung2013}, let $Y \in [x^{\frac{1}{2}}, x]$ be an auxiliary parameter to be chosen later.~Note that $\frac{1+2\vartheta}{3} \leq \frac{1}{2}$ is equivalent to $\vartheta \leq \frac{1}{4}$, thereby ensuring the subsequent usage of \cref{thm:Brun-Titchmarsh}.

Fix a smooth function $k: (0, \infty) \to [0, \infty)$ supported on $[Y, 2Y]$, satisfying $\tilde{k}(0) = 1$ and
\begin{equation}
\int_{-\infty}^{\infty} |k^{(\ell)}(u)| \, du \ll_{\ell} Y^{-\ell}, \qquad \ell \in \mathbb{N}_{0}.
\end{equation}
Then~\cites[Lemma~4.1]{BalogBiroCherubiniLaaksonen2022} coupled with \cref{thm:Brun-Titchmarsh} implies (cf.~\cites[Equation~(3-64)]{Kaneko2022-2})
\begin{equation}\label{eq:approximation}
\mathcal{E}_{\Gamma}(x) = 2 \, \Re \left(\sum_{t_{j} \leq \frac{x^{1+\epsilon}}{Y}} \int_{Y}^{2Y} \frac{(x+u)^{s_{j}}}{s_{j}} k(u) \, du \right)+O_{\epsilon}(x^{1+\vartheta+\epsilon} Y^{\frac{1}{2}}+x^{\frac{5}{4}+\frac{\vartheta}{2}+\epsilon} Y^{\frac{1}{4}}),
\end{equation}
where $s_{j} = 1+it_{j}$. By estimating the integral on the right-hand side using the supremum~of the integrand, we obtain
\begin{equation}\label{eq:sup}
\sum_{t_{j} \leq \frac{x^{1+\epsilon}}{Y}} \int_{Y}^{2Y} \frac{(x+u)^{s_{j}}}{s_{j}} k(u) \, du 
\ll x \sup_{Y \leq u \leq 2Y} \left|\sum_{t_{j} \leq \frac{x^{1+\epsilon}}{Y}} \frac{(x+u)^{it_{j}}}{1+it_{j}} \right|.
\end{equation}
It remains to address the spectral exponential sum within the absolute value. To this end,~let $\mathcal{S}(T, X)$ denote the spectral exponential sum on the left-hand side of~\eqref{eq:extra}. If $T^{\ast} \coloneqq \frac{x^{1+\epsilon}}{Y}$~and $X \coloneqq x+u$ with $u \in [Y, 2Y]$, then partial summation shows
\begin{equation}\label{eq:partial-summation}
\sum_{t_{j} \leq \frac{x^{1+\epsilon}}{Y}} \frac{(x+u)^{it_{j}}}{1+it_{j}} = \int_{1}^{T^{\ast}} \frac{d\mathcal{S}(T, X)}{1+iT} = \frac{\mathcal{S}(T^{\ast}, X)}{1+iT^{\ast}}+i \int_{1}^{T^{\ast}} \frac{\mathcal{S}(T, X)}{(1+iT)^{2}} dT,
\end{equation}
thereby reducing the problem to the estimation of $\mathcal{S}(T, X)$. We are now prepared to establish \cref{thm:unconditional,thm:second-moment,thm:conditional} in one fell swoop.

\subsection{Proof of \cref{thm:unconditional}}
We follow the proof strategy in~\cites[Section~4D]{Kaneko2022-2} \textit{mutatis mutandis}. The complete resolution of the mean Lindel\"{o}f hypothesis given in \cref{thm:mean-Lindelof} now guarantees $\eta = 0$ in accordance with the notation therein. Gathering together~\eqref{eq:approximation},~\eqref{eq:partial-summation}, and~\cites[Equation~(3-48)]{Kaneko2022-2} and optimising $Y = \min(x^{\frac{3}{4}-\frac{\vartheta}{2}}, x^{\frac{4(1-\vartheta)}{5}})$ yields
\begin{equation}\label{eq:combination}
\mathcal{E}_{\Gamma}(x) \ll_{\epsilon} x^{2+\epsilon} Y^{-\frac{3}{4}}+x^{1+\vartheta+\epsilon} Y^{\frac{1}{2}}+x^{\frac{5}{4}+\frac{\vartheta}{2}+\epsilon} Y^{\frac{1}{4}} \ll_{\epsilon} x^{\frac{23+6\vartheta}{16}+\epsilon}+x^{\frac{7+3\vartheta}{5}+\epsilon},
\end{equation}
from which the first claim of \cref{thm:unconditional} follows. The second claim follows from substituting into~\eqref{eq:combination} the best known subconvex exponent $\vartheta = \frac{1}{6}$ due to Nelson~\cites[Theorem~1.1]{Nelson2020}.

\subsection{Proof of \cref{thm:second-moment}}
A salient difference from the proof of \cref{thm:unconditional} lies in our interpretation of the parameter $\eta$, which cannot be assumed to be $0$, given the current state of knowledge. Let $\eta \in [0, \min(\frac{1}{4}, 2\vartheta)]$ denote an additional exponent towards \cref{conj:square-mean}. The antepenultimate display on~\cites[Page~1871]{Kaneko2022-2} then translates to
\begin{align}
\frac{1}{\Delta} \int_{V}^{V+\Delta} |\mathcal{E}_{\Gamma}(x)|^{2} \, dx &\ll_{\epsilon} \Delta^{-1} V^{4+\eta+\epsilon} Y^{-\frac{1}{2}-\eta}+V^{2(1+\vartheta)+\epsilon} Y+V^{\frac{5}{2}+\vartheta+\epsilon} Y^{\frac{1}{2}}\\
&\ll_{\epsilon} \Delta^{-\frac{1}{2(1+\eta)}} V^{\frac{13+2\vartheta+4(3+\vartheta) \eta}{4(1+\eta)}}+\Delta^{-\frac{2}{3+2\eta}} V^{\frac{2(5+\vartheta+(3+2\vartheta) \eta)}{3+2\eta}+\epsilon},
\end{align}
where
\begin{equation}
V^{\frac{1}{2}} \leq Y = \min(\Delta^{-\frac{1}{1+\eta}} V^{\frac{3-2\vartheta+2\eta}{2(1+\eta)}}, \Delta^{-\frac{2}{3+2\eta}} V^{\frac{2(2(1-\vartheta)+\eta)}{3+2\eta}}) \leq V, \qquad V^{\frac{1}{2}-\vartheta} \leq \Delta \leq V^{1-\vartheta+\frac{\eta}{2}}.
\end{equation}
Now \cref{thm:second-moment} follows from renaming the variables.

\subsection{Proof of \cref{thm:conditional}}
It follows from \cref{prop:key} and~\eqref{eq:partial-summation} that the first~term in~\eqref{eq:approximation} is bounded by
\begin{multline}\label{eq:E}
\ll_{\epsilon} x^{\frac{17}{13}+\frac{18(3+2\beta-(3+\beta)\sigma)}{13(3-2\alpha)(2-\sigma)}+\epsilon} Y^{-\frac{3}{13}}+x^{\frac{77}{52}+\frac{9(3+2\beta-(3+\beta)\sigma)}{13(3-2\alpha)(2-\sigma)}+\epsilon} Y^{-\frac{3}{13}}\\
 + x^{\frac{18\delta-1}{13}-\frac{18(3+2\beta-(3+\beta)\sigma)}{13(3-2\alpha)(2-\sigma)}+\epsilon} Y^{-\frac{3}{13}}+x^{\frac{37}{26}+\epsilon}+x^{2+\epsilon} Y^{-\frac{12}{13}}.
\end{multline}
The first term dominates the last term whenever $Y \gg x^{1-\frac{2(3+2\beta-(3+\beta)\sigma)}{(3-2\alpha)(2-\sigma)}+\epsilon}$. It is now convenient to minimise $Y$ so that
\begin{equation}
Y = x^{1-\frac{2(3+2\beta-(3+\beta)\sigma)}{(3-2\alpha)(2-\sigma)}+\epsilon}.
\end{equation}
Hence, we conclude from~\eqref{eq:approximation} that
\begin{multline}
\mathcal{E}_{\Gamma}(x) \ll_{\epsilon} x^{\frac{14}{13}+\frac{24(3+2\beta-(3+\beta)\sigma)}{13(3-2\alpha)(2-\sigma)}+\epsilon}+x^{\frac{5}{4}+\frac{15(3+2\beta-(3+\beta)\sigma)}{13(3-2\alpha)(2-\sigma)}+\epsilon}+x^{\frac{18\delta-4}{13}-\frac{12(3+2\beta-(3+\beta)\sigma)}{13(3-2\alpha)(2-\sigma)}+\epsilon}\\
 + x^{\frac{3}{2}+\vartheta-\frac{3+2\beta-(3+\beta)\sigma}{(3-2\alpha)(2-\sigma)}+\epsilon}+x^{\frac{3}{2}+\frac{\vartheta}{2}-\frac{3+2\beta-(3+\beta)\sigma}{2(3-2\alpha)(2-\sigma)}+\epsilon}+x^{\frac{37}{26}+\epsilon}.\label{eq:altogether}
\end{multline}
By~\eqref{eq:alpha-beta} and \cref{prop:exponent-pair}, the optimal choice of $\sigma$, based on the second and penultimate terms, boils down to
\begin{equation}\label{eq:optimal-choice}
0.88793 \cdots = \frac{103}{116} \leq \sigma = \frac{206+(68-52\vartheta^{\prime}) \vartheta+232\vartheta^{\prime}}{232+(34-26\vartheta^{\prime}) \vartheta+245\vartheta^{\prime}} \leq \frac{446}{481} = 0.92723 \cdots
\end{equation}
for any $\vartheta^{\prime} \geq 0$, which obeys the assumption in \cref{prop:key}, thereby justifying its validity. It is straightforward to verify from~\eqref{eq:altogether} and~\eqref{eq:optimal-choice} that the resulting bound is independent of $\vartheta^{\prime}$, namely
\begin{equation}
\mathcal{E}_{\Gamma}(x) \ll_{\epsilon} x^{\frac{245}{172}+\frac{15\vartheta}{43}+\epsilon}+x^{\frac{18\delta}{13}-\frac{250}{559}-\frac{12\vartheta}{43}+\epsilon}.
\end{equation}
Furthermore, by the definition~\eqref{eq:delta} of $\delta \in [\frac{5}{4}, \frac{3}{2}]$, one must equate
\begin{equation}
\delta = \max \left(\frac{245}{172}+\frac{15\vartheta}{43}, \frac{18\delta}{13}-\frac{250}{559}-\frac{12\vartheta}{43} \right) \quad \Longleftrightarrow \quad \delta = \frac{245}{172}+\frac{15\vartheta}{43}.
\end{equation}
Consequently, gathering the above observations together implies
\begin{equation}
\mathcal{E}_{\Gamma}(x) \ll_{\epsilon} x^{\frac{245}{172}+\frac{15\vartheta}{43}+\epsilon},
\end{equation}
as required. The proof of \cref{thm:conditional} is thus complete.


\newcommand{\etalchar}[1]{$^{#1}$}
\providecommand{\bysame}{\leavevmode\hbox to3em{\hrulefill}\thinspace}
\providecommand{\MR}{\relax\ifhmode\unskip\space\fi MR }
\providecommand{\MRhref}[2]{%
  \href{http://www.ams.org/mathscinet-getitem?mr=#1}{#2}
}
\providecommand{\Zbl}{\relax\ifhmode\unskip\space\thinspace\fi Zbl }
\providecommand{\MR}[2]{%
  \href{https://zbmath.org/?q=an:#1}{#2}
}
\providecommand{\doi}{\relax\ifhmode\unskip\space\thinspace\fi DOI }
\providecommand{\MR}[2]{%
  \href{https://doi.org/#1}{#2}
}
\providecommand{\SSNI}{\relax\ifhmode\unskip\space\thinspace\fi ISSN }
\providecommand{\MR}[2]{%
  \href{#1}{#2}
}
\providecommand{\ISBN}{\relax\ifhmode\unskip\space\thinspace\fi ISBN }
\providecommand{\MR}[2]{%
  \href{#1}{#2}
}
\providecommand{\arXiv}{\relax\ifhmode\unskip\space\thinspace\fi arXiv }
\providecommand{\MR}[2]{%
  \href{#1}{#2}
}
\providecommand{\href}[2]{#2}

\end{document}